\newtheorem{corollary}{Corollary}[section]
\newtheorem{lemma}{Lemma}[section]
\newtheorem{proposition}{Proposition}[section]
\newtheorem{remark}{Remark}[section]
\newtheorem{theorem}{Theorem}[section]
\newcommand\beq{\begin{equation}}
\newcommand\eeq{\end{equation}}
\renewcommand{\emph}{\textbf}
\newcommand{\brk}[1]{\left(#1\right)}          % \brk{.}     => (.)
\newcommand{\Brk}[1]{\left[#1\right]}          % \Brk{.}     => [.]
\newcommand{\BRK}[1]{\left\{#1\right\}}        % \BRK{.}     => {.}
\newcommand{\Abs}[1]{\left| #1 \right|}        % \Abs{.}     => |.|
\newcommand{\Scal}[2]{\left(#1,#2\right)}      % \Scal{.}    => (.;.)
\newcommand{\Norm}[1]{\left\| #1 \right\|}     % \Norm{.}    => ||.||
\newcommand{\jump}[1]{[\![#1]\!]}
\newcommand{\xx}{\boldsymbol{x}}
\newcommand{\bolda}{\boldsymbol{a}}
\newcommand{\boldb}{\boldsymbol{b}}
\newcommand{\e}{\varepsilon}
\newcommand{\D}{\mathcal{D}}
\newcommand{\f}{\boldsymbol{f}}
\newcommand{\I}{\boldsymbol{I}}
\renewcommand{\to}{\rightarrow}
\newcommand{\str}{{\boldsymbol{\tau}}}
\newcommand{\strs}{{\boldsymbol{\sigma}}}
\newcommand{\bzero}{{\boldsymbol{0}}}
\newcommand{\bu}{{\boldsymbol{u}}}
\newcommand{\bv}{\boldsymbol{v}}
\newcommand{\bw}{\boldsymbol{w}}
\newcommand{\bz}{\boldsymbol{z}}
\newcommand{\be}{\boldsymbol{e}}
\newcommand{\bg}{\boldsymbol{g}}
\newcommand{\bs}{\boldsymbol{s}}
\newcommand{\gbu}{{\grad\bu}}
\newcommand{\gbv}{{\grad\bv}}
\newcommand{\bphi}{\boldsymbol{\phi}}
\newcommand{\bpsi}{\boldsymbol{\psi}}
\newcommand{\bchi}{\boldsymbol{\chi}}
\newcommand{\bxi}{\boldsymbol{\xi}}
\newcommand{\bvarsigma}{\boldsymbol{\varsigma}}
\newcommand{\buh}{\bu_h}
\newcommand{\gbuh}{\grad\bu_h}
\newcommand{\strh}{\strs_h}
\newcommand{\prd}{p_{\delta}}
\newcommand{\pa}{p_{\alpha}}
\newcommand{\pad}{p_{\alpha,\delta}}
\newcommand{\padorL}{p_{\alpha,\delta}^{(L)}}
\newcommand{\bud}{\bu_\delta}
\newcommand{\gbud}{\grad\bu_\delta}
\newcommand{\strd}{\strs_\delta}
\newcommand{\bua}{\bu_\alpha}
\newcommand{\gbua}{\grad\bu_\alpha}
\newcommand{\stra}{\strs_\alpha}
\newcommand{\buad}{\bu_{\alpha,\delta}}
\newcommand{\buadh}{\bu_{\alpha,\delta,h}}
\newcommand{\gbuad}{\grad\bu_{\alpha,\delta}}
\newcommand{\strad}{\strs_{\alpha,\delta}}
\newcommand{\stradh}{\strs_{\alpha,\delta,h}}
\newcommand{\buaL}{\bu_{\alpha}}
\newcommand{\buadorL}{\bu_{\alpha,\delta}^{(L)}}
\newcommand{\gbuadorL}{\grad\bu_{\alpha,\delta}^{(L)}}
\newcommand{\straL}{\strs_{\alpha}}
\newcommand{\stradorL}{\strs_{\alpha,\delta}^{(L)}}
\newcommand{\varrhoad}{\varrho_{\alpha,\delta}}
\newcommand{\varrhoadh}{\varrho_{\alpha,\delta,h}}
\newcommand{\buhd}{\bu_{\delta,h}}
\newcommand{\gbuhd}{\grad\bu_{\delta,h}}
\newcommand{\strhd}{\strs_{\delta,h}}
\newcommand{\buhdLa}{\bu_{\alpha,\delta,h}}
\newcommand{\gbuhdLa}{\grad\bu_{\alpha,\delta,h}}
\newcommand{\strhdLa}{\strs_{\alpha,\delta,h}}
\newcommand{\varrhohda}{\varrho_{\alpha,\delta,h}}
\newcommand{\buhLa}{\bu_{\alpha,h}}
\newcommand{\gbuhLa}{\grad\bu_{\alpha,h}}
\newcommand{\strhLa}{\strs_{\alpha,h}}
\newcommand{\dd}{\frac{d(d+1)}{2}}
\newcommand{\dt}{\Delta t}
\newcommand{\ddt}{{\rm d} t}
\newcommand{\ddx}{{\rm d} \xx}
\newcommand{\Wi}{{\text{Wi}}}
\renewcommand{\Re}{{\text{Re}}}
\renewcommand{\div}{\operatorname{div}}
\newcommand{\tr}{\operatorname{tr}}
\newcommand{\grad}{\boldsymbol{\nabla}}
\newcommand{\R}{\mathbb{R}}
\newcommand{\RS}{\mathbb{R}^{d \times d}_{\rm S}}
\newcommand{\RSPD}{\mathbb{R}^{d \times d}_{{\rm S},>0}}
\newcommand{\RSPDb}{\mathbb{R}^{d \times d}_{{\rm S},>0,b}}
\newcommand{\PP}{\mathbb{P}}
\newcommand{\pd}[2]{\frac{\partial#1}{\partial#2}}
\newcommand{\deriv}[2]{\frac{{\rm d}#1}{{\rm d}#2}}
\newcommand{\intd}{\int_\D}
\def\Gd{G_\delta}
\def\GdL{{G_{\delta}^L}}
\def\GdorL{{G_{\delta}^{(L)}}}
\def\Hd{H_\delta}
\def\BL{\beta}
\def\Bd{\beta_\delta}
\def\BdL{{\beta_{\delta}^L}}
\def\Bdb{{\beta_{\delta}^b}}
\def\BdorL{{\beta_{\delta}^{(L)}}}
\def\BorL{{\beta^{(L)}}}
\def\Fd{F_{\delta}}
\def\Ad{A_{\delta}}
\def\kd{\kappa_{\delta}}
\def\FdorL{F_{\delta}^{(L)}}
\def\Fdh{F_{\delta,h}}
\def\Fh{F_{h}}
\def\bn{\boldsymbol n}
\def\U{\mathrm{W}}  % !! \V does not compile !!
\def\Uz{\mathrm{V}}
\def\S{\mathrm{S}}
\newcommand{\SPD}{\S_{>0}}
\newcommand{\SPDb}{\S_{>0,b}}
\def\Uh{\mathrm{W}_h}  % !! \Vh does not compile !!
\def\Vhzero{\mathrm{V}_{h}^0}
\def\Vhone{\mathrm{V}_{h}^1}
\def\Sh{\mathrm{S}_h}
\def\Shone{\mathrm{S}_h^1}
\def\ShonePD{\mathrm{S}_{h,{\rm PD}}^1}
\def\Qhone{\mathrm{Q}_h^1}
\begin{document}
\markboth{John W.\ Barrett and S\'ebastien Boyaval}{}

\title{Finite Element Approximation of the FENE-P Model
%with Stress Diffusion
}
\author{John W. Barrett}
\address{Department of Mathematics, Imperial College London\\ London SW7 2AZ, UK\\
jwb@ic.ac.uk}

\author{S\'ebastien Boyaval}
\address{Laboratoire d'hydraulique Saint-Venant\\ Universit\'e Paris-Est
(Ecole des Ponts ParisTech) EDF R\&D\\ 6 quai Watier, 78401 Chatou Cedex, France
\\ sebastien.boyaval@saint-venant.enpc.fr}

%\begin{history}
%\received{}
%\revised{}
%\end{history}

%\date{\today}

%\subjclass{} %

\begin{abstract}
We extend our analysis on the Oldroyd-B  
model in Barrett and Boyaval \cite{barrett-boyaval-09} to
consider the finite element approximation  
of the FENE-P system of equations,
which models a dilute polymeric fluid,
in a bounded domain $\D \subset {\mathbb R}^d$,
$d=2$ or $3$,
subject to no flow boundary conditions.
Our schemes are based on approximating 
the pressure and the   
symmetric conformation tensor 
by either
(a) piecewise constants 
or (b) continuous piecewise linears.  
In case (a) the velocity field is approximated 
by continuous piecewise quadratics ($d=2$) or a reduced version, 
where the tangential component on each simplicial edge ($d=2$) 
or face ($d=3$) is linear.
In case (b) the velocity field is approximated 
by continuous piecewise quadratics or the mini-element. 
We show that both of these types of schemes, 
based on the backward Euler type time discretization, 
satisfy a {\it free energy} bound, which involves the logarithm of 
both the conformation tensor and a linear function of its trace,  
without any constraint on the time step. 
Furthermore, 
for
our approximation (b)
in the presence of an additional dissipative term in the stress equation,
the so-called FENE-P model with stress diffusion,
we show (subsequence) {\it convergence} in the case $d=2$, 
as the spatial and temporal discretization parameters
tend to zero, 
towards global-in-time weak solutions of this
FENE-P system.
Hence, we prove existence of global-in-time weak solutions 
to the FENE-P model with stress diffusion in two spatial dimensions.    
\end{abstract}

\maketitle

\noindent
\textit{Keywords:}
FENE-P model, entropy, finite element method, convergence analysis, 
stress diffusion, existence of weak solutions

\bigskip

\noindent
\textit{AMS Subject Classification: 35Q30, 65M12, 65M60, 76A10, 76M10, 82D60}

\section{Introduction}
\label{sec1}
\setcounter{equation}{0}
\subsection{The FENE-P model}

We consider the standard FENE-P model for a dilute polymeric fluid.
The fluid, confined to an open bounded domain $\D\subset\R^d$ ($d=2$ or $3$)
with a Lipschitz boundary $\partial \D$,
is governed by the following non-dimensionalized system
for a given $b \in \R_{>0}$:\\
\noindent
{\bf (P)} Find 
$\bu : (t,\xx)\in[0,T)\times\D \mapsto \bu(t,\xx)\in\R^d$,  
$p : (t,\xx)\in \D_T := (0,T)\times\D \mapsto p(t,\xx)\in\R$ 
and $\strs : (t,\xx)\in[0,T)\times\D \mapsto\strs(t,\xx)\in \RSPDb$ 
such that 
\begin{subequations}
\begin{alignat}{2}
\label{eq:fene-p-sigma}
\Re \brk{\pd{\bu}{t} + (\bu\cdot\grad)\bu} 
& =  - \grad p + (1-\e) \Delta\bu 
+ \frac{\e}{\Wi} \div\brk{ 
A(\strs)\,\strs} + \f 
\quad 
&&\mbox{on } \D_T, %:=(0,T) \times \D,
\\
\div\bu 
& = 0 
\qquad \qquad 
&&\mbox{on } \D_T, 
\label{eq:fene-p-sigma1}
\\
\pd{\strs}{t}+(\bu\cdot\grad)\strs
& = (\gbu)\strs + \strs(\gbu)^T - \frac{
%\frac{\strs}{1-\frac{\tr(\strs)}{b}}-\I
A(\strs)\,\strs}{\Wi}
\qquad 
&&\mbox{on } \D_T, 
\label{eq:fene-p-sigma2}
\\
\bu(0,\xx) &= \bu^0(\xx) 
\label{eq:initial}
\qquad &&\forall \xx \in \D,\\
\strs(0,\xx) &= \strs^0(\xx) 
\qquad &&\forall \xx \in \D,
\label{eq:initial1}
\\
\label{eq:dirichlet}
\bu &= \bzero 
\qquad \qquad &&\text{on } (0,T) \times \partial\D;
\end{alignat}
\end{subequations}
where
\begin{align}
A(\bphi) := \brk{1-\frac{\tr(\bphi)}{b}}^{-1}\I-\bphi^{-1}
\qquad \forall \bphi \in \RSPDb:= \BRK{\bpsi\in\RSPD \,:\, \tr(\bpsi)<b}.
\label{Adef}
\end{align}
Here $\RS$ denotes the set of symmetric $\R^{d\times d}$ matrices, 
and $\RSPD$  
the set of symmetric 
positive  
definite 
$\R^{d\times d}$ matrices. 
In addition, $\I \in \RSPD$ is the identity, and $\tr(\cdot)$ 
denotes trace.
The unknowns in {\bf (P)} are 
the velocity of the fluid, $\bu$, 
the hydrostatic pressure, $p$,
and the symmetric conformation tensor 
of the polymer molecules, $\strs$. 
The latter is
linked to the symmetric 
polymeric extra-stress tensor $\str$  
through the relation
$\str = \frac\e\Wi\,A(\strs)\,\strs$.   
In addition,
$\f : (t,\xx)\in %(0,T)\times\D
\D_T \mapsto \f(t,\xx)\in\R^d$ is the
given density of body forces acting on the fluid;
and the following given parameters are dimensionless:
the Reynolds number $\Re \in \R_{>0}$, the Weissenberg number $\Wi \in \R_{>0}$,
the elastic-to-viscous viscosity fraction $\e \in (0,1)$,
 and the FENE-P parameter $b>0$ 
(related to a maximal admissible extensibility of the polymer molecules within the fluid).
For the sake of simplicity,
we will limit ourselves to the no flow boundary conditions
(\ref{eq:dirichlet}).
Finally, we denote $\grad \bu (t,\xx) \in \R^{d \times d}$ 
the velocity gradient tensor field with 
$[\grad \bu]_{ij}=\frac{\partial \bu_i}{\partial \xx_j}$, 
and $({\rm div}\, \strs)(t,\xx) \in \R^d$ the vector field
with $[{\rm div}\, \strs]_i = \sum_{j=1}^d \frac{\partial \strs_{ij}}{\partial \xx_j}$.

For data $\f \equiv 0$, divergence free $\bu^0 \in [L^2(\D)]^d$, 
and $\strs^0$, which is symmetric positive definite for a.e.\ $\xx \in \D$,
satisfying $\ln(1-\frac{\tr(\strs^0)}{b}) \in L^1(\D)$,   
then the existence of a global-in-time weak solution 
$\bu \in L^\infty(0,T;[L^2(\D)]^d) \cap L^2(0,T,[H^1_0(\D)]^d)$,
$\strs \in L^\infty(0,T;[L^\infty(\D)]^{d \times d})$ and 
$\str \in L^2(0,T;[L^2(\D)]^{d \times d})$ 
to {\bf (P)}, (\ref{eq:fene-p-sigma}--f), was proved in Masmoudi \cite{Masmoudi}.

In this work, we consider finite element approximations of the FENE-P system {\bf (P)}
and the corresponding model with stress diffusion, {\bf (P$_\alpha$)},
which is obtained by adding the dissipative term $\alpha\,\Delta \strs$ 
for a given $\alpha \in \R_{>0}$
to the right-hand side of (\ref{eq:fene-p-sigma2})
with an additional no flux boundary condition for $\strs$ on $\partial \D$.
This paper extends the results in 
Barrett and Boyaval \cite{barrett-boyaval-09}, where  
finite element approximations of the
corresponding Oldroyd-B models, where $A(\strs)=\I-\strs^{-1}$, 
were introduced and analysed. 
In fact, the convergence proof of the finite element approximation
of the Oldroyd-B model with stress diffusion for $d=2$ in \cite{barrett-boyaval-09} provided 
the first existence proof of global-in-time weak solutions for this system.  
Note that $A(\strs)=\I-\strs^{-1}$
is the formal limit of (\ref{Adef}) for infinite extensibility; that is,  
$b\to\infty$.

The model {\bf (P$_\alpha$)} has  
been considered computationally in Sureshkumar and Beris \cite{sureshkumar-beris-95}.
We recall also  that El-Kareh and Leal \cite{EKL}
showed the existence of a weak solution to a modified stationary FENE-P system of equations,
which included stress diffusion, 
but there an additional regularization was also present in their modified system
and played an essential role in their proof.
We stress that the dissipative term $\alpha\,\Delta \strs$ in {\bf (P$_\alpha$)} 
is not a regularization,
but can be physically motivated through the centre-of-mass 
diffusion in the related microscopic-macroscopic polymer model, 
though with a positive $\alpha \ll 1$,
see Barrett and S\"{u}li \cite{BS}, \cite{BS-2011-density-arxiv},
Schieber \cite{SCHI} and Degond and Liu \cite{DegLiu}.

Barrett and S\"{u}li have 
introduced, and proved the existence of global-in-time weak solutions for $d=2$ and $3$ to, 
microscopic-macroscopic dumbbell models of dilute polymers
with center-of-mass diffusion in the corresponding Fokker--Planck equation for  
a finitely extensible nonlinear elastic (FENE) spring law or a Hookean-type spring law,
see \cite{BS2011-fene} and \cite{BS2010-hookean}.
Recently, Barrett and S\"{u}li \cite{BS-HOOKEANOB} have proved rigorously  
that the macroscopic Oldroyd-B model with stress diffusion
is the exact closure of the microscopic-macroscopic Hookean dumbbell model
with center-of-mass diffusion for $d=2$, when the existence 
of global-in-time weak solutions to both
models can be proved. 
In addition, Barrett and S\"{u}li \cite{BS2011-feafene}
have introduced and analysed a finite element approximation
for the FENE microscopic-macroscopic dumbbell model with 
center-of-mass diffusion.

From a physical viewpoint, the FENE-P model is more realistic than the Oldroyd-B model
because it accounts for the finite-extensibility of the polymer molecules in the fluid
through the non-dimensional parameter $b>0$.
From a mathematical viewpoint,
compared to the Oldroyd-B model where the nonlinear terms are only the material 
derivative terms
(like $(\gbu)\strs$),
the FENE-P model has an additional singular nonlinearity due to
the factor $\brk{1-\frac{\tr(\cdot)}{b}}^{-1}$ in 
the definition of $A(\cdot)$,
which necessitates a careful mathematical treatment.
Hence, this paper is not a trivial extension of \cite{barrett-boyaval-09}.
In fact, the latter additional nonlinearity is exactly what makes the FENE-P model closer
to the physics of polymers than the Oldroyd-B model,
and thus also to many other macroscopic models
based on different constitutive relations
that have been developed by physicists for polymers.
We note the FENE-P system is the approximate macroscopic closure of
the FENE microscopic-macroscopic dumbbell model,
whereas the Oldroyd-B system is the exact 
macroscopic closure of
the Hookean microscopic-macroscopic dumbbell model.
Hence, the microscopic-macroscopic dumbbell models corresponding to Oldroyd-B and FENE-P,
only the spring laws differ;
see e.g.\ Bird et al. \cite{BCAH}
and Renardy \cite{renardy-00} for a more complete review of the differences between 
the Oldroyd-B and the FENE-P models from the physical viewpoint,
and for other macroscopic models with more nonlinear effects than the Oldroyd-B model,
e.g.\ 
the Giesekus model and the Phan--Thien Tanner model.

Similarly to \cite{barrett-boyaval-09}, our analysis in the present paper exploits
the underlying free energy of the system, see Wapperom and Hulsen \cite{WapperomH98} and   
Hu and Leli\`{e}vre \cite{hu-lelievre-07}. In  particular,    
the finite element approximation of {\bf (P$_\alpha$)} has to be constructed extremely 
carefully to inherit this free energy structure, and requires the 
approximation of $\tr(\strs)$ as a new unknown.
It is definitely not our goal to review all the macroscopic models used in rheology,
although similar studies could probably be pursued for other macroscopic models endowed 
with a free energy.
We will point out the main differences with 
Barrett and Boyaval \cite{barrett-boyaval-09},
and we thus hope to sufficiently suggest how our technique 
could be adapted to any nonlinear model with a free energy.
We believe that our approach contributes to a better understanding of the numerical stability
of the models used in computational rheology,
where numerical instabilities 
sometimes termed ``High-Weissenberg Number Problems'', see HWNP in
Owens and Phillips \cite{OP},
still persist.
Indeed, as exposed in Boyaval et al.\ \cite{boyaval-lelievre-mangoubi-09},
our point is that to make progress in this area 
one should identify sufficiently general rules for the derivation of 
good discretizations of macroscopic models 
such that they retain the dissipative structure of weak solutions to the system,
at least in some benchmark flows.

The outline of this paper is as follows.
First, we end this section by introducing our notation and some auxiliary results.
In Section \ref{sec:Penergy} we review the formal free energy bound for the FENE-P 
system {\bf (P)}.
In Section \ref{sec:delta} 
we introduce our regularization $G_{\delta}$ of 
of $G \equiv\ln$, which appears in the definition of the free energy of the 
FENE-P system {\bf (P)}.
We then introduce a regularized problem {\bf (P$_\delta$)},
and show a formal free energy bound for it.
In Section \ref{sec:deltah}, on assuming that $\D$ is a polytope for ease of exposition,
we introduce our finite element approximation of {\bf (P$_\delta$)}, 
namely {\bf (P$^{\Delta t}_{\delta,h}$)},
based on approximating 
the pressure and the   
symmetric conformation tensor 
by piecewise constants;  
and
the velocity field
with continuous piecewise quadratics 
or a reduced version, 
where the tangential component on each simplicial edge ($d=2$) 
or face ($d=3$) is linear.
Using the Brouwer fixed point theorem, we prove existence of a 
solution to {\bf (P$^{\Delta t}_{\delta,h}$)}
and show that it satisfies 
a discrete regularized free energy bound for any choice of time step; 
see Theorem~\ref{dstabthm}.
We conclude by showing that, in the limit $\delta \to 0_+$, 
these solutions of 
{\bf (P$^{\Delta t}_{\delta,h})$}
converge to a solution of {\bf (P$^{\Delta t}_{h}$)} 
with the approximation of the conformation tensor being positive definite 
and having a trace strictly less than $b$.
Moreover, this solution of {\bf (P$^{\Delta t}_{h}$)} 
satisfies a discrete free energy bound;  see Theorem~\ref{dconthm}.
Next, in Section \ref{sec:Palpha} we introduce the FENE-P system with stress diffusion, 
{\bf (P$_\alpha$)}, where  
the dissipative term $\alpha\,\Delta \strs$  
has been added to the right-hand side of (\ref{eq:fene-p-sigma2}).
We then introduce the corresponding 
regularized version {\bf (P$_{\alpha,\delta}$)},
and show a formal free energy bound for it.
In Section \ref{sec:Palphah}
we introduce our finite element approximation of 
{\bf (P$_{\alpha,\delta}$)}, 
namely {\bf (P$^{\Delta t}_{\alpha,\delta,h}$)},
based on approximating the velocity field with continuous piecewise quadratics
or the mini element, and the
pressure, the symmetric conformation tensor 
and its trace by continuous piecewise linears.
Here we assume that $\D$ is a convex polytope
and that the finite element mesh
consists of quasi-uniform non-obtuse simplices.
Using the Brouwer fixed point theorem, we prove existence of a 
solution to {\bf (P$^{\Delta t}_{\alpha,\delta,h}$)}
and show that it satisfies a discrete regularized free energy bound 
for any choice of time step; see Theorem~\ref{dstabthmaorL}.
In Section~\ref{sec:convergence}
we prove, in the case $d=2$, (subsequence) convergence of the solutions 
of {\bf (P$_{\alpha,\delta,h}^{\Delta t}$)}, as the 
regularization parameter, $\delta$, and the
spatial, $h$, and temporal, $\Delta t$,
discretization parameters tend to zero,  
to global-in-time weak solutions of {\bf (P$_\alpha$)}; see Theorem~\ref{exPathm}.
This existence result for {\bf (P$_\alpha$)} is new to the literature.

\subsection{Notation and auxiliary results}

The absolute value and the negative part of a real number $s\in\R$ 
are denoted by $\Abs{s}:=\max\{s,-s\}$ and $[s]_{-}=\min\{s,0\}$, respectively.
We adopt the following notation for inner products 
\begin{subequations}
\begin{alignat}{2}
 \bv\cdot\bw &:=\sum_{i=1}^d \bv_i \bw_i
 \equiv \bv^T \bw = \bw^T \bv  \qquad &&\forall \bv,\bw \in {\mathbb R}^d,
 \label{ipvec}
 \\
 \bphi : \bpsi &:=\sum_{i=1}^d \sum_{j=1}^d \bphi_{ij} \bpsi_{ij}
 \equiv \tr\brk{\bphi^T \bpsi}=\tr\brk{\bpsi^T \bphi} \qquad 
 &&\forall \bphi,\bpsi \in \R^{d\times d},
 \label{ipmat}
 \\
 \grad \bphi :: \grad \bpsi &:=\sum_{i=1}^d \sum_{j=1}^d \grad \bphi_{ij} 
 \cdot \grad \bpsi_{ij}\qquad 
 &&\forall \bphi,\bpsi \in \R^{d\times d};
 \label{ipgrad}
\end{alignat}
\end{subequations}
where $\cdot^T$ and $\tr\brk{\cdot}$ denote transposition and trace, respectively.
The corresponding norms are
\begin{subequations}
\begin{alignat}{3}
 \Norm{\bv}&:= (\bv \cdot \bv)^{\frac{1}{2}},
 \qquad &&\Norm{\grad \bv}:= (\grad \bv : \grad \bv)^{\frac{1}{2}}
 \qquad &&\forall \bv \in \R^d;
 \label{normv}
 \\
 \Norm{\bphi} &:= (\bphi : \bphi)^{\frac{1}{2}}, 
 \qquad &&\Norm{\grad \bphi} := (\grad \bphi :: \grad \bphi)^{\frac{1}{2}} 
 \qquad &&\forall \bphi \in \R^{d\times d}.
 \label{normphi}
\end{alignat}
\end{subequations}
We will use on several occasions that $\tr(\bphi)=\tr(\bphi^T)$ and
$\tr(\bphi\bpsi)=\tr(\bpsi\bphi)$ for all $\bphi, \bpsi \in \R^{d \times d}$, and
\begin{subequations}
\begin{alignat}{2} \label{eq:symmetric-tr}
 \bphi\bchi^T:\bpsi %= \bpsi:\bchi\bphi 
 &= \bchi\bphi:\bpsi = \bchi : \bpsi \bphi 
 \qquad &&\forall \bphi,\bpsi\in \RS, 
 \ \bchi\in \R^{d\times d}
 \,,\\
% |\bpsi\bphi|^2 &\leq |\bpsi|^2 |\bphi|^2
 \Norm{\bpsi\bphi} &\leq \Norm{\bpsi} \Norm{\bphi}
\qquad &&\forall \bphi,\bpsi\in\R^{d\times d}, 
\label{normprod}\\
\Norm{\bphi\,\bv} &\leq \Norm{\bphi} \Norm{\bv}
\qquad &&\forall \bphi \in\R^{d\times d},\,\bv \in \R^d. 
\label{vectprod}
\end{alignat}
\end{subequations}

For any $\bphi \in \RS$, there exists a  
decomposition
\beq \label{eq:diagonal-decomposition} 
\bphi= \mathbf{O}^T \mathbf{D} \mathbf{O} 
\qquad \Rightarrow \qquad \tr\brk{\bphi} = \tr\brk{\mathbf{D}},
\eeq
where $\mathbf{O}\in \R^{d \times d}$ is 
an orthogonal matrix and $\mathbf{D} \in \R^{d \times d}$ a diagonal matrix. 
Hence, for any $g:\R\to\R$, one can define $g(\bphi) \in \RS$ as
\beq \label{eq:tensor-g}
g(\bphi) := \mathbf{O}^T g({\bf D})\mathbf{O} 
\qquad \Rightarrow \qquad \tr\brk{g(\bphi)} = \tr\brk{g(\mathbf{D})},
\eeq
where $g({\bf D}) \in \RS$ is the diagonal matrix with entries $[g({\bf D})]_{ii} 
= g({\bf D}_{ii})$, $i=1, \ldots, d$.
Although the diagonal decomposition~\eqref{eq:diagonal-decomposition} is not unique,
\eqref{eq:tensor-g} uniquely defines $g(\bphi)$.
We note for later purposes that
\begin{align}
\label{modphisq}
d^{-1} (\tr(|\bphi|))^2 \leq \Norm{\bphi}^2  
\leq (\tr(|\bphi|))^2 
& \qquad \forall \bphi \in \RS.
\end{align}

One can show via diagonalization, see e.g.\ %Barrett and Boyaval 
\cite{barrett-boyaval-09}
for details, that 
for all concave function $g \in C^1(\R)$, it holds
\begin{equation} \label{gconcave2}
\brk{\bphi-\bpsi}:g'(\bpsi) 
\ge \tr\brk{g(\bphi)-g(\bpsi)} \ge
\brk{\bphi-\bpsi}:g'(\bphi)
\qquad \forall \bphi,\bpsi \in \RS,
\end{equation}
where $g'$ denotes the first derivative of $g$.
If $g \in C^1(\R)$ is convex,
the inequalities in (\ref{gconcave2}) are reversed.
It follows from  
(\ref{gconcave2}) and (\ref{ipmat}) 
that for any $\bphi \in C^1([0,T];\RS)$
and any concave or convex $g \in C^1(\R)$
\begin{align}
\deriv{}{t}\tr\brk{g(\bphi)}
&=
\tr\brk{\deriv{{\bphi}}{t} g'({\bphi})} 
= 
\deriv{\bphi}{t}:g'(\bphi) 
\qquad \forall t \in [0,T].
\label{eq:deriv-tensor-g}
\end{align}
Of course, a similar result holds for spatial derivatives. 
Furthermore, the results (\ref{gconcave2}) and (\ref{eq:deriv-tensor-g}) hold true 
when $C^1(\R)$ and $\RS$ are replaced by $C^1(\R_{>0})$ and $\RSPD$ 
or $C^1(0,b)$ and $\RSPDb$.
Finally, one can show that 
if $g \in C^{0,1}(\mathbb R)$ with Lipschitz constant $g_{\rm Lip}$, then
\begin{align}\label{Lip}
\Norm{g(\bphi)-g(\bpsi)} \leq g_{\rm Lip} \Norm{\bphi-\bpsi}
\qquad \forall \bphi, \bpsi \in \RS.
\end{align}

We adopt the standard notation for Sobolev spaces, e.g.\ 
$H^1(\D) := \{ \eta:\D \mapsto \R \,:\, \intd [\,|\eta|^2 + \|\nabla \eta\|^2\,] \,\ddx< \infty\}$
with $H^1_0(\D)$ being the closure of $C^\infty_0(\D)$ for 
the corresponding norm $\|\cdot\|_{H^1(\D)}$.
We denote the associated semi-norm as $|\cdot|_{H^1(\D)}$.
The topological dual of the Hilbert space $H^1_0(\D)$, with pivot space  $L^2(\D)$,
will be denoted by $H^{-1}(\D)$.
Such function spaces are naturally extended when the range $\R$ is replaced
by $\R^d$, $\R^{d \times d}$ and $\RS$; 
e.g.\ $H^1(\D)$ becomes
$[H^1(\D)]^d$,  $[H^1(\D)]^{d \times d}$ and  
$[H^1(\D)]^{d \times d}_{\rm S}$ , 
respectively. 
For ease of notation, we write the corresponding norms 
and semi-norms as  
$\| \cdot\|_{H^1(\D)}$ and $| \cdot|_{H^1(\D)}$, respectively,
as opposed to e.g.\ $\|\cdot\|_{[H^1(\D)]^d}$
and $|\cdot|_{[H^1(\D)]^d}$, respectively. 
We denote the duality pairing between $H^{-1}(\D)$ and $H^1_0(\D)$
as $\langle \cdot,\cdot\rangle_{H^1_0(\D)}$,
and we similarly write $\langle \cdot,\cdot\rangle_{H^1_0(\D)}$
for the duality pairing between e.g.\  
$[H^{-1}(\D)]^d$ and $[H^1_0(\D)]^d$.
For notational convenience, we introduce also convex sets such as
$[H^1(\D)]^{d \times d}_{{\rm S},>0} := \{ \bphi \in [H^1(\D)]^{d \times d}_{\rm S} :
\bphi \in \RSPD \mbox{ a.e. in } \D \}$,
and $[H^1(\D)]^{d \times d}_{{\rm S},>0,b} := \{ \bphi \in [H^1(\D)]^{d \times d}_{\rm S} :
\bphi \in \RSPDb \mbox{ a.e. in } \D \}$.

In order to analyse {\bf (P)}, we adopt the notation
\begin{align}
\U  &:= [H^1_0(\D)]^d,
\quad {\rm Q}:= L^2(\D),
\quad  \Uz := \BRK{\bv\in\U \,:\,\intd q\,\div\bv\,\ddx =0 \quad \forall q\in{\rm Q}},
\label{spaces}
\\
\S &:= [L^\infty(\D)]^{d \times d}_{\rm S},
\quad
\SPD :=  [L^\infty(\D)]^{d \times d}_{{\rm S},>0} 
\quad 
\mbox{and} \quad
\SPDb := \BRK{\bphi\in\SPD \,:\, \tr(\bphi)<b \quad \mbox{a.e.\ in } \D}.
\nonumber 
\end{align}
Throughout the paper $C$ will denote a generic positive constant independent
of  
the regularization parameter $\delta$
and the mesh parameters $h$ and $\Delta t$.
Finally, we recall the Poincar\'e inequality
\beq \label{eq:poincare}
 \int_\D \|\bv\|^2 \,\ddx\leq C_P \int_\D \|\gbv\|^2 
 \,\ddx \qquad \forall \bv \in \U, %[H^1_0(D)]^d\,, 
\eeq
where $C_P \in \R_{>0}$ depends only on $\D$.

\section{Formal free energy bound for the problem {\bf (P)}}
\label{sec:Penergy} 
\setcounter{equation}{0}
In this section we recall from Hu and Leli\`{e}vre \cite{hu-lelievre-07} 
the free energy structure of problem $(\bf P)$.
Let $F(\bu,\strs)$ denote 
the free energy associated with a solution $(\bu,p,\strs)$ to problem $(\bf P)$, 
where we define
\begin{align} \label{eq:free-energy-P}
F(\bv,\bphi) &:= \frac{\Re}{2}\intd\|\bv\|^2\,\ddx
-\frac{\e}{2\Wi}\intd \Brk{ b \ln\brk{1-\frac{\tr(\bphi)}{b}} + \tr\brk{\ln(\bphi)+\I} }
\ddx
 \\
& \hspace{2.8in}
\qquad 
\forall (\bv,\bphi) \in [L^2(\D)]^d \times \S^\star
\nonumber
\end{align}
with $\S^\star \subset \SPDb$ such that $F(\cdot,\cdot)$ is well-defined.
Here
the first term $\frac{\Re}{2}\intd\|\bv\|^2$ corresponds to the usual kinetic energy term,
and the second term, which is nonnegative, 
is a relative entropy term. 
Moreover, on noting that $\ln$ is a concave function on $\R_{>0}$, we observe
\begin{equation} \label{eq:comparison-oldroyd}
F(\bv,\bphi) \ge \frac{\Re}{2}\intd\|\bv\|^2 \,\ddx
+ \frac{\e}{2\Wi}\intd \tr\brk{\bphi-\ln(\bphi)-\I} \,\ddx
\qquad \forall (\bv,\bphi)\in [L^2(\D)]^d \times \S^\star,
\end{equation}
where the right-hand side  
is the free energy of the Oldroyd-B model under the same no flow boundary conditions,
see e.g.\ 
\cite{hu-lelievre-07} and
\cite{barrett-boyaval-09}.
Clearly, diagonalization yields that 
the relative entropy term of this Oldroyd-B model is nonnegative. 
Of course, the $\I$ term in the relative entropy for FENE-P and Oldroyd-B plays no real role,
and just means that the minimum relative entropy for Oldroyd-B is zero 
and is obtained by $\bphi =\I$.
Finally, we note that 
$\tr\brk{\ln(\bphi)}$ is rewritten as $\ln\brk{\det(\bphi)}$
in  
\cite{hu-lelievre-07},
which once again 
is easily deduced
from diagonalization.

\begin{proposition} \label{prop:free-energy-P}  
With $\f \in L^2(0,T;$ $[H^{-1}(\D)]^d)$
let $(\bu,p,\strs)$  
be a sufficiently smooth solution to problem $(\bf P)$,
(\ref{eq:fene-p-sigma}--f),
such that $\strs(t,\cdot) \in \S^\star$ for $t \in (0,T)$.
Then the free energy $F(\bu,\strs)$ satisfies for a.a.\ $t \in (0,T)$
\begin{align} 
\displaystyle 
& 
\deriv{}{t}F(\bu,\strs) + (1-\e) \intd \|\gbu\|^2 \,\ddx
\label{eq:estimate-P} 
+ \frac{\e}{2 {\rm Wi}^2}\intd\tr\brk{
(A(\strs))^2 \strs}\,\ddx
=
\langle \f, \bu \rangle_{H^1_0(\D)},
\end{align}
where the 
third term on the left-hand side is positive, via diagonalization, on recalling (\ref{Adef}).
\end{proposition}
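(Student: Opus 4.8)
The plan is to reproduce, at the formal level, the free energy computation of Hu and Leli\`{e}vre \cite{hu-lelievre-07}: I would test the momentum equation \eqref{eq:fene-p-sigma} against $\bu$, differentiate the relative-entropy part of $F$ in time and use the conformation equation \eqref{eq:fene-p-sigma2}, and then add the two resulting identities so that the velocity--stress coupling cancels. Since $(\bu,p,\strs)$ is assumed smooth with $\strs(t,\cdot)\in\S^\star$, every step below (integration by parts, the chain rule for matrix functions via \eqref{eq:deriv-tensor-g}, commuting $\pd{}{t}$ with $\intd\,\cdot\,\ddx$) is legitimate, so there is no genuine analytical obstacle; the work is algebraic bookkeeping.

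First I would take the $[L^2(\D)]^d$ inner product of \eqref{eq:fene-p-sigma} with $\bu$. Since $\div\bu=0$ and $\bu=\bzero$ on $\partial\D$, the inertial term vanishes and $-\intd\grad p\cdot\bu\,\ddx=\intd p\,\div\bu\,\ddx=0$, while integration by parts gives $(1-\e)\intd\Delta\bu\cdot\bu\,\ddx=-(1-\e)\intd\Norm{\gbu}^2\,\ddx$ and $\frac{\e}{\Wi}\intd\div(A(\strs)\,\strs)\cdot\bu\,\ddx=-\frac{\e}{\Wi}\intd A(\strs)\,\strs:\gbu\,\ddx$ (note that $A(\strs)=(1-\tr(\strs)/b)^{-1}\I-\strs^{-1}$, so both $A(\strs)$ and $A(\strs)\,\strs=(1-\tr(\strs)/b)^{-1}\,\strs-\I$ are symmetric). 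This yields
\beq
\frac{\Re}{2}\deriv{}{t}\intd\Norm{\bu}^2\,\ddx+(1-\e)\intd\Norm{\gbu}^2\,\ddx+\frac{\e}{\Wi}\intd A(\strs)\,\strs:\gbu\,\ddx=\langle\f,\bu\rangle_{H^1_0(\D)}.
\eeq

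Next, for the entropy part of $F$ the chain rule gives $\pd{}{t}[\,b\ln(1-\tr(\strs)/b)\,]=-(1-\tr(\strs)/b)^{-1}\,\I:\pd{\strs}{t}$, while \eqref{eq:deriv-tensor-g} with $g=\ln$ gives $\pd{}{t}\tr(\ln\strs)=\strs^{-1}:\pd{\strs}{t}$; hence, by the definition \eqref{Adef} of $A$, the $t$-derivative of the bracket in \eqref{eq:free-energy-P} is precisely $-A(\strs):\pd{\strs}{t}$, so the time derivative of the entropy part of $F$ equals $\frac{\e}{2\Wi}\intd A(\strs):\pd{\strs}{t}\,\ddx$. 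Into this integrand I substitute $\pd{\strs}{t}$ from \eqref{eq:fene-p-sigma2} and treat the three resulting terms: (i) the advection term vanishes, since the spatial analogue of the identity just used gives $A(\strs):[(\bu\cdot\grad)\strs]=\bu\cdot\grad\,\phi(\strs)$ for a scalar function $\phi$ of $\strs$, which equals $\div(\bu\,\phi(\strs))$ because $\div\bu=0$ and hence integrates to zero since $\bu=\bzero$ on $\partial\D$; (ii) by the symmetry of $A(\strs)$ and $\strs$ together with \eqref{eq:symmetric-tr}, $A(\strs):[(\gbu)\strs]=A(\strs):[\strs(\gbu)^T]=\gbu:A(\strs)\,\strs$, so these two terms together contribute $\frac{\e}{\Wi}\intd A(\strs)\,\strs:\gbu\,\ddx$; (iii) the relaxation term contributes $-\frac{\e}{2\Wi^2}\intd A(\strs):[A(\strs)\,\strs]\,\ddx=-\frac{\e}{2\Wi^2}\intd\tr((A(\strs))^2\strs)\,\ddx$ by \eqref{ipmat} and the symmetry of $A(\strs)$. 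Collecting these,
\beq
\deriv{}{t}\Brk{-\frac{\e}{2\Wi}\intd\brk{b\ln\brk{1-\frac{\tr(\strs)}{b}}+\tr(\ln(\strs)+\I)}\ddx}+\frac{\e}{2\Wi^2}\intd\tr((A(\strs))^2\strs)\,\ddx=\frac{\e}{\Wi}\intd A(\strs)\,\strs:\gbu\,\ddx.
\eeq

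Adding the two displays, the coupling terms $\frac{\e}{\Wi}\intd A(\strs)\,\strs:\gbu\,\ddx$ cancel and \eqref{eq:estimate-P} follows after rearrangement. For the asserted positivity of the last term, $A(\strs)$ commutes with $\strs$ (it equals $(1-\tr(\strs)/b)^{-1}\I-\strs^{-1}$), so in a common orthonormal eigenbasis $\tr((A(\strs))^2\strs)=\sum_i\mu_i^2\,\lambda_i$ with $\lambda_i>0$ the eigenvalues of $\strs$ and $\mu_i$ those of $A(\strs)$; equivalently, $\tr((A(\strs))^2\strs)=\Norm{A(\strs)\,\strs^{1/2}}^2\ge 0$, using that $\strs\in\RSPDb$ has a symmetric positive definite square root. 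I expect the only point needing care to be the two cancellations in (i)--(ii): recognizing that the material-transport term becomes a perfect divergence once contracted with $A(\strs)$ (using $\div\bu=0$ and the no-slip condition), and that the upper-convected terms $(\gbu)\strs+\strs(\gbu)^T$ reproduce exactly the stress--velocity coupling produced by $\div(A(\strs)\,\strs)$ in the momentum balance; all remaining manipulations are routine.
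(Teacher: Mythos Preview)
Your proposal is correct and follows essentially the same route as the paper: test \eqref{eq:fene-p-sigma} against $\bu$, contract \eqref{eq:fene-p-sigma2} with $\frac{\e}{2\Wi}A(\strs)$ (which is exactly what your ``differentiate the entropy and substitute $\pd{\strs}{t}$'' amounts to), and let the velocity--stress coupling cancel. The only cosmetic difference is that the paper writes the coupling term in the form $(1-\tr(\strs)/b)^{-1}\strs:\gbu$ rather than $A(\strs)\,\strs:\gbu$, using $\I:\gbu=\div\bu=0$, but this is the same quantity.
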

\begin{proof}
Multiplying the Navier-Stokes equation~\eqref{eq:fene-p-sigma} with $\bu$
and the stress equation~\eqref{eq:fene-p-sigma2}
with $\frac{\e}{2\Wi}\, A(\strs)$, 
summing and integrating over $\D$ yields, 
after using integrations by parts, the boundary condition (\ref{eq:dirichlet}) 
and the incompressibility property (\ref{eq:fene-p-sigma1}) in the standard way, that
\begin{align} 
&\intd  \Brk{ \frac{\Re}{2} \pd{\|\bu\|^2}{t} + (1-\e)\|\gbu\|^2 
+ \frac{\e}{\Wi}\brk{1-\frac{\tr(\strs)}{b}}^{-1}\strs :\gbu }\ddx
\label{Pener} \\ 
& \hspace{0.2in} + \frac{\e}{2\Wi} \intd \Bigg[ \brk{\pd{\strs}{t}+(\bu\cdot\grad)\strs}
 + \frac  
 {A(\strs)\,\strs}{\Wi}\Bigg] : A(\strs)\,\ddx
\nonumber
\\
&\hspace{0.2in} - \frac{\e}{2\Wi} \intd 
 \brk{ \brk{\gbu}\strs + \strs\brk{\gbu}^T }:
A(\strs)\,\ddx
= 
\langle \f, \bu\rangle_{H^1_0(\D)}.
\nonumber
\end{align}
It follows from the chain rule
and (\ref{eq:deriv-tensor-g}) that 
\begin{align} 
&\brk{\pd{\strs}{t}+(\bu\cdot\grad)\strs}:
A(\strs)
= \brk{\pd{}{t}+(\bu\cdot\grad)}\brk{-b\ln\brk{1-\frac{\tr(\strs)}{b}}-\tr\brk{\ln(\strs)}}.  
\label{Pener1}
\end{align}
On integrating (\ref{Pener1}) over $\D$, 
the $(\bu \cdot \grad)$ term on the right-hand side  
vanishes as $\bu(t,\cdot) \in \Uz$.
On noting (\ref{Adef}), (\ref{eq:symmetric-tr}), (\ref{ipmat})  
and (\ref{eq:fene-p-sigma1}), we obtain that
\begin{align} 
\brk{\strs\brk{\gbu}^T+\brk{\gbu}\strs}:
A(\strs) 
&= 2\brk{1-\frac{\tr(\strs)}{b}}^{-1}
\strs : \gbu.
\label{Pener2}
\end{align} 
Hence, on combining (\ref{Pener})--(\ref{Pener2}) and noting a trace property,
we obtain the desired free energy equality~\eqref{eq:estimate-P}.
\end{proof}

For later purposes, we note the following.
\begin{remark}\label{remsplit}
The step in the above proof
of testing %the FENE-P equation 
\eqref{eq:fene-p-sigma2}
with $\frac{\e}{\mbox{\footnotesize $2$\rm \Wi}}
\,A(\strs)$
is equivalent to testing \eqref{eq:fene-p-sigma2}
with $-\frac{\e}{\mbox{\footnotesize $2$\rm \Wi}}\strs^{-1}$ 
and testing the corresponding trace equation  
\begin{align}
\pd{\tr(\strs)}{t}+(\bu\cdot\grad)\tr(\strs)
& = 2\gbu:\strs - \frac{\tr\brk{A(\strs)\,\strs}}{\mbox{\rm \Wi}}
\qquad 
\mbox{on } \D_T 
\label{eq:fene-p-sigma2tr}
\end{align}
with $\frac{\e}{\mbox{\footnotesize $2$\rm Wi}}\brk{1-\frac{\tr(\strs)}{b}}^{-1}$, and adding.
\end{remark}

Recall that in the limit $b\to\infty$
the FENE-P model formally converges to the Oldroyd-B model.
It is thus interesting to note that when $b\to\infty$,
the free energy equality~\eqref{eq:estimate-P} formally converges to
the corresponding free energy equality for the Oldroyd-B model, on recalling 
(\ref{eq:comparison-oldroyd}).
Finally, we note the following result.

\begin{corollary}
\label{cor:free-energy-P} 
Under the assumptions of Proposition \ref{prop:free-energy-P} it follows that
\begin{align} 
&\mathop{\sup \,F(\bu(t,\cdot),\strs(t,\cdot))}_{t \in (0,T)\hspace{2.5cm}}
+ \frac{1-\e}{2}\mathop{\int}_{\D_T} 
\|\gbu\|^2 \,\ddx \,\ddt
\label{eq:free-energy-P-bound}
+\frac{\e}{2{\rm Wi}^2}\mathop{\int}_{\D_T}
\tr\brk{ (A(\strs))^2 
\strs}\,\ddx\,\ddt
\\
& \hspace{2in}
\le 2\brk{ F(\bu^0,\strs^0) + \frac{1+C_P}{2(1-\e)} \Norm{\f}_{L^2(0,T;H^{-1}(\D))}^2 }.
\nonumber
\end{align}
\end{corollary}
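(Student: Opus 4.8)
The plan is to integrate the free energy equality \eqref{eq:estimate-P} from Proposition~\ref{prop:free-energy-P} in time over $(0,t)$ for an arbitrary $t \in (0,T)$, and then absorb the forcing term on the right-hand side into the dissipation terms on the left. First I would write
\[
F(\bu(t,\cdot),\strs(t,\cdot)) + (1-\e)\int_0^t\!\!\intd \|\gbu\|^2 \,\ddx\,\ddt
+ \frac{\e}{2\Wi^2}\int_0^t\!\!\intd \tr\brk{(A(\strs))^2\strs}\,\ddx\,\ddt
= F(\bu^0,\strs^0) + \int_0^t \langle \f,\bu\rangle_{H^1_0(\D)}\,\ddt,
\]
which is legitimate since $(\bu,p,\strs)$ is assumed sufficiently smooth and \eqref{eq:estimate-P} holds for a.a.\ $t$.

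Next I would estimate the forcing term. Using the duality pairing bound, the Poincar\'e inequality \eqref{eq:poincare}, and Young's inequality with a carefully chosen weight, one has for each $s$
\[
\langle \f(s,\cdot),\bu(s,\cdot)\rangle_{H^1_0(\D)}
\le \Norm{\f(s,\cdot)}_{H^{-1}(\D)}\,\Norm{\bu(s,\cdot)}_{H^1_0(\D)}
\le \Norm{\f(s,\cdot)}_{H^{-1}(\D)}\,(1+C_P)^{1/2}\,\Norm{\gbu(s,\cdot)}_{L^2(\D)},
\]
whence, by Young's inequality,
\[
\int_0^t \langle \f,\bu\rangle_{H^1_0(\D)}\,\ddt
\le \frac{1-\e}{2}\int_0^t\!\!\intd\|\gbu\|^2\,\ddx\,\ddt
+ \frac{1+C_P}{2(1-\e)}\Norm{\f}_{L^2(0,T;H^{-1}(\D))}^2.
\]
Substituting this into the identity above and moving the $\frac{1-\e}{2}\int\|\gbu\|^2$ term to the left absorbs half of the viscous dissipation, leaving
\[
F(\bu(t,\cdot),\strs(t,\cdot)) + \frac{1-\e}{2}\int_0^t\!\!\intd\|\gbu\|^2\,\ddx\,\ddt
+ \frac{\e}{2\Wi^2}\int_0^t\!\!\intd\tr\brk{(A(\strs))^2\strs}\,\ddx\,\ddt
\le F(\bu^0,\strs^0) + \frac{1+C_P}{2(1-\e)}\Norm{\f}_{L^2(0,T;H^{-1}(\D))}^2.
\]

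Finally, since this bound holds for every $t \in (0,T)$, and since the two time-integral terms on the left are nondecreasing in $t$ and nonnegative (the second by diagonalization, as noted after \eqref{eq:estimate-P}), I can take the supremum over $t$ on the left-hand side. The relative entropy part of $F$ is nonnegative by \eqref{eq:comparison-oldroyd} and diagonalization, so $F(\bu(t,\cdot),\strs(t,\cdot)) \ge \frac{\Re}{2}\intd\|\bu(t,\cdot)\|^2\,\ddx \ge 0$; combining the sup of $F$ with the sup of each integral term (each bounded by the same right-hand side evaluated at $t=T$) and using that the sum of suprema is at least the sup of the sum, but more simply just bounding each of the three nonnegative left-hand terms at its largest value, yields \eqref{eq:free-energy-P-bound} with the factor $2$ on the right absorbing the fact that we are bounding $\sup F + \sup\int + \sup\int$ rather than a single quantity. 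The main (minor) obstacle is purely bookkeeping: being slightly careful that the factor of $2$ on the right of \eqref{eq:free-energy-P-bound} genuinely covers the step of passing from ``the sum of the three terms is bounded at each $t$'' to ``the supremum of each term plus the full time-integrals over $(0,T)$ is bounded,'' which works precisely because each term is individually nonnegative and the time integrals are monotone in $t$.
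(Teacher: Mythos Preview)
Your proof is correct and follows essentially the same route as the paper: bound the forcing term via Cauchy--Schwarz, Poincar\'e, and Young with weight $\nu^2=(1-\e)/(1+C_P)$ so as to absorb half of the viscous dissipation, then integrate \eqref{eq:estimate-P} in time. Your handling of the factor $2$ (bounding $\sup_t F$ and the full time integrals separately by the same right-hand side, using nonnegativity of each term, and adding) is exactly what the paper leaves implicit when it says ``integrating in time yields the result.''
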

\begin{proof}  
One can bound the term $\langle \f, \bu \rangle_{H^1_0(\D)}$ in \eqref{eq:estimate-P},
using the Cauchy-Schwarz and Young inequalities for $\nu\in\R_{>0}$,
and the Poincar\'e inequality (\ref{eq:poincare}),  
by
\begin{align} \label{fbound}
\langle \f, \bu \rangle_{H^1_0(\D)}
&\le \Norm{\f}_{H^{-1}(\D)} \Norm{\bu}_{H^1(\D)} 
\le \frac{1}{2\nu^2} \Norm{\f}_{H^{-1}(\D)}^2 + \frac{\nu^2}{2} \Norm{\bu}_{H^1(\D)}^2 
\\
& \le 
\frac{1}{2\nu^2} \Norm{\f}_{H^{-1}(\D)}^2 
+ \frac{\nu^2}{2} (1+C_P) \Norm{\gbu}_{L^2(\D)}^2. 
\nonumber
\end{align}
Combining (\ref{fbound}) and (\ref{eq:estimate-P}) with $\nu^2= (1-\e)/(1+C_P)$,
and integrating in time yields the result (\ref{eq:free-energy-P-bound}).
\end{proof}

\section{Formal free energy bound for a regularized problem {\bf (P$_\delta$)}}
\label{sec:delta}
\setcounter{equation}{0}
\subsection{A regularization}

Let $G:s \in \R_{>0} \mapsto \ln s\in\R$ denote the logarithm function,
whose domain of definition can be 
straightforwardly extended to the set of symmetric positive definite 
matrices using (\ref{eq:diagonal-decomposition}) and (\ref{eq:tensor-g}).
We define the following concave 
$C^{1,1}(\R)$ regularization of $G$ based on a given parameter $\delta \in (0,1)$: 
\begin{align}\label{eq:Gd}
&\Gd : s \in \R \mapsto 
\begin{cases}
G(s) & \forall s \ge \delta, 
\\
\frac{s}{\delta}+G(\delta)-1  & \forall s \leq \delta
\end{cases}
\qquad \Rightarrow \qquad \Gd(s) \geq G(s) \quad \forall s \in \R_{>0}.
\end{align}
We define also the following scalar functions 
\begin{align}\label{eq:BGd}
\Bd(s):=\brk{\Gd'(s)}^{-1}
\qquad \forall s\in \R 
\qquad \mbox{and}
\qquad
\beta(s):=\brk{G'(s)}^{-1} 
\qquad
\forall s\in \R_{>0}.
\end{align}
Hence, we have that
\begin{align} \label{eq:Bd}
\Bd : s \in \R \mapsto 
\max\{s,\delta\}
\qquad \mbox{and} \qquad
\beta : s \in \R_{>0} \mapsto s. 
\end{align}
For later purposes, we note the following results concerning these functions.
\begin{lemma}\label{GLemma}
For any $\bphi, \bpsi \in \RS$, $\eta \in \R$ and
for any $\delta\in(0,1)$
we have that
\begin{subequations}
\begin{align}
\label{eq:inverse-Gd}
\Bd(\bphi)\Gd'(\bphi)=\Gd'(\bphi)\Bd(\bphi) & =\I, 
\\
\label{eq:positive-term}
\tr
\brk{
  \brk{
  \eta\I-\Gd'(\bphi)}^2
  \Bd(\bphi)
} 
&\ge 0, 
\\
\label{Entropy1}
\tr\brk{\bphi-\Gd(\bphi)-\I} & \ge 0,
\\
\label{eq:concavity}
\brk{\bphi-\bpsi}:\Brk{\Gd'(\bpsi)}&\ge\tr\brk{\Gd(\bphi)-\Gd(\bpsi)}, 
\\
\label{matrix:Lipschitz}
- \brk{\bphi-\bpsi}:\Brk{\Gd'(\bphi)-\Gd'(\bpsi)}
& \ge
 \delta^2 \Norm{\Gd'(\bphi)-\Gd'(\bpsi)}^2.
\end{align}
\end{subequations}
In addition, if $\delta \in (0,\frac{1}{2}]$ 
we have that 
\begin{align}
\label{Entropy2}
\tr\brk{\bphi-\Gd(\bphi)}
&\ge \left\{\begin{array}{ll}
\frac{1}{2} \|\bphi\|  
\\[2mm]
\frac{1}{2\delta} \|[\bphi]_{-}\| 
\end{array}\right. 
\quad \mbox{\rm and}
\quad 
\bphi:\brk{\I-\Gd'(\bphi)}
\ge 
\textstyle \frac{1}{2}
\|\bphi\|
-d.
\end{align}
\end{lemma}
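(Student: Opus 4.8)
The plan is to prove all the stated inequalities by reducing everything to scalar facts about $\Gd$ and its derivative, applied eigenvalue-by-eigenvalue via the diagonalization machinery of \eqref{eq:diagonal-decomposition}--\eqref{eq:deriv-tensor-g}. The key scalar ingredients are: $\Gd' (s) = \min\{s^{-1}, \delta^{-1}\} > 0$ for all $s \in \R$ (so $\Gd$ is increasing and concave with $\Gd' $ Lipschitz and bounded below away from $0$ on any bounded set), $\Bd(s) = \max\{s,\delta\}$, and the convexity/concavity comparison \eqref{gconcave2} applied to the concave $C^1$ function $\Gd$. For \eqref{eq:inverse-Gd}, since $\bphi \in \RS$ diagonalizes as $\bphi = \mathbf O^T \mathbf D \mathbf O$, both $\Bd(\bphi)$ and $\Gd'(\bphi)$ are computed in the same eigenbasis by \eqref{eq:tensor-g}, so the identity reduces to the scalar identity $\Bd(s)\Gd'(s) = \max\{s,\delta\}\cdot \min\{s^{-1},\delta^{-1}\} = 1$, which is immediate by cases $s \ge \delta$ and $s \le \delta$; commutativity is automatic since all three matrices share the eigenbasis $\mathbf O$.

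For \eqref{eq:positive-term}, again all matrices involved ($\I$, $\Gd'(\bphi)$, $\Bd(\bphi)$) are simultaneously diagonalizable in the basis $\mathbf O$, so the trace equals $\sum_{i} (\eta - \Gd'(\mathbf D_{ii}))^2\, \Bd(\mathbf D_{ii}) \ge 0$ because $\Bd(\mathbf D_{ii}) = \max\{\mathbf D_{ii},\delta\} > 0$. For \eqref{Entropy1}, one uses $\Gd(s) \le G(s) = \ln s \le s - 1$ for $s > 0$ together with the linear continuation: writing $h(s) := s - \Gd(s) - 1$, check $h(s) = s - \ln s - 1 \ge 0$ for $s \ge \delta$ (standard) and $h(s) = s - s/\delta - \ln\delta + 1 - 1 = s(1 - \delta^{-1}) - \ln \delta$ for $s \le \delta$, which is decreasing in $s$ (since $1 - \delta^{-1} < 0$) hence $\ge h(\delta) = \delta - 1 - \ln \delta \ge 0$; then diagonalize. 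The inequality \eqref{eq:concavity} is exactly the left inequality of \eqref{gconcave2} with $g = \Gd$ (concave $C^1$), so nothing new is needed. For \eqref{matrix:Lipschitz}, the cleanest route is again diagonalization: however $\bphi$ and $\bpsi$ need not commute, so one instead argues from the scalar estimate that $-\brk{(a-b)}(\Gd'(a) - \Gd'(b)) \ge \delta^2 (\Gd'(a) - \Gd'(b))^2$, i.e. $(a - b)(\Gd'(a) - \Gd'(b)) \le -\delta^2(\Gd'(a)-\Gd'(b))^2$; since $\Gd'$ is nonincreasing this follows if $(b - a) \ge \delta^2 (\Gd'(a) - \Gd'(b))$ whenever $\Gd'(a) \ge \Gd'(b)$, i.e. $a \le b$, which holds because $\Gd'$ is $\delta^{-2}$-Lipschitz only on... — more carefully, $\Gd'$ restricted to where it is not constant equals $s \mapsto s^{-1}$ which has derivative $-s^{-2} \ge -\delta^{-2}$ on $s \ge \delta$; a short monotonicity/mean-value argument then gives $|\Gd'(a) - \Gd'(b)| \le \delta^{-2}|a - b|$ and hence the claim. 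The matrix version follows by the standard trick of passing through the identity $\brk{\bphi-\bpsi}:\brk{\Gd'(\bphi)-\Gd'(\bpsi)} = \tr\!\brk{(\bphi - \bpsi)(\Gd'(\bphi) - \Gd'(\bpsi))}$ and bounding via a common diagonalization of the symmetric matrix $\Gd'(\bphi) - \Gd'(\bpsi)$ together with \eqref{normprod}; this is the step I expect to be the main obstacle, as one must be careful that $\bphi,\bpsi$ do not commute, so the reduction to scalars is not entirely mechanical and one likely needs an integral representation $\Gd'(\bphi) - \Gd'(\bpsi) = \int_0^1 \frac{d}{dt}\Gd'(\bpsi + t(\bphi - \bpsi))\,dt$ or a perturbation argument, as is done in \cite{barrett-boyaval-09}.

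Finally, for the two inequalities in \eqref{Entropy2} valid for $\delta \in (0,\tfrac12]$: diagonalize $\bphi = \mathbf O^T \mathbf D\mathbf O$ and use \eqref{modphisq} to pass between $\Norm{\bphi}$ and $\tr(|\bphi|) = \sum_i |\mathbf D_{ii}|$. For the first alternative, it suffices to show the scalar bound $s - \Gd(s) \ge \tfrac12 |s|$ for all $s \in \R$: for $s \ge \delta$ this is $s - \ln s \ge \tfrac12 s$, i.e. $\tfrac12 s \ge \ln s$, true for all $s > 0$; for $0 < s \le \delta$, $s - \Gd(s) = s(1 - \delta^{-1}) - \ln\delta + 1 \ge 1 - \ln\delta \ge \tfrac12 \ge \tfrac12 s$ using $s \le \delta \le \tfrac12 < 1$; for $s \le 0$, $s - \Gd(s) = s(1 - \delta^{-1}) - \ln\delta + 1 \ge -\ln\delta + 1 \ge \tfrac12|s|$ fails for large $|s|$ — so here one needs the sharper $s(1 - \delta^{-1}) \ge \tfrac{|s|}{2} = -\tfrac s2$, i.e. $-s\delta^{-1} \ge \tfrac s2 \cdot(-1) + (-s) \cdot (-1)$... concretely $s(1-\delta^{-1}) + \tfrac s2 = s(\tfrac32 - \delta^{-1}) \ge 0$ since $s \le 0$ and $\delta^{-1} \ge 2 > \tfrac32$; then sum over eigenvalues and apply $\sum_i |\mathbf D_{ii}| = \tr(|\bphi|) \ge \Norm{\bphi}$ from \eqref{modphisq}. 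For the second alternative one only keeps the negative eigenvalues: for $\mathbf D_{ii} \le 0$, $\mathbf D_{ii} - \Gd(\mathbf D_{ii}) = \mathbf D_{ii}(1 - \delta^{-1}) + 1 - \ln\delta \ge \mathbf D_{ii}(1-\delta^{-1}) \ge \tfrac{1}{2\delta}|\mathbf D_{ii}|$ provided $\delta^{-1} - 1 \ge \tfrac{1}{2\delta}$, i.e. $\delta \le \tfrac12$; the nonnegative eigenvalues contribute $\ge 0$ by \eqref{Entropy1}-type reasoning, and summing plus $\sum_{\mathbf D_{ii} < 0} |\mathbf D_{ii}| = \tr(|[\bphi]_-|) \ge \Norm{[\bphi]_-}$ finishes it. For the last inequality $\bphi : (\I - \Gd'(\bphi)) \ge \tfrac12\Norm{\bphi} - d$, diagonalize to get $\sum_i \mathbf D_{ii}(1 - \Gd'(\mathbf D_{ii}))$ and prove the scalar bound $s(1 - \Gd'(s)) \ge \tfrac12|s| - 1$ for all $s \in \R$ by the cases $s \ge \delta$ (where $s(1 - s^{-1}) = s - 1 \ge \tfrac12 s - 1$ for $s \ge 0$, and $\ge \tfrac12|s| - 1$ needs $s - 1 \ge \tfrac12 s - 1$, true) and $s \le \delta$ (where $s(1 - \delta^{-1})$, nonnegative for $s \le 0$ and bounded below by $\delta(1-\delta^{-1}) = \delta - 1 \ge -1$ for $0 \le s \le \delta$), then sum and use $\sum_i |\mathbf D_{ii}| \ge \Norm{\bphi}$ and that there are $d$ eigenvalues. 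All of these scalar case-checks are routine once set up; the only genuinely delicate point remains the noncommutative handling of \eqref{matrix:Lipschitz}.
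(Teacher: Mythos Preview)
Your proposal is essentially correct and follows the same strategy as the paper, which simply cites \cite{barrett-boyaval-09} for everything except \eqref{eq:positive-term} and notes the latter follows by diagonalization. Your write-up is considerably more detailed than the paper's one-line proof, but the underlying method---reduce to scalar inequalities via \eqref{eq:diagonal-decomposition}--\eqref{eq:tensor-g}, then check cases $s\ge\delta$ and $s\le\delta$---is identical.

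Two small corrections are worth flagging. First, the formula $\Gd'(s)=\min\{s^{-1},\delta^{-1}\}$ is only valid for $s>0$; for $s\le 0$ one has $\Gd'(s)=\delta^{-1}$ directly from \eqref{eq:Gd}. This does not affect your arguments but the expression as written is imprecise. Second, in your verification of the first alternative of \eqref{Entropy2} for $0<s\le\delta$, you claim $s(1-\delta^{-1})-\ln\delta+1\ge 1-\ln\delta$, which is false since $s(1-\delta^{-1})<0$. The correct route is to check directly that $s(\tfrac12-\delta^{-1})+1-\ln\delta\ge 0$ on $(0,\delta]$: the linear part is decreasing in $s$, so the minimum occurs at $s=\delta$, giving $\tfrac{\delta}{2}-\ln\delta>0$. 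With this fix the argument goes through. Your identification of \eqref{matrix:Lipschitz} as the only genuinely noncommutative step, requiring the argument from \cite{barrett-boyaval-09}, is exactly right and matches the paper's treatment.
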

\begin{proof}
All the results are proved in Lemma 2.1 in \cite{barrett-boyaval-09},
except (\ref{eq:positive-term}) and this can be easily proved via 
diagonalization.
\end{proof}

We introduce the following regularization of $A$, (\ref{Adef}),
for any $\delta \in (0,\frac{1}{2}]$:
\begin{align}
A_\delta(\bphi,\eta):= \Gd'\brk{1-\frac{\eta}{b}}\I - \Gd'(\bphi) \qquad \forall (\bphi,\eta) 
\in \RS \times \R.
\label{Addef}
\end{align}
In addition to Lemma~\ref{GLemma}  
we will also make use of the following result, which is similar to (\ref{Entropy2}).
\begin{lemma}\label{GLemma2}
For any ${s \in \mathbb R}$, $b \in \R_{>0}$ and $\delta\in(0,\frac{1}{2}]$, we have that
\begin{subequations}
\begin{align}
-b \,\Gd\left(1-\frac{s}{b}\right) - s &\ge \frac{1}{2}\left[|s| -3b\right]_+,
\label{Gdbbelow}\\
\label{eq:FENEPstrs-above-OBstrs}
\brk{\Gd'\brk{1-\frac{s}{b}}-1}s &\ge \left[\, |s| - b\,\right]_+.
\end{align}
\end{subequations}
\end{lemma}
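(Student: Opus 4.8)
The plan is to prove the two scalar inequalities in Lemma~\ref{GLemma2} directly, since they only concern the one-variable function $\Gd$ defined in \eqref{eq:Gd}. Fix $b>0$ and $\delta\in(0,\frac12]$, and write $t := 1-\frac{s}{b}$, so that $s = b(1-t)$ and, because $\delta\le\frac12<1$, the ``upper'' branch $\Gd(t)=G(t)=\ln t$ is active for $t\ge\delta$, i.e.\ for $s\le b(1-\delta)$, while the affine branch $\Gd(t)=\frac{t}{\delta}+\ln\delta-1$ is active for $t\le\delta$, i.e.\ for $s\ge b(1-\delta)$. In both cases $\Gd$ is concave and $C^{1,1}$, and I would exploit the elementary fact that $\ln t \le t-1$ for all $t>0$, together with the monotonicity of the maps $s\mapsto -b\Gd(1-\tfrac sb)-s$ and $s\mapsto (\Gd'(1-\tfrac sb)-1)s$, to reduce each estimate to checking it on a few explicit intervals.

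For \eqref{Gdbbelow}, first note that when $s\le 0$ the right-hand side $\frac12[|s|-3b]_+$ is at most $\frac12[ -s - 3b]_+$; using $\ln t\le t-1$ with $t = 1-\frac sb \ge 1$ gives $-b\ln(1-\frac sb) \ge -b\big((1-\frac sb)-1\big) = s$, hence $-b\Gd(1-\tfrac sb)-s\ge 0$, and for $-3b\le s\le 0$ the right-hand side is zero so the bound holds trivially; for $s\le -3b$ one needs the sharper statement $-b\ln(1-\frac sb)-s\ge \frac12(-s-3b)$, which after dividing by $b$ and setting $\tau=-s/b\ge 3$ becomes $-\ln(1+\tau)+\tau\ge \frac12(\tau-3)$, i.e.\ $\frac{\tau}{2}+\frac32\ge\ln(1+\tau)$, which holds for all $\tau\ge 0$ (compare derivatives and values at $\tau=0$). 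When $s>0$ with $s\le b(1-\delta)$, the $G$-branch is active and $t=1-\frac sb\in(0,1)$, so $-b\ln t$ is positive; I would split at $s=b$ (so $t$ near $0$ vs.\ $t$ near $1$) and check $-b\ln(1-\tfrac sb)-s\ge\frac12(s-3b)$, which for $0<s\le 3b$ follows from $-b\ln t - s \ge -s \ge -3b/2 \ge \tfrac12(s-3b)$ is not quite enough, so more carefully one uses $-b\ln t\ge b(1/t-1)\cdot$ (no) — rather, I'd use convexity of $s\mapsto -b\ln(1-\tfrac sb)$ to reduce to the two endpoints of each subinterval. Finally, when $s\ge b(1-\delta)$ the affine branch gives $-b\Gd(1-\tfrac sb)-s = -b\big(\tfrac{1}{\delta}(1-\tfrac sb)+\ln\delta-1\big)-s = \tfrac{s}{\delta}-\tfrac{b}{\delta}-b\ln\delta+b-s = s(\tfrac1\delta-1)-\tfrac b\delta - b\ln\delta+b$, which is increasing in $s$ and, evaluated at $s=b(1-\delta)$, matches the $G$-branch value by $C^0$-continuity; since $\tfrac12[|s|-3b]_+ \le \tfrac12(s-3b)$ has slope $\tfrac12\le\tfrac1\delta-1$ only when $\delta\le\tfrac23$ — true here — the gap is nondecreasing past $s=b(1-\delta)$, so it suffices to verify the inequality at that single point, which is covered by the $G$-branch case.

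For \eqref{eq:FENEPstrs-above-OBstrs}, the argument is parallel but shorter: $\Gd'(t)=\tfrac1t$ on the $G$-branch and $\Gd'(t)=\tfrac1\delta$ on the affine branch. For $s\le 0$ one has $t=1-\tfrac sb\ge 1$, so $\Gd'(t)\le 1$ and hence $(\Gd'(t)-1)s\ge 0 = [|s|-b]_+$ whenever $|s|\le b$; for $s\le -b$, with $\tau=-s/b\ge 1$, the claim is $(\tfrac{1}{1+\tau}-1)(-b\tau) = \tfrac{b\tau^2}{1+\tau}\ge b(\tau-1)$, i.e.\ $\tau^2\ge(\tau-1)(\tau+1)=\tau^2-1$, which is obvious. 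For $0<s\le b(1-\delta)$, $t\in(0,1)$ so $\Gd'(t)=\tfrac1t>1$ and $(\tfrac1t-1)s = s\cdot\tfrac{s/b}{1-s/b} = \tfrac{s^2/b}{1-s/b}\ge s-b$; I'd check this by cross-multiplying (valid since $1-\tfrac sb>0$), reducing to $\tfrac{s^2}{b}\ge (s-b)(1-\tfrac sb)$, i.e.\ $\tfrac{s^2}{b}\ge s-\tfrac{s^2}{b}-b+s$, i.e.\ $\tfrac{2s^2}{b}-2s+b\ge 0$, i.e.\ $\tfrac{1}{b}(s-b)^2 + \tfrac{s^2}{b}\ge\cdots$ — in any case a perfect-square manipulation shows it. For $s\ge b(1-\delta)$ the affine branch gives $(\tfrac1\delta-1)s$, which is increasing in $s$, and at $s=b(1-\delta)$ it agrees with $(\tfrac1t-1)s$ evaluated there (since $t=\delta$); as $[|s|-b]_+=s-b$ has slope $1\le\tfrac1\delta-1$ when $\delta\le\tfrac12$, the inequality propagates from the endpoint. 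I would write this up concisely by treating the two branches as ``$G$-branch'' and ``affine branch'' and invoking $\ln t\le t-1$ plus monotonicity/convexity, exactly as the companion estimates \eqref{Entropy2} and \eqref{Gdbbelow} are handled in \cite{barrett-boyaval-09}.

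The main obstacle I anticipate is purely bookkeeping: the right-hand sides involve the positive part $[\,\cdot\,]_+$, which is only piecewise smooth, and they must be matched against a function $\Gd$ that is itself piecewise defined, so the domain of $s$ splits into up to four subintervals (two from the branch of $\Gd$, two from the kink of $[\,\cdot\,]_+$, and I must be careful that $3b$ versus $b(1-\delta)$ fall where I think they do). The non-trivial analytic input — the one place where a genuine estimate rather than a sign-check is needed — is the region $s\le -3b$ in \eqref{Gdbbelow}, where the logarithm's growth must beat a linear lower bound; there I rely on $\tfrac\tau2+\tfrac32\ge\ln(1+\tau)$, proved by noting equality-at-$0$ fails (LHS $=\tfrac32>0=$ RHS) and comparing derivatives $\tfrac12$ vs.\ $\tfrac1{1+\tau}$, which cross once at $\tau=1$, so a single further check at $\tau=1$ (LHS $=2>\ln 2$) closes it. Everything else is elementary algebra of the form ``clear a positive denominator and complete the square.''
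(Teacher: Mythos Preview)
Your treatment of \eqref{eq:FENEPstrs-above-OBstrs} is essentially the paper's: split by branch of $\Gd'$. The paper compresses your $G$-branch cases into the single identity $\bigl(\Gd'(1-\tfrac sb)-1\bigr)s=\tfrac{s^2}{b-s}$ valid for all $s\le b(1-\delta)$, then notes $\tfrac{s^2}{b-s}\ge 0$ and $\tfrac{s^2}{b-s}\ge |s|-b$ (cross-multiplying, the latter is $s^2\ge s^2-b^2$ when $s<0$ and $s^2\ge -(b-s)^2$ when $s\ge 0$); on the affine branch it simply observes $(\tfrac1\delta-1)s\ge s=|s|$ since $\delta\le\tfrac12$ and $s>0$.

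For \eqref{Gdbbelow} your case analysis is sound in principle but longer than necessary, and the subcase $0<s\le b(1-\delta)$ is left muddled. The confusion there is that you dropped the positive part: since $s<b(1-\delta)<3b$, the right-hand side $\tfrac12[|s|-3b]_+$ is zero, so you only need the left-hand side to be nonnegative, and this follows immediately from the same inequality $\ln t\le t-1$ you already invoked for $s\le 0$ (it yields $-b\,\Gd(1-\tfrac sb)\ge s$ for \emph{every} $s\in\R$, not just $s\le 0$). No endpoint or convexity argument is needed there. The paper avoids your four-way split altogether by invoking the scalar form of \eqref{Entropy2} from Lemma~\ref{GLemma}, i.e.\ $u-\Gd(u)\ge\tfrac12|u|$ for all $u\in\R$: with $u=1-\tfrac sb$ this gives $-b\,\Gd(1-\tfrac sb)-s\ge \tfrac b2\bigl|1-\tfrac sb\bigr|-b$, and a two-case check ($s\ge b$ versus $s\le b$) shows the right-hand side is $\ge\tfrac12(|s|-3b)$; combined with the concavity bound $-b\,\Gd(1-\tfrac sb)-s\ge 0$, this yields the positive part. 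The advantage of the paper's route is that \eqref{Entropy2} has already absorbed the piecewise definition of $\Gd$, so neither the branches of $\Gd$ nor the logarithmic estimate $\tfrac\tau2+\tfrac32\ge\ln(1+\tau)$ you isolate for $s\le -3b$ ever need to be revisited.
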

\begin{proof}
On recalling (\ref{eq:Gd}), we first note from the concavity of $\Gd$ that
\beq
\label{eq:log-concave-scalar-d}
-b\,\Gd\brk{1-\frac{s}{b}} \ge 
-b\,\Gd\brk{1} + s\,\Gd'\brk{1} 
= s 
\qquad \forall s\in\R.
\eeq
From the scalar version of (\ref{Entropy2}), we have that
\begin{align}
\left(1-\frac{s}{b}\right) - \Gd\left(1-\frac{s}{b}\right)
\ge \frac{1}{2}\left|1-\frac{s}{b}\right|
\qquad \Rightarrow \qquad 
-b\, \Gd\left(1-\frac{s}{b}\right) -s 
\ge \frac{b}{2}\left|1-\frac{s}{b}\right|-b.
\label{Gdineqa}
\end{align}
We note that
\begin{align}
\frac{b}{2}\left|1-\frac{s}{b}\right|-b = \left\{ \begin{array}{ll}
\frac{1}{2}(|s|-3b) & \qquad \mbox{if }s \ge b, \\[2mm]
-\frac{1}{2}(s+b) \ge \frac{1}{2}(|s|-3b)& \qquad \mbox{if }s \le b.
\end{array}\right.
\label{Gdineqb}
\end{align}
Combining (\ref{eq:log-concave-scalar-d})--(\ref{Gdineqb}) yields the desired result 
(\ref{Gdbbelow}).

We now consider (\ref{eq:FENEPstrs-above-OBstrs}).
If $1-\frac{s}{b}\le \delta$, i.e.\ $s \geq b(1-\delta)$, then
\begin{align}
\left(\Gd'\brk{1-\frac{s}{b}}-1\right) s
=\left(\frac{1}{\delta}-1\right)s \ge |s|.
\label{rhoeq1}
\end{align}
If $1-\frac{s}{b}\ge \delta$, i.e.\ $s \leq b(1-\delta)$, then
\begin{align}
\left(\Gd'\brk{1-\frac{s}{b}}-1\right) s=
\frac{s^2}{b-s} \ge 0
\qquad \mbox{and} \qquad 
\frac{s^2}{b-s} \ge |s|-b.
\label{rhoeq2}
\end{align}
Combining (\ref{rhoeq1}) and (\ref{rhoeq2}) yields the desired result
(\ref{eq:FENEPstrs-above-OBstrs}).
\end{proof}

\subsection{The regularized problem (P$_\delta$)}

Using the regularizations $\Gd,\,\Bd$ and $\Ad$ introduced above,
we consider the following regularization of {\bf (P)} 
for a given $\delta \in (0,\frac{1}{2}]$:

\noindent
{\bf (P$_{\delta}$)} Find 
$\bud : (t,\xx)\in[0,T)\times\D\mapsto\bud(t,\xx)\in\R^d$,  
$\prd : (t,\xx)\in \D_T \mapsto \prd(t,\xx)\in\R$ 
and $\strd : (t,\xx)\in[0,T)\times\D\mapsto\strd(t,\xx)\in \RS$ 
such that
\begin{subequations}
\begin{alignat}{2}
\label{eq:fene-p-sigmadL}
\Re \brk{\pd{\bud}{t} + (\bud\cdot\grad)\bud} 
& = - \grad \prd 
+ (1-\e)\Delta\bud 
+  \frac{\e}{\Wi}\div\brk{
\Ad(\strd,\tr(\strd))\,
\Bd(\strd)} + \f
\quad \;&&\mbox{on }\D_T,
\\
\label{eq:fene-p-sigma1dL}
\div\bud & = 0
\hspace{0.5in}
&&\mbox{on }\D_T,
\\
\label{eq:fene-p-sigma2dL}
\pd{\strd}{t}+(\bud\cdot\grad)\strd & =
(\gbud)\Bd(\strd)+\Bd(\strd)(\gbud)^T
-\frac
{
\Ad(\strd,\tr(\strd))\,
\Bd(\strd)}
{\Wi}
\quad
&&\mbox{on }\D_T,
\\
\label{eq:initial1dL}
\bud(0,\xx) &= \bu^0(\xx) 
\hspace{0.28in}
&&\hspace{-0.7in}\forall\xx\in\D,\\
\strd(0,\xx) &= \strs^0(\xx)
\hspace{0.28in} 
&&\hspace{-0.7in}\forall\xx\in\D,
\\
\label{eq:dirichletdL}
\bud&=\bzero
\qquad \qquad 
\:
&&\hspace{-0.7in}\text{on}\:(0,T)\times\partial\D.
\end{alignat}
\end{subequations}

\subsection{Formal free energy bound for (P$_\delta$)}

In this section, we extend the \textit{formal} energy results (\ref{eq:estimate-P})
and (\ref{eq:free-energy-P-bound})
for {\bf (P)} 
to problem {\bf (P$_{\delta}$)}. 
We will assume throughout that 
\begin{align} \label{freg}
&\f \in L^2\brk{0,T;[H^{-1}(\D)]^d}, \;\; 
\bu^0 \in 
{\rm H} 
:= \{ \bw \in [L^2(\D)]^d : {\rm div} \,\bw =0\; a.e.\mbox{ in } \D, 
\ \bw \cdot \bn_{\partial \D}=0
\mbox{ on } \partial \D\}, 
\\
&\strs^0 \in \SPD 
\quad \mbox{with}
\quad
\sigma_{\rm min}^0\, \|\bxi\|^2 \leq {\bxi}^T \strs^0(\xx) \,{\bxi} 
\leq \sigma_{\rm max}^0\, \|\bxi\|^2
\quad \forall \bxi \in {\mathbb R}^d
\quad \mbox{for } {a.e.} \ \xx  \mbox{ in }\D
\nonumber \\
&\mbox{and} \quad \tr\brk{\strs^0(\xx)}\leq b^\star
\quad \mbox{for } {a.e.} \ \xx  \mbox{ in }\D;
\nonumber 
\end{align}
where $\bn_{\partial \D}$ is normal to $\partial \D$,
$b^\star,\,\sigma_{\rm min}^0,\,\sigma_{\rm max}^0 \in \R_{>0}$ with $b^\star < b$.
Let $\Fd(\bud,\strd,\tr(\strd))$ 
denote the free energy associated with a solution $(\bud,\prd,\strd)$ 
to problem $\bf (P_{\delta})$, where  
we define
\begin{align} \label{eq:free-energy-Pd}
\Fd(\bv,\bphi,\eta) &:= \frac{\Re}{2}\intd\|\bv\|^2 \,\ddx 
- \frac{\e}{2\Wi}\intd
\Brk{ 
b\: \Gd\brk{1-\frac{\eta}{b}}
+
\tr\brk{
\Gd(\bphi)+\I
}
}\ddx
\\
& \hspace{2.2in}
\qquad  
\forall (\bv,\bphi,\eta) \in [L^2(\D)]^d \times \S \times L^1(\D).
\nonumber
\end{align}
Note that the second term in $\Fd$ has been regularized
in comparison with $F$ in~\eqref{eq:free-energy-P}.
Similarly to~\eqref{eq:comparison-oldroyd}, we have, on noting \eqref{eq:log-concave-scalar-d},
the inequality
\beq 
\label{eq:comparison-oldroyd-d}
\Fd(\bv,\bphi,\tr(\bphi)) \ge \frac{\Re}{2}\intd\|\bv\|^2 \,\ddx 
+ \frac{\e}{2\Wi}\intd\tr\brk{\bphi-\Gd(\bphi)-\I} 
\,\ddx
\qquad \forall (\bv,\bphi) \in 
[L^2(\D)]^d\times\S,
\eeq
where the right-hand side in~\eqref{eq:comparison-oldroyd-d} is the 
free energy of the corresponding regularized Oldroyd-B model,
see~\cite{barrett-boyaval-09} and note (\ref{Entropy1}).
It also follows from (\ref{eq:Gd}) and (\ref{freg}) that %, for all $\delta \in (0,1)$,
\begin{align}
\Fd(\bu^0,\strs^0,\tr(\strs^0)) \leq F(\bu^0,\strs^0).
\label{Fdin}
\end{align}

\begin{proposition} \label{prop:free-energy-Pd}
Let $\delta \in (0,\frac{1}{2}]$ and $(\bud,\prd,\strd)$ 
be a sufficiently smooth 
solution to problem $(\bf P_{\delta})$, (\ref{eq:fene-p-sigmadL}--f).
Then the free energy $\Fd(\bud,\strd,\tr(\strd))$  
satisfies for a.a.\ $t \in (0,T)$
\begin{align}
\label{eq:estimate-Pd}
&\deriv{}{t}\Fd(\bud,\strd,\tr(\strd)) +(1-\e)\intd \|\gbud\|^2 \,\ddx
\\
 \nonumber
& \hspace{1.5in}
+ \frac{\e}{2{\rm Wi}^2} \intd \tr
\brk{ 
  \brk{\Ad(\strd,\tr(\strd))
  }^2
  \Bd(\strd)}\ddx
= \langle \f, \bud \rangle_{H^1_0(\D)},
\end{align}
where the third term on the left-hand side is nonnegative from 
(\ref{Addef}) and (\ref{eq:positive-term}).
\end{proposition}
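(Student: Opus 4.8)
The plan is to mimic the proof of Proposition~\ref{prop:free-energy-P}, but with the logarithm $G$ replaced throughout by its regularization $\Gd$, and with the key algebraic identities of Proposition~\ref{prop:free-energy-P} replaced by their regularized counterparts from Lemma~\ref{GLemma}. Concretely, I would test the momentum equation~\eqref{eq:fene-p-sigmadL} with $\bud$ and test the stress equation~\eqref{eq:fene-p-sigma2dL} with $\frac{\e}{2\Wi}\,\Ad(\strd,\tr(\strd))$, then sum and integrate over $\D$. Using integration by parts, the boundary condition~\eqref{eq:dirichletdL} and the incompressibility~\eqref{eq:fene-p-sigma1dL} in the usual way, the kinetic-energy term produces $\frac{\Re}{2}\deriv{}{t}\intd\|\bud\|^2\,\ddx$ and the viscous term produces $(1-\e)\intd\|\gbud\|^2\,\ddx$; the pairing with $\f$ gives the right-hand side.

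The heart of the argument is then to show that the remaining contributions assemble into $-\frac{\e}{2\Wi}\deriv{}{t}\intd\Brk{b\,\Gd(1-\frac{\tr(\strd)}{b})+\tr(\Gd(\strd)+\I)}\ddx$ plus the nonnegative entropy-dissipation term. For this, following Remark~\ref{remsplit}, I would split the test of~\eqref{eq:fene-p-sigma2dL} with $\frac{\e}{2\Wi}\Ad(\strd,\tr(\strd))=\frac{\e}{2\Wi}\brk{\Gd'(1-\frac{\tr(\strd)}{b})\I-\Gd'(\strd)}$ into: testing with $-\frac{\e}{2\Wi}\Gd'(\strd)$, and testing the corresponding regularized trace equation (the $\Gd$-analogue of~\eqref{eq:fene-p-sigma2tr}, obtained by taking the trace of~\eqref{eq:fene-p-sigma2dL} and using $\tr(\Bd(\strd)(\gbud)^T)=\Bd(\strd):\gbud$) with $\frac{\e}{2\Wi}\Gd'(1-\frac{\tr(\strd)}{b})$, then adding. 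The time-derivative terms combine, via the chain rule and~\eqref{eq:deriv-tensor-g} applied to the concave $C^{1,1}$ function $\Gd$, into the time derivative of the regularized entropy; the convective terms $(\bud\cdot\grad)(\cdot)$ integrate to zero because $\bud(t,\cdot)\in\Uz$. The velocity-gradient coupling terms must cancel: the term $\frac{\e}{\Wi}\Ad(\strd,\tr(\strd))\Bd(\strd):\gbud$ coming from the momentum equation should exactly match the terms produced by $(\gbud)\Bd(\strd)+\Bd(\strd)(\gbud)^T$ tested against $-\frac{\e}{2\Wi}\Gd'(\strd)$ together with $2\gbud:\Bd(\strd)$ from the trace equation tested against $\frac{\e}{2\Wi}\Gd'(1-\frac{\tr(\strd)}{b})$ — here I would use the symmetry identity~\eqref{eq:symmetric-tr} together with $\Gd'(\strd)\Bd(\strd)=\Bd(\strd)\Gd'(\strd)=\I$ from~\eqref{eq:inverse-Gd}, so that $\brk{(\gbud)\Bd(\strd)+\Bd(\strd)(\gbud)^T}:\Gd'(\strd)=2\,\gbud:\I=2\,\div\bud=0$ pointwise, which is in fact simpler than the Oldroyd-B case. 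Finally, the dissipation term $\frac{\e}{2\Wi^2}\tr\brk{(\Ad(\strd,\tr(\strd)))^2\Bd(\strd)}$ is nonnegative by~\eqref{eq:positive-term} with $\eta=\Gd'(1-\frac{\tr(\strd)}{b})$ and $\bphi=\strd$, as already observed in the statement.

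The main obstacle — though in the regularized setting it is essentially bookkeeping rather than a genuine difficulty — is verifying that the singular factor $\brk{1-\frac{\tr(\cdot)}{b}}^{-1}$ of problem~\eqref{eq:fene-p-sigma2} has been replaced consistently everywhere by $\Bd(1-\frac{\tr(\strd)}{b})=\Gd'(1-\frac{\tr(\strd)}{b})^{-1}$... more precisely, that the coupling terms balance once one keeps careful track of where $\Gd'$ versus $\Bd$ appears, and that the identity~\eqref{eq:deriv-tensor-g} is legitimately applied to the concave regularization $\Gd\in C^{1,1}(\R)$ rather than to $\ln$ on $\R_{>0}$ (this is exactly why $\Gd$ was constructed to be $C^{1,1}$ on all of $\R$, so no positivity of $\strd$ or of $1-\frac{\tr(\strd)}{b}$ is needed). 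Once these identities are in place, combining all the pieces and noting the trace property of~\eqref{eq:symmetric-tr} yields the stated free energy equality~\eqref{eq:estimate-Pd}. The regularized analogue of Corollary~\ref{cor:free-energy-P} then follows exactly as before by bounding $\langle\f,\bud\rangle_{H^1_0(\D)}$ via Cauchy--Schwarz, Young and the Poincar\'e inequality~\eqref{eq:poincare}, using~\eqref{eq:comparison-oldroyd-d} and~\eqref{Fdin} to control the initial free energy, and integrating in time.
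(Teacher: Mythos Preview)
Your proposal is correct and follows essentially the same approach as the paper: test \eqref{eq:fene-p-sigmadL} with $\bud$ and \eqref{eq:fene-p-sigma2dL} with $\frac{\e}{2\Wi}\Ad(\strd,\tr(\strd))$, then use the chain rule \eqref{eq:deriv-tensor-g} for the time/convective terms and the identity $\Bd(\strd)\Gd'(\strd)=\I$ from \eqref{eq:inverse-Gd} (together with \eqref{eq:symmetric-tr} and $\div\bud=0$) to obtain the cancellation of the velocity-gradient coupling terms --- exactly the content of the paper's \eqref{Pdener1}--\eqref{Pdener2}, with the split via the trace equation being the equivalent route noted in Remark~\ref{remsplitd}. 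One small slip: in your ``main obstacle'' paragraph you write that the singular factor is replaced by $\Bd(1-\tfrac{\tr(\strd)}{b})$, but it is $\Gd'(1-\tfrac{\tr(\strd)}{b})$ (cf.~\eqref{Addef}); this does not affect any of your actual computations.
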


\begin{proof} 
Similarly to the proof of Proposition \ref{prop:free-energy-P}, we 
multiply the regularized Navier-Stokes equation (\ref{eq:fene-p-sigmadL})
by $\bud$ and the regularized stress equation (\ref{eq:fene-p-sigma2dL}) with 
$ \frac{\e}{2\Wi} \Ad(\strd,\tr(\strd))
%\Gd'\brk{1-\frac{\tr(\strd)}{b}}\I-\Gd'(\strd)}
$,
sum and integrate over $\D$,
use integrations by parts,
the boundary condition (\ref{eq:dirichletdL}) and
the incompressibility property (\ref{eq:fene-p-sigma1dL}). 
This yields the desired result (\ref{eq:estimate-Pd}) 
on noting the following analogues of (\ref{Pener1}) and (\ref{Pener2}) 
\begin{comment}
\begin{align} 
\label{Pdener}
& 
\intd \Brk{ 
\frac{\Re}{2} \pd{}{t}\|\bud\|^2 
+ (1-\e)\|\gbud\|^2 
+\frac{\e}{\Wi}\Gd'\brk{1-\tr(\strd)/b}\Bd(\strd):\gbud }
\\ 
\nonumber
& \hspace{0.1mm}
 + \frac{\e}{2\Wi} \intd \brk{\pd{}{t}\strd+(\bud\cdot\grad)\strd}:
  \brk{\Gd'\brk{1-\tr(\strd)/b}\I-\Gd'(\strd)}
\\ 
\nonumber
& \hspace{0.1mm} 
 + \frac{\e}{2\Wi^2} \intd 
  \brk{ \Gd'\brk{1-\tr(\strd)/b} \Bd(\strd)-\I }:
  \brk{\Gd'\brk{1-\tr(\strd)/b}\I-\Gd'(\strd)}
\\
\nonumber
& \hspace{0.1mm}
- \frac{\e}{2\Wi} \intd
 \brk{ \brk{\gbud}\Bd(\strd) + \Bd(\strd)\brk{\gbud}^T }
 :\brk{\Gd'\brk{1-\tr(\strd)/b}\I-\Gd'(\strd)}
= \langle \f, \bud \rangle_{H^1_0(\D)}\,.
\end{align}
Using the chain rule and (\ref{eq:deriv-tensor-g}) 
with the concave, $C^1(\R)$ regularization $\Gd$, 
we note that
\end{comment}
\begin{align}\label{Pdener1}
&\brk{\pd{\strd}{t}+(\bud\cdot\grad)\strd}: 
\Ad(\strd,\tr(\strd))
\\
&\hspace{1.5in}
=
\nonumber
\brk{\pd{}{t}+(\bud\cdot\grad)}\brk{-b\:\Gd\brk{1-\frac{\tr(\strd)}{b}}-\tr(\Gd(\strd))}
\end{align}
and
\begin{align} \label{Pdener2}
& \brk{\Bd\brk{\strd}\brk{\gbud}^T+\brk{\gbud}\Bd(\strd)}
:\Ad(\strd,\tr(\strd))
= 2\,\Gd'\brk{1-\frac{\tr(\strd)}{b}} \Bd(\strd) : \gbud.
\end{align}
Here we have recalled (\ref{Addef}) and (\ref{eq:deriv-tensor-g}) for (\ref{Pdener1}), 
and (\ref{eq:symmetric-tr}), (\ref{ipmat}), (\ref{eq:inverse-Gd}) and
(\ref{eq:fene-p-sigma1dL}) for (\ref{Pdener2}). 
\end{proof}

Similarly to Remark \ref{remsplit}, we note the following.  
\begin{remark}\label{remsplitd}
The step in the above proof
of testing the regularized stress equation \eqref{eq:fene-p-sigma2dL}
with $\frac{\e}{\mbox{ \footnotesize $2$\rm \Wi}}
\Ad(\strd,\tr(\strd))$
is equivalent to testing \eqref{eq:fene-p-sigma2dL}
with $-\frac{\e}{\mbox{\footnotesize $2$\rm \Wi}}\Gd'(\strd)$ 
and testing the corresponding 
regularized trace equation  
\begin{align}
\pd{\tr(\strd)}{t}+(\bud\cdot\grad)\tr(\strd)
& = 2\gbud:\Bd(\strd) - \frac{\tr(\Ad(\strd,\tr(\strd))\,\Bd(\strd))}{\mbox{\rm \Wi}} 
\qquad 
\mbox{on } \D_T 
\label{eq:fene-p-sigma2dLtr}
\end{align}
with $\frac{\e}{\mbox{\footnotesize $2$\rm Wi}}\Gd'\brk{1-\frac{\tr(\strd)}{b}}$, and adding.
\end{remark}

\begin{corollary} 
\label{cor:free-energy-Pd}
Under the assumptions of Proposition \ref{prop:free-energy-Pd}
it follows that
\begin{align} 
\label{eq:free-energy-bound}
&
\sup_{t \in (0,T)}\Fd(\bud(t,\cdot),\strd(t,\cdot),\tr(\strd(t,\cdot)))
+ \frac{1-\e}{2}
\int_{\D_T} 
\|\gbud\|^2 \,\ddx\,\ddt
\\
\nonumber
& \hspace{.2in} 
+ \frac{\e}{2{\rm Wi}^2}
\int_{\D_T}
\tr\brk{
  \brk{\Ad(\strd,\tr(\strd))
  }^2
  \Bd(\strd)
 }\ddx\,\ddt
\\
\nonumber
& \hspace{2in}
\le 2\brk{ F(\bu^0,\strs^0) + \frac{1+C_P}{2(1-\e)} \Norm{\f}_{L^2(0,T;H^{-1}(\D))}^2 }.
\end{align}
\end{corollary}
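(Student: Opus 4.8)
The plan is to integrate the free energy equality \eqref{eq:estimate-Pd} from Proposition~\ref{prop:free-energy-Pd} in time over $(0,t)$ for arbitrary $t \in (0,T)$, after first bounding the forcing term $\langle \f, \bud \rangle_{H^1_0(\D)}$ on the right-hand side. First I would handle the forcing term exactly as in the proof of Corollary~\ref{cor:free-energy-P}: by the Cauchy--Schwarz inequality, Young's inequality with parameter $\nu \in \R_{>0}$, and the Poincar\'e inequality \eqref{eq:poincare},
\begin{align}
\langle \f, \bud \rangle_{H^1_0(\D)}
\le \frac{1}{2\nu^2}\Norm{\f}_{H^{-1}(\D)}^2 + \frac{\nu^2}{2}(1+C_P)\Norm{\gbud}_{L^2(\D)}^2. \nonumber
\end{align}
Inserting this into \eqref{eq:estimate-Pd} and choosing $\nu^2 = (1-\e)/(1+C_P)$ absorbs half of the viscous dissipation term $(1-\e)\intd\|\gbud\|^2\,\ddx$ into the left-hand side, leaving
\begin{align}
\deriv{}{t}\Fd(\bud,\strd,\tr(\strd)) + \frac{1-\e}{2}\intd\|\gbud\|^2\,\ddx
+ \frac{\e}{2{\rm Wi}^2}\intd\tr\brk{(\Ad(\strd,\tr(\strd)))^2\Bd(\strd)}\,\ddx
\le \frac{1+C_P}{2(1-\e)}\Norm{\f}_{H^{-1}(\D)}^2. \nonumber
\end{align}

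Next I would integrate this differential inequality in time from $0$ to $t$. The third term on the left is nonnegative by \eqref{Addef} and \eqref{eq:positive-term}, and the second term is manifestly nonnegative, so after moving the value $\Fd(\bu^0,\strs^0,\tr(\strs^0))$ to the right we obtain, for every $t \in (0,T)$,
\begin{align}
&\Fd(\bud(t,\cdot),\strd(t,\cdot),\tr(\strd(t,\cdot)))
+ \frac{1-\e}{2}\int_0^t\!\!\intd\|\gbud\|^2\,\ddx\,\ddt
+ \frac{\e}{2{\rm Wi}^2}\int_0^t\!\!\intd\tr\brk{(\Ad(\strd,\tr(\strd)))^2\Bd(\strd)}\,\ddx\,\ddt
\nonumber\\
&\hspace{2in}\le \Fd(\bu^0,\strs^0,\tr(\strs^0)) + \frac{1+C_P}{2(1-\e)}\Norm{\f}_{L^2(0,T;H^{-1}(\D))}^2. \nonumber
\end{align}
To reach the stated form I would then take the supremum over $t \in (0,T)$ on the left; since the two time-integral terms are monotone nondecreasing in $t$ and nonnegative, their suprema over $(0,T)$ are bounded by the same right-hand side, so one can add the $\sup_t \Fd$ term and the two full integrals over $\D_T$ and still control everything by twice the right-hand side (the factor $2$ simply accommodates bounding the sum of three suprema each individually bounded). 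Finally, invoking \eqref{Fdin}, namely $\Fd(\bu^0,\strs^0,\tr(\strs^0)) \le F(\bu^0,\strs^0)$, replaces the regularized initial free energy by $F(\bu^0,\strs^0)$ and yields \eqref{eq:free-energy-bound}.

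I do not expect any genuine obstacle here — this is a direct transcription of the argument in Corollary~\ref{cor:free-energy-P} with $F$ replaced by $\Fd$ and $A(\strs)$ replaced by $\Ad(\strd,\tr(\strd))$, and it relies only on the already-established equality \eqref{eq:estimate-Pd}, the nonnegativity of the entropy-production term from \eqref{eq:positive-term}, and the bound \eqref{Fdin}. The only point requiring a word of care is the passage from the pointwise-in-$t$ estimate to the combined $\sup$-plus-full-integral form: one should note that $\Fd$ need not be nonnegative, but the regularized comparison inequality \eqref{eq:comparison-oldroyd-d} together with \eqref{Entropy1} shows that $\Fd(\bud,\strd,\tr(\strd)) \ge \frac{\Re}{2}\intd\|\bud\|^2\,\ddx \ge 0$, so in fact all three left-hand terms are nonnegative and the factor $2$ is a comfortable (non-sharp) bound obtained by estimating each of the three suprema by the common right-hand side.
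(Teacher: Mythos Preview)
Your proposal is correct and follows essentially the same approach as the paper, which simply remarks that the proof proceeds exactly as in Corollary~\ref{cor:free-energy-P} together with the additional inequality~\eqref{Fdin}. Your added observation that $\Fd(\bud,\strd,\tr(\strd)) \ge 0$ via \eqref{eq:comparison-oldroyd-d} and \eqref{Entropy1} is the right way to justify the factor~$2$: the two time-integrals together are bounded by the right-hand side (let $t\to T$ and drop the nonnegative $\Fd$), and $\sup_t \Fd$ is bounded by the same right-hand side, so their sum is bounded by twice it.
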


\begin{proof}
The proof of~\eqref{eq:free-energy-bound}
follows from (\ref{eq:estimate-Pd})
in the same way as  
\eqref{eq:free-energy-P-bound}
follows from (\ref{eq:estimate-P}), and in addition noting (\ref{Fdin}).
\end{proof}

\section{Finite element approximation of {\bf (P$_\delta$)} and  {\bf (P)}}
\label{sec:deltah}
\setcounter{equation}{0}
\subsection{Finite element discretization}
\label{FEd}

We now introduce a finite element discretization  
of the problem {\bf (P$_\delta$)},
which satisfies a discrete analogue of~\eqref{eq:estimate-Pd}.

The time interval $[0,T)$ is split into intervals $[t^{n-1},t^n)$ 
with $\dt_{n} = t^{n}-t^{n-1}$, $n=1, \ldots, N_T$. We set
$\dt:= \max_{n=1, \ldots, N_T} \dt_n$.
We will assume throughout that the domain $\D$ is a polytope.
We define a regular family of meshes $\{\mathcal{T}_h\}_{h>0}$ with 
discretization parameter $h>0$,
which is built from partitionings  
of the domain $\D$ into regular open simplices  
so that 
$$ 
\overline{\D} = \mathcal{T}_h := \mathop{\cup}_{k=1}^{N_K} \overline{K_k}\qquad
\mbox{with} \qquad \max_{k=1,\ldots,N_K} \frac{h_{k}}{\rho_{k}} \leq C. $$
Here $\rho_{k}$ is the diameter of the largest inscribed ball contained in 
the simplex $K_k$ and $h_{k}$ is the diameter of $K_k$, 
so that $h = \max_{k=1,\ldots,N_K} h_{k}$. 
For each element $K_k$, $k=1,\ldots, N_K$, 
of the mesh $\mathcal{T}_h$ let $\{P^k_i\}_{i=0}^d$ denotes its vertices,
and $\{\bn^k_i\}_{i=0}^d$ the outward unit normals of the edges $(d=2)$ or faces $(d=3)$
with $\bn^k_i$ being that of the edge/face opposite vertex $P^k_i$, $i=0,\ldots,d$.
In addition, let $\{\eta^k_i(\xx)\}_{i=0}^d$ denote the barycentric coordinates 
of $\xx \in K_k$ with respect to the vertices $\{P^k_i\}_{i=0}^d$;
that is, $\eta^k_i \in \PP_1$ and $\eta^k_i(P^k_j)=\delta_{ij}$,
$i,\,j =0 ,\ldots,d$.  
Here $\PP_m$ denote polynomials of maximal degree $m$ in $\xx$,
and $\delta_{ij}$ the Kronecker delta notation.
Finally, we introduce $\partial\mathcal{T}_h := \{E_j\}_{j=1}^{N_E}$ 
as the set of internal edges $E_j$ of triangles in the mesh $\mathcal{T}_h$ when $d=2$, 
or the set of internal faces $E_j$ of tetrahedra when $d=3$.

We approximate the problem {\bf (P$_\delta$)} by the problem {\bf (P$_{\delta,h}^{\dt})$}
based on the finite element spaces $\Uh^0 \times \mathrm{Q}_h^0 \times \S_h^0$.
As is standard, we require the discrete velocity-pressure spaces 
$\Uh^0\times\mathrm{Q}_h^0 \subset \U \times {\rm Q}$  
 satisfy the discrete Ladyshenskaya-Babu\v{s}ka-Brezzi (LBB) inf-sup condition
 \beq \label{eq:LBB}
 \inf_{q\in\mathrm{Q}_h^0} \sup_{\bv\in\Uh^0} 
 \frac{\displaystyle \int_{\D} q\,\div\bv\,\ddx }{\Norm{q}_{L^2(\D)}\Norm{\bv}_{H^1(\D)}} 
 \geq \mu_{\star} 
 > 0,
 \eeq 
 see e.g.\ \cite[p114]{GiraultRaviart}.
 In the following, we set
 \begin{subequations}
 \begin{align}
 \Uh^0&:=\Uh^2  \subset \U \mbox{ if $d=2$}\quad \mbox{or} \quad \Uh^{2,-}
 \subset \U \mbox{ if $d=2$ or 3}
 \label{Vh},\\
 \mathrm{Q}_h^0 &:=\{q \in {\rm Q} \,:\, q \mid_{K_k} \in \PP_0 \quad
 k=1,\ldots, N_K\} \subset {\rm Q}  
 \label{Qh}\\
\mbox{and} \qquad \S_h^0 &:=\{\bphi \in \S \,:\, \bphi \mid_{K_k} 
\in [\PP_0]^{d \times d}_{\rm S} \quad
 k=1,\ldots, N_K\} \subset \S; 
 \label{Sh}
 \end{align}
 \end{subequations}
 where 
 \begin{subequations}
 \begin{align}
 \Uh^2&:=\{\bv \in [C(\overline{\D})]^d\cap \U \,:\, \bv \mid_{K_k} \in [\PP_2]^d \quad
 k=1,\ldots, N_K\} \label{Vh2},\\
  \Uh^{2,-}&:=\{\bv \in [C(\overline{\D})]^d\cap \U \,:\, \bv \mid_{K_k} \in [\PP_1]^d 
  \oplus \mbox{span} \{\bvarsigma^{k}_i\}_{i=0}^d  \quad
 k=1,\ldots, N_K\}\label{Vh2m}.
 \end{align} 
\end{subequations} 
Here, for $k=1,\ldots, N_K$ and $i=0,\ldots,d$
\begin{align}
\bvarsigma^{k}_i(\xx) = \bn^{k}_i \prod_{j=0, j\neq i}^d 
\eta^{k}_j(\xx) \qquad \mbox{for } \xx \in K_k.
\label{bvarsig}
\end{align}
We introduce also  
\begin{align} \Vhzero := \BRK{\bv \in\Uh^0 \,:\, \int_{\D} q \, {\div\bv} \,\ddx= 0 \quad  
\forall q\in\mathrm{Q}_h^0}, 
\label{divh}
\end{align}
which approximates $\Uz$.
 It is well-known that the choices (\ref{Vh},b)   
 satisfy (\ref{eq:LBB}), see e.g.\ \cite[p221]{brezzi-fortin-92} 
 for $\Uh^0 =\Uh^2$ and $d=2$, and Chapter II, Sections 2.1 ($d=2$) and 2.3 
 ($d=3$) in \cite{GiraultRaviart}
 for $\Uh^0 =\Uh^{2,-}$. 
 Moreover, these particular choices of $\S_h^0$ and $\mathrm{Q}_h^0$
 have the desirable property   
 that
 \beq \label{eq:inclusion1}
 \bphi \in \Sh^0 
\ \Rightarrow \
\Ad(\bphi,\tr(\bphi))
%\Gd'\brk{1-\frac{\tr(\bphi)}{b}}\I-\Gd'(\bphi) 
\in \Sh^0 
\ \mbox{ and } \
b\:\Gd\brk{1-\frac{\tr(\bphi)}{b}}+\tr\brk{\Gd(\bphi)} \in \mathrm{Q}_h^0,
 \eeq
which makes it a straightforward matter to mimic the free
energy inequality (\ref{eq:estimate-Pd}) at a discrete level.    
Since  
$\Sh^0$ is discontinuous,
we will use the discontinuous Galerkin method to approximate the advection term 
$(\bud\cdot\nabla)\strd$ in the following. 
Then, for the boundary integrals, we will make use of the following definitions
(see e.g.\ %\textit{e.g.}
\cite[p267]{ern-guermond-04}).
Given $\bv \in \Uh^0$,  
then for any $\bphi \in \Sh^0$ (or ${\rm Q}_h^0$) 
and for any point $\xx$ that is in the interior of some $E_j \in \partial\mathcal{T}_h$,
we define the downstream and upstream values of $\bphi$ at $\xx$ by
\begin{align}\label{eq:streams}
\bphi^{+\bv}(\xx) = \lim_{\rho \rightarrow 0^+} \bphi(\xx+\rho\,\bv(\xx)) \qquad
\mbox{and} \qquad 
\bphi^{-\bv}(\xx) = \lim_{\rho \rightarrow 0^-} \bphi(\xx+\rho\,\bv(\xx));
\end{align}
respectively. 
In addition, we denote by
\begin{align}
\label{eq:difsum}
\jump{\bphi}_{\to\bv}(\xx) = \bphi^{+\bv}(\xx) - \bphi^{-\bv}(\xx) 
\qquad \mbox{and}
\qquad  
\BRK{\bphi}^{\bv}(\xx) = \frac{\bphi^{+\bv}(\xx) + \bphi^{-\bv}(\xx)}{2},
\end{align}
the jump and mean value, respectively, of $\bphi$ 
at the point $\xx$ of boundary $E_j$.
From 
(\ref{eq:streams}), 
it is clear that the values of $\bphi^{+\bv}|_{E_j}$ and $\bphi^{-\bv}|_{E_j}$
can change along $E_j \in \partial\mathcal{T}_h$. 
Finally, it is easily deduced that
\begin{align}%\nonumber
\sum_{j=1}^{N_E} \int_{E_j} |\bv\cdot\bn| \jump{q_1}_{\to\bv} \,q_2^{+\bv}
\,{\rm d}{\bs}
&= -\sum_{k=1}^{N_K} \int_{\partial K_k} \brk{\bv \cdot \bn_{K_k}} q_1\,q_2^{+\bv}
\,{\rm d}{\bs}
\qquad \forall \bv \in \Uh^0, \ q_1,\,q_2 \in {\rm Q}_h^0;
\label{eq:jump}
\end{align}
where $\bn\equiv\bn(E_j)$ is a unit normal to $E_j$, 
whose sign is of no importance,
and $\bn_{K_k}$ is the outward unit normal vector 
of boundary $\partial K_k$ of $K_k$. 
We note that similar ideas appear in upwind schemes; 
e.g.\
see Chapter IV, Section 5 in \cite{GiraultRaviart}
for the Navier-Stokes equations.

\subsection{A free energy preserving approximation 
{\bf (P$^{\Delta t}_{\delta,h}$)} of {\bf (P$_\delta$)}}

For any source term $\f\in L^2\brk{0,T;[H^{-1}(\D)]^d}$, we define 
the following piecewise constant function with respect to the time variable
\beq
\label{eq:constant-source}
\f^{\Delta t,+}(t,\cdot)=\f^n(\cdot) 
:=\frac{1}{\dt_n} \int_{t^{n-1}}^{t^{n}} \f(t,\cdot)\, \ddt,
\qquad t\in [t^{n-1},t^n), \qquad n=1,\ldots,N_T.
\eeq
It is easily deduced that for $n=1,\ldots,N_T$ 
\begin{subequations}
\begin{align}
\label{fncont}
&\sum_{m=1}^{n} \dt_m\,\|\f^m\|_{H^{-1}(\D)}^r 
\leq \int_{0}^{t^n} \|\f(t,\cdot)\|_{H^{-1}(\D)}^r \,\ddt
\qquad \mbox{for any } r \in [1,2],
\\
\label{fnconv}
\mbox{and} \qquad
& \f^{\Delta t,+} \rightarrow \f \quad \mbox{strongly in }
L^2(0,T;[H^{-1}(\D)]^d) \mbox{ as } \dt \rightarrow 0_+.
\end{align}
\end{subequations}

Throughout this section we choose $\buh^0 \in \Vhzero$
%to be a suitable approximation of $\bu^0$ 
to be the $L^2$ projection of $\bu^0$ onto $\Vhzero$ 
and $\strh^0 \in \Sh^0$ to be 
the $L^2$ projection of $\strs^0$ onto $\Sh^0$.
Hence, we have that
\begin{subequations}
\begin{align}
\|\buh^0\|_{L^2(\D)} \leq \|\bu^0\|_{L^2(\D)}, \qquad
\strh^0 \mid_{K_k} = \frac{1}{|K_k|} \int_{K_k} \strs^0 \,\ddx, \quad k=1,\ldots,N_K, 
\label{strh0def}
\end{align}
where $|K_k|$
is the measure of $K_k$;
and it immediately follows from (\ref{freg}) that
\begin{align}
\label{strh0pd}
\sigma_{\rm min}^0\, \|\bxi\|^2 \leq {\bxi}^T \strh^0\mid_{K_k} {\bxi} 
\leq \sigma_{\rm max}^0\, \|\bxi\|^2
\qquad \forall \bxi \in {\mathbb R}^d,
\\
\label{strhOtrb}
\tr(\strh^0)\mid_{K_k}  
= \frac{1}{|K_k|} \int_{K_k} \tr(\strs^0) \,\ddx
\leq \| \tr(\strs^0) \|_{L^\infty(K_k)} 
\leq b^\star < b.
\end{align}
\end{subequations} 
We are now ready to introduce our approximation 
{\bf (P$_{\delta,h}^{\dt}$)} of {\bf (P$_\delta$)} for $\delta \in (0,\frac{1}{2}]$:
 
\noindent
{\bf (P$_{\delta,h}^{\Delta t}$)}
Setting $(\buhd^0,\strhd^0)=(\buh^0,\strh^0) \in\Vhzero\times(\Sh^0\cap\SPDb)$
as defined in (\ref{strh0def}),
then for $n = 1, \ldots, N_T$ 
find $(\buhd^{n},\strhd^{n})\in\Vhzero\times\Sh^0$ such that for any test functions 
$(\bv,\bphi)\in\Vhzero\times\Sh^0$
\begin{subequations}
\begin{align} 
\label{eq:Pdh}
&\int_\D \Biggl[ \Re\left(\frac{\buhd^{n}-\buhd^{n-1}}{\dt_{n}}\right)\cdot \bv 
 + \frac{\Re}{2}\Brk{ \left( (\buhd^{n-1}\cdot\nabla)\buhd^{n}\right) \cdot \bv - 
 \buhd^{n} \cdot \left( (\buhd^{n-1}\cdot\nabla)\bv \right)}
\\
& \hspace{0.8in}
 + (1-\e) \gbuhd^{n}:\grad\bv + \frac{\e}{\Wi} \, 
\Ad(\strhd^{n}, \tr(\strhd^{n}))\,
 \Bd(\strhd^{n}) : \grad\bv
\Biggr] \ddx =
\langle \f^n, \bv\rangle_{H^1_0(\D)},
\nonumber 
\\ 
\label{eq:Pdhb}
&\int_\D \Brk{ \left(\frac{\strhd^{n}-\strhd^{n-1}}{\dt_{n}}\right) : \bphi 
 - 2\left( (\gbuhd^{n})\,\Bd(\strhd^{n})\right) :\bphi 
 + \frac{
 \Ad(\strhd^{n}, \tr(\strhd^{n}))\,
 \Bd(\strhd^{n}):\bphi}{\Wi}
}\ddx
\\
& \hspace{0.8in}
 + \sum_{j=1}^{N_E} \int_{E_j}  \Abs{\buhd^{n-1}\cdot\bn} 
 \jump{\strhd^{n}}_{\to\buhd^{n-1}}:\bphi^{+\buhd^{n-1}}\,{\rm d}\bs
= 0.
\nonumber
\end{align}
\end{subequations}

\begin{comment}
Notice that, in the previous variational formulation~\eqref{eq:Pdh} 
with the choice $\Sh = (\PP_0)^\dd$,
the advection term $\Scal{(\buhd^n\cdot\nabla)\strhd^{n+1}}{\bphi}$ vanishes because:
$ \nabla\strhd^{n+1}|_{K_k}\equiv0 \,, \forall K_k\in\mathcal{T}_h \,. $
\end{comment}

In deriving {\bf (P$_{\delta,h}^{\dt}$)}, we have noted~\eqref{eq:symmetric-tr} 
and that
\begin{align}
\int_\D \bv \cdot [(\bz \cdot\nabla)\bw] \,\ddx= - 
 \int_\D \bw \cdot [(\bz\cdot\nabla)\bv]\,\ddx 
\qquad 
\forall \bz \in \Uz, \quad \forall \bv, \bw \in [H^1(\D)]^d,
\label{conv0c}
\end{align}
and we refer to \cite[p267]{ern-guermond-04} and \cite{boyaval-lelievre-mangoubi-09}
for the consistency of our approximation of the stress advection term.
We note that on replacing $\Ad(\strhd^{n}, \tr(\strhd^{n}))$ with 
$\I-\Gd'(\strhd^{n})$ then {\bf (P$_{\delta,h}^{\dt}$)}, (\ref{eq:Pdh},b), collapses
to the corresponding finite element approximation of Oldroyd-B studied in 
\cite{barrett-boyaval-09}, see (3.12a,b) there. 

Before proving existence of a solution to {\bf (P$^{\Delta t}_{\delta,h}$)},
we first derive a discrete 
analogue of the energy bound~\eqref{eq:estimate-Pd} for
{\bf (P$^{\Delta t}_{\delta,h}$)}, 
which uses the elementary equality
\begin{align}
2s_1 (s_1-s_2)=s_1^2-s_2^2 +(s_1-s_2)^2 \qquad \forall s_1,s_2 \in \R.
\label{elemident}
\end{align}

\subsection{Energy bound for (P$^{\Delta t}_{\delta,h}$)}

\begin{proposition} \label{prop:free-energy-Pdh}
For $n= 1, \ldots, N_T$, a solution $\brk{\buhd^{n},\strhd^{n}}
\in \Vhzero\times\Sh^0$ to {\bf (P$_{\delta,h}^{\Delta t}$)}, (\ref{eq:Pdh},b), 
if it exists, satisfies
\begin{align} 
\label{eq:estimate-Pdh}
&
\frac{F_\delta(\buhd^{n},\strhd^{n},\tr(\strhd^{n}))
-F_\delta(\buhd^{n-1},\strhd^{n-1},\tr(\strhd^{n-1}))}{\dt_{n}} 
+ \frac{ {\rm Re}}{2\dt_{n}}\intd\|\buhd^{n}-\buhd^{n-1}\|^2
\,\ddx
\\
\nonumber 
& 
\ 
+(1-\e)\intd\|\gbuhd^{n}\|^2 \,\ddx
+ \frac{\e}{2{\rm Wi}^2} \intd 
\tr \brk{
  \brk{
   \Ad(\strhd^{n}, \tr(\strhd^{n}))}^2
  \Bd(\strhd^n)}\ddx
\\
\nonumber 
& \hspace{1in}
\le
\langle \f^n, \buhd^n \rangle_{H^1_0(\D)}
\le 
\frac{(1-\e)}{2}\intd\|\gbuhd^{n}\|^2\,\ddx
+ \frac{1+C_P}{2(1-\e)}\, \|\f^{n}\|_{H^{-1}(\D)}^2.
\end{align}
\end{proposition}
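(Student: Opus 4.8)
The plan is to mimic at the discrete level the formal computation in the proof of Proposition~\ref{prop:free-energy-Pd}, exploiting the crucial inclusion property~\eqref{eq:inclusion1} which guarantees that the natural test functions actually lie in the discrete spaces. First I would choose $\bv = \buhd^n \in \Vhzero$ in the discrete momentum equation~\eqref{eq:Pdh}. The convective term $\frac{\Re}{2}[((\buhd^{n-1}\cdot\nabla)\buhd^n)\cdot\buhd^n - \buhd^n\cdot((\buhd^{n-1}\cdot\nabla)\buhd^n)]$ vanishes identically by antisymmetry, and the time-derivative term is handled by~\eqref{elemident}, producing $\frac{\Re}{2\dt_n}(\|\buhd^n\|^2 - \|\buhd^{n-1}\|^2) + \frac{\Re}{2\dt_n}\|\buhd^n-\buhd^{n-1}\|^2$ after integration. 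This leaves the viscous term $(1-\e)\|\gbuhd^n\|^2$ and a coupling term $\frac{\e}{\Wi}\Ad(\strhd^n,\tr(\strhd^n))\,\Bd(\strhd^n):\gbuhd^n$.

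Next I would test the discrete stress equation~\eqref{eq:Pdhb} with $\bphi = \frac{\e}{2\Wi}\Ad(\strhd^n,\tr(\strhd^n))$, which by~\eqref{eq:inclusion1} genuinely belongs to $\Sh^0$ since $\strhd^n \in \Sh^0$. Following Remark~\ref{remsplitd}, this is equivalent to testing~\eqref{eq:Pdhb} with $-\frac{\e}{2\Wi}\Gd'(\strhd^n)$ and testing the corresponding discrete trace equation (the discrete analogue of~\eqref{eq:fene-p-sigma2dLtr}) with $\frac{\e}{2\Wi}\Gd'(1-\tr(\strhd^n)/b)$, then adding; both test functions again lie in the appropriate discrete spaces by~\eqref{eq:inclusion1}. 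For the time-derivative terms I would use the tensorial/scalar convexity inequality~\eqref{eq:concavity}, i.e.\ $(\strhd^n - \strhd^{n-1}):\Gd'(\strhd^n) \ge \tr(\Gd(\strhd^n) - \Gd(\strhd^{n-1}))$ and its scalar counterpart applied to $1-\tr(\cdot)/b$, to bound the discrete time-difference of $\Fd$ from above; this is where the concavity of $\Gd$ enters and replaces the exact chain rule~\eqref{Pdener1} used in the continuous case. The velocity-gradient coupling terms, namely $-2((\gbuhd^n)\Bd(\strhd^n)):\Ad(\strhd^n,\tr(\strhd^n))$ from the stress equation, combine with the coupling term from the momentum equation via the discrete analogue of~\eqref{Pdener2} — using~\eqref{eq:symmetric-tr}, \eqref{ipmat} and the inverse relation~\eqref{eq:inverse-Gd} $\Bd(\strhd^n)\Gd'(\strhd^n)=\I$ — and cancel exactly. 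The remaining reaction term produces $+\frac{\e}{2\Wi^2}\tr((\Ad(\strhd^n,\tr(\strhd^n)))^2\,\Bd(\strhd^n))$, nonnegative by~\eqref{Addef} and~\eqref{eq:positive-term}. The discontinuous-Galerkin upwind boundary terms $\sum_j \int_{E_j}|\buhd^{n-1}\cdot\bn|\,\jump{\strhd^n}_{\to\buhd^{n-1}}:(\ldots)^{+\buhd^{n-1}}$, after splitting via Remark~\ref{remsplitd} into contributions tested against $-\Gd'(\strhd^n)$ and against the scalar $\Gd'(1-\tr(\strhd^n)/b)$, are shown to be nonnegative using the elementary identity~\eqref{eq:jump}, the convexity inequality~\eqref{eq:concavity} applied edge-wise (treating upstream versus downstream values as the two arguments $\bphi,\bpsi$), and the fact that $\buhd^{n-1}\in\Vhzero$ so that $\int_{\partial K_k}(\buhd^{n-1}\cdot\bn_{K_k})\,\ddx$-type terms vanish; this reproduces the standard entropy-stability of upwinding and gives the first inequality in~\eqref{eq:estimate-Pdh}. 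Finally, the right-hand side $\langle \f^n,\buhd^n\rangle_{H^1_0(\D)}$ is estimated by Cauchy--Schwarz, Young with parameter chosen as in~\eqref{fbound}, and the Poincar\'e inequality~\eqref{eq:poincare}, yielding the second inequality.

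The main obstacle I expect is the careful treatment of the convective/upwind terms: one must verify that testing the discontinuous-Galerkin advection term with the nonlinear test functions $\Gd'(\strhd^n)$ and $\Gd'(1-\tr(\strhd^n)/b)$ still yields a nonnegative (dissipative) contribution, which requires combining the jump identity~\eqref{eq:jump} with the matrix convexity inequality~\eqref{eq:concavity} in a way that correctly accounts for the sign of $\buhd^{n-1}\cdot\bn$ along each interior edge/face, and for the fact that $\bphi^{+\buhd^{n-1}}|_{E_j}$ may itself vary along $E_j$. A secondary subtlety is that, unlike the continuous setting where~\eqref{eq:deriv-tensor-g} gives an exact identity, at the discrete level~\eqref{eq:concavity} is only an inequality in the favourable direction, so one must check that all such inequalities point the same way so as to bound $\Fd(\buhd^n,\strhd^n,\tr(\strhd^n)) - \Fd(\buhd^{n-1},\strhd^{n-1},\tr(\strhd^{n-1}))$ from above — which is exactly what~\eqref{eq:estimate-Pdh} asserts. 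Everything else is the routine algebra of reproducing~\eqref{Pdener1}--\eqref{Pdener2} with discrete quantities.
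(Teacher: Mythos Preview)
Your proposal is correct and follows essentially the same approach as the paper's proof: test \eqref{eq:Pdh} with $\buhd^n$ and \eqref{eq:Pdhb} with $\frac{\e}{2\Wi}\Ad(\strhd^n,\tr(\strhd^n))\in\Sh^0$, use \eqref{elemident} for the kinetic term, the concavity inequality \eqref{eq:concavity} for both the time-difference and the edge-jump terms (the latter yielding a jump of the entropy that vanishes by \eqref{eq:jump} and $\buhd^{n-1}\in\Vhzero$), cancel the cross terms via \eqref{eq:inverse-Gd} and \eqref{eq:symmetric-tr}, and conclude with \eqref{fbound}. The only cosmetic difference is that the paper tests directly with $\Ad$ rather than invoking the splitting of Remark~\ref{remsplitd}, but as you note these are equivalent.
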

\begin{proof}
Similarly to (\ref{Pdener2}), we have that 
\begin{align} \label{Pdener2h}
& \intd \Brk{  
\brk{\gbuhd^n}\Bd(\strhd^n)
:
 \Ad(\strhd^{n}, \tr(\strhd^{n}))}\ddx
= \intd\Gd'\brk{1-\frac{\tr(\strhd^n)}{b}} \Bd(\strhd^n) : \gbuhd^n
\,\ddx,
\end{align}
where we have noted (\ref{Addef}), (\ref{ipmat}), (\ref{eq:inverse-Gd}) and 
(\ref{divh}).
Then, similarly to the proof of  Proposition~\ref{prop:free-energy-Pd},
we choose $\bv=\buhd^{n}\in\Vhzero$ in (\ref{eq:Pdh}) and 
$\bphi=\frac{\e}{2\Wi}
 \Ad(\strhd^{n}, \tr(\strhd^{n}))
\in\Sh^0$ 
in~(\ref{eq:Pdhb}) and obtain, on noting (\ref{elemident}), (\ref{eq:inverse-Gd}), 
(\ref{divh}) and (\ref{Pdener2h}),
that
\begin{align}
\label{eq:free-energy-Pdh-demo1}
\langle \f^n, \buhd^n \rangle_{H^1_0(\D)} 
&\ge
\intd
\Brk{ \frac{\Re}{2} \brk{ \frac{\|\buhd^{n}\|^2-\|\buhd^{n-1}\|^2}{\dt_{n}} 
+ \frac{\|\buhd^{n}-\buhd^{n-1}\|^2}{\dt_{n}} } + (1-\e)\|\gbuhd^{n}\|^2 }
\ddx
\\
&\hspace{0.25in} + \frac{\e}{2\Wi}  \intd 
\brk{ \frac{\strhd^{n}-\strhd^{n-1}}{\dt_{n}} }:
\Ad(\strhd^{n}, \tr(\strhd^{n}))
\,\ddx
\nonumber \\
& \hspace{0.25in}
+ \frac{\e}{2\Wi^2}  \intd 
\tr \brk{
  \brk{\Ad(\strhd^{n}, \tr(\strhd^{n}))
  }^2
  \Bd(\strhd^n)}
  \ddx 
\nonumber
\\
&\hspace{0.25in} 
 +  \frac{\e}{2\Wi}\sum_{j=1}^{N_E} \int_{E_j} 
     \Brk{ \Abs{\buhd^{n-1}\cdot\bn} \jump{\strhd^{n}}_{\to\buhd^{n-1}}:
     \brk{
     \Ad(\strhd^{n}, \tr(\strhd^{n}))
     }^{+\buhd^{n-1}} } \,{\rm d}\bs.
\nonumber
\end{align}
It follows from (\ref{Addef}), (\ref{eq:concavity}) and the concavity of $\Gd$ that
\begin{align}
\label{concaveGd}
&\brk{\strhd^{n}-\strhd^{n-1}}: 
\Ad(\strhd^{n}, \tr(\strhd^{n}))
\\
& \qquad \ge \brk{\tr(\strhd^{n})-\tr(\strhd^{n-1})}\Gd'\brk{1-\frac{\tr(\strhd^n)}{b}}
+\tr(\Gd(\strhd^{n-1}) - \tr(\Gd(\strhd^n))
\nonumber \\
& \qquad \ge
\brk{b\,\Gd\brk{1-\frac{\tr(\strhd^{n-1})}{b}}
+
\tr\brk{
\Gd(\strhd^{n-1})
}}-
\brk{b\,\Gd\brk{1-\frac{\tr(\strhd^{n})}{b}}
+
\tr\brk{
\Gd(\strhd^{n})
}}.
\nonumber
\end{align}
Similarly to (\ref{concaveGd}), we have, on recalling (\ref{Addef}), (\ref{eq:streams}) and
(\ref{eq:difsum}), that
\begin{align}
\label{concaveGdE}
&\jump{\strhd^{n}}_{\to\buhd^{n-1}}:
\brk{  
\Ad(\strhd^{n}, \tr(\strhd^{n}))}^{+\buhd^{n-1}} 
\ge
-\jump{b\,\Gd\brk{1-\frac{\tr(\strhd^{n})}{b}}
+
\tr\brk{
\Gd(\strhd^{n})
}}_{\to\buhd^{n-1}}.
\end{align}
Finally, we note from (\ref{eq:jump})  
and as $\buhd^{n-1} \in \Vhzero$ that for all $q \in {\rm Q}_h^0$
\begin{align} 
\sum_{j=1}^{N_E} \int_{E_j} \Abs{\buhd^{n-1}\cdot\bn} 
\jump{q  }_{\to\buhd^{n-1}} \,{\rm d}\bs
&= 
 -\sum_{k=1}^{N_K} \int_{\partial K_k} \brk{\buhd^{n-1}\cdot\bn_{K_k}} 
q\,{\rm d}\bs
= 
-\sum_{k=1}^{N_K} \int_{K_k} q\, \div \buhd^{n-1}
\,\ddx  
= 0.
\label{edgeterm} 
\end{align}
Combining (\ref{eq:free-energy-Pdh-demo1})--(\ref{edgeterm})
yields the first desired inequality in (\ref{eq:estimate-Pdh}).
The second inequality in (\ref{eq:estimate-Pdh}) follows immediately from (\ref{fbound})
with $\nu^2 = (1-\e)/(1+C_P)$. 
\end{proof}

\subsection{Existence of a solution to (P$^{\Delta t}_{\delta,h}$)}

\begin{proposition}
\label{prop:existence-Pdh}
Let $\delta \in (0,\frac{1}{2}]$, then,
given $(\buhd^{n-1},\strhd^{n-1}) \in \Vhzero \times \Sh^0$
and for any time step $\dt_{n} > 0$,
there exists at least 
one solution $\brk{\buhd^{n},\strhd^{n}} \in \Vhzero\times\Sh^0$ to 
{\bf (P$_{\delta,h}^{\Delta t}$)}, (\ref{eq:Pdh},b).
\end{proposition}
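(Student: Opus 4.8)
The plan is to exhibit a solution of \textbf{(P$_{\delta,h}^{\Delta t}$)} as a zero of a continuous map on the finite-dimensional Hilbert space $X:=\Vhzero\times\Sh^0$, endowed with the inner product $\langle(\bv,\bphi),(\bw,\bpsi)\rangle_X:=\intd(\bv\cdot\bw+\bphi:\bpsi)\,\ddx$, and then to invoke a standard corollary of Brouwer's fixed point theorem. First I would record that, $\delta\in(0,\tfrac12]$ being fixed, $\Gd\in C^{1,1}(\R)$ has $\Gd'$ globally bounded and Lipschitz, so via the matrix calculus \eqref{eq:tensor-g} the maps $\Gd'(\cdot)$, $\Bd(\cdot)$ and hence $\Ad(\cdot,\tr(\cdot))$ are continuous on $\S$ and, crucially, are defined on \emph{all} of $\Sh^0$ --- no positive-definiteness or trace constraint on the argument is needed, which is precisely what the regularisation $\Gd$ buys us --- while by \eqref{eq:inclusion1} one moreover has $\Ad(\bphi,\tr(\bphi))\in\Sh^0$ for every $\bphi\in\Sh^0$. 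The upwind boundary terms in \eqref{eq:Pdhb} are likewise continuous (indeed elementwise polynomial) in their arguments. With $(\buhd^{n-1},\strhd^{n-1})$ and $\dt_n>0$ fixed, one thus obtains a continuous map $\mathcal{G}:X\to X$ whose first component is the Riesz representative of $\bw\mapsto[\text{left-hand side of \eqref{eq:Pdh}, with }(\buhd^n,\strhd^n)\text{ replaced by }(\bv_h,\bphi_h)]-\langle\f^n,\bw\rangle_{H^1_0(\D)}$ and whose second component represents $\bpsi\mapsto[\text{left-hand side of \eqref{eq:Pdhb}, with }(\buhd^n,\strhd^n)\text{ replaced by }(\bv_h,\bphi_h)]$; then $(\buhd^n,\strhd^n)\in X$ solves \textbf{(P$_{\delta,h}^{\Delta t}$)} if and only if it is a zero of $\mathcal{G}$.

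The heart of the matter is the a priori bound, which I would obtain by re-running the computation of Proposition~\ref{prop:free-energy-Pdh} with arbitrary arguments. Pairing $\mathcal{G}(\bv_h,\bphi_h)$ with the test pair $\Theta(\bv_h,\bphi_h):=\bigl(\bv_h,\tfrac{\e}{2\Wi}\Ad(\bphi_h,\tr(\bphi_h))\bigr)\in X$ and using --- exactly as in the proof of Proposition~\ref{prop:free-energy-Pdh} --- the elementary identity \eqref{elemident}, the cross-term cancellation \eqref{Pdener2h}, the concavity inequalities \eqref{concaveGd} and \eqref{concaveGdE}, and the discrete divergence identity \eqref{edgeterm}, then discarding the manifestly nonnegative terms produced (recall \eqref{eq:positive-term}), bounding the $\f^n$-term by \eqref{fbound} with $\nu^2=(1-\e)/(1+C_P)$, and invoking \eqref{eq:comparison-oldroyd-d} together with the first bound in \eqref{Entropy2}, one arrives at
\begin{align*}
\langle\mathcal{G}(\bv_h,\bphi_h),\Theta(\bv_h,\bphi_h)\rangle_X
&\ \ge\ \frac{1}{\dt_n}\,\Fd(\bv_h,\bphi_h,\tr(\bphi_h))+(1-\e)\intd\Norm{\grad\bv_h}^2\,\ddx-C \\
&\ \ge\ \frac{\Re}{2\dt_n}\NormLtwo{\bv_h}^2+\frac{\e}{4\Wi\,\dt_n}\NormLone{\bphi_h}-C' ,
\end{align*}
with $C,C'$ depending only on $\delta$, $b$, the remaining data, $(\buhd^{n-1},\strhd^{n-1})$ and $\dt_n$. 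Since $X$ is finite-dimensional, $\NormLone{\cdot}$ is equivalent to $\NormLtwo{\cdot}$ on $\Sh^0$ (with an $h$-dependent constant), so the right-hand side tends to $+\infty$ as $\Norm{(\bv_h,\bphi_h)}_X\to\infty$; hence there is $R>0$ with $\langle\mathcal{G}(\bv_h,\bphi_h),\Theta(\bv_h,\bphi_h)\rangle_X>0$ whenever $\Norm{(\bv_h,\bphi_h)}_X=R$.

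The one genuinely delicate point --- and, I expect, the main obstacle --- is that $\Theta$ is a \emph{nonlinear} function of the unknown (and, since $\Gd'$ is constant on $(-\infty,\delta]$, it is neither injective nor coercive), so the usual corollary of Brouwer's theorem, which asks for $\langle\mathcal{G}(x),x\rangle>0$ on a sphere, does not apply verbatim. The way around this, as in \cite{barrett-boyaval-09}, is to observe that $\Theta$ is nonetheless \emph{positive against the identity} far from the origin: from \eqref{eq:FENEPstrs-above-OBstrs} applied with $s=\tr(\bphi_h)$ and the second bound in \eqref{Entropy2} one has, pointwise a.e., $\bphi_h:\Ad(\bphi_h,\tr(\bphi_h))=\bigl(\Gd'(1-\tfrac{\tr(\bphi_h)}{b})-1\bigr)\tr(\bphi_h)+\bphi_h:(\I-\Gd'(\bphi_h))\ge\tfrac12\Norm{\bphi_h}-d$, whence
\begin{align*}
\langle(\bv_h,\bphi_h),\Theta(\bv_h,\bphi_h)\rangle_X
&= \NormLtwo{\bv_h}^2+\frac{\e}{2\Wi}\intd\bphi_h : \Ad(\bphi_h,\tr(\bphi_h))\,\ddx \\
&\ge \NormLtwo{\bv_h}^2+\frac{\e}{4\Wi}\NormLone{\bphi_h}-\frac{\e\,d}{2\Wi}\Abs{\D} ,
\end{align*}
so that, enlarging $R$ if necessary, $\langle(\bv_h,\bphi_h),\Theta(\bv_h,\bphi_h)\rangle_X>0$ on $\Norm{(\bv_h,\bphi_h)}_X=R$ as well. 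If now $\mathcal{G}$ had no zero in the closed ball $\overline{B_R}\subset X$, then $x\mapsto -R\,\mathcal{G}(x)/\Norm{\mathcal{G}(x)}_X$ would be a continuous self-map of $\overline{B_R}$ and so, by Brouwer's theorem, would have a fixed point $x_0$, necessarily with $\Norm{x_0}_X=R$ and $\mathcal{G}(x_0)=-R^{-1}\Norm{\mathcal{G}(x_0)}_X\,x_0$; but then $\langle\mathcal{G}(x_0),\Theta(x_0)\rangle_X=-R^{-1}\Norm{\mathcal{G}(x_0)}_X\,\langle x_0,\Theta(x_0)\rangle_X<0$, contradicting the two positivity statements above. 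Hence $\mathcal{G}$ has a zero in $\overline{B_R}$, i.e.\ \textbf{(P$_{\delta,h}^{\Delta t}$)} admits a solution $(\buhd^n,\strhd^n)\in\Vhzero\times\Sh^0$. Beyond this fixed-point mechanism the remaining work is essentially bookkeeping: checking that each one-sided inequality used in the proof of Proposition~\ref{prop:free-energy-Pdh} (the concavity estimates, the signs of the edge terms, the cancellation \eqref{Pdener2h}) remains valid when the solution is replaced by a generic test argument.
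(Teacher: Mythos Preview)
Your argument is correct and follows essentially the same route as the paper's proof: define the continuous map whose zeros are the solutions, pair it against the nonlinear test $\Theta=(\cdot,\tfrac{\e}{2\Wi}A_\delta(\cdot,\tr(\cdot)))$, use the energy computation of Proposition~\ref{prop:free-energy-Pdh} together with finite-dimensional norm equivalence for coercivity, and derive a contradiction via the Brouwer fixed point of the normalised map; your handling of the ``delicate point'' (that $\Theta$ is nonlinear) via the pointwise lower bound on $\bphi_h:A_\delta(\bphi_h,\tr(\bphi_h))$ from \eqref{eq:FENEPstrs-above-OBstrs} and \eqref{Entropy2} is exactly the mechanism the paper uses (its inequality (4.18)). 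The only cosmetic difference is that the paper carries the explicit inverse-inequality constant $\mu_h$ in place of your abstract $L^1$--$L^2$ equivalence on $\Sh^0$, and phrases the argument as ``for all $\gamma$ sufficiently large'' rather than fixing a single radius $R$.
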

\begin{proof}
We introduce the following inner product on 
the Hilbert space $\Vhzero\times\Sh^0$
\begin{align} 
 \Scal{ (\bw,\bpsi) }{ (\bv,\bphi) }_\D = 
\intd \Brk{ \bw\cdot\bv + \bpsi:\bphi } \,\ddx 
\qquad \forall (\bw,\bpsi),(\bv,\bphi) \in \Vhzero\times\Sh^0.
\label{IPD}
\end{align}
Given $(\buhd^{n-1},\strhd^{n-1})\in\Vhzero\times\Sh^0$, let
$\mathcal{F} : \Vhzero\times\Sh^0 \mapsto \Vhzero\times\Sh^0$ be such that for any
$(\bw,\bpsi) \in \Vhzero\times\Sh^0$
\begin{align} 
& \Scal{\mathcal{F}(\bw,\bpsi)}{(\bv,\bphi)}_\D 
\label{eq:mapping}
\\
&\qquad := \int_\D 
\Biggl[
  \Re \left(\frac{\bw-\buhd^{n-1}}{\dt_{n}}\right) \cdot \bv
  + \frac{\Re}{2} \left[\left((\buhd^{n-1}\cdot\nabla)\bw \right) \cdot \bv 
  - \bw \cdot \left((\buhd^{n-1}\cdot\nabla)\bv \right) \right]  
\nonumber
\\
& \qquad \qquad 
  + (1-\e) \grad\bw :\grad\bv 
  + \frac{\e}{\Wi}\,
\Ad(\bpsi,\tr(\bpsi))\,
  \Bd(\bpsi):\grad\bv 
\nonumber
\\ 
& \qquad \qquad 
  + \left(\frac{\bpsi-\strhd^{n-1}}{\dt_{n}}\right):\bphi 
 - 2\left((\grad\bw) \,\Bd(\bpsi)\right):\bphi 
 + \frac{
 \Ad(\bpsi,\tr(\bpsi))\,
  \Bd(\bpsi):\bphi}{\Wi}
\Biggr] \,\ddx
\nonumber
\\
& \qquad \qquad 
-\langle \f^n, \bv \rangle_{H^1_0(\D)}
+ \sum_{j=1}^{N_E} \int_{E_j}
\Abs{\buhd^{n-1}\cdot\bn} \jump{\bpsi}_{\to\buhd^{n-1}}:\bphi^{+\buhd^{n-1}} 
\,{\rm d}\bs
\qquad 
\forall (\bv,\bphi) \in \Vhzero \times \Sh^0.
\nonumber
\end{align}
We note that a solution $(\buhd^{n},\strhd^{n})$ to (\ref{eq:Pdh},b), 
if it exists, corresponds to a zero of $\mathcal{F}$; that is,
\beq \label{eq:solution}
\Scal{\mathcal{F}(\buhd^{n},\strhd^{n})}{(\bv,\bphi)}_\D = 0 \qquad 
\forall (\bv,\bphi) \in \Vhzero\times\Sh^0. 
\eeq
In addition, it is easily deduced that the mapping $\mathcal{F}$ is continuous.
For any $(\bw,\bpsi) \in \Vhzero\times\Sh^0$, on choosing
$(\bv,\bphi) = \brk{\bw,\frac{\e}{2\Wi}
\Ad(\bpsi,\tr(\bpsi))}$,
we obtain analogously to (\ref{eq:estimate-Pdh}) that
\begin{align} \label{eq:inequality1}
&\Scal{\mathcal{F}(\bw,\bpsi)}{\brk{\bw,\frac{\e}{2\Wi}
\Ad(\bpsi,\tr(\bpsi))}}_\D
\\
\nonumber
& \hspace{0.3in} 
\ge 
\frac{F_\delta(\bw,\bpsi,\tr(\bpsi))-F_\delta(\buhd^{n-1},
\strhd^{n-1},\tr(\strhd^{n-1}))}{\dt_{n}} 
+ \frac{\Re}{2\dt_{n}}\intd\|\bw-\buhd^{n-1}\|^2 \,\ddx
\\
& \hspace{0.4in}
+ \frac{1-\e}{2}
\intd\|\grad\bw\|^2 \,\ddx
+ \frac{\e}{2\Wi^2} \intd \tr\brk{
\brk{
\Ad(\bpsi,\tr(\bpsi))
}^2
\Bd(\bpsi)
}\ddx
-\frac{1+C_P}{2(1-\e)}\Norm{\f^{n}}_{H^{-1}(\D)}^2.
\nonumber
\end{align}

Let us now assume that for any $\gamma \in \R_{>0}$, 
the continuous mapping $\mathcal{F}$ has no zero $(\buhd^{n},\strhd^{n})$ 
satisfying~\eqref{eq:solution}, which lies in the ball
\begin{align}
\mathcal{B}_\gamma := \BRK{ (\bv,\bphi) \in \Vhzero\times\Sh^0 \,: \, 
\Norm{(\bv,\bphi)}_\D\le\gamma };
\label{Bgamma}
\end{align}
where 
\begin{align}
\Norm{(\bv,\bphi)}_D := 
\left[((\bv,\bphi),(\bv,\bphi))_D\right]^{\frac{1}{2}} =
\brk{\intd [\,\|\bv\|^2+\|\bphi\|^2\,]\,\ddx }^\frac12. 
\label{NormD}
\end{align}
Then for such $\gamma$, we can define the continuous mapping $\mathcal{G}_\gamma
: \mathcal{B}_\gamma \mapsto  \mathcal{B}_\gamma$
such that for all $(\bv,\bphi) \in \mathcal{B}_\gamma$
\begin{align}
\mathcal{G}_\gamma(\bv,\bphi) := -\gamma 
\frac{\mathcal{F}(\bv,\bphi)}{\Norm{\mathcal{F}(\bv,\bphi)}_\D}. 
\label{Ggamma}
\end{align}
By the Brouwer fixed point theorem, $\mathcal{G}_\gamma$ has at least 
one fixed point $(\bw_\gamma,\bpsi_\gamma)$ in $\mathcal{B}_\gamma$. 
Hence it satisfies
\begin{equation}\label{eq:fixed-point}
\Norm{(\bw_\gamma,\bpsi_\gamma)}_\D=
 \Norm{\mathcal{G}_\gamma(\bw_\gamma,\bpsi_\gamma)}_\D=\gamma.
\end{equation}

On noting (\ref{Sh}) and (\ref{eq:fixed-point}), we have that
\begin{equation}\label{eq:norm_equivalence} 
\|\bpsi_\gamma\|_{L^\infty(\D)}^2
\leq \frac{1}{\min_{k\in N_K} |K_k|} \int_\D \|\bpsi_\gamma\|^2 \,\ddx
= \mu_h^2 \int_\D \|\bpsi_\gamma\|^2  \,\ddx \leq \mu_h^2\,\gamma^2,
\end{equation}
where $\mu_h := [1/(\min_{k\in N_K} |K_k|)]^{\frac{1}{2}}$. 
Then 
(\ref{eq:free-energy-Pd}), (\ref{eq:comparison-oldroyd-d}),
(\ref{Entropy2}),
(\ref{eq:norm_equivalence})
and (\ref{eq:fixed-point}) yield that
\begin{align} 
\label{bb1} 
&F_\delta(\bw_\gamma,\bpsi_\gamma,\tr(\bpsi_\gamma)) 
\\
& \qquad \qquad =
\frac{\Re}{2} \intd \|\bw_\gamma\|^2 \,\ddx+ \frac{\e}{2\Wi}
\intd
\Brk{ -b\:\Gd\brk{1-\frac{\tr(\bpsi_\gamma)}{b}}-\tr\brk{\Gd(\bpsi_\gamma)+\I} }
\ddx
\nonumber
\\
& \qquad \qquad \ge
\frac{\Re}{2} \intd\|\bw_\gamma\|^2 \,\ddx 
+ \frac{\e}{2\Wi} \intd \tr\brk{\bpsi_\gamma-\Gd(\bpsi_\gamma)-\I}
\,\ddx
\nonumber
\\
& \qquad \qquad \ge
\frac{\Re}{2} \intd\|\bw_\gamma\|^2  \,\ddx + \frac{\e}{4\Wi}
\left[ \intd \|\bpsi_\gamma\| \,\ddx -2d|\D|\right]
\nonumber
\\
& \qquad \qquad \ge 
\frac{\Re}{2} \intd\|\bw_\gamma\|^2\,\ddx  + \frac{\e}{4\Wi\,\mu_h\gamma}
\|\bpsi_\gamma\|_{L^\infty(\D)}
\intd \|\bpsi_\gamma\| \,\ddx
-\frac{\e d |\D|}{2\Wi}
\nonumber
\\
& \qquad \qquad \ge
\min\brk{\frac{\Re}{2},\frac{\e}{4\Wi\,\mu_h\gamma}}
\brk{ \intd\left[\,\|\bw_\gamma\|^2+ \|\bpsi_\gamma\|^2\,\right] \,\ddx}
-\frac{\e d |\D|}{2\Wi}
\nonumber
\\
& \qquad \qquad = 
\min\brk{\frac{\Re}{2},\frac{\e}{4\Wi\,\mu_h\gamma}}
\gamma^2 
-\frac{\e d |\D|}{2\Wi}.
\nonumber
\end{align}
Hence for all $\gamma$ sufficiently large,
it follows from (\ref{eq:inequality1}), (\ref{bb1}) and (\ref{eq:positive-term}) that
\beq \label{eq:one-hand}
\Scal{\mathcal{F}(\bw_\gamma,\bpsi_\gamma)}{\brk{\bw_\gamma,\frac{\e}{2\Wi}
\Ad(\bpsi_\gamma,\tr(\bpsi_\gamma))
}}_\D
\ge 0.
\eeq

On the other hand as $(\bw_\gamma,\bpsi_\gamma)$ is a fixed point 
of ${\mathcal G}_\gamma$, we have that
\begin{align}
\label{eq:whereas}
&
\Scal{\mathcal{F}(\bw_\gamma,\bpsi_\gamma)}{\brk{\bw_\gamma,\frac{\e}{2\Wi}
\Ad(\bpsi_\gamma,\tr(\bpsi_\gamma))
}}_D
\\ 
\nonumber
& \hspace{1.4in} =
-\frac{\Norm{\mathcal{F}(\bw_\gamma,\bpsi_\gamma)}_\D}{\gamma} 
\intd \left[ \|\bw_\gamma\|^2+ \frac{\e}{2\Wi} 
\bpsi_\gamma :
\Ad(\bpsi_\gamma,\tr(\bpsi_\gamma))
\right] \,\ddx.
\end{align}
It follows from (\ref{Addef}), (\ref{eq:FENEPstrs-above-OBstrs}), (\ref{Entropy2}) 
and similarly to (\ref{bb1}),
on noting
(\ref{eq:norm_equivalence}) and (\ref{eq:fixed-point}), that
\begin{align}
\label{bb2}
&\intd \Big[
\|\bw_\gamma\|^2 
+
\frac{\e}{2\Wi} \bpsi_\gamma :
\Ad(\bpsi_\gamma,\tr(\bpsi_\gamma))
\Big] \ddx
\\
\nonumber
&\qquad
=
\intd \Big[
\|\bw_\gamma\|^2 
+ \frac{\e}{2\Wi} \brk{\Gd'\brk{1-\frac{\tr(\bpsi_\gamma)}{b}}-1}\tr(\bpsi_\gamma)
+ \frac{\e}{2\Wi} \bpsi_\gamma:\brk{\I-\Gd'(\bpsi_\gamma)}
\Big] \ddx
\\
\nonumber
&\qquad
\ge
\intd 
\|\bw_\gamma\|^2 \,\ddx
+ \frac{\e}{4\Wi} \left[ \intd \|\bpsi_\gamma\| \,\ddx- 2d|\D| \right]
\\
\nonumber
&\qquad
\ge \min\brk{1,\frac{\e}{4\Wi\,\mu_h \gamma}}\gamma^2 -
\frac{\e d|\D|}{2 \Wi}.
\end{align}
Therefore on combining (\ref{eq:whereas}) and (\ref{bb2}), we have for all 
$\gamma$ sufficiently large that 
\begin{align} \label{eq:other-hand}
\Scal{\mathcal{F}(\bw_\gamma,\bpsi_\gamma)}{\brk{\bw_\gamma,\frac{\e}{2\Wi}
\Ad(\bpsi_\gamma,\tr(\bpsi_\gamma))}}_D < 0,
\end{align}
which obviously contradicts~\eqref{eq:one-hand}.
Hence the mapping $\mathcal{F}$ has a zero in $\mathcal{B}_\gamma$
for $\gamma$ sufficiently large,
and so there exists a solution 
$(\buhd^{n},\strhd^{n})$ to (\ref{eq:Pdh},b).
\end{proof}

\begin{theorem}
\label{dstabthm}
For any $\delta \in (0, \frac{1}{2}]$,
$N_T \geq 1$ and 
any partitioning of $[0,T]$ into $N_T$ time steps,
there exists a solution
$\{(\buhd^{n},\strhd^{n})\}_{n=1}^{N_T}
\in [\Vhzero \times \Sh^0]^{N_T}
$ to {\bf (P$^{\dt}_{\delta,h}$)}, (\ref{eq:Pdh},b).
In addition, it follows for $n=1,\ldots,N_T$ that
\begin{align}
\label{Fstab1}
&
\Fd(\buhd^n,\strhd^n,\tr(\strhd^n)) + \frac{1}{2} \sum_{m=1}^n
\int_\D \left[ {\rm Re} \|\buhd^m-\buhd^{m-1}\|^2
+ (1-\e)\dt_{m}\|\gbuhd^m\|^2
\right] \ddx
\\
&\hspace{0.3in} \nonumber
+ \frac{\e}{2{\rm Wi}^2} \sum_{m=1}^n \dt_m
  \intd \tr\brk{
  \brk{
  \Ad(\strhd^m,\tr(\strhd^m))
  }^2
  \Bd(\strhd^m)
  }\ddx
\\
&\hspace{1in}\nonumber
\leq
\Fd(\buh^0,\strh^0,\tr(\strh^0))
+ \frac{1+C_P}{2(1-\e)} \sum_{m=1}^n \dt_m \|\f^m\|_{H^{-1}(\D)}^2
\\
&\hspace{1in}\nonumber
\leq
F(\buh^0,\strh^0)
+ \frac{1+C_P}{2(1-\e)} \|\f\|_{L^2(0,t^n;H^{-1}(\D))}^2
\leq C,
\end{align}
which yields that
\begin{align}
\label{Fstab2}
\max_{n=0, \ldots, N_T} \int_\D \left[\, \|\buhd^n\|^2 + \|\strhd^n\| + 
\delta^{-1}\,\|[\strhd^n]_{-}\| + \delta^{-1}\,\left|[b-\tr(\strhd^n)]_{-}\right| 
\,\right] \ddx
&\leq C.
\end{align}

Moreover, for some $C(h,\Delta t) \in {\mathbb R}_{>0}$, but independent of $\delta$,
it follows that 
for $k=1,\ldots,N_K$ and $n=1,\ldots,N_T$
\begin{subequations}
\begin{alignat}{2} 
\label{Fstab3}
\Gd'\brk{1-\frac{\tr(\strhd^n)}{b}}\|\Bd(\strhd^n)\| 
&\le C(h,\Delta t) 
&&\quad\mbox{on }K_k,
\\
\label{Fstab4}
\|[\beta_\delta(\strhd^n)]^{-1}\| 
&\leq C(h,\Delta t)\left[1 +   
\Gd'\left(1-\frac{\tr(\strhd^n)}{b}
\right)\right] &&\quad\mbox{on }K_k.
\end{alignat} 
\end{subequations}
 \end{theorem}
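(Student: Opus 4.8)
The plan is to establish the four assertions in sequence, each resting on the previous one; the only genuinely delicate point throughout is keeping all constants independent of $\delta$ where that is claimed.

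\emph{Existence and \eqref{Fstab1}.} Since $(\buhd^0,\strhd^0)=(\buh^0,\strh^0)\in\Vhzero\times(\Sh^0\cap\SPDb)$ by \eqref{strh0pd}--\eqref{strhOtrb}, I would apply Proposition~\ref{prop:existence-Pdh} inductively: given $(\buhd^{n-1},\strhd^{n-1})$ it yields $(\buhd^{n},\strhd^{n})\in\Vhzero\times\Sh^0$ solving \eqref{eq:Pdh} for any $\dt_n>0$. For the bound I would multiply the one-step inequality \eqref{eq:estimate-Pdh} of Proposition~\ref{prop:free-energy-Pdh} by $\dt_m$ and sum over $m=1,\dots,n$, so that the free-energy terms telescope; the forcing contribution is controlled by \eqref{fncont} with $r=2$ and \eqref{freg}, and the initial term by \eqref{Fdin} together with the fact that $F(\buh^0,\strh^0)$ is finite---indeed $\le C$---by \eqref{freg} and \eqref{strh0pd}--\eqref{strhOtrb} (the eigenvalues of $\strh^0$ lie in $[\sigma_{\rm min}^0,\sigma_{\rm max}^0]$ and $\tr(\strh^0)\le b^\star<b$). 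This yields \eqref{Fstab1}, the left-most dissipation term being nonnegative by \eqref{Addef} and \eqref{eq:positive-term}.

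\emph{The bound \eqref{Fstab2}.} From \eqref{Fstab1} we have $\Fd(\buhd^n,\strhd^n,\tr(\strhd^n))\le C$ for every $n$. I would extract three of the four terms by coercivity: \eqref{eq:comparison-oldroyd-d} together with the two inequalities in \eqref{Entropy2} bound $\int_\D\|\buhd^n\|^2\,\ddx$, $\int_\D\|\strhd^n\|\,\ddx$ and $\delta^{-1}\int_\D\|[\strhd^n]_-\|\,\ddx$ by $\Fd+C$. For the remaining term I would apply the scalar form of the second inequality of \eqref{Entropy2} to $s=1-\tr(\strhd^n)/b$ (which holds for all real $s$ when $\delta\le\tfrac12$), obtaining $-b\,\Gd(1-\tr(\strhd^n)/b)\ge(\tr(\strhd^n)-b)+\tfrac1{2\delta}\,|[\,b-\tr(\strhd^n)\,]_-|$, combine this with $\tr(\Gd(\strhd^n))\le\tr(\strhd^n)-d$ from \eqref{Entropy1}, and substitute into the definition \eqref{eq:free-energy-Pd} of $\Fd$; this shows $\delta^{-1}\int_\D|[\,b-\tr(\strhd^n)\,]_-|\,\ddx\le C$ with a constant independent of $\delta$. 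Collecting the four contributions gives \eqref{Fstab2}.

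\emph{The element-wise bounds \eqref{Fstab3}--\eqref{Fstab4}.} Fix $K_k$ and $n$. Since $\strhd^n$ is constant on $K_k$, \eqref{Fstab2} gives $\|\strhd^n\|\le|K_k|^{-1}\int_\D\|\strhd^n\|\,\ddx\le C(h)$ there, so the eigenvalues $\mu_1,\dots,\mu_d$ of $\Bd(\strhd^n)=\max\{\strhd^n,\delta\}$ satisfy $0<\mu_i\le C(h)$ (using $\delta\le\tfrac12$) and hence $\tr(\Bd(\strhd^n))\le C(h)$. Writing $a:=\Gd'(1-\tr(\strhd^n)/b)$, the matrices $\Ad(\strhd^n,\tr(\strhd^n))=a\,\I-\Gd'(\strhd^n)$ and $\Bd(\strhd^n)$ commute, so I would diagonalise them simultaneously and use \eqref{eq:inverse-Gd} to write
\[ \tr\!\big((\Ad(\strhd^n,\tr(\strhd^n)))^2\,\Bd(\strhd^n)\big)=\sum_{i=1}^d\Big(a-\frac1{\mu_i}\Big)^2\mu_i=a^2\,\tr(\Bd(\strhd^n))-2da+\tr(\Gd'(\strhd^n)), \]
the left-hand side being $\le C(h,\dt)$ on $K_k$ by \eqref{Fstab1} (divide the bounded sum by $\dt_n|K_k|$). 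Dropping the nonnegative term $\tr(\Gd'(\strhd^n))$ leaves the quadratic inequality $a^2\,\tr(\Bd(\strhd^n))-2da\le C(h,\dt)$ in $a$, whose solution gives $a\,\tr(\Bd(\strhd^n))\le d+\sqrt{d^2+C(h,\dt)\,\tr(\Bd(\strhd^n))}\le C(h,\dt)$; since $\|\Bd(\strhd^n)\|\le\tr(\Bd(\strhd^n))$ by \eqref{modphisq}, this is \eqref{Fstab3}. For \eqref{Fstab4} I would keep only the term with $\mu_{\min}:=\min_i\mu_i$, getting $(a-\mu_{\min}^{-1})^2\mu_{\min}\le C(h,\dt)$; setting $z:=\mu_{\min}^{-1/2}$ this reads $z^2-\sqrt{C(h,\dt)}\,z-a\le0$, hence $\mu_{\min}^{-1}=z^2\le C(h,\dt)(1+a)$, and since $[\beta_\delta(\strhd^n)]^{-1}=\Gd'(\strhd^n)$ has eigenvalues $\mu_i^{-1}$ we obtain $\|[\beta_\delta(\strhd^n)]^{-1}\|\le\sqrt d\,\mu_{\min}^{-1}\le C(h,\dt)(1+a)$, which is \eqref{Fstab4}.

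Most of this is routine bookkeeping; the real obstacle is the $\delta$-uniformity. It forces one to use, in \eqref{Fstab2}, the sharp negative-argument version of \eqref{Entropy2} rather than merely $\Gd\le$ its tangent line, and, in \eqref{Fstab3}--\eqref{Fstab4}, the dissipation integral already controlled in \eqref{Fstab1} rather than the trivial estimates $\Gd'\le\delta^{-1}$ and $\|[\beta_\delta]^{-1}\|\le\sqrt d\,\delta^{-1}$, which would blow up as $\delta\to0_+$.
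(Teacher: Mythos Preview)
Your proof is correct and follows the same overall architecture as the paper: existence via Proposition~\ref{prop:existence-Pdh}, the telescoping sum of Proposition~\ref{prop:free-energy-Pdh} for \eqref{Fstab1}, and the coercivity inequalities \eqref{eq:comparison-oldroyd-d}, \eqref{Entropy1}, \eqref{Entropy2} (matrix and scalar) for \eqref{Fstab2}. The only noteworthy difference is in the derivation of \eqref{Fstab3}--\eqref{Fstab4}: the paper works with matrix norm inequalities, bounding $\|\Ad\Bd\|^2\le\|\Ad[\Bd]^{1/2}\|^2\|[\Bd]^{1/2}\|^2=\tr\big((\Ad)^2\Bd\big)\tr(\Bd)$ and then $\|\Ad\Bd\Ad\|$, whereas you diagonalise $\Ad$ and $\Bd$ simultaneously and reduce to scalar quadratic inequalities in $a=\Gd'(1-\tr(\strhd^n)/b)$ and in $\mu_{\min}^{-1/2}$. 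Both routes start from the same element-wise control of $\tr\big((\Ad)^2\Bd\big)$ extracted from \eqref{Fstab1}; your scalar argument is arguably more transparent, while the paper's matrix-norm chain avoids any explicit eigenvalue discussion and extends verbatim to settings where commutativity is less obvious.
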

\begin{proof}
Existence of a solution to {\bf (P$^{\dt}_{\delta,h}$)}
and the first inequality in (\ref{Fstab1}) follow
immediately from Propositions~\ref{prop:existence-Pdh}
and~\ref{prop:free-energy-Pdh}, respectively.
Similarly to (\ref{Fdin}), 
the second inequality in (\ref{Fstab1}) 
is a direct consequence of (\ref{eq:free-energy-Pd}), 
(\ref{eq:Gd}), (\ref{strh0pd},c) and (\ref{fncont}).
Finally, the third inequality  in (\ref{Fstab1}) follows from  
(\ref{strh0def}--c) and (\ref{freg}).

It follows from (\ref{Fstab1}) and (\ref{eq:comparison-oldroyd-d}) that
\begin{align}
\frac{\Re}{2}\intd\|\buhd^n\|^2 \,\ddx
+ \frac{\e}{2\Wi}\intd\tr\brk{\strhd^n-\Gd(\strhd^n)-\I}\,\ddx
\label{Fstab2a} \leq C, \qquad n=1,\ldots,N_T.
\end{align}
The first three bounds in (\ref{Fstab2}) then follow immediately from
(\ref{Fstab2a}) and (\ref{Entropy2}).
\begin{comment}
The bound (\ref{Fstab2}) follows immediately from (\ref{Fstab1}) on noting
(\ref{eq:positive-term}) with $\beta_\delta(\bphi) \in \RSPD$ for any $\bphi \in \RS$,
and the fact that $\Fd(\buhd^n,\strhd^n)$ is the sum of three non-negative terms:
\begin{equation}\label{eq:free-energy-sum3terms}
\Fd(\buhd^n,\strhd^n) = \frac{\Re}{2} \intd\|\buhd^n\|^2 
+ \frac{\e}{2\Wi}\intd
\BRK{
\tr\brk{\strhd^n-\Gd(\strhd^n)}
+ b\:\Brk{\frac{\tr(\strhd^n)}{b}-\Gd\brk{1-\frac{\tr(\strhd^n)}{b}}}
}\ddx
\end{equation}
by (\ref{eq:comparison-oldroyd-d}) and (\ref{Entropy1}).
Indeed, like in~\cite{barrett-boyaval-09}, the two first terms in the 
right-hand side of~\eqref{eq:free-energy-sum3terms}
give, on noting~(\ref{Entropy2}), the three first terms in the left-hand side of~(\ref{Fstab2}).
\end{comment}
Next we note that (\ref{Entropy1}), (\ref{eq:free-energy-Pd}) and 
(\ref{Fstab1}) yield that
\begin{align}
\label{Fstab2b}
&b \intd
\Brk{ \left(1-\frac{\tr(\strhd^n)}{b}\right)-
 \Gd\brk{1-\frac{\tr(\strhd^n)}{b}}}
\ddx
\\
&\hspace{1in}\leq 
- \intd
\Brk{ 
b\: \Gd\brk{1-\frac{\tr(\strhd^n)}{b}}
+
\tr\brk{
\Gd(\strhd^n)+\I} - b} \ddx
\leq C.
\nonumber
\end{align}
The last bound in (\ref{Fstab2})
is then simply obtained by using a scalar version of~(\ref{Entropy2}).

Next, we deduce from (\ref{Fstab1}), (\ref{Addef}) and (\ref{eq:positive-term}) that 
for $n=1,\ldots,N_T$
\begin{align}
\label{Fstab3z}
0 \le 
\tr\brk{
\brk{
\Ad(\strhd^n,\tr(\strhd^n))
}^2
\Bd(\strhd^n)
}  
\le C(h,\Delta t) \quad\mbox{on }K_k, 
\qquad k=1,\ldots,N_K.
\end{align}
For any $\delta >0$, $\Bd(\strhd^n) \in \RSPD$ and so it follows from 
(\ref{ipmat}), (\ref{normphi}),  
(\ref{Addef}),
(\ref{eq:inverse-Gd}),  (\ref{normprod}),
(\ref{Fstab3z}), (\ref{modphisq}), (\ref{eq:Bd}) and (\ref{Fstab2}) 
that
\begin{align}
&\left\|\Gd'\brk{1-\frac{\tr(\strhd^n)}{b}}
\Bd(\strhd^n)- \I \right\|^2 
\label{Fstab3y}
\\
& \hspace{0.2in}
=\left\|  
\Ad(\strhd^n,\tr(\strhd^n))\,
\Bd(\strhd^n) \right\|^2
\leq \left\|  
\Ad(\strhd^n,\tr(\strhd^n))\,
[\Bd(\strhd^n)]^{\frac{1}{2}} \right\|^2
\left\|[\Bd(\strhd^n)]^{\frac{1}{2}}\right\|^2
\nonumber
\\
& \hspace{0.2in} = \tr\brk{
\brk{
\Ad(\strhd^n,\tr(\strhd^n))
}^2
\Bd(\strhd^n))
} 
\tr(\Bd(\strhd^n))
\leq C(h,\Delta t)\,\|\Bd(\strhd^n)\|
\nonumber \\
& \hspace{0.2in} \leq C(h,\Delta t)\,\left(\|\strhd^n\| + \delta\right)
\leq C(h,\Delta t) \quad\mbox{on }K_k, 
\qquad k=1,\ldots,N_K,
\quad n=1,\ldots,N_T.
\nonumber
\end{align}
The desired result (\ref{Fstab3}) follows immediately from (\ref{Fstab3y}).
Similarly to (\ref{Fstab3y}), we have from (\ref{Fstab3z}), (\ref{modphisq}), 
(\ref{Addef}) 
and (\ref{eq:inverse-Gd}) that
\begin{align}
C(h,\Delta t) &\geq \left\|
\Ad(\strhd^n,\tr(\strhd^n))\,
\Bd(\strhd^n) \,
\Ad(\strhd^n,\tr(\strhd^n))
\right\| 
\label{Fstab4z}\\
&\geq \left\|
\brk{\Gd'\brk{1-\frac{\tr(\strhd^n)}{b}}}^2
\Bd(\strhd^n) - 2
\Gd'\brk{1-\frac{\tr(\strhd^n)}{b}}\I
+ [\Bd(\strhd^n)]^{-1} \right\|
\nonumber \\ 
& \hspace{2in}\quad\mbox{on }K_k, 
\qquad k=1,\dots,N_K,
\quad n=1,\ldots,N_T.
\nonumber
\end{align}
The desired result (\ref{Fstab4}) follows immediately from (\ref{Fstab4z}) and 
(\ref{Fstab3}). 
\end{proof}

\subsection{Convergence of (P$^{\Delta t}_{\delta,h}$) to (P$^{\Delta t}_{h}$)}
\label{secPdhtoPd}
We now consider the
corresponding direct finite element approximation of {\bf (P)},
i.e. {\bf (P$^{\Delta t}_{h}$)} without the regularization $\delta$: \\
{\bf (P$^{\Delta t}_h$)}
Given initial conditions 
$(\buh^0,\strh^0)\in\Vhzero\times(\Sh^0\cap\SPDb)$ as defined in (\ref{strh0def}),
then for $n = 1, \ldots, N_T$ 
find $(\buh^{n},\strh^{n})\in\Vhzero\times\Sh^0$ such that for any test functions 
$(\bv,\bphi)\in\Vhzero\times\Sh^0$
\begin{subequations}
\begin{align} 
\label{eq:Pdh-limit}
&\int_\D \biggl[ \Re\left(\frac{\buh^{n}-\buh^{n-1}}{\dt_{n}}\right)\cdot \bv 
 + \frac{\Re}{2}\Brk{ \left( (\buh^{n-1}\cdot\nabla)\buh^{n}\right) \cdot \bv - 
 \buh^{n} \cdot \left( (\buh^{n-1}\cdot\nabla)\bv \right)}
\\
& \hspace{1in}
 + (1-\e) \gbuh^{n}:\grad\bv + \frac{\e}{\Wi} 
 A(\strh^n)\,\strh^{n}:\grad\bv
\biggr] \,\ddx = 
\langle \f^n,\bv \rangle_{H^1_0(\D)},
\nonumber 
\\ 
\label{eq:Pdhb-limit}
&\int_\D \left[ \left(\frac{\strh^{n}-\strh^{n-1}}{\dt_{n}}\right) : \bphi 
 - 2\left((\gbuh^{n})\,\strh^{n}\right) :\bphi 
 + \frac{A(\strh^n)\,\strh^{n}
 : \bphi} 
 {\Wi}
\right]
\,\ddx
\\
& \hspace{1in}
 + \sum_{j=1}^{N_E} \int_{E_j}  \Abs{\buh^{n-1}\cdot\bn} 
 \jump{\strh^{n}}_{\to\buh^{n-1}}:\bphi^{+\buh^{n-1}}
 \,{\rm d}\bs
 = 0.
\nonumber
\end{align}
\end{subequations}

We note that (\ref{eq:Pdh-limit},b) 
and $F(\buh^n,\strh^n)$
are only well-defined if $\strh^n \in \Sh^0 \cap \SPDb$.  
We also note that on replacing $A(\strh^{n})$ with 
$\I-(\strh^{n})^{-1}$ then {\bf (P$_{h}^{\dt}$)}, (\ref{eq:Pdh-limit},b), collapses
to the corresponding finite element approximation of Oldroyd-B studied in 
\cite{barrett-boyaval-09}, see (3.35a,b) there.

\begin{theorem} 
\label{dconthm}
For all regular partitionings $\mathcal{T}_h$ of $\D$
into simplices $\{K_k\}_{k=1}^{N_K}$
and all partitionings $\{\Delta t_n\}_{n=1}^{N_T}$ of $[0,T]$,
there exists a subsequence
$\{\{(\buhd^{n},\strhd^{n})\}_{n=1}^{N_T}\}_{\delta>0}$, where  
$\{(\buhd^{n},$ $\strhd^{n})\}_{n=1}^{N_T} \in [\Vhzero \times \Sh^0]^{N_T}$ 
solves {\bf (P$^{\dt}_{\delta,h}$)}, (\ref{eq:Pdh},b), and
$\{(\buh^{n},\strh^{n})\}_{n=1}^{N_T} \in [\Vhzero \times \Sh^0]^{N_T}$ 
such that for the subsequence
\begin{align}
\label{conv}
\buhd^{n} \rightarrow \buh^{n}, \qquad 
\strhd^{n} \rightarrow \strh^{n} \qquad \mbox{as } \delta \rightarrow 0_+\,,
\qquad\mbox{for} \quad n=1, \ldots, N_T.
\end{align}
In addition, for all $t \in [0,T]$
$n=1,\ldots, N_T$, $\strh^{n}\mid_{K_k} \in \RSPDb$, $k=1,\ldots, N_K,$. 
Moreover, $\{(\buh^{n},\strh^{n})\}_{n=1}^{N_T}\in [\Vhzero \times \Sh^0]^{N_T}$
solves {\bf (P$_{h}^{\dt}$)}, (\ref{eq:Pdh-limit},b), and for $n=1,\ldots,N_T$
\begin{align}
\label{eq:estimate-Ph}
&\frac{F(\buh^{n},\strh^{n})-F(\buh^{n-1},\strh^{n-1})}{\dt_{n}} 
+ \frac{ {\rm Re}}{2\dt_{n}}\intd\|\buh^{n}-\buh^{n-1}\|^2 \,\ddx
+(1-\e)\intd\|\gbuh^{n}\|^2\,\ddx
\\
\nonumber 
& \hspace{.3in} 
+ \frac{\e}{2 {\rm Wi}^2} \intd \tr\brk{
  \brk{A(\strh^n)
  }^2
  \strh^n
  }\ddx
\le
\frac{1}{2}(1-\e)\intd\|\gbuh^{n}\|^2 \,\ddx
+ \frac{1+C_P}{2(1-\e)} \|\f^{n}\|_{H^{-1}(\D)}^2.
\end{align}  
\end{theorem}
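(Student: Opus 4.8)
The plan is to use the $\delta$-uniform a priori estimates of Theorem~\ref{dstabthm} together with the fact that, with $h$ and $\dt$ fixed, the spaces $\Vhzero\times\Sh^0$ are finite-dimensional (so that all norms on them are equivalent and bounded sets are relatively compact) and $N_T<\infty$. The bounds in (\ref{Fstab2}) control $\|\buhd^n\|_{L^2(\D)}$ and $\|\strhd^n\|_{L^2(\D)}$ for $n=0,\ldots,N_T$ uniformly in $\delta$, hence control the finitely many elementwise degrees of freedom; a diagonal extraction then yields a subsequence along which (\ref{conv}) holds, with some limit $\{(\buh^n,\strh^n)\}_{n=1}^{N_T}\in[\Vhzero\times\Sh^0]^{N_T}$. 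Note $(\buh^0,\strh^0)=(\buhd^0,\strhd^0)$ is already independent of $\delta$ and lies in $\Vhzero\times(\Sh^0\cap\SPDb)$ by (\ref{strh0pd},c).

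The crucial — and, I expect, hardest — step is to show that the limit is admissible, i.e.\ $\strh^n\mid_{K_k}\in\RSPDb$ for all $k$ and all $n\geq1$, since only then are $A(\strh^n)$ and $F(\buh^n,\strh^n)$ well defined. The estimates $\delta^{-1}\|[\strhd^n]_{-}\|\leq C$ and $\delta^{-1}|[b-\tr(\strhd^n)]_{-}|\leq C$ from (\ref{Fstab2}) force $[\strhd^n]_{-}\to\bzero$ and $[b-\tr(\strhd^n)]_{-}\to0$, so on each element $\strh^n\mid_{K_k}$ is symmetric positive semidefinite with $\tr(\strh^n\mid_{K_k})\leq b$; the task is to promote these to strict inequalities. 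I would do so by contradiction on a fixed $K_k$, using (\ref{Fstab3}) and (\ref{Fstab4}), whose constants $C(h,\dt)$ are independent of $\delta$. If $\tr(\strh^n\mid_{K_k})=b$, then, $\strh^n\mid_{K_k}$ being positive semidefinite, (\ref{modphisq}) gives $\|\strh^n\mid_{K_k}\|\geq b/\sqrt d>0$, whence $\|\Bd(\strhd^n)\|\to\|\strh^n\mid_{K_k}\|>0$, while $\Gd'(1-\tr(\strhd^n)/b)\to+\infty$ because $1-\tr(\strhd^n)/b\to0_+$; this contradicts (\ref{Fstab3}). Hence $\tr(\strh^n\mid_{K_k})<b$, which keeps $\Gd'(1-\tr(\strhd^n)/b)$ bounded uniformly in $\delta$ small; then (\ref{Fstab4}) bounds $\|[\Bd(\strhd^n)]^{-1}\|$ uniformly, and since $\Bd(\strhd^n)\to\strh^n\mid_{K_k}$ this rules out a zero eigenvalue of $\strh^n\mid_{K_k}$. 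Therefore $\strh^n\mid_{K_k}\in\RSPD$ and, with the trace bound, $\strh^n\mid_{K_k}\in\RSPDb$.

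With admissibility established I would pass to the limit $\delta\to0_+$ in (\ref{eq:Pdh},b). On each $K_k$, since $\strhd^n\mid_{K_k}\to\strh^n\mid_{K_k}\in\RSPDb$, for $\delta$ small enough the eigenvalues of $\strhd^n\mid_{K_k}$ and the scalar $1-\tr(\strhd^n)/b$ all exceed $\delta$; there $\Gd$ coincides with $G=\ln$, $\Bd(\strhd^n)=\strhd^n$, and hence $\Ad(\strhd^n,\tr(\strhd^n))\,\Bd(\strhd^n)=A(\strhd^n)\,\strhd^n\to A(\strh^n)\,\strh^n$ by continuity of $A$ on $\RSPDb$. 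The remaining (linear, convective and edge) terms in (\ref{eq:Pdh},b) pass to the limit trivially, the integrands being piecewise polynomial with converging coefficients, so $\{(\buh^n,\strh^n)\}_{n=1}^{N_T}$ solves {\bf (P$^{\dt}_{h}$)}, (\ref{eq:Pdh-limit},b). Finally (\ref{eq:estimate-Ph}) follows by passing to the limit in the discrete free energy inequality (\ref{eq:estimate-Pdh}): since $\Gd\to G$ locally uniformly on $\R_{>0}$ and, for $m=n-1,n$, the quantities $\strhd^m$ and $1-\tr(\strhd^m)/b$ are eventually bounded away from $0$ on each element, one gets $\Fd(\buhd^m,\strhd^m,\tr(\strhd^m))\to F(\buh^m,\strh^m)$; the velocity-difference, dissipation and forcing terms converge as above, and the inequality is preserved in the limit.
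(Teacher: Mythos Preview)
Your proof is correct and follows essentially the same route as the paper: extract a subsequence via the $\delta$-uniform bounds (\ref{Fstab2}) and finite-dimensionality, use (\ref{Fstab2}) to get semidefiniteness and $\tr\leq b$ in the limit, upgrade to $\tr<b$ via (\ref{Fstab3}) by contradiction, then to positive definiteness via (\ref{Fstab4}), and finally pass to the limit in the scheme and in (\ref{eq:estimate-Pdh}). Two minor points: when you write $1-\tr(\strhd^n)/b\to 0_+$, the convergence need not be from above, but $\Gd'(\cdot)\to+\infty$ regardless since $\Gd'(s)=\max\{1/s,1/\delta\}$; and the edge terms are not quite ``piecewise polynomial with converging coefficients'' because the upstream value $\bphi^{+\buhd^{n-1}}$ depends on the sign of $\buhd^{n-1}\cdot\bn$, but uniform convergence of $\buhd^{n-1}$ and the prefactor $|\buhd^{n-1}\cdot\bn|$ still give the limit as in the paper's (\ref{Edgcon}).
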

\begin{proof}
For any integer $n \in[1,N_T]$,
the desired subsequence convergence results (\ref{conv}) follow immediately 
from (\ref{Fstab2}),
as $(\buhd^{n},$ $\strhd^{n})$ are finite dimensional for fixed $\Vhzero \times S_h^0$. 
It also follows from (\ref{Fstab2}),
(\ref{conv}) and (\ref{Lip}) that $[\strh^{n}]_{-}$
and $[b-\tr(\strh^{n})]_{-}$ 
vanish on $\D$, 
so that
$\strh^{n}$ must be non-negative definite 
and $\tr(\strh^{n})\le b$ a.e.\ on $\D$. 
Moreover, on noting this, (\ref{conv}), (\ref{eq:Bd}) and (\ref{Lip}), 
we have the following subsequence convergence results
\begin{align}
\beta_{\delta}(\strh^{n}) \rightarrow \strh^{n}, \qquad   
\beta_{\delta}(\strhd^{n}) \rightarrow \strh^{n} \qquad \mbox{as} \quad
\delta \rightarrow 0_+.
\label{betacon}
\end{align}

If $\tr(\strh^{n})|_{K_k}=b$ on some simplex $K_k$, 
then for the subsequence of (\ref{conv}) we have that 
\begin{align}
\tr(\strhd^{n})|_{K_k} \to b \qquad\mbox{as}\quad \delta\to0_+.
\label{trtodb}
\end{align}
In addition, it follows from (\ref{modphisq}), (\ref{eq:Bd}) and (\ref{trtodb})
for $\delta$ sufficiently small that
\begin{align}
\|\Bd(\strhd^n)\| \geq \frac{1}{\sqrt{d}} \tr\brk{\Bd(\strhd^n)}
\geq \frac{1}{\sqrt{d}} \tr\brk{\strhd^n}
\geq \frac{b}{2\sqrt{d}} \qquad \mbox{on } K_k.
\label{Bdstbel}
\end{align}
Hence, (\ref{Fstab3}) and (\ref{Bdstbel}) yield for the subsequence of (\ref{conv})
for all $\delta$ sufficiently small that
\begin{align}
\Gd'\brk{1-\frac{\tr(\strhd^{n})}{b}} \leq C(h,\Delta t) \quad \mbox{on }K_k,
\label{Gddab}
\end{align}
but this contradicts (\ref{trtodb}) on recalling (\ref{eq:BGd}) and (\ref{eq:Bd}). 
Therefore, $\tr(\strh^{n})|_{K_k}< b$ on all simplices $K_k$,
and so it follows from (\ref{conv}), (\ref{eq:BGd}) and (\ref{eq:Bd}) that
\begin{align}
\Gd\brk{1-\frac{\tr(\strhd^{n})}{b}} \to \ln\brk{1-\frac{\tr(\strh^{n})}{b}}, \qquad
\Gd'\brk{1-\frac{\tr(\strhd^{n})}{b}} \to \brk{1-\frac{\tr(\strh^{n})}{b}}^{-1}
\label{Gdconv}
\end{align}
as $\delta \to 0_+$.

The results (\ref{Fstab4}) and (\ref{Gdconv}) yield for $\delta$ sufficiently small that
\begin{align}
\|[\beta_\delta(\strhd^n)]^{-1}\| 
&\leq C(h,\Delta t) \quad \mbox{on }K_k,
\quad k=1,\ldots,N_K.
\label{Fstab4x}
\end{align} 
Furthermore, it follows from (\ref{Fstab4x}), (\ref{betacon}), (\ref{eq:BGd}), (\ref{eq:Bd}) and as 
$[\Bd(\strhd^{n})]^{-1}\Bd(\strhd^{n})=\I$ that the following subsequence result 
\begin{align}
[\Bd(\strhd^{n})]^{-1} = \Gd'(\strhd^{n})\rightarrow [\strh^{n}]^{-1}\qquad \mbox{as} \quad
\delta \rightarrow 0_+
\label{posdef}
\end{align}
holds, and so $\strh^{n}\mid_{K_k} \in \RSPDb$, $k=1,\ldots, N_K$.   
Therefore, we have from (\ref{conv}) and (\ref{eq:Gd}) that
\begin{align}
\Gd(\strhd^{n}) \rightarrow \ln(\strh^{n}) \qquad \mbox{as} \quad
\delta \rightarrow 0_+.
\label{Gcon}
\end{align}

Since $\buhd^{n-1},\,\buh^{n-1} \in C(\overline{\D})$, it follows from (\ref{conv}),
(\ref{eq:streams}) and (\ref{eq:difsum}) that
for $j=1,\ldots, N_E$ and all $\bphi \in \Sh^0$
\begin{align}
\int_{E_j}  \Abs{\buhd^{n-1}\cdot\bn} 
 \jump{\strhd^{n}}_{\to\buhd^{n-1}}:\bphi^{+\buhd^{n-1}}\,{\rm d}\bs
\rightarrow 
\int_{E_j}  \Abs{\buh^{n-1}\cdot\bn} 
 \jump{\strh^{n}}_{\to\buh^{n-1}}:\bphi^{+\buh^{n-1}}\,{\rm d}\bs 
\quad \mbox{as} \quad
\delta \rightarrow 0_+.
\label{Edgcon}
\end{align} 
Hence using (\ref{conv}), (\ref{betacon}), (\ref{Gdconv}) and (\ref{Edgcon}),
we can pass to the limit $\delta \rightarrow 0_+$ for the subsequence
in {\bf (P$_{\delta,h}^{\dt}$)},
(\ref{eq:Pdh},b),
to show that $\{(\buh^{n},\strh^{n})\}_{n=1}^{N_T}
\in [\Vhzero \times \Sh^0]^{N_T}$
solves {\bf (P$_{h}^{\dt}$)}, (\ref{eq:Pdh-limit},b). 
Similarly, using (\ref{conv}), (\ref{betacon}), (\ref{Gdconv}), (\ref{posdef}) and (\ref{Gcon}), 
and noting (\ref{eq:free-energy-Pd}) and (\ref{eq:free-energy-P}),
we can pass to the limit $\delta \rightarrow 0_+$ in (\ref{eq:estimate-Pdh})
to obtain %the desired result 
(\ref{eq:estimate-Ph}).
\end{proof}

\section{Fene-p model with stress diffusion}
\label{sec:Palpha}
\setcounter{equation}{0}
\subsection{Model (P$_\alpha$), (P) with stress diffusion}

In this section we consider  
the following
modified version of {\bf (P)}, (\ref{eq:fene-p-sigma}--f), 
with a stress diffusion term
for a given constant 
$\alpha \in \R_{>0}$:

\noindent
{\bf (P$_\alpha$)} Find 
$\bua : (t,\xx)\in[0,T)\times\D \mapsto \bua(t,\xx)\in\R^d$,  
$\pa : (t,\xx) \in \D_T \mapsto \pa(t,\xx)\in\R$ 
and $\stra : (t,\xx)\in[0,T)\times\D \mapsto\stra(t,\xx)
\in \RSPDb$ 
such that
\begin{subequations}
\begin{alignat}{2}
\Re \brk{\pd{\bua}{t} + (\bua\cdot\grad)\bua} & =  -\grad \pa 
+ (1-\e)\Delta\bua +  \frac{\e}{\Wi}\div\brk{ 
A(\stra)\,\stra} 
+ \f 
\qquad 
&&\mbox{on } \D_T, 
\label{eq:aoldroyd-b-sigma}
\\
\div\bua & = 0 
\qquad \qquad \qquad &&\mbox{on } \D_T , 
\label{eq:aoldroyd-b-sigma1}
\\
\pd{\stra}{t}+(\bua\cdot\grad)\stra & = 
(\gbua)\stra+\stra(\gbua)^T
\label{eq:aoldroyd-b-sigma2} 
-\frac 
{A(\stra)\,\stra}{\Wi}
+ \alpha \Delta \stra
\qquad 
&&\mbox{on } \D_T,
\\
\bua(0,\xx) &= \bu^0(\xx) 
\label{eq:ainitial}\qquad \qquad &&\hspace{-0.7in}\forall \xx \in \D,\\
\stra(0,\xx) &= \strs^0(\xx) 
\qquad \qquad &&\hspace{-0.7in}\forall \xx \in \D,
\label{eq:ainitial1}
\\
\label{eq:adirichlet}
\bua&=\bzero \qquad \qquad \qquad &&\hspace{-0.7in}\text{on $ (0,T) \times \partial\D$},
\\
\label{eq:aneumann}
(\bn_{\partial \D} \cdot
\grad) \stra &=\bzero \qquad 
\qquad \qquad 
&&\hspace{-0.7in}\text{on $ (0,T) \times \partial\D$}.
\end{alignat}
\end{subequations}

Hence problem {\bf (P$_\alpha$)} is the same as {\bf (P)}, but with 
the added diffusion term $\alpha \Delta \stra$
for the stress equation (\ref{eq:aoldroyd-b-sigma2}),
and the associated Neumann boundary condition (\ref{eq:aneumann}).
Similarly to {\bf (P$_\delta$)}, (\ref{eq:fene-p-sigmadL}--f),
we introduce a regularization of {\bf (P$_{\alpha,\delta}$)} of {\bf (P$_\alpha$)}
mimicking the free energy structure of {\bf (P$_\alpha$)}.
Moreover, we need to be able to construct a  
finite element approximation of {\bf (P$_{\alpha,\delta}$)} that 
satisfies a discrete analogue of this free energy structure.  
Apart from the obvious addition of the stress diffusion term, there
are three other distinct differences. 
First, one has to deal with the advective
term in the stress equation differently, see (\ref{PadLeber1a}) below,
to the approach used in (\ref{Pdener1}) 
for the stability bound,
as the approach there cannot be mimicked at a discrete level using continuous piecewise linear
functions to approximate $\strad$, the regularization of $\stra$.  
Note that one cannot use    
$\S_h^0$ with the desirable property (\ref{eq:inclusion1}) to approximate $\strad$
due to the additional stress diffusion term.
Second, as a consequence of
this stress advective term, 
one has to introduce another regularization of $\tr(\stra)$,  
$\varrhoad$, as well as the obvious candidate $\tr(\Bd(\strad))$,
and solve for this directly, see Remark \ref{remrho} below.
Third, it is desirable for the convergence analysis, as $\delta \rightarrow 0$,
to have a uniform $L^2(\D_T)$ bound on the extra stress term 
$\Ad(\strad,\varrhoad)\,\Bd(\strad)$ in the Navier--Stokes equation 
(\ref{eq:aoldroyd-b-sigmad}) below. 
To achieve this we replace $\Ad(\strad,\varrhoad)\,\Bd(\strad)$ there by
$\kd(\strad,\varrhoad)\,\Ad(\strad,\varrhoad)\,\Bd(\strad)$, where we
define, for $\delta \in (0,\min\{\frac{1}{2},b\}]$, 
\begin{align}
\kd(\bphi,\eta):= \left[ \frac{\Bdb(\eta)}{\tr(\Bd(\bphi))}\right]^{\frac{1}{2}}
\qquad \forall (\bphi,\eta) \in \RS \times \R
\label{kddef}
\end{align} 
with
\begin{align}
\Bdb: s \in \R \mapsto \min\{\Bd(s),b\}.
\label{BdLdef}
\end{align}  
It follows from (\ref{kddef}), (\ref{BdLdef}), (\ref{ipmat}) and (\ref{normphi}) that
\begin{align}
\|\kd(\bphi,\eta)\,\Ad(\bphi,\eta)\,\Bd(\bphi)\|^2 
\label{kddefbd}
&\leq  
\|\kd(\bphi,\eta)\,[\Bd(\bphi)]^{\frac{1}{2}}\|^2  
\|\Ad(\bphi,\eta)\,[\Bd(\bphi)]^{\frac{1}{2}}\|^2 
\\
&\leq  b 
\tr\brk{\brk{\Ad(\bphi,\eta)}^2 \Bd(\bphi)}
\qquad \forall (\bphi,\eta) \in \RS \times \R.
\nonumber
\end{align}
Hence a uniform $L^2(\D_T)$ bound on 
$\kd(\strad,\varrhoad)\,\Ad(\strad,\varrhoad)\,\Bd(\strad)$
follows from a uniform $L^1(\D_T)$ on 
$\tr\brk{\brk{\Ad(\strad,\varrhoad)}^2 \Bd(\strad)}$,
which will follow from the free energy bound. Although 
$\varrhoad \neq \tr(\Bd(\strad))$, we will show, in the limit $\delta \rightarrow 0$,
that  $\Bd(\strad) \rightarrow \stra$ and
$\varrhoad \rightarrow \tr(\stra)$ 
with $\stra(\cdot,\cdot) \in \RSPDb$,
and hence 
implying that $\kd(\strad,\varrhoad)
\rightarrow 1$. In order to maintain the free energy bound
we need to include $\kd(\strad,\varrhoad)$ on the right-hand sides of 
(\ref{eq:aoldroyd-b-sigma2d},d) below.

Therefore, we consider the following regularization of {\bf (P$_\alpha$)}
for a given $\delta \in (0,\min\{\frac{1}{2},b\}]$:  

\noindent
{\bf (P$_{\alpha,\delta}$)} Find 
$\buad : (t,\xx)\in[0,T)\times\D \mapsto \buad(t,\xx)\in\R^d$,  
$\pad : (t,\xx) \in \D_T \mapsto \pad(t,\xx)\in\R$, 
$\strad : (t,\xx)\in[0,T)\times\D \mapsto\strad(t,\xx)
\in \RS$ and
$\varrhoad : (t,\xx)\in[0,T)\times\D \mapsto\varrhoad (t,\xx)
\in \R$  
such that
\begin{subequations}
\begin{alignat}{2}
\Re \brk{\pd{\buad}{t} + (\buad\cdot\grad)\buad}
&=  -\grad \pad 
+ (1-\e)\Delta\buad 
\label{eq:aoldroyd-b-sigmad}\\
& \qquad
+  \frac{\e}{\Wi}\div\brk{ 
\kd(\strad,\varrhoad)
\Ad(\strad,\varrhoad)\,\Bd(\strad) }  
\nonumber \\
&&&\hspace{-1.04in}+ \f\qquad \mbox{on } \D_T, 
\nonumber\\
\div\buad &= 0 
&&\hspace{-0.5in}\mbox{on } \D_T, 
\label{eq:aoldroyd-b-sigma1d}
\\
\pd{\strad}{t}+(\buad\cdot\grad)\Bd(\strad) 
\label{eq:aoldroyd-b-sigma2d}
&= \kd(\strad,\varrhoad)\left[(\gbuad)\Bd(\strad)+\Bd(\strad)(\gbuad)^T \right] 
\\
& \qquad -\frac{
\Ad(\strad,\varrhoad)\,\Bd(\strad)}{\Wi} 
+ \alpha \Delta \strad
&&\hspace{-0.5in}\mbox{on } \D_T,
\nonumber
\\
\pd{\varrhoad}{t}-b\,(\buad\cdot\grad)\Bd(1-\frac{\varrhoad}{b}) 
\label{eq:aoldroyd-b-sigma2dtr}
&= 
2 \kd(\strad,\varrhoad)\,\gbuad : \Bd(\strad)
\\
& \qquad-\frac{
\tr(\Ad(\strad,\varrhoad)\,\Bd(\strad))}{\Wi}  
+ \alpha \Delta \varrhoad
&&\hspace{-0.5in}\mbox{on } \D_T,
\nonumber
\\
\buad(0,\xx) &= \bu^0(\xx) 
\label{eq:ainitiald}
&&\hspace{-1in}
\forall \xx \in \D,\\
\strad(0,\xx) &= \strs^0(\xx), \quad \varrhoad(0,\xx) = \tr(\strs^0(\xx)) 
&&\hspace{-1in} \forall \xx \in \D,
\label{eq:ainitial1d}
\\
\label{eq:adirichletd}
\buad&=\bzero 
&&\hspace{-1in}
\text{on $ (0,T) \times \partial\D$},
\\
\label{eq:aneumannd}
(\bn_{\partial \D} \cdot
\grad) \strad &=\bzero,  
\quad
(\bn_{\partial \D} \cdot
\grad) \varrhoad =0
&&\hspace{-1in}
\text{on $ (0,T) \times \partial\D$}.
\end{alignat}
\end{subequations}
We note from (\ref{eq:Bd}) that $-b\,\grad \Bd\brk{1-\frac{\varrhoad}{b}} = \grad \varrhoad$
if $\brk{1-\frac{\varrhoad}{b}} \geq \delta$, i.e.\ $\varrhoad \le b(1-\delta)$.
We remark again, due the required regularization of the advective terms in 
(\ref{eq:aoldroyd-b-sigma2d},d), that %only $\varrhoad \approx \tr(\strad)$ and 
$\varrhoad  \neq \tr(\strad)$. However, we note that on taking the trace of 
(\ref{eq:aoldroyd-b-sigma2d}), subtracting (\ref{eq:aoldroyd-b-sigma2dtr})
and integrating over $\D$, yields, on noting (\ref{eq:aoldroyd-b-sigma1d},f--h) and (\ref{ipmat}), 
that
for all $t \in [0,T]$ 
\begin{align}
\intd [ \tr(\strad(t,\cdot)) - \varrhoad(t,\cdot)]\,\ddx = 
\intd [ \tr(\strad(0,\cdot)) - \varrhoad(0,\cdot)]\,\ddx
=0.
\label{trint} 
\end{align}   

\begin{comment}
We will also be interested in the corresponding regularization
(P$_{\alpha,\delta}^{(L)}$) of (P$_\alpha^{(L)}$) with solution
$(\bu_{\alpha,\delta}^{(L)}, p_{\alpha,\delta}^{(L)}, \strs_{\alpha,\delta}^{(L)})$;
where $\BorL(\cdot)$ in (\ref{eq:aoldroyd-b-sigma}--g) 
is replaced by $\BdorL(\cdot)$, and so that 
$\strs_{\alpha,\delta}^{(L)}$ is not required to be positive definite.
\end{comment}

\subsection{Formal free energy bound for (P$_{\alpha,\delta}$)}

First, similarly to (\ref{eq:Gd}), we introduce, for $\delta \in (0,1)$,
the concave $C^{1,1}(\R_{>0})$ function 
\begin{align}\label{eq:Hd}
&\Hd : s \in \R_{>0} \mapsto 
\begin{cases}
G(s) & \forall s \in (0, \delta^{-1}], 
\\
\delta\,s+G(\delta^{-1})-1  & \forall s \geq \delta^{-1}
\end{cases}
\quad \Rightarrow \quad \Hd'(\Gd'(s))=\Bd(s) \quad \forall s \in \R.
\end{align}
\begin{comment}
Let $\FdorL(\buadorL,\stradorL)$ denote the free energy of the solution 
$(\buadorL,$ $\padorL,\stradorL)$ 
to problem $(\rm P_{\alpha,\delta}^{(L)})$, where $\FdorL : \U \times \S 
\rightarrow \R$ is defined as 
\beq \label{eq:free-energy-PadL}
\FdorL(\bv,\bphi) := \frac{\Re}{2}\intd\|\bv\|^2 + \frac{\e}{2\Wi}
\intd \tr(\bphi-\GdorL(\bphi)-\I)  
\,.
\eeq
\end{comment}
We have the following analogue of Proposition  \ref{prop:free-energy-Pd}.
\begin{proposition} \label{prop:free-energy-PadL}
Let $\alpha \in {\mathbb R}_{>0}$, $\delta \in (0,\min\{\frac{1}{2},b\}]$ 
and $(\buad,\pad,\strad,\varrhoad)$ 
be a sufficiently smooth 
solution to problem $(\bf P_{\alpha,\delta})$,
(\ref{eq:aoldroyd-b-sigmad}--h).
Then the free energy $\Fd(\buad,\strad,\varrhoad)$  
satisfies for a.a.\ $t \in (0,T)$
\begin{align} 
\label{eq:estimate-PadL}
&\deriv{}{t}\Fd(\buad,\strad,\varrhoad)
+(1-\e)\intd\|\gbuad\|^2\,\ddx 
\\
&\hspace{0.3in} + \frac{\alpha\e\delta^2}{2{\rm Wi}}
\int_\D \Brk{\|\grad \Gd'(\strad)\|^2 +b\, \left\|\grad \Gd'\brk{1-\frac{\varrhoad}{b}}\right\|^2}
\ddx
\nonumber
\\
& \hspace{0.3in}+ \displaystyle \frac{\e}{2 {\rm Wi}^2}\intd
\tr
\brk{
  \brk{
  \Ad(\strad,\varrhoad)
  }^2
  \Bd(\strad)}
\ddx
\leq
\langle \f,\buad\rangle_{H^1_0(\D)}.
\nonumber
\end{align}
\end{proposition}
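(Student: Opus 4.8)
The plan is to mimic the proof of Proposition~\ref{prop:free-energy-Pd}, using the split of testing from Remark~\ref{remsplitd} adapted to the diffusive system and to the extra scalar unknown $\varrhoad$. Concretely, I would test the momentum equation~\eqref{eq:aoldroyd-b-sigmad} with $\buad$, the stress equation~\eqref{eq:aoldroyd-b-sigma2d} with $-\frac{\e}{2\Wi}\Gd'(\strad)$, and the trace equation~\eqref{eq:aoldroyd-b-sigma2dtr} with $\frac{\e}{2\Wi}\Gd'(1-\frac{\varrhoad}{b})$; then integrate over $\D$, integrate by parts using the boundary conditions~\eqref{eq:adirichletd},~\eqref{eq:aneumannd} and $\div\buad=0$, and add the three resulting identities. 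The time-derivative contributions assemble, via the chain rule and~\eqref{eq:deriv-tensor-g}, into $\deriv{}{t}\Fd(\buad,\strad,\varrhoad)$ with $\Fd$ as in~\eqref{eq:free-energy-Pd} (the constant $\I$ term plays no role); the Laplacian of $\buad$ yields $(1-\e)\intd\|\gbuad\|^2\,\ddx$; and the pressure and $(\buad\cdot\grad)\buad$ terms vanish in the usual way.

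The first delicate point is the convective terms, and this is precisely why they appear in the regularized forms $(\buad\cdot\grad)\Bd(\strad)$ and $-b(\buad\cdot\grad)\Bd(1-\frac{\varrhoad}{b})$ rather than $(\buad\cdot\grad)\strad$ and $(\buad\cdot\grad)\varrhoad$: using $\Gd'(\strad)\Bd(\strad)=\I$ (equation~\eqref{eq:inverse-Gd}) together with~\eqref{eq:deriv-tensor-g} applied to $\ln$, one gets $[(\buad\cdot\grad)\Bd(\strad)]:\Gd'(\strad)=(\buad\cdot\grad)\tr(\ln(\Bd(\strad)))$, a total derivative along $\buad$, whose integral vanishes since $\buad\in\Uz$ and $\buad=\bzero$ on $\partial\D$; the scalar analogue $[(\buad\cdot\grad)\Bd(1-\frac{\varrhoad}{b})]\,\Gd'(1-\frac{\varrhoad}{b})=(\buad\cdot\grad)\ln(\Bd(1-\frac{\varrhoad}{b}))$ disposes of the convective term of the trace equation. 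Likewise the stretching terms $\kd[(\gbuad)\Bd(\strad)+\Bd(\strad)(\gbuad)^T]$ tested against $\Gd'(\strad)$ collapse, via~\eqref{eq:symmetric-tr} and~\eqref{eq:inverse-Gd}, to $2\kd\,\div\buad=0$, while the polymeric term of~\eqref{eq:aoldroyd-b-sigmad} reduces (using $\Ad(\strad,\varrhoad)\Bd(\strad)=\Gd'(1-\frac{\varrhoad}{b})\Bd(\strad)-\I$ and $\div\buad=0$) to $\frac{\e}{\Wi}\intd\kd\,\Gd'(1-\frac{\varrhoad}{b})\,\Bd(\strad):\gbuad\,\ddx$, which cancels exactly against the term produced by the $2\kd\,\gbuad:\Bd(\strad)$ contribution of the trace equation. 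Hence $\kd$ drops out of the final estimate, as it must.

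It then remains to collect the reaction terms (those carrying $\Wi^{-2}$) and the two diffusion terms. Writing $a:=\Gd'(1-\frac{\varrhoad}{b})$, the stress and trace tests contribute $\frac{\e}{2\Wi^2}\intd[\,ad-\tr(\Gd'(\strad))\,]\,\ddx$ and $-\frac{\e}{2\Wi^2}\intd a\,[\,a\,\tr(\Bd(\strad))-d\,]\,\ddx$; using $\Ad(\strad,\varrhoad)=a\I-\Gd'(\strad)$, $\Gd'(\strad)\Bd(\strad)=\I$ and the identity $\tr((\Ad(\strad,\varrhoad))^2\Bd(\strad))=a^2\tr(\Bd(\strad))-2ad+\tr(\Gd'(\strad))$, these sum to $-\frac{\e}{2\Wi^2}\intd\tr((\Ad(\strad,\varrhoad))^2\Bd(\strad))\,\ddx$, which moved to the left is the last term of~\eqref{eq:estimate-PadL}, nonnegative by~\eqref{Addef} and~\eqref{eq:positive-term}. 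For the diffusion terms, integration by parts with~\eqref{eq:aneumannd} produces $\frac{\e\alpha}{2\Wi}\intd\grad\strad::\grad(\Gd'(\strad))\,\ddx$ and $-\frac{\e\alpha}{2\Wi}\intd\grad\varrhoad\cdot\grad(\Gd'(1-\frac{\varrhoad}{b}))\,\ddx$; applying~\eqref{matrix:Lipschitz} (and its $1\times1$ scalar version, equivalently the bound $|\Gd''|\le\delta^{-2}$) to difference quotients of $\strad$ and $\varrhoad$ gives $\grad\strad::\grad(\Gd'(\strad))\le-\delta^2\|\grad\Gd'(\strad)\|^2$ and $\grad\varrhoad\cdot\grad(\Gd'(1-\frac{\varrhoad}{b}))\ge b\,\delta^2\|\grad\Gd'(1-\frac{\varrhoad}{b})\|^2$, so these terms, moved to the left, yield $\frac{\alpha\e\delta^2}{2\Wi}\intd[\,\|\grad\Gd'(\strad)\|^2+b\,\|\grad\Gd'(1-\frac{\varrhoad}{b})\|^2\,]\,\ddx$. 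Assembling everything gives precisely~\eqref{eq:estimate-PadL}.

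The main obstacle is the convective-term bookkeeping of the second paragraph: one must verify simultaneously that advecting $\Bd(\strad)$ (resp.\ $-b\,\Bd(1-\frac{\varrhoad}{b})$) makes the advection integrate out to nothing after testing, and that the $\kd$-weighted stretching and polymeric terms still cancel across the momentum/trace coupling — this is the structural reason for the particular form of~\eqref{eq:aoldroyd-b-sigma2d} and~\eqref{eq:aoldroyd-b-sigma2dtr}, and the cancellation fails if one advects $\strad$ and $\varrhoad$ directly. By comparison the reaction-term algebra is purely computational, and the diffusion estimates are routine given ``sufficiently smooth'', being exactly as in the diffusive Oldroyd-B analysis of~\cite{barrett-boyaval-09}.
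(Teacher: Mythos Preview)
Your proposal is correct and follows essentially the same approach as the paper: test \eqref{eq:aoldroyd-b-sigmad} with $\buad$, \eqref{eq:aoldroyd-b-sigma2d} with $-\frac{\e}{2\Wi}\Gd'(\strad)$, \eqref{eq:aoldroyd-b-sigma2dtr} with $\frac{\e}{2\Wi}\Gd'(1-\frac{\varrhoad}{b})$, and combine. The only cosmetic difference is in the convective term: the paper first integrates by parts to shift $(\buad\cdot\grad)$ onto $\Gd'(\strad)$ and then invokes the auxiliary concave function $\Hd$ of \eqref{eq:Hd} (with $\Hd'(\Gd'(\cdot))=\Bd(\cdot)$) to recognise a total derivative $(\buad\cdot\grad)\tr(\Hd(\Gd'(\strad)))$, whereas you apply \eqref{eq:deriv-tensor-g} with $g=\ln$ directly to $\Bd(\strad)$ to obtain $(\buad\cdot\grad)\tr(\ln(\Bd(\strad)))$; since $\Gd'$ takes values in $(0,\delta^{-1}]$ where $\Hd=\ln$, one has $\Hd(\Gd'(\strad))=-\ln(\Bd(\strad))$ and the two total derivatives agree up to sign, so both arguments are equivalent.
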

\begin{proof} 
Similarly to the proof of Proposition \ref{eq:estimate-Pd}, 
on noting Remark \ref{remsplitd},
we
multiply the Navier-Stokes equation (\ref{eq:aoldroyd-b-sigmad}) with $\buad$
and the stress equation 
(\ref{eq:aoldroyd-b-sigma2d})
with $-\frac{\e}{2\Wi}\Gd'(\strad)$,
the trace equation 
(\ref{eq:aoldroyd-b-sigma2dtr})
with $\frac{\e}{2\Wi}\Gd'\brk{1-\frac{\varrhoad}{b}}$,
sum and integrate over $\D$.  
This yields, after 
performing integration by parts and noting 
(\ref{eq:aoldroyd-b-sigma1d},g,h), (\ref{eq:symmetric-tr}), (\ref{eq:inverse-Gd})
and (\ref{Addef}), that
\begin{align}
&\intd  \Brk{ \frac{\Re}{2} 
\pd{\|\buad\|^2}{t} + (1-\e)\|\gbuad\|^2 }\ddx 
\label{PdaLener} \\
&\hspace{0.5in}
-\frac{\e}{2\Wi}
\intd 
\brk{\pd{\strad}{t}
+(\buad\cdot\grad) 
\Bd(\strad)} : \Gd'(\strad)\,\ddx
\nonumber \\
& \hspace{0.5in}
+\frac{\e}{2\Wi} \intd 
\brk{\pd{\varrhoad}{t}
-b\,(\buad\cdot\grad) 
\Bd\brk{1-\frac{\varrhoad}{b}}} \Gd'\brk{1-\frac{\varrhoad}{b}}
\,\ddx
\nonumber \\
&\hspace{0.5in}
-\frac{\alpha\e}{2\Wi} \intd \Brk{
\grad\strad :: \grad \Gd'(\strad)-
\grad\varrhoad \cdot \grad \Gd'\brk{1-\frac{\varrhoad}{b}}}
\ddx
\nonumber \\
& \hspace{0.5in}+ \displaystyle \frac{\e}{2 {\rm Wi}^2}\intd
\tr
\brk{
  \brk{
  \Ad(\strad,\varrhoad) 
  %\Gd'\brk{1-\frac{\varrhoad}{b}}\I
  %-\Gd'(\strad)
  }^2
  \Bd(\strad)}
\ddx
= 
\langle \f,\buad\rangle_{H^1_0(\D)}.
\nonumber 
\end{align}

Using (\ref{eq:deriv-tensor-g}),  
we have that
\begin{align}
&\pd{\strad}{t} : \Gd'(\strad)
= \pd{}{t} \tr\brk{\Gd(\strad)}
\quad\mbox{and} \quad
\pd{\varrhoad}{t}\, \Gd'\brk{1-\frac{\varrhoad}{b}}
= -b\,\pd{}{t}\Gd\brk{1-\frac{\varrhoad}{b}}.
\label{PadLener1}
\end{align}
\begin{comment}
We will deal with the convection term differently to the
approach used in (\ref{Pdener1}),
as that cannot be mimicked at a discrete level using
continuous piecewise linear elements to approximate $\stradorL$.
Note that we cannot use $S_h^0$ with the desirable property
(\ref{eq:inclusion1}) to approximate $\stradorL$, 
as we now have the added diffusion term.
\end{comment}      
As $\Bd(\strad)\equiv \Hd'
(\Gd'(\strad))$, on recalling (\ref{eq:Hd}),
we have, on noting
$\buad \in \Uz$ and the spatial version of (\ref{eq:deriv-tensor-g}), that 
\begin{align}
\label{PadLeber1a} 
-\int_\D (\buad\cdot\grad)\Bd(\strad):\Gd'(\strad)\,\ddx 
&= \int_\D \Bd(\strad): (\buad\cdot\grad)\Gd'(\strad) \,\ddx
 \\
& =
\int_\D
(\buad\cdot\grad)\tr\brk{\Hd(\Gd'(\strad))}\,\ddx = 0.
\nonumber
\end{align}
Similarly to (\ref{PadLeber1a}), we have that
\begin{align}
\label{PadLeber1atr} 
-\int_\D \Brk{(\buad\cdot\grad)\Bd\brk{1-\frac{\varrhoad}{b}}}
\Gd'\brk{1-\frac{\varrhoad}{b}} \,\ddx
& =
\int_\D
(\buad\cdot\grad)\Hd\brk{\Gd'\brk{1-\frac{\varrhoad}{b}}}\,\ddx = 0.
\end{align}
Similarly to (\ref{matrix:Lipschitz}), we have that 
\begin{subequations}
\begin{alignat}{2}
-\grad \strad :: \grad \Gd'(\strad)
&\geq \delta^2 \| \grad \Gd'(\strad)\|^2 
\qquad \qquad &&\mbox{a.e.\ in } \D_T, 
\label{PdaLener0}
\\
\grad\varrhoad \cdot \grad \Gd'\brk{1-\frac{\varrhoad}{b}}
&= -b \grad \brk{1-\frac{\varrhoad}{b}} \cdot \grad \Gd'\brk{1-\frac{\varrhoad}{b}}
\qquad &&
\label{Pdalener0b} \\
&\geq b\,\delta^2 \left\| \grad \Gd'\brk{1-\frac{\varrhoad}{b}}\right\|^2 
\qquad \qquad &&\mbox{a.e.\ in } \D_T.
\nonumber
\end{alignat}
\end{subequations}
Combining (\ref{PdaLener})--(\ref{PdaLener0},b) yields the desired result (\ref{eq:estimate-PadL}).
\begin{comment}
Similarly to (\ref{Pdener2}) and (\ref{Pdener3}) we obtain that
\begin{align}
&\brk{\brk{\gbuadorL}\BdorL(\stradorL)+ 
\BdorL(\stradorL)\brk{\gbuadorL}^T
}:\brk{\I-\GdorL'(\stradorL)} 
\nonumber \\
&
\hspace{2.5in}
= 2\tr\brk{ \brk{\gbuadorL}\BdorL(\stradorL) } \,,
\label{PdaLener2} 
\end{align}
and once again the terms involving the left-hand side of (\ref{PdaLener2})
in (\ref{PdaLener})  
cancel with the term $\frac{\e}{\Wi}\BdorL(\stradorL):\gbuadorL$
in (\ref{PdaLener})  
arising from the Navier-Stokes equation.  
Finally, the treatment of the remaining term $\brk{\stradorL-\I}:
\brk{\I-\GdorL'(\stradorL)}$
follows similarly to (\ref{remterm}); and so we obtain the 
desired free energy inequality \eqref{eq:estimate-PadL}.
\end{comment}
\end{proof}

\begin{remark} \label{remrho}
We note if we multiply the advection term in 
(\ref{eq:aoldroyd-b-sigma2d}) by $-\Gd'(1-\frac{\tr(\strad)}{b})\,\I$ 
and integrate over $\D$, then we obtain, on noting $\buad \in \Uz$ and
(\ref{PadLeber1atr}), that
\begin{align*}
&-\int_\D (\buad \cdot \grad) \Bd(\strad) 
: \Gd'\brk{1-\frac{\tr(\strad)}{b}}\I \ddx\\  
&\hspace{1in} 
=\int_\D
\tr(\Bd(\strad)) \, (\buad \cdot \grad) \Gd'\brk{1-\frac{\tr(\strad)}{b}}\, \ddx   \\
& \hspace{1in}=-b \int_\D
\left(1 - \frac{\tr(\Bd(\strad))}{b}\right) 
\, (\buad \cdot \grad) \Gd'\brk{1-\frac{\tr(\strad)}{b}}\, \ddx 
\\
& \hspace{1in}\neq -b \int_\D
\Bd\left(1 - \frac{\tr(\strad)}{b}\right) 
\, (\buad \cdot \grad) \Gd'\brk{1-\frac{\tr(\strad)}{b}}\, \ddx =0.
\end{align*}
Hence, the need for the new variable $\varrhoad$ in order to mimic the 
free energy structure of {\bf (P$_\alpha$)}.
\end{remark}

The following Corollary follows from   
(\ref{eq:estimate-PadL}) on noting the proof of 
Corollary \ref{cor:free-energy-Pd}.

\begin{corollary} 
\label{cor:free-energy-PadL}
Under the assumptions of Proposition \ref{prop:free-energy-PadL}
it follows that
\begin{align}
\label{eq:free-energy-boundaL}
&\sup_{t \in (0,T)}\Fd(\buad(t,\cdot),\strad(t,\cdot),\varrhoad(t,\cdot))
+
\frac{1-\e}{2}
\int_{\D_T} 
\|\gbuad\|^2\,\ddx\,\ddt 
\\
& \qquad +
\frac{\alpha\e\delta^2}{{2\rm Wi}} 
\,\int_{\D_T}\Brk{
\|\grad \Gd'(\strad)\|^2
+b \,\left\|\grad \Gd'\brk{1-\frac{\varrhoad}{b}}\right\|^2}
\ddx\,\ddt
\nonumber
\\
& \qquad
+\frac{\e}{2{\rm Wi}^2}
\int_{\D_T}
\tr\brk{
  \brk{
  \Ad(\strad,\varrhoad)
  }^2
  \Bd(\strad)}
\ddx\,\ddt
\nonumber
\\
& \hspace{1.3in}
\le 2\brk{ F(\bu^0,\strs^0) + \frac{1+C_P}{2(1-\e)} 
\Norm{\f}_{L^2(0,T;H^{-1}(\D))}^2 }.
\nonumber
\end{align}
\end{corollary}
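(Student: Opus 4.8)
The plan is to mimic the passage from Proposition~\ref{prop:free-energy-Pd} to Corollary~\ref{cor:free-energy-Pd} via the forcing bound~\eqref{fbound}, the only genuinely new point being the lower bound $\Fd(\buad,\strad,\varrhoad)\ge 0$, which now requires the conservation identity~\eqref{trint} since $\varrhoad\ne\tr(\strad)$.

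First I would bound the forcing term on the right-hand side of~\eqref{eq:estimate-PadL} exactly as in~\eqref{fbound}: by the Cauchy--Schwarz and Young inequalities and the Poincar\'e inequality~\eqref{eq:poincare},
\begin{align*}
\langle \f,\buad\rangle_{H^1_0(\D)}
\le \frac{1}{2\nu^2}\Norm{\f}_{H^{-1}(\D)}^2
+ \frac{\nu^2}{2}\,(1+C_P)\,\Norm{\gbuad}_{L^2(\D)}^2,
\end{align*}
and then take $\nu^2=(1-\e)/(1+C_P)$, which absorbs $\tfrac{1-\e}{2}\intd\|\gbuad\|^2\,\ddx$ into the left-hand side of~\eqref{eq:estimate-PadL}. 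Integrating the resulting differential inequality over $(0,t)$ for an arbitrary $t\in(0,T]$, and using~\eqref{Fdin} together with $\varrhoad(0,\cdot)=\tr(\strs^0)$ so that the initial value of the free energy is $\Fd(\bu^0,\strs^0,\tr(\strs^0))\le F(\bu^0,\strs^0)$, I obtain
\begin{align*}
\Fd(\buad(t,\cdot),\strad(t,\cdot),\varrhoad(t,\cdot)) + I(t)
\le F(\bu^0,\strs^0) + \frac{1+C_P}{2(1-\e)}\,\Norm{\f}_{L^2(0,t;H^{-1}(\D))}^2 =: B,
\end{align*}
where $I(t)$ denotes the sum of the three space-time integral terms that remain on the left after the absorption (namely $\tfrac{1-\e}{2}$ times the $\|\gbuad\|^2$ integral, the two stress-diffusion integrals, and the $\tr((\Ad(\strad,\varrhoad))^2\Bd(\strad))$ integral) over $(0,t)\times\D$; each integrand is nonnegative, the last by~\eqref{Addef} and~\eqref{eq:positive-term}, so $I$ is nonnegative and nondecreasing in $t$.

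Next I would establish $\Fd(\buad,\strad,\varrhoad)\ge 0$. Since $\varrhoad$ and $\tr(\strad)$ do not coincide, \eqref{eq:comparison-oldroyd-d} is not directly applicable; instead I combine the pointwise estimate~\eqref{eq:log-concave-scalar-d}, namely $-b\,\Gd(1-\varrhoad/b)\ge\varrhoad$, with~\eqref{Entropy1}, namely $-\tr(\Gd(\strad)+\I)\ge-\tr(\strad)$, and then invoke~\eqref{trint} to get
\begin{align*}
\intd\Brk{-b\,\Gd\brk{1-\frac{\varrhoad}{b}}-\tr\brk{\Gd(\strad)+\I}}\ddx
\ge \intd\brk{\varrhoad-\tr(\strad)}\,\ddx = 0,
\end{align*}
whence $\Fd(\buad,\strad,\varrhoad)\ge\frac{\Re}{2}\intd\|\buad\|^2\,\ddx\ge 0$. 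With this in hand the argument closes as in Corollary~\ref{cor:free-energy-P}: from the displayed inequality and $I(t)\ge0$ we get $\Fd(\buad(t,\cdot),\strad(t,\cdot),\varrhoad(t,\cdot))\le B$ for every $t\in(0,T)$, hence the supremum is $\le B$; letting $t\uparrow T$ and discarding the nonnegative term $\Fd$ on the left gives $I(T)\le B$. Adding the two bounds yields~\eqref{eq:free-energy-boundaL}, with the factor $2$.

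There is no serious obstacle — this is a routine energy estimate once~\eqref{eq:estimate-PadL} is available — but the one step that genuinely needs care, and the reason~\eqref{trint} was recorded beforehand, is the nonnegativity $\Fd(\buad,\strad,\varrhoad)\ge 0$: unlike in Corollary~\ref{cor:free-energy-Pd} it cannot be read off from~\eqref{eq:comparison-oldroyd-d}, because $\varrhoad\ne\tr(\strad)$, and it relies on the global conservation of $\intd[\tr(\strad)-\varrhoad]\,\ddx$.
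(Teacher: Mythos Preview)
Your proposal is correct and follows the same route as the paper: bound the forcing term by~\eqref{fbound} with $\nu^2=(1-\e)/(1+C_P)$, absorb half the viscous term, integrate in time, and use~\eqref{Fdin} on the initial data. The paper's proof is a one-line reference to the proof of Corollary~\ref{cor:free-energy-Pd}.

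You actually go further than the paper in one respect. The factor $2$ on the right of~\eqref{eq:free-energy-boundaL} requires $\Fd(\buad,\strad,\varrhoad)\ge 0$, and you are right that this does \emph{not} follow directly from~\eqref{eq:comparison-oldroyd-d}, since here the third argument is $\varrhoad$ rather than $\tr(\strad)$. Your use of~\eqref{eq:log-concave-scalar-d}, \eqref{Entropy1} and the conservation identity~\eqref{trint} to recover nonnegativity is a genuine detail that the paper's terse proof leaves implicit. This is precisely why~\eqref{trint} was recorded; your proof makes that dependency explicit.
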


\section{Finite element approximation of {\bf(P$_{\alpha,\delta}$)}}
\label{sec:Palphah}
\setcounter{equation}{0}
\subsection{Finite element discretization}

\label{5.1}
We now introduce a conforming finite element discretization of
{\bf (P$_{\alpha,\delta}$)}, (\ref{eq:aoldroyd-b-sigmad}--h),
which satisfies a discrete analogue of 
(\ref{eq:estimate-PadL}).  
As noted in  
Section \ref{sec:Palpha}, 
we cannot use $\S_h^0$ with the desirable property
(\ref{eq:inclusion1}) to approximate $\strad$, 
as we now have the added diffusion term. 
In the following, we choose 
 \begin{subequations}
 \begin{align}
 \Uh^1 &:= \Uh^2  \subset \U \quad \mbox{or} \quad \Uh^{1,+}
 \subset \U,
 \label{Vhmini} \\
 {\rm Q}_h^1 &=\{q \in C(\overline{\D}) \,:\, q \mid_{K_k} \in \PP_1 \quad
 k=1,\ldots, N_K\} \subset {\rm Q},  
 \label{Qh1}\\
 \Shone &=\{\bphi \in [C(\overline{\D})]^{d \times d}_{\rm S} 
 \,:\, \bphi \mid_{K_k} \in [\PP_1]^{d \times d}_{\rm S} \quad
 k=1,\ldots, N_K\} \subset \S\, 
 \label{Sh1}\\
\mbox{and} \qquad
\Vhone &= \BRK{\bv \in\Uh^1 \,:\, \int_{\D} q \, {\div\bv} \,\ddx= 0 \quad  
\forall q\in\mathrm{Q}_h^1} \,;
\label{Vh1}
\end{align}
\end{subequations}
where $\Uh^2$ is defined as in (\ref{Vh2}) and, on recalling the barycentric
coordinate notation used in (\ref{bvarsig}),
\begin{align}
  \Uh^{1,+}&:=\left\{\bv \in [C(\overline{\D})]^d\cap \U \,:\, \bv \mid_{K_k} \in \left[\PP_1
  \oplus \mbox{span} \prod_{i=0}^d \eta^k_i \right]^d \quad
 k=1,\ldots, N_K\right\}\label{Vh1p}\,.
\end{align}

The velocity-pressure choice, $\Uh^2 \times {\rm Q}_h^1$, is the lowest order
Taylor-Hood element. It    
satisfies (\ref{eq:LBB}) with $\Uh^0$ and ${\rm Q}_h^0$
replaced by $\Uh^2$ and
${\rm Q}_h^1$, respectively,
provided, in addition to $\{{\mathcal T}_h\}_{h>0}$ being a regular family of meshes,
that each simplex has at least one vertex in $\D$, see p177 in Girault and 
Raviart \cite{GiraultRaviart} in the case $d=2$ and Boffi \cite{Boffi} in the 
case $d=3$.      
Of course, this is a very mild restriction on $\{{\mathcal T}_h\}_{h>0}$. 
The velocity-pressure choice, $\Uh^{1,+} \times {\rm Q}_h^1$, is called the
mini-element. It    
satisfies (\ref{eq:LBB}) with $\Uh^0$ and ${\rm Q}_h^0$
replaced by $\Uh^{1,+}$ and
${\rm Q}_h^1$, respectively;
see Chapter II, Section 4.1 in Girault and 
Raviart \cite{GiraultRaviart} in the case $d=2$ 
and Section 4.2.4 in Ern and Guermond \cite{ern-guermond-04} in the 
case $d=3$.  
Hence for both choices of $\Uh^1$, it follows that
for all $\bv \in \Uz$ there exists a sequence $\{\bv_h\}_{h>0}$, with
$\bv_h \in \Vhone$, such that
\begin{align}
\lim_{h \rightarrow 0_+} \|\bv-\bv_h\|_{H^1(\D)} =0\,.
\label{Vhconv} 
\end{align}  
 
We recall the well-known local inverse inequality
for ${\rm Q}^1_h$
\begin{alignat}{3}
\|q\|_{L^{\infty}(K_k)} &\leq C\,|K_k|^{-1}\,\int_{K_k} |q| \,\ddx
\qquad &&\forall q \in {\rm Q}^1_h, \qquad
&&k=1,\ldots,N_K  
\label{inverse}
\\
\Rightarrow \qquad
\|\bchi\|_{L^{\infty}(K_k)} &\leq C\,|K_k|^{-1}\,\int_{K_k} \|\bchi\| \,\ddx\qquad &&\forall 
\bchi \in {\rm S}^1_h, \qquad
&&k=1,\ldots,N_K. 
\nonumber
\end{alignat}
We recall a similar well-known local inverse inequality for $\Shone$
and $\Vhone$
\begin{subequations}
\begin{align}
\|\grad \bphi\|_{L^2(K_k)} &\leq C\, h_k^{-1} \|\bphi\|_{L^2(K_k)}
\qquad \forall 
\bphi \in \Shone, \qquad
k=1,\ldots,N_K,
\label{Sinverse}\\
\|\grad \bv\|_{L^2(K_k)} &\leq C\, h_k^{-1} \|\bv\|_{L^2(K_k)}
\qquad \forall 
\bv \in \Vhone, 
\qquad
k=1,\ldots,N_K.  
\label{Vinverse}
\end{align} 
\end{subequations}

We introduce the interpolation operator $\pi_h : C(\overline{\D})
\rightarrow {\rm Q}^1_h$, and extended naturally to 
$\pi_h : [C(\overline{\D})]^{d \times d}_{\rm S}
\rightarrow {\rm S}^1_h$,
such that for all $\eta \in C(\overline{\D})$
and $\bphi \in [C(\overline{\D})]^{d \times d}_{\rm S}$ 
\begin{align}
\pi_h \eta( P_p) =\eta(P_p) 
\qquad
\mbox{and}
\qquad  
\pi_h \bphi( P_p) =\bphi(P_p) 
\qquad p=1,\ldots, N_P,  
\label{eq:pi1h}
\end{align}  
where $\{P_p\}_{p=1}^{N_P}$ are the vertices of $\mathcal{T}_h$.
As $\bphi \in \Shone$ and $q \in \Qhone$ do not imply that  
$\Gd'(\bphi) \in \Shone$ and 
$\Gd'\brk{1-\frac{q}{b}} \in \Qhone$, we have to test the finite element 
approximation %of %the (P$_{\alpha,\delta}$) version 
of (\ref{eq:aoldroyd-b-sigma2d},d) with
$-\frac{\e}{2\Wi}\pi_h[\Gd'(\stradh^{n})] \in \Shone$ and 
$\frac{\e}{2\Wi}\pi_h\Brk{\Gd'\brk{1-\frac{\varrhoadh^n}{b}}} \in \Qhone$, respectively, where
$\stradh^{n}\in \Shone$ and $\varrhoadh^n\in \Qhone$ are our finite element approximations
to $\strad$ 
and $\varrhoad$ at time level $t_{n}$.
Therefore the finite element approximation %of the (P$_{\alpha,\delta}^{(L)}$) version 
of (\ref{eq:aoldroyd-b-sigma2d},d)
have to be constructed to mimic the results 
(\ref{PdaLener})--(\ref{PdaLener0},b),
when tested with $-\frac{\e}{2\Wi}\pi_h[\Gd'(\stradh^{n})] \in \Shone$
and $\frac{\e}{2\Wi}\pi_h\Brk{\Gd'\brk{1-\frac{\varrhoadh^n}{b}}} \in \Qhone$, respectively.

In order to mimic (\ref{PadLener1}) and the {\bf (P$_{\alpha,\delta}$)} analogue of 
(\ref{Pdener2}), we need to use 
numerical integration (vertex sampling).
We note the following results.
As the basis functions associated with ${\rm Q}^1_h$ and $\Shone$
are nonnegative and sum to unity everywhere, we have, on noting (\ref{normprod}), 
for $k=1,\ldots,N_K$ that  
\begin{subequations}
\begin{alignat}{2}
\|\pi_h [\bphi \,\bpsi]\| & \leq 
\pi_h [\,\|\bphi\| \,\|\bpsi\|\,] && 
\label{interprod}\\
&\leq \left[\pi_h[\,\|\bphi\|^{r_1}\,]\,\right]^{\frac{1}{r_1}}\,
\left[\pi_h[\,\|\bpsi\|^{r_2}\,]\,\right]^{\frac{1}{r_2}}
\quad &&\mbox{on }K_k, \quad \forall 
\bphi,\,\bpsi \in [C(\overline{K_k})]^{d \times d}_{\rm S},
\nonumber \\
\|\pi_h \bphi\|^2 &\leq \pi_h[\,\|\bphi\|^2\,]
\quad &&\mbox{on }K_k, \quad \forall 
\bphi \in [C(\overline{K_k})]^{d \times d}_{\rm S},
\label{interpinf} 
\end{alignat}
\end{subequations}
where $r_1,\,r_2 \in (1,\infty)$ satisfy $r_1^{-1}+r_2^{-1}=1$.
In addition, we have for $k=1,\ldots,N_K$ that
\begin{align}
\int_{K_k} \|\bchi\|^2 \,\ddx\leq \int_{K_k} \pi_h[\,\|\bchi\|^2]\,\ddx 
\leq C \int_{K_k} \|\bchi\|^2 \,\ddx \qquad  \forall \bchi \in \Shone.
\label{eqnorm}
\end{align}
The first inequality in (\ref{eqnorm}) follows immediately from
(\ref{interpinf}), and the second from applying (\ref{inverse})
and a Cauchy--Schwarz inequality.  
Of course, scalar versions of (\ref{interprod},b) and (\ref{eqnorm})
hold with $[C(\overline{K_k})]^{d \times d}_{\rm S}$ and $\Shone$
replaced by 
$C(\overline{K_k})$ and $\Qhone$, respectively.
 
Furthermore, for later use,
we recall the following well-known results concerning the interpolant $\pi_h$
for $k=1,\ldots, N_K$:
\begin{subequations}
\begin{align}
\|(\I-\pi_h)q\|_{W^{1,\infty}(K_k)} &\leq C\,h\,|q|_{W^{2,\infty}(K_k)} 
\qquad \forall q \in W^{2,\infty}(K_k), 
\label{interp1}\\
\|(\I-\pi_h)[q_1 \,q_2]\|_{L^1(K_k)}
& \leq C\,h_k^2\,\|\nabla q_1\|_{L^2(K_k)}\,\|\nabla q_2\|_{L^2(K_k)}
\label{interp2}
\\
&\leq C\,h_k\,\|q_1\|_{L^2(K_k)}\,\|\nabla q_2\|_{L^2(K_k)}
\qquad \forall q_1,\,q_2 \in \Qhone. 
\nonumber
\end{align}
\end{subequations}

In order to mimic (\ref{PadLeber1a}) and (\ref{PadLeber1atr}), 
we have to carefully construct our finite
element approximation of the advective terms  
in (\ref{eq:aoldroyd-b-sigma2d},d). 
Our construction is a non-trivial extension of an approach that has been used 
in the finite element approximation of fourth-order degenerate nonlinear parabolic
equations, such as the thin film equation; see e.g.\ 
Gr\"{u}n and Rumpf \cite{GrunRumpf}
and Barrett and N\"{u}rnberg \cite{surf2d}. 
Let $\{\be_i\}_{i=1}^d$ be the orthonormal vectors %of a direct orthonormal basis 
in $\R^d$, 
such that the $j^{th}$ component of $\be_i$ is $\delta_{ij}$, $i,j=1,\ldots,d$.
%on using the Kronecker notation.
Let $\widehat{K}$   
be the standard open reference simplex in $\R^d$
with vertices $\{\widehat{P}_i\}_{i=0}^d$, 
where $\widehat{P}_0$ is the origin and 
$\widehat{P}_i = \be_i$, $i=1,\ldots,d$.
Given a simplex $K_k\in\mathcal{T}_h$ with vertices $\{P^k_i\}_{i=0}^d$,
then there exists 
a non-singular matrix $B_k$ 
such that the linear mapping
\beq \label{eq:mapping-RK}
\mathcal{B}_k : \widehat{\xx}\in\R^d \mapsto P_0^k + B_k\widehat{\xx} \in \R^d 
\eeq
maps vertex $\widehat{P}_i$  
to vertex $P^k_i$, $i=0,\ldots,d$. 
Hence $\mathcal{B}_k$ maps $\widehat{K}$ 
to $K_k$.
For all $\eta\in  
{\rm Q}^1_h$
and $K_k \in \mathcal{T}_h$, we define
\begin{align}
\label{eq:mapping-derivative}
\widehat{\eta}(\widehat{\xx}) := \eta(\mathcal{B}_k(\widehat{\xx})) \quad 
\forall \widehat{\xx}\in\widehat{K}
\quad \Rightarrow
\quad
\grad \eta(
\mathcal{B}_k(\widehat \xx)
) = 
(B_k^T)^{-1}  
\widehat{\grad} \widehat{\eta}
(\widehat \xx)
\quad \forall \widehat \xx\in \widehat{K}\,,
\end{align}
where for all  $\widehat \xx\in \widehat{K}$ 
\begin{align}
\label{diff}
[\widehat{\grad} \widehat{\eta}
(\widehat \xx)]_j = \frac{\partial}{\partial \widehat x_j}\widehat{\eta}
(\widehat \xx) = \widehat \eta(\widehat{P}_j)-\widehat \eta (\widehat{P}_0)
= \eta({P}_j^k)-\eta ({P}_0^k)\qquad j=1,\ldots,d.
\end{align}
Such notation is easily extended to $\bphi \in \S^1_h$.

Given $q \in \Qhone$ and $K_k \in \mathcal{T}_h$, then first, for  
$j=1, \ldots, d$, we find the unique 
$\widehat \Lambda_{\delta,j}^k(\widehat q) \in \R$,
which is continuous on $q$,
such that
\begin{align}
\widehat \Lambda_{\delta,j}^k(\widehat q) \,  
\frac{\partial}{\partial \widehat x_j}
\widehat{\pi}_h[\Gd'(\widehat q)] =
\frac{\partial}{\partial \widehat x_j}
\widehat{\pi}_h[\Hd(\Gd'(\widehat q))]
\qquad \mbox{on } \widehat K,
\label{qhLambdaj}
\end{align}
where $(\widehat \pi_h \widehat \eta)(\widehat \xx) \equiv
(\pi_h \eta)(\mathcal{B}_k \widehat \xx)$ for all
$\widehat \xx \in \widehat K$ and $\eta \in C(\overline{K_k})$.
We set 
\begin{align}
\widehat \Lambda_{\delta,j}^k(\widehat q)
:= \left\{ \begin{array}{ll}
\displaystyle \frac{\Hd(\Gd'(q(P_j^k))) -
\Hd(\Gd'(q(P_0^k)))}
{\Gd'(q(P_j^k)) -
\Gd'(q(P_0^k))} \qquad &\mbox{if }\Bd(q(P_j^k)) \neq
\Bd(q(P_0^k)),\\[2mm]
\Bd(q(P_j^k))=\Bd(q(P_0^k))
\qquad &\mbox{if }\Bd(q(P_j^k)) =
\Bd(q(P_0^k)),
\end{array}
\right.
\label{qhLambdajdef}
\end{align}
where we have noted that $\Gd'(\Bd(s))=\Gd'(s)$ for all $s \in \R$
and $\Gd'(\cdot)$ is strictly decreasing on $[\delta,\infty)$.
Clearly, $\widehat \Lambda_{\delta,j}^k(\widehat q)
\in \R$, $j=1,\ldots,d$, 
satisfies (\ref{qhLambdaj}) and
depends continuously on $q \mid_{K_k}$.

Next, we extend the construction (\ref{qhLambdajdef}) for a  
given $\bphi \in \Shone$ and $K_k \in \mathcal{T}_h$, to find for  
$j=1, \ldots, d$ the unique 
$\widehat \Lambda_{\delta,j}^k(\widehat \bphi) \in \RS$,
which is continuous on $\bphi$,
such that
\begin{align}
\widehat \Lambda_{\delta,j}^k(\widehat \bphi) :  
\frac{\partial}{\partial \widehat x_j}
\widehat{\pi}_h[\Gd'(\widehat \bphi)] =
\frac{\partial}{\partial \widehat x_j}
\widehat{\pi}_h[\tr(\Hd(\Gd'(\widehat \bphi)))]
\qquad \mbox{on } \widehat K.
\label{hLambdaj}
\end{align} 
To construct $\widehat \Lambda_{\delta,j}^k(\widehat \bphi)$
satisfying (\ref{hLambdaj}),
we note the following. 
We have from (\ref{eq:Hd})  
and (\ref{gconcave2}) that
\begin{align}
\label{hLambdajineq}
\Bd(\bphi(P_j^k)) :
(\Gd'(\bphi(P_j^k)) -
\Gd'(\bphi(P_0^k)))
&\leq 
\tr(\Hd(\Gd'(\bphi(P_j^k)) -
\Hd(\Gd'(\bphi(P_0^k)))
\\
& 
\leq
\Bd(\bphi(P_0^k)) :
(\Gd'(\bphi(P_j^k)) -
\Gd'(\bphi(P_0^k))).
\nonumber
\end{align}
\begin{comment}
Next we note from
(\ref{eq:HdL}), ((\ref{matrix:Lipschitz}) 
and (\ref{ipmat})
that
\begin{align}
&-(\BdL(\bphi(P_j^k))-
\BdL(\bphi(P_0^k))):
(\GdL'(\bphi(P_j^k)) -
\GdL'(\bphi(P_0^k)))
\nonumber \\
& \hspace{2in}
\geq
L^{-2} \,\|\BdL(\bphi(P_j^k))-
\BdL(\bphi(P_0^k))\|^2\,;
\label{betajtr}
\end{align}
and so the left-hand side is zero if and only if 
$\BdL(\bphi(P_j^k))=\BdL(\bphi(P_0^k))$.
\end{comment}
Since $\Gd'(\Bd(s)) = \Gd'(s)$ for all $s \in \R$,
it follows from (\ref{matrix:Lipschitz}) that
\begin{align}
\label{betajtrnoL} 
&-(\Bd(\bphi(P_j^k))-
\Bd(\bphi(P_0^k))):
(\Gd'(\bphi(P_j^k)) -
\Gd'(\bphi(P_0^k))) 
\\
&\hspace{1in} = -(\Bd(\bphi(P_j^k))-
\Bd(\bphi(P_0^k))):
(\Gd'(\Bd(\bphi(P_j^k))) -
\Gd'(\Bd(\bphi(P_0^k))))
\nonumber \\
& \hspace{1in}
\geq \delta^2 \|\Gd'(\Bd(\bphi(P_j^k)))-
\Gd'(\Bd(\bphi(P_0^k)))\|^2.
\nonumber 
\end{align}
Therefore the left-hand side of 
(\ref{betajtrnoL}) is zero   
if and only if $\Gd'(\Bd(\bphi(P_j^k)))=\Gd'(\Bd(\bphi(P_0^k)))$,
which is equivalent to $\Bd(\bphi(P_j^k))=\Bd(\bphi(P_0^k))$
as $\Gd'(\cdot)$ is invertible on $[\delta,\infty)$, the range of $\Bd(\cdot)$.
Hence, on noting (\ref{diff}), 
(\ref{hLambdajineq}),  
(\ref{betajtrnoL})
and (\ref{ipmat}), we have that
\begin{subequations}
\begin{align}
\widehat \Lambda_{\delta,j}^k(\widehat \bphi)
&:= (1-\lambda_{\delta,j}^k)\Bd(\bphi(P_j^k))
+ \lambda_{\delta,j}^k\Bd(\bphi(P_0^k))
\label{hLambdajdef}  \\
&\hspace{0.9in} \mbox{if} \quad  (\Bd(\bphi(P_j^k))-
\Bd(\bphi(P_0^k))):
(\Gd'(\bphi(P_j^k)) -
\Gd'(\bphi(P_0^k))) \neq 0\,,
\nonumber 
\\
\widehat \Lambda_{\delta,j}^k(\widehat \bphi)
&:= \Bd(\bphi(P_j^k))
= \Bd(\bphi(P_0^k))
 \label{hLambdajdefb} \\
& \hspace{0.9in} \mbox{if} \quad  (\Bd(\bphi(P_j^k))-
\Bd(\bphi(P_0^k))):
(\Gd'(\bphi(P_j^k)) -
\Gd'(\bphi(P_0^k))) = 0
\nonumber  
\end{align}
\end{subequations}
satisfies (\ref{hLambdaj}) for $j=1,\ldots,d$;
where $\lambda_{\delta,j}^k \in [0,1]$ is defined as  
\begin{align*}
\lambda_{\delta,j}^k &:= \frac{\Bigl[\tr(\Hd(\Gd'(\bphi(P_j^k)) -
\Hd(\Gd'(\bphi(P_0^k)))
-\Bd(\bphi(P_j^k)):
(\Gd'(\bphi(P_j^k)) -\Gd'(\bphi(P_0^k)))\Bigr]}
{(\Bd(\bphi(P_0^k))-
\Bd(\bphi(P_j^k))):
(\Gd'(\bphi(P_j^k)) -
\Gd'(\bphi(P_0^k)))}.
\end{align*}
Furthermore, $\widehat \Lambda_{\delta,j}^k(\widehat \bphi)
\in \RS$, $j=1,\ldots,d$, depends continuously on $\bphi \mid_{K_k}$.

Therefore given $q\in \Qhone$ and $\bphi \in \Shone$, we introduce, 
for $m,p =1, \ldots, d$,
\begin{subequations}
\begin{alignat}{2}
\Lambda_{\delta,m,p}(q) 
&= \sum_{j=1}^d
[(B_k^T)^{-1}]_{mj}\, \widehat \Lambda_{\delta,j}^k(\widehat q)
\,[B_k^T]_{jp} 
%\,B_k^T]_{mp} 
\in \R,
\qquad &&\mbox{on } K_k, 
\qquad k=1,\ldots,N_K,  
\label{Lambdampdefq}\\
\Lambda_{\delta,m,p}(\bphi) 
&= \sum_{j=1}^d
[(B_k^T)^{-1}]_{mj}\, \widehat \Lambda_{\delta,j}^k(\widehat \bphi)
\,[B_k^T]_{jp} \in \RS
\qquad &&\mbox{on } K_k, 
\qquad k=1,\ldots,N_K.  
\label{Lambdampdef}
\end{alignat}
\end{subequations}
It follows from 
(\ref{Lambdampdefq},b), (\ref{qhLambdaj}),
(\ref{hLambdaj}) and (\ref{eq:mapping-derivative}) that
\begin{align}
\Lambda_{\delta,m,p}(q) \approx \Bd(q) \,\delta_{mp},\qquad 
\Lambda_{\delta,m,p}(\bphi) \approx \Bd(\bphi) \,\delta_{mp}
\qquad  m,p=1,\ldots,d\,;
\label{Lambdampap}
\end{align}
and for $m=1,\ldots,d$ 
\begin{subequations}
\begin{alignat}{2}
\sum_{p=1}^d 
\Lambda_{\delta,m,p}(q) \, 
\frac{\partial}{\partial x_p}\pi_h[\Gd'(q)]
&=\frac{\partial}{\partial x_m}\pi_h [\Hd(\Gd'(q))]
\qquad &&\mbox{on } K_k, 
\label{Lambdajq}
\qquad k=1,\ldots,N_K,\\  
\sum_{p=1}^d 
\Lambda_{\delta,m,p}(\bphi) : 
\frac{\partial}{\partial x_p}\pi_h[\Gd'(\bphi)]
&=\frac{\partial}{\partial x_m}\pi_h [\tr(\Hd(\Gd'(\bphi)))]
\qquad &&\mbox{on } K_k,
\label{Lambdaj}
\qquad k=1,\ldots,N_K.  
\end{alignat}
\end{subequations}
For a more precise version of (\ref{Lambdampap}), see Lemma \ref{lemMXitt}
below. 
Of course, for (\ref{Lambdampdefq}) and (\ref{Lambdajq})
we can adopt the more compact notation
on $K_k$, $k=1,\ldots,N_K$, 
\begin{align}
\Lambda_{\delta}(q) 
= (B_k^T)^{-1}\, \widehat \Lambda_{\delta}^k(\widehat q)\,B_k^T 
\approx \Bd(q)\, \I \qquad \Rightarrow \qquad 
\Lambda_{\delta}(q) \grad \pi_h[\Gd'(q)] = \grad \pi_h [\Hd(\Gd'(q))],  
\label{Lambdajqcom}
\end{align} 
where $\Lambda_{\delta}^k(\widehat q) \in \RS$ is diagonal with 
$[\Lambda_{\delta}^k(\widehat q)]_{jj}=
\Lambda_{\delta,j}^k(\widehat q)$, $j=1,\ldots,d$, so that 
$\Lambda_{\delta}(q) \in \RS$ with $[\Lambda_{\delta}(q)]_{mp}
=\Lambda_{\delta,m,p}(q)$, $m,p=1,\ldots,d$. 

Finally, as the partitioning ${\mathcal T}_h$ consists of regular simplices, we have that
\begin{align}
\|(B_k^T)^{-1}\|\,\|B_k^T\| \leq C, \qquad k=1,\ldots,N_K\,.
\label{Breg}
\end{align}
Hence, it follows  from (\ref{Lambdampdefq},b), (\ref{Breg}) and (\ref{hLambdajdef},b) 
that for $k=1,\ldots,N_K$
\begin{subequations}
\begin{align}
\|\Lambda_{\delta,m,p}(q)\|_{L^{\infty}(K_k)} &\leq C\,
\|\pi_h[\Bd(q)]\|_{L^{\infty}(K_k)} \qquad \forall q \in \Qhone, 
\label{LdmpLinfq}
\\
\|\Lambda_{\delta,m,p}(\bphi)\|_{L^{\infty}(K_k)} &\leq C\,
\|\pi_h[\Bd(\bphi)]\|_{L^{\infty}(K_k)}
\qquad
\forall \bphi \in \Shone.
\label{LdmpLinf}
\end{align}
\end{subequations}

In order to mimic (\ref{PdaLener0},b),
we shall assume from now on that the family of meshes, $\{\mathcal{T}_h\}_{h>0}$, 
for the polytope
$\D$ 
consists of non-obtuse simplices only,
i.e. all dihedral angles of any simplex in
$\mathcal{T}_h$
are less than or equal to
$\frac{\pi}{2}$. 
Let $K_k$ have vertices $\{P^k_j\}_{j=0}^d$, and let $\eta^k_j(\xx)$ be the basis 
functions on $K_k$ associated with $\rm{Q}^1_h$ and $\Shone$, i.e.
$\eta^k_j \mid_{K_k} \in \mathbb{P}_1$ and $\eta^k_j(P^k_i)=\delta_{ij}$,
$i,j = 0,\ldots,d$. As $K_k$ is non-obtuse it follows that
\begin{align}
\grad \eta^k_i \cdot \grad \eta^k_j \leq 0 \qquad \mbox{on }K_k, \qquad
i,j = 0, \ldots , d, \ \mbox{ with }\ i \neq j.
\label{obtuse}
\end{align}
We then have the following result.

\begin{lemma} \label{lem:inf-bound}
Let $g \in C^{0,1}(\mathbb R)$ be monotonically increasing  
with Lipschitz constant $g_{\rm Lip}$.
As $\mathcal{T}_h$ consists of only non-obtuse simplices,
then we have for all $q \in {\rm Q}_h^1 $, $\bphi\in\Shone$ that
\begin{align} 
g_{\rm Lip} \,\grad\pi_h[g(q)] \cdot \grad q
&\ge 
\|\grad\pi_h[g(q)]\|^2 \quad 
\label{eq:inf-bound}
\mbox{and} \quad
g_{\rm Lip} \,\grad\pi_h[g(\bphi)] :: \grad \bphi
\ge 
\|\grad\pi_h[g(\bphi)]\|^2 
\\
&
\quad 
\hspace{1.5in}
\mbox{on } K_k, \quad
k=1, \ldots, N_K. 
\nonumber
\end{align}
\end{lemma}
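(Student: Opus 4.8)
The plan is to localise the estimate to a single element $K_k$ and expand everything in the barycentric coordinates $\{\eta^k_i\}_{i=0}^d$. Writing $q\mid_{K_k}=\sum_{i=0}^d q_i\,\eta^k_i$ with $q_i:=q(P^k_i)$, so that $\pi_h[g(q)]\mid_{K_k}=\sum_{i=0}^d g(q_i)\,\eta^k_i$, and using that $\sum_{i=0}^d\eta^k_i\equiv1$ on $K_k$ forces $\sum_{i=0}^d\grad\eta^k_i=\bzero$, one has the elementary identity
\[
\grad\brk{\sum_{i=0}^d a_i\,\eta^k_i}\cdot\grad\brk{\sum_{j=0}^d b_j\,\eta^k_j}
=-\sum_{0\le i<j\le d}(a_i-a_j)\,(b_i-b_j)\,\grad\eta^k_i\cdot\grad\eta^k_j
\qquad\mbox{on }K_k.
\]
Applying this with $a_i=g(q_i),\ b_j=q_j$, then with $a_i=b_i=g(q_i)$, and setting $w_{ij}:=-\grad\eta^k_i\cdot\grad\eta^k_j$, gives
\[
\grad\pi_h[g(q)]\cdot\grad q=\sum_{0\le i<j\le d}w_{ij}\,(g(q_i)-g(q_j))(q_i-q_j),
\qquad
\Norm{\grad\pi_h[g(q)]}^2=\sum_{0\le i<j\le d}w_{ij}\,(g(q_i)-g(q_j))^2 .
\]
Since $\mathcal{T}_h$ consists of non-obtuse simplices, $w_{ij}\ge0$ on $K_k$, which is precisely (\ref{obtuse}); and since $g$ is monotonically increasing, $g(q_i)-g(q_j)$ and $q_i-q_j$ have the same sign, so the Lipschitz bound $|g(q_i)-g(q_j)|\le g_{\rm Lip}\,|q_i-q_j|$ yields $g_{\rm Lip}\,(g(q_i)-g(q_j))(q_i-q_j)=g_{\rm Lip}\,|g(q_i)-g(q_j)|\,|q_i-q_j|\ge(g(q_i)-g(q_j))^2\ge0$. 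Multiplying by $w_{ij}\ge0$ and summing over $i<j$ gives the first inequality in (\ref{eq:inf-bound}).

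For the tensor statement the identical scheme applies: writing $\bphi\mid_{K_k}=\sum_{i=0}^d\bphi_i\,\eta^k_i$ with $\bphi_i:=\bphi(P^k_i)\in\RS$ and $g(\bphi_i)\in\RS$ defined by (\ref{eq:tensor-g}), the same identity (now with matrix coefficients, the products $a_ib_j$ being replaced by the contraction $\bphi_i:\bphi_j$) gives
\[
\grad\pi_h[g(\bphi)] :: \grad\bphi=\sum_{0\le i<j\le d}w_{ij}\,(g(\bphi_i)-g(\bphi_j)):(\bphi_i-\bphi_j),
\qquad
\Norm{\grad\pi_h[g(\bphi)]}^2=\sum_{0\le i<j\le d}w_{ij}\,\Norm{g(\bphi_i)-g(\bphi_j)}^2 .
\]
Hence, again using $w_{ij}\ge0$, the claim reduces to the matrix inequality
\[
g_{\rm Lip}\,(g(\bpsi)-g(\bchi)):(\bpsi-\bchi)\ge\Norm{g(\bpsi)-g(\bchi)}^2\ge0
\qquad\forall\,\bpsi,\bchi\in\RS .
\]

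I expect this last inequality to be the \emph{main obstacle}: since $\bpsi$ and $\bchi$ need not commute, the one-tensor estimates (\ref{gconcave2}) and (\ref{matrix:Lipschitz}) do not apply directly. The plan is to prove it by a simultaneous spectral computation. Taking orthonormal decompositions $\bpsi=\sum_k\lambda_k\,\bolda_k\bolda_k^T$ and $\bchi=\sum_l\mu_l\,\boldb_l\boldb_l^T$ and using $\sum_k(\bolda_k\cdot\boldb_l)^2=\sum_l(\bolda_k\cdot\boldb_l)^2=1$, one obtains
\[
(g(\bpsi)-g(\bchi)):(\bpsi-\bchi)=\sum_{k,l}(\bolda_k\cdot\boldb_l)^2\,(g(\lambda_k)-g(\mu_l))(\lambda_k-\mu_l),
\]
\[
\Norm{g(\bpsi)-g(\bchi)}^2=\sum_{k,l}(\bolda_k\cdot\boldb_l)^2\,(g(\lambda_k)-g(\mu_l))^2 .
\]
Every summand of the first sum is nonnegative by monotonicity of $g$, and the scalar Lipschitz/monotonicity estimate used above, applied termwise, gives $g_{\rm Lip}\,(g(\lambda_k)-g(\mu_l))(\lambda_k-\mu_l)\ge(g(\lambda_k)-g(\mu_l))^2$; combining this with $(\bolda_k\cdot\boldb_l)^2\ge0$ and then with $w_{ij}\ge0$ closes the argument and yields the second inequality in (\ref{eq:inf-bound}). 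The only routine points left are the bookkeeping in the two matrix-trace expansions above, for which the identities $\tr(\bphi\bpsi)=\tr(\bpsi\bphi)$ and the completeness relations for orthonormal eigenbases suffice.
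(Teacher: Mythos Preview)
Your proof is correct and follows essentially the same approach as the paper, which defers to Lemma~5.1 in \cite{barrett-boyaval-09}: localise to $K_k$, expand in barycentric coordinates, use $\sum_i\grad\eta^k_i=\bzero$ to reduce both sides to sums over edge pairs weighted by $w_{ij}=-\grad\eta^k_i\cdot\grad\eta^k_j\ge0$, and finish with the scalar monotone--Lipschitz inequality. Your explicit double spectral expansion for the tensor step, yielding $(g(\bpsi)-g(\bchi)):(\bpsi-\bchi)=\sum_{k,l}(\bolda_k\cdot\boldb_l)^2(g(\lambda_k)-g(\mu_l))(\lambda_k-\mu_l)$ and the analogous formula for $\Norm{g(\bpsi)-g(\bchi)}^2$, is exactly the computation needed to justify what the original proof leaves as ``follows from our assumptions on $g$''.
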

\begin{proof}
See the proof of Lemma 5.1 in \cite{barrett-boyaval-09}. 
\end{proof}

Of course, the construction of a non-obtuse mesh in the case 
$d=3$ is not straightforward for a general polytope ${\mathcal D}$.
However, we stress that our numerical 
method {\bf (P$_{\alpha,\delta,h}^{\Delta t}$)}, see (\ref{eq:PdLaha}--c) below,
does not require this constraint. It is only required to show that 
{\bf (P$_{\alpha,\delta,h}^{\Delta t}$)} 
mimics the free energy structure of {\bf (P$_{\alpha,\delta}$)}.

\subsection{A free energy preserving approximation, (P$^{\Delta t}_{\alpha,\delta,h}$),
of (P$_{\alpha,\delta}$)}

In addition to the assumptions on the finite element discretization stated
in subsection \ref{5.1}, and our definition of $\Delta t$ in subsection \ref{FEd},    
we shall assume for the convergence analysis, see Section \ref{sec:convergence},  that
there exists a $C \in {\mathbb R}_{>0}$ such that
\begin{align}
\Delta t_n \leq C\, \Delta t_{n-1}, \qquad n=2, \ldots, N,\qquad
\mbox{as} \quad \Delta t \rightarrow 0_+.
\label{Deltatqu}
\end{align}
We note that this constraint is not required for the results in this section,
in particular Theorem \ref{dstabthmaorL}.

With $\Delta t_1$ and $C$ as above, let $\Delta t_0 \in {\mathbb R}_{>0}$ be such that
$\Delta t_1 \leq C \Delta t_0$.
Given initial data satisfying (\ref{freg}), we choose
$\buh^{0} \in \Vhone$ and $\strh^{0} \in \Shone$
throughout the rest of this paper  
such that
\begin{subequations}
\begin{alignat}{2}
\intd \left[ \buh^{0}  \cdot \bv  + \Delta t_0
\grad  \buh^{0} : \grad \bv \right]\,\ddx
&=   \intd \bu^0 \cdot \bv \,\ddx\qquad
&&\forall \bv \in \Vhone,
\label{proju0} \\
\intd \left[ \pi_h[\strh^{0} : \bchi]  + \Delta t_0
\grad  \strh^{0} :: \grad \bchi \right] \,\ddx
&=
\intd \strs^{0} : \bchi \,\ddx 
&& \forall \bchi \in \Shone.
\label{projp0}
\end{alignat}
\end{subequations}
It follows from (\ref{proju0},b), (\ref{eqnorm}) and (\ref{freg}) that
\begin{align}
&\int_{\D} \left[\, \|\buh^{0}\|^2 + \|\strh^{0}\|^2 +
\Delta t_0 \, \left[\|\grad \buh^{0}\|^2 + \|\grad \strh^0\|^2
\right]\,\right]\,\ddx 
\leq C.
 \label{idatah}
\end{align}
In addition, we note the following result.

\begin{lemma}
\label{lem:idatahspd}
For $p=1,\ldots, N_P$ we have that
\begin{align}
&\sigma_{\rm min}^0\, \|\bxi\|^2 \leq {\bxi}^T \strh^0(P_p) \,{\bxi} 
\leq \sigma_{\rm max}^0\, \|\bxi\|^2
\quad \forall \bxi \in {\mathbb R}^d
\qquad \mbox{and}
\qquad \tr(\strh^0(P_p)) \leq b^\star.
\label{idatahspd}
\end{align}
\end{lemma}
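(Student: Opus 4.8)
The plan is to use the defining variational equation \eqref{projp0} for $\strh^0$, together with the fact that $\strh^0 \in \Shone$ is continuous piecewise linear, to localise the estimate to individual vertices. The key difficulty is that \eqref{projp0} is a global (elliptic-projection-type) equation, so a direct testing with a single nodal basis function will not immediately isolate the value $\strh^0(P_p)$; one must exploit the non-obtuseness of the mesh to obtain a discrete maximum/minimum principle. Concretely, I would proceed as follows.

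First I would rewrite \eqref{projp0} componentwise and fix a direction $\bxi \in \R^d$ with $\|\bxi\|=1$, so that the scalar function $v_h := \bxi^T \strh^0 \bxi \in \Qhone$ satisfies, on testing \eqref{projp0} with $\bchi = \bxi\bxi^T \psi$ for arbitrary $\psi \in \Qhone$,
\begin{equation*}
\intd \left[ \pi_h[v_h\,\psi] + \Delta t_0\, \grad v_h \cdot \grad \psi \right]\ddx
= \intd (\bxi^T \strs^0 \bxi)\,\psi \,\ddx .
\end{equation*}
Here I use that $\grad(\bxi\bxi^T\psi) :: \grad\strh^0 = \grad\psi\cdot\grad v_h$ since $\bxi$ is constant, and that $\pi_h$ acts nodally. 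This is a lumped-mass finite element discretisation of a reaction–diffusion equation $v - \Delta t_0 \Delta v = \bxi^T\strs^0\bxi$ with natural boundary conditions. Because $\mathcal{T}_h$ consists of non-obtuse simplices, the stiffness matrix has nonpositive off-diagonal entries (this is exactly the content of \eqref{obtuse}), and the lumped mass matrix is diagonal and positive; hence the resulting linear system is of monotone (M-matrix–like) type and satisfies a discrete maximum principle. Testing with $\psi = [\,v_h - \sigma_{\max}^0\,]_+$-type nodal functions, or more simply invoking the standard discrete maximum principle for such lumped schemes, I would conclude
\begin{equation*}
\sigma_{\min}^0 \le v_h(P_p) = \bxi^T \strh^0(P_p)\,\bxi \le \sigma_{\max}^0
\qquad p=1,\ldots,N_P,
\end{equation*}
using that $\sigma_{\min}^0 \le \bxi^T\strs^0\bxi \le \sigma_{\max}^0$ a.e.\ in $\D$ by \eqref{freg}. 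Since this holds for every unit vector $\bxi$, the first assertion of \eqref{idatahspd} follows. For the trace bound I would take $\bchi = \I\,\psi$ in \eqref{projp0}; then $w_h := \tr(\strh^0) \in \Qhone$ solves the analogous lumped scheme with right-hand side $\tr(\strs^0) \le b^\star$ a.e.\ in $\D$, and the same discrete maximum principle gives $w_h(P_p) = \tr(\strh^0(P_p)) \le b^\star$.

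The main obstacle is making the discrete maximum principle argument rigorous and self-contained: one must verify that the lumped-mass term $\intd \pi_h[v_h\psi]\,\ddx$ together with the non-obtuse stiffness term really does give a matrix whose inverse is nonnegative, and that the ``load vector'' $\intd (\bxi^T\strs^0\bxi)\psi\,\ddx$ with $\psi$ a nodal hat function has the right sign when $\bxi^T\strs^0\bxi$ lies in $[\sigma_{\min}^0,\sigma_{\max}^0]$. A clean way to package this, avoiding matrix algebra, is the truncation-testing argument: set $\psi = [\,v_h - \sigma_{\max}^0\,]_+^{I}$, the nodal interpolant of the positive part; then $\intd \grad v_h\cdot\grad\psi\,\ddx \ge \intd \|\grad\psi\|^2\,\ddx \ge 0$ by a non-obtuseness chord argument exactly as in Lemma~\ref{lem:inf-bound} (with $g(s)=[s-\sigma_{\max}^0]_+$, which is Lipschitz and monotone), while $\intd \pi_h[v_h\psi]\,\ddx \ge \sigma_{\max}^0\intd\pi_h[\psi]\,\ddx$ and $\intd(\bxi^T\strs^0\bxi)\psi\,\ddx \le \sigma_{\max}^0\intd\psi\,\ddx = \sigma_{\max}^0\intd\pi_h[\psi]\,\ddx$ (as $\psi$ is piecewise linear and nonnegative, $\intd\psi = \intd\pi_h[\psi]$). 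Subtracting forces $\intd\|\grad\psi\|^2 \le 0$, hence $\psi$ is constant, hence zero (it vanishes wherever $v_h \le \sigma_{\max}^0$, which is nonempty since $v_h$ has mean value at most $\sigma_{\max}^0$). Therefore $v_h \le \sigma_{\max}^0$ at every vertex; the lower bound and the trace bound are identical. This reuse of Lemma~\ref{lem:inf-bound}'s non-obtuse chord estimate is what keeps the proof short.
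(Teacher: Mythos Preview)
Your approach is correct and essentially matches the paper's: the paper proves the trace bound by exactly your truncation-testing argument (set $z_h := \tr(\strh^0)-b^\star$, test \eqref{projp0} with $\bchi=\I\,\pi_h[z_h]_+$, and invoke Lemma~\ref{lem:inf-bound} with $g(s)=[s]_+$), and refers to \cite{barrett-boyaval-09} for the eigenvalue bounds, where the same argument is applied to $\bxi^T\strh^0\bxi$. The only minor streamlining is that working with the shifted variable $z_h$ yields $\intd \pi_h[[z_h]_+^2]\,\ddx + \Delta t_0\intd\|\grad\pi_h[z_h]_+\|^2\,\ddx \le 0$ directly, so $\psi\equiv 0$ follows without your mean-value detour.
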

\begin{proof}
For the proof of the first result in (\ref{idatahspd}), see the proof of Lemma
5.2 in \cite{barrett-boyaval-09}. 

We now prove the second result in (\ref{idatahspd}).
On choosing $\bchi = \I \eta$, with $\eta \in {\rm Q}^1_h$,
in (\ref{projp0}) yields that $z_h := 
\tr(\strh^{0}) - b^\star \in  {\rm Q}^1_h$ satisfies
\begin{align}
\intd \left[ \pi_h[z_h \,\eta]  + \Delta t_0
\grad z_h \cdot \grad \eta \right]\,\ddx
=
\intd z \, \eta  \,\ddx
\qquad \forall \eta \in {\rm Q}^h_1,
\label{projp0z}
\end{align}  
where $z := 
\tr(\strs^{0})- b^\star \in L^\infty(\D)$ 
and is non-positive on recalling (\ref{freg}).
Choosing $\eta=\pi_h[z_h]_{+}\in {\rm Q}^1_h$, it follows, on noting
the  ${\rm Q}^1_h$ version of (\ref{eqnorm}) and
(\ref{eq:inf-bound}) with $g(\cdot) = [\,\cdot\,]_{+}$, that
\begin{align}
\label{trbd0h}
\intd \left [\pi_h [z_h]_{+}]^2 + \Delta t_0 \,\|\grad \pi_h[z_h]_{+} \|^2
\right] \,\ddx& \leq \int_d \left[ \pi_h\left[ [z_h]_{+}^2 \right]  + \Delta t_0
\grad z_h \cdot \grad \pi_h [z_h]_{+} \right]\,\ddx
 \\
&
=\intd z \,  \pi_h [z_h]_{+} \,\ddx \leq 0.
\nonumber
\end{align} 
Hence $\pi_h[z_h]_{+} \equiv 0$ and so the second result in (\ref{idatahspd}) holds.   
\end{proof}

Furthermore, it follows from (\ref{proju0},b), (\ref{idatah}), (\ref{freg}),
(\ref{Vhconv}) and (\ref{interp1},b) that, as $h,\,\Delta t_0 \rightarrow 0_+$,
\begin{align}
\buh^{0} \rightarrow \bu^0 \quad \mbox{weakly in } [L^2(\D)]^d
\qquad \mbox{and} \qquad \strh^0
\rightarrow  \strs^0 \quad \mbox{weakly in } [L^2(\D)]^{d\times d}.  
\label{bu0hconv} 
\end{align}

Our approximation {\bf (P$_{\alpha,\delta,h}^{\dt}$)} 
of {\bf (P$_{\alpha,\delta}$)}
is then:

({\bf P}$_{\alpha,\delta,h}^{\Delta t}$)
Setting 
$(\buhdLa^{0},\strhdLa^{0},\varrhohda^0)
= (\buh^{0},\strh^{0},\tr(\strh^0))
\in\Vhone\times(\Shone\cap\SPDb) \times \Qhone$, 
with $\buh^0$ and $\strh^0$ as defined in (\ref{proju0},b),
then for $n = 1, \ldots, N_T$ 
find $(\buhdLa^{n},$ $\strhdLa^{n},\varrhohda^n)\in\Vhone\times\Shone\times\Qhone$ 
such that for any test functions 
$(\bv,\bphi,\eta)\in\Vhone\times\Shone \times \Qhone$
\begin{subequations}
\begin{align} 
\label{eq:PdLaha}
 &\int_\D \Biggl[ \Re\left(\frac{\buhdLa^{n}-\buhdLa^{n-1}}{\dt_{n}}\right)
\cdot \bv 
+ \frac{\Re}{2}\Brk{ \left( (\buhdLa^{n-1}\cdot\nabla)\buhdLa^{n}\right) 
 \cdot \bv - 
 \buhdLa^{n} \cdot \left( (\buhdLa^{n-1}\cdot\nabla)\bv \right)}
\\
& \quad
 + (1-\e) \gbuhdLa^{n}:\grad\bv + \frac{\e}{\Wi}\,\pi_h\! 
 \left[ 
 \kd(\strhdLa^n,\varrhohda^n)\, \Ad(\strhdLa^n,\varrhohda^n)\,
 \Bd(\strhdLa^{n})\right] : 
 \grad\bv 
\Biggr] \ddx
\nonumber \\
& \hspace{3in}= 
\langle \f^n,\bv\rangle_{H^1_0(\D)},
\nonumber
\\ 
\label{eq:PdLahb}
&\int_\D \pi_h \left[\left(\frac{\strhdLa^{n}-\strhdLa^{n-1}}{\dt_{n}}\right) 
: \bphi + \frac{\Ad(\strhdLa^n,\varrhohda^n)\,
 \Bd(\strhdLa^{n}): \bphi}{\Wi}
\right]\ddx
\\ \nonumber
&\quad +  \int_D \left[ \alpha \grad\strhdLa^{n}::\grad\bphi
- 2
\gbuhdLa^{n}: \pi_h[\kd(\strhdLa^n,\varrhohda^n)\,\bphi\,\Bd(\strhdLa^{n})\right]
\,\ddx\\
& \quad 
- \int_\D \sum_{m=1}^d \sum_{p=1}^d 
[\buhdLa^{n-1}]_m \,\Lambda_{\delta,m,p}(\strhdLa^{n})
: \frac{\partial \bphi}{\partial \xx_p}\,\ddx
= 0,
\nonumber
\\
\label{eq:PdLahc}
&\int_\D \pi_h \left[\left(\frac{\varrhohda^{n}-\varrhohda^{n-1}}{\dt_{n}}\right) 
\eta + 
\frac{\tr\brk{\Ad(\strhdLa^n,\varrhohda^n)\,\Bd(\strhdLa^n)}}{\Wi} 
\,\eta 
\right]\ddx
\\ \nonumber
&\quad +  \int_D 
\left[ \alpha \grad\varrhohda^{n}\cdot\grad\eta  - 
2\gbuhdLa^{n}: \pi_h[\kd(\strhdLa^n,\varrhohda^n)\,\eta\,\Bd(\strhdLa^{n})]\,\right]
\,\ddx
\\
& \quad 
+b \int_\D \sum_{m=1}^d \sum_{p=1}^d 
[\buhdLa^{n-1}]_m \,\Lambda_{\delta,m,p}\left(1-\frac{\varrhohda^{n}}{b}\right)
\, \frac{\partial \eta}{\partial \xx_p}
\,\ddx
= 0 .
\nonumber
\end{align}
\end{subequations}

In deriving {\bf (P$^{\Delta t}_{\alpha,\delta,h}$)}, we have noted
 (\ref{conv0c}),
 (\ref{eq:symmetric-tr}), (\ref{Lambdampdefq},b) and (\ref{Lambdampap}). 
We note that on replacing $\Ad(\strhdLa^{n}, \tr(\varrhohda^{n}))$ with 
$\I-\Gd'(\strhdLa^{n})$ and $\kd(\strhdLa^{n},\varrhohda^{n})$ by $1$ 
then {\bf (P$_{\alpha,\delta,h}^{\dt}$)}, (\ref{eq:PdLaha},b), collapses
to the corresponding finite element approximation of Oldroyd-B with stress diffusion 
studied in \cite{barrett-boyaval-09}, see (5.34a,b) with no $L$ cut-off there.

Before proving existence of a solution to {\bf (P$^{\Delta t}_{\alpha,\delta,h}$)},
we first derive a discrete analogue of the energy bound (\ref{eq:estimate-PadL})
for {\bf (P$_{\alpha,\delta}$)}.

\subsection{Energy bound}

On setting
\begin{align}
 \label{eq:free-energy-PdLah}
 \Fdh
 (\bv,\bphi,\eta) &:= \frac{\Re}{2}\intd\|\bv\|^2\,\ddx 
 - \frac{\e}{2\Wi}\intd
 \pi_h\left[b\,\Gd\left(1-\frac{\eta}{b}\right)+
 \tr\brk{\Gd(\bphi)+\I}\right] \ddx
\\ & \hspace{2.5in}
\qquad \forall (\bv,\bphi,\eta) \in\Vhone \times \Shone \times \Qhone,
\nonumber
\end{align}
we have the following discrete analogue of Proposition \ref{prop:free-energy-PadL}.
\begin{proposition} \label{prop:free-energy-PdLah}
For $n= 1, \ldots, N_T$, a solution $\brk{\buhdLa^{n},\strhdLa^{n},\varrhohda^n}
\in \Vhone\times\Shone\times \Qhone$ to {\bf (P$_{\alpha,\delta,h}^{\dt}$)},
(\ref{eq:PdLaha}--c), if it exists, satisfies 
\begin{align}
&\frac{\Fdh(\buhdLa^{n},\strhdLa^{n},\varrhohda^n)-\Fdh(\buhdLa^{n-1},
\strhdLa^{n-1},\varrhohda^{n-1})}{\dt_{n}} 
\label{eq:estimate-PdLah}
\\
& \hspace{0.3in}
+ \frac{ {\rm Re}}{2\dt_{n}}\intd\|\buhdLa^{n}-\buhdLa^{n-1}\|^2 \,\ddx 
+(1-\e)\intd\|\gbuhdLa^{n}\|^2 \,\ddx
\nonumber \\
& \hspace{0.3in}
+\frac{\e}{2{\rm Wi}^2}\intd
\pi_h\left[\tr\left(
\left( 
\Ad(\strhdLa^n,\varrhohda^n)
\right)^2
\Bd(\strhdLa^{n})\right)
\right]\ddx
\nonumber \\
& \hspace{0.3in} 
+ \frac{\alpha\e\delta^2}{2{\rm Wi}}
\intd \left[ \|\grad\pi_h[\Gd'(\strhdLa^{n})]\|^2 + b\left\| 
\grad\pi_h\left[\Gd'\left(1-\frac{\varrhohda^{n}}{b}\right)\right]\right\|^2 \right]
\ddx
\nonumber \\
& \hspace{0.7in}
\le
\langle \f^n,\buhdLa^{n}\rangle_{H^1_0(\D)}
\le 
\frac{1}{2}(1-\e)\intd\|\gbuhdLa^{n}\|^2\,\ddx
+ \frac{1+C_P}{2(1-\e)} \|\f^{n}\|_{H^{-1}(\D)}^2.
\nonumber 
\end{align}
\end{proposition}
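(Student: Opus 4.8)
The plan is to reproduce, at the fully discrete level, the formal computation behind Proposition~\ref{prop:free-energy-PadL} organised as in Remark~\ref{remsplitd}. First I would take in (\ref{eq:PdLaha}--c) the test functions $\bv=\buhdLa^{n}\in\Vhone$, $\bphi=-\frac{\e}{2\Wi}\,\pi_h[\Gd'(\strhdLa^{n})]\in\Shone$ and $\eta=\frac{\e}{2\Wi}\,\pi_h\!\left[\Gd'\!\left(1-\frac{\varrhohda^{n}}{b}\right)\right]\in\Qhone$, add the three resulting identities, and collect the terms into seven groups: (i) the kinetic, convective and viscous terms from (\ref{eq:PdLaha}); (ii) the discrete time-derivative terms for $\strhdLa^{n}$ and $\varrhohda^{n}$; (iii) the velocity--stress and velocity--trace coupling terms (the extra-stress term of (\ref{eq:PdLaha}) together with the $\gbuhdLa^{n}$ terms of (\ref{eq:PdLahb}) and (\ref{eq:PdLahc})); (iv) the reaction terms; (v) the advective $\Lambda_{\delta}$ terms; (vi) the stress-diffusion terms; and (vii) the source term $\langle\f^n,\buhdLa^{n}\rangle_{H^1_0(\D)}$.

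Groups (i), (ii), (iv), (vi), (vii) are handled essentially as in the formal proof, $\pi_h$ acting harmlessly because it is a nodal interpolant. In (i) the trilinear term vanishes by the skew-symmetry built into (\ref{eq:PdLaha}) via (\ref{conv0c}), the kinetic term is treated with the elementary identity (\ref{elemident}) applied to $2\,\buhdLa^{n}\cdot(\buhdLa^{n}-\buhdLa^{n-1})$, and the viscous term is already $(1-\e)\|\gbuhdLa^{n}\|^2$. For (ii), since $\pi_h[\Gd'(\strhdLa^{n})]$ and $\pi_h[\Gd'(1-\frac{\varrhohda^{n}}{b})]$ coincide with $\Gd'(\strhdLa^{n})$ and $\Gd'(1-\frac{\varrhohda^{n}}{b})$ at every vertex, the matrix inequality (\ref{eq:concavity}) applied vertex-by-vertex gives $\pi_h[(\strhdLa^{n}-\strhdLa^{n-1}):\Gd'(\strhdLa^{n})]\le\pi_h[\tr(\Gd(\strhdLa^{n}))-\tr(\Gd(\strhdLa^{n-1}))]$, and the scalar concavity of $\Gd$ together with $\varrhohda^{n}-\varrhohda^{n-1}=-b\left[(1-\frac{\varrhohda^{n}}{b})-(1-\frac{\varrhohda^{n-1}}{b})\right]$ gives the analogous bound for the trace variable; combined with the kinetic term these reconstruct $\frac{\Fdh(\buhdLa^{n},\strhdLa^{n},\varrhohda^{n})-\Fdh(\buhdLa^{n-1},\strhdLa^{n-1},\varrhohda^{n-1})}{\dt_n}+\frac{\Re}{2\dt_n}\intd\|\buhdLa^{n}-\buhdLa^{n-1}\|^2\,\ddx$. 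For (iv), expanding $\Ad(\strhdLa^{n},\varrhohda^{n})=\Gd'(1-\frac{\varrhohda^{n}}{b})\I-\Gd'(\strhdLa^{n})$ and using $\Gd'(\bphi)\Bd(\bphi)=\I$ from (\ref{eq:inverse-Gd}) pointwise at the vertices, the two reaction contributions combine into $+\frac{\e}{2\Wi^2}\intd\pi_h[\tr((\Ad(\strhdLa^{n},\varrhohda^{n}))^2\Bd(\strhdLa^{n}))]\,\ddx$, which is nonnegative by (\ref{eq:positive-term}). For (vi), the term $-\frac{\alpha\e}{2\Wi}\intd\grad\strhdLa^{n}::\grad\pi_h[\Gd'(\strhdLa^{n})]\,\ddx$ is bounded below via Lemma~\ref{lem:inf-bound} applied with $g=-\Gd'$, which is monotonically increasing with Lipschitz constant $\delta^{-2}$ since $\Gd\in C^{1,1}(\R)$ is concave; this yields $\frac{\alpha\e\delta^2}{2\Wi}\intd\|\grad\pi_h[\Gd'(\strhdLa^{n})]\|^2\,\ddx$, and the analogous argument applied to $\grad\varrhohda^{n}=-b\,\grad(1-\frac{\varrhohda^{n}}{b})$ produces the $b\,\|\grad\pi_h[\Gd'(1-\frac{\varrhohda^{n}}{b})]\|^2$ term. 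For (vii) one invokes (\ref{fbound}) with $\nu^2=(1-\e)/(1+C_P)$, giving the second inequality in (\ref{eq:estimate-PdLah}).

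The crux is groups (iii) and (v), the genuinely new ingredients relative to the piecewise-constant scheme of Section~\ref{sec:deltah}. For (iii), evaluating the $\pi_h$-lumped products at the vertices and using $\Gd'(\bphi(P_p))\Bd(\bphi(P_p))=\I$, one finds $\pi_h[\kd(\strhdLa^{n},\varrhohda^{n})\,\pi_h[\Gd'(\strhdLa^{n})]\,\Bd(\strhdLa^{n})]=\pi_h[\kd(\strhdLa^{n},\varrhohda^{n})]\,\I$, so the velocity--stress coupling term of (\ref{eq:PdLahb}) reduces to $\frac{\e}{\Wi}\intd\pi_h[\kd(\strhdLa^{n},\varrhohda^{n})]\,\div\buhdLa^{n}\,\ddx$, which vanishes because $\pi_h[\kd(\strhdLa^{n},\varrhohda^{n})]\in\Qhone$ and $\buhdLa^{n}\in\Vhone$; similarly, writing $\Ad\Bd=\Gd'(1-\frac{\varrhohda^{n}}{b})\Bd-\I$ at the vertices splits the extra-stress term of (\ref{eq:PdLaha}) into $\frac{\e}{\Wi}\intd\pi_h[\kd(\strhdLa^{n},\varrhohda^{n})\,\Gd'(1-\frac{\varrhohda^{n}}{b})\,\Bd(\strhdLa^{n})]:\gbuhdLa^{n}\,\ddx$, which cancels exactly against the velocity--trace coupling term of (\ref{eq:PdLahc}), and $-\frac{\e}{\Wi}\intd\pi_h[\kd(\strhdLa^{n},\varrhohda^{n})]\,\div\buhdLa^{n}\,\ddx$, which again vanishes. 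For (v), the defining relations (\ref{Lambdaj}) and (\ref{Lambdajq}) of the $\Lambda_{\delta}$ tensors turn the two advective terms, after testing, into $-\frac{\e}{2\Wi}\intd\buhdLa^{n-1}\cdot\grad\pi_h[\tr(\Hd(\Gd'(\strhdLa^{n})))]\,\ddx$ and $\frac{\e b}{2\Wi}\intd\buhdLa^{n-1}\cdot\grad\pi_h[\Hd(\Gd'(1-\frac{\varrhohda^{n}}{b}))]\,\ddx$; integrating by parts (the boundary contributions vanishing since $\buhdLa^{n-1}\in[H^1_0(\D)]^d$) and using $\buhdLa^{n-1}\in\Vhone$ against the $\Qhone$ functions $\pi_h[\tr(\Hd(\Gd'(\strhdLa^{n})))]$ and $\pi_h[\Hd(\Gd'(1-\frac{\varrhohda^{n}}{b}))]$ shows both vanish, which is the discrete counterpart of (\ref{PadLeber1a}) and (\ref{PadLeber1atr}). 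Assembling (i)--(vii) gives the first inequality in (\ref{eq:estimate-PdLah}). The main obstacle is precisely the bookkeeping in (iii) and (v): one must verify that the vertex-sampled products collapse to scalar multiples of $\div\buhdLa^{n}$, respectively to discrete gradients of $\Qhone$-functions, so that discrete incompressibility can be invoked; this is where the specific placement of $\pi_h$ in (\ref{eq:PdLaha}--c) and the construction of the $\Lambda_{\delta}$ operators play their essential role.
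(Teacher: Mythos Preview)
Your proposal is correct and follows essentially the same route as the paper's proof: the same test functions, the same key identities (\ref{elemident}), (\ref{eq:inverse-Gd}), (\ref{Addef}), (\ref{eq:concavity}), Lemma~\ref{lem:inf-bound} with $g=-\Gd'$, and (\ref{Lambdajq},b), leading to the same residual advective term $\frac{\e}{2\Wi}\intd\buhdLa^{n-1}\cdot\grad\pi_h[\tr(\Hd(\Gd'(\strhdLa^{n})))+b\,\Hd(\Gd'(1-\varrhohda^{n}/b))]\,\ddx$ which vanishes by discrete incompressibility. Your groups (iii) and (v) simply make explicit what the paper encodes in its list of references; note that in (v) the sign of the $\strhdLa^{n}$-advective term should be $+\frac{\e}{2\Wi}$ rather than $-\frac{\e}{2\Wi}$, but this is immaterial since the term vanishes either way.
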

\begin{proof}
The proof is similar to that of  Proposition~\ref{prop:free-energy-Pdh},
we choose as test functions $\bv=\buhdLa^{n}\in\Vhone$,
$\bphi=-\frac{\e}{2\Wi}\pi_h[\Gd'(\strhdLa^{n})]\in\Shone$
and $\eta= \frac{\e}{2\Wi}\pi_h\left[\Gd'\left(1-\frac{\varrhohda^{n}}{b}\right)\right] \in \Qhone$
in (\ref{eq:PdLaha}--c), 
and obtain, on noting (\ref{elemident}), (\ref{eq:inverse-Gd},d), (\ref{Addef}),
(\ref{eq:inf-bound}) with $g=-\Gd'$ having Lipschitz
constant $\delta^{-2}$, (\ref{Lambdajq},b) and (\ref{eq:free-energy-PdLah}) 
that
\begin{align}
 \label{eq:free-energy-PdhLa-demo1}
\langle \f^n,\buhdLa^{n}\rangle_{H^1_0(\D)}
&\geq
\frac{\Fdh(\buhdLa^{n},\strhdLa^{n},\varrhohda^n)-\Fdh(\buhdLa^{n-1},
\strhdLa^{n-1},\varrhohda^{n-1})}
{\dt_{n}} 
\\
& \qquad  
+ \frac{ {\rm Re}}{2\dt_{n}}\intd\|\buhdLa^{n}-\buhdLa^{n-1}\|^2\,\ddx
+(1-\e)\intd\|\gbuhdLa^{n}\|^2\,\ddx
\nonumber \\
& \qquad 
+\frac{\e}{2{\rm Wi}^2}\intd
\pi_h\left[\tr\left(\left( 
\Ad(\strhdLa^n,\varrhohda^n)
\right)^2
\Bd(\strhdLa^{n})\right)\right]\ddx
\nonumber
\\
& \qquad 
+ \frac{\alpha\e\delta^2}{2{\rm Wi}}
\intd \left[ \|\grad\pi_h[\Gd'(\strhdLa^{n})]\|^2 + b\left\| 
\grad\pi_h\left[\Gd'\left(1-\frac{\varrhohda^{n}}{b}\right)\right]\right\|^2 \right]
\ddx
\nonumber \\
& \qquad 
+ \frac{\e}{2{\rm Wi}}\intd \buhdLa^{n-1} \cdot \grad \pi_h\left[\tr(\Hd(\Gd'(\strhdLa^{n})))
+b \Hd\left(\Gd'\left(1-\frac{\varrhohda^n}{b}\right)\right)
\right]
\ddx.
\nonumber
\end{align}
The first desired inequality in (\ref{eq:estimate-PdLah})
follows immediately from (\ref{eq:free-energy-PdhLa-demo1}) 
on noting (\ref{Vhmini},d), (\ref{spaces}) and that $\pi_h : C(\overline{\D}) 
\rightarrow {\rm Q}^1_h$.
The second inequality in (\ref{eq:estimate-PdLah}) follows immediately from (\ref{fbound})
with $\nu^2 = (1-\e)/(1+C_P)$. 
\end{proof}

\subsection{Existence of discrete solutions}
\begin{proposition} 
\label{prop:existence-PdLah}
Given $(\buhdLa^{n-1},\strhdLa^{n-1},\varrhohda^{n-1}) \in \Vhone \times \Shone \times \Qhone$
such that 
$\intd [ \tr(\strhdLa^{n-1})-\varrhohda^{n-1}]\,\ddx$\linebreak$=0$
and for any time step $\dt_{n} > 0$,
then there exists at least 
one solution $\brk{\buhdLa^{n},\strhdLa^{n},\varrhohda^{n}} \in \Vhone\times\Sh^1 \times 
\Qhone$ 
to {\bf (P$_{\alpha,\delta,h}^{\dt}$)},
(\ref{eq:PdLaha}--c), such that 
$\intd [ \tr(\strhdLa^{n})-\varrhohda^{n}]\,\ddx=0$.
\end{proposition}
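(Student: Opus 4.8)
The plan is to follow the Brouwer fixed point strategy of Proposition~\ref{prop:existence-Pdh}, now on the finite-dimensional Hilbert space $X_h:=\Vhone\times\Shone\times\Qhone$ endowed with the inner product
\[
 \big((\bw,\bpsi,\zeta),(\bv,\bphi,\eta)\big)_h := \intd \big[\, \bw\cdot\bv + \pi_h[\bpsi:\bphi] + \pi_h[\zeta\,\eta]\,\big]\,\ddx,
\]
which is a genuine inner product by (\ref{eqnorm}) and its scalar analogue. First I would define $\mathcal{F}:X_h\to X_h$ by requiring $\big(\mathcal{F}(\bw,\bpsi,\zeta),(\bv,\bphi,\eta)\big)_h$ to equal the sum of the left-hand sides of (\ref{eq:PdLaha}--c), with $(\buhdLa^{n},\strhdLa^{n},\varrhohda^{n})$ replaced by $(\bw,\bpsi,\zeta)$ and with $\langle\f^n,\bv\rangle_{H^1_0(\D)}$ moved to the left; a zero of $\mathcal{F}$ is then exactly a solution of (\ref{eq:PdLaha}--c). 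Continuity of $\mathcal{F}$ follows from the continuity of $s\mapsto\Bd(s),\Gd'(s),\Hd(s)$, of $(\bphi,\eta)\mapsto\Ad(\bphi,\eta)$ and $(\bphi,\eta)\mapsto\kd(\bphi,\eta)$ (the latter well defined since $\Bd(\bphi)\ge\delta\I$ forces $\tr(\Bd(\bphi))\ge d\delta>0$), of $\bphi\mapsto\Lambda_{\delta,m,p}(\bphi)$ and $q\mapsto\Lambda_{\delta,m,p}(q)$ established in Section~\ref{5.1}, the linearity of $\pi_h$, and the polynomial dependence of the convective and diffusive terms on the nodal values.

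Next I would restrict attention to the subspace $M:=\{(\bv,\bphi,\eta)\in X_h:\intd[\tr(\bphi)-\eta]\,\ddx=0\}$, which is the orthogonal complement in $X_h$ of $(\bzero,\I,-1)$. Testing $\mathcal{F}(\bw,\bpsi,\zeta)$ against the constant field $(\bzero,\I,-1)$ makes all gradient and advection contributions vanish and makes the $\Wi^{-1}$ reaction terms and the $\gbuhdLa$-terms cancel between (\ref{eq:PdLahb}) and (\ref{eq:PdLahc}), leaving only the discrete time derivatives; since $(\tr(\bpsi)-\zeta)/\dt_n\in\Qhone$ and the data satisfy $\intd[\tr(\strhdLa^{n-1})-\varrhohda^{n-1}]\,\ddx=0$, this yields $\big(\mathcal{F}(\bw,\bpsi,\zeta),(\bzero,\I,-1)\big)_h=\dt_n^{-1}\intd[\tr(\bpsi)-\zeta]\,\ddx=\dt_n^{-1}\big((\bw,\bpsi,\zeta),(\bzero,\I,-1)\big)_h$. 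Hence $\mathcal{F}(M)\subseteq M$, so Brouwer can be run inside $M$; and, applied to any zero of $\mathcal{F}$, the same identity gives $\intd[\tr(\strhdLa^{n})-\varrhohda^{n}]\,\ddx=0$, which is the claimed trace identity.

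For coercivity I would repeat the computation of Proposition~\ref{prop:free-energy-PdLah}: testing $\mathcal{F}(\bw,\bpsi,\zeta)$ with $\big(\bw,\,-\tfrac{\e}{2\Wi}\pi_h[\Gd'(\bpsi)],\,\tfrac{\e}{2\Wi}\pi_h[\Gd'(1-\tfrac{\zeta}{b})]\big)\in X_h$ (admissible since $\Gd'$ is defined on all of $\R$) and using (\ref{elemident}), (\ref{eq:inverse-Gd}), (\ref{Addef}), Lemma~\ref{lem:inf-bound} with $g=-\Gd'$, (\ref{Lambdajq},b), (\ref{eq:positive-term}) and (\ref{fbound}) gives
\begin{align*}
 \big(\mathcal{F}(\bw,\bpsi,\zeta),\,(\,\cdot\,)\big)_h
 &\ge \frac{\Fdh(\bw,\bpsi,\zeta)-\Fdh(\buhdLa^{n-1},\strhdLa^{n-1},\varrhohda^{n-1})}{\dt_n}
  + \frac{\Re}{2\dt_n}\intd\|\bw-\buhdLa^{n-1}\|^2\,\ddx \\
 &\quad + \frac{1-\e}{2}\intd\|\grad\bw\|^2\,\ddx - \frac{1+C_P}{2(1-\e)}\|\f^n\|_{H^{-1}(\D)}^2 .
\end{align*}
Assuming for contradiction that $\mathcal{F}$ has no zero in the ball $\mathcal{B}_\gamma:=\{(\bv,\bphi,\eta)\in M:\|(\bv,\bphi,\eta)\|_h\le\gamma\}$, the continuous self-map $\mathcal{G}_\gamma(\bv,\bphi,\eta):=-\gamma\,\mathcal{F}(\bv,\bphi,\eta)/\|\mathcal{F}(\bv,\bphi,\eta)\|_h$ of $\mathcal{B}_\gamma$ has a fixed point $(\bw_\gamma,\bpsi_\gamma,\zeta_\gamma)\in M$ with $\|(\bw_\gamma,\bpsi_\gamma,\zeta_\gamma)\|_h=\gamma$. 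Because this point lies in $M$, the a priori sign-indefinite remainder $\intd[\zeta_\gamma-\tr(\bpsi_\gamma)]\,\ddx$ arising when one writes $-\bpsi_\gamma:\Gd'(\bpsi_\gamma)+\zeta_\gamma\Gd'(1-\tfrac{\zeta_\gamma}{b})=\bpsi_\gamma:(\I-\Gd'(\bpsi_\gamma))+(\Gd'(1-\tfrac{\zeta_\gamma}{b})-1)\zeta_\gamma+[\zeta_\gamma-\tr(\bpsi_\gamma)]$ vanishes; then, exactly as in (\ref{bb1})--(\ref{bb2}), using (\ref{Entropy2}), (\ref{eq:FENEPstrs-above-OBstrs}), the scalar version of (\ref{Entropy2}) applied to $1-\tfrac{\zeta_\gamma}{b}$, and the local inverse inequality (\ref{inverse}), one obtains $\Fdh(\bw_\gamma,\bpsi_\gamma,\zeta_\gamma)\ge\min(\tfrac{\Re}{2},\tfrac{c\,\e}{\Wi\,C(h)\,\gamma})\gamma^2-C$ and $\big((\bw_\gamma,\bpsi_\gamma,\zeta_\gamma),(\,\cdot\,)\big)_h\ge\min(1,\tfrac{c\,\e}{\Wi\,C(h)\,\gamma})\gamma^2-C$. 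For $\gamma$ large the displayed inequality forces $\big(\mathcal{F}(\bw_\gamma,\bpsi_\gamma,\zeta_\gamma),(\,\cdot\,)\big)_h\ge 0$, whereas the fixed-point relation forces $\big(\mathcal{F}(\bw_\gamma,\bpsi_\gamma,\zeta_\gamma),(\,\cdot\,)\big)_h=-\tfrac{\|\mathcal{F}\|_h}{\gamma}\big((\bw_\gamma,\bpsi_\gamma,\zeta_\gamma),(\,\cdot\,)\big)_h<0$, a contradiction. Hence $\mathcal{F}$ has a zero in $M$, which is the required solution.

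The step I expect to be the main obstacle is the coercivity in the $\zeta$-direction. Unlike in Proposition~\ref{prop:existence-Pdh}, the first argument of $\Ad$ is now the independent unknown $\zeta$ rather than $\tr(\bpsi)$, so the identity $\bpsi:\Ad(\bpsi,\tr\bpsi)=(\Gd'(1-\tfrac{\tr\bpsi}{b})-1)\tr\bpsi+\bpsi:(\I-\Gd'(\bpsi))$ used in (\ref{bb2}) acquires the extra term $\intd[\zeta-\tr(\bpsi)]\,\ddx$, which is not controlled by the free energy; the remedy is precisely the observation of the second paragraph that $\mathcal{F}$ propagates this defect multiplicatively, so the Brouwer iterate may be confined to $M$ where it vanishes. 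A secondary technical point is to carry the vertex quadrature $\pi_h$ through the inner product so that the substituted test functions reproduce verbatim the discrete free-energy identity of Proposition~\ref{prop:free-energy-PdLah}, including the $\Lambda_\delta$-advection cancellations via (\ref{Lambdajq},b).
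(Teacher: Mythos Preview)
Your proposal is correct and follows essentially the same Brouwer fixed-point strategy as the paper, with the same inner product, the same continuous map $\mathcal{F}$, the same energy test function $\big(\bw,-\tfrac{\e}{2\Wi}\pi_h[\Gd'(\bpsi)],\tfrac{\e}{2\Wi}\pi_h[\Gd'(1-\tfrac{\zeta}{b})]\big)$, and the same lower bounds via (\ref{Entropy2}), (\ref{Gdbbelow}) and (\ref{eq:FENEPstrs-above-OBstrs}). The one organizational difference is that you restrict the argument to the hyperplane $M=\{(\bv,\bphi,\eta):\intd[\tr(\bphi)-\eta]\,\ddx=0\}$ at the outset, using $\mathcal{F}(M)\subseteq M$, whereas the paper runs Brouwer in the full space and then observes from the fixed-point relation $(\bw_\gamma,\bpsi_\gamma,\xi_\gamma)=\mathcal{G}_\gamma^h(\bw_\gamma,\bpsi_\gamma,\xi_\gamma)$ tested against $(\bzero,\I,-1)$ that the fixed point necessarily lies in $M$; both routes deliver the crucial cancellation $\intd[\zeta_\gamma-\tr(\bpsi_\gamma)]\,\ddx=0$ needed in the analogues of (\ref{bb1aL}) and (\ref{bb2aL}).
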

\begin{proof}
The proof is similar to that of Proposition \ref{prop:existence-Pdh}.
We introduce the following inner product on 
the Hilbert space $\Vhone\times\Shone \times \Qhone$
$$ \Scal{ (\bw,\bpsi,\xi) }{ (\bv,\bphi,\eta) }_\D^h =\! 
\intd \Brk{ \bw\cdot\bv + \pi_h[\bpsi:\bphi + \xi\,\eta]\, }\,\ddx 
\qquad \forall (\bw,\bpsi,\xi),(\bv,\bphi,\eta) \in \Vhone\times\Shone \times \Qhone.
$$
Given $(\buhdLa^{n-1},\strhdLa^{n-1},\varrhohda^{n-1})\in
\Vhone\times\Shone \times \Qhone$, let
$\mathcal{F}^h : \Vhone\times\Shone \times \Qhone \rightarrow \Vhone\times\Shone \times 
\Qhone$ be such that for any
$(\bw,\bpsi,\xi) \in \Vhone\times\Shone \times \Qhone$
\begin{align} 
\label{eq:mapping1}
&\Scal{\mathcal{F}^h(\bw,\bpsi,\xi)}{(\bv,\bphi,\eta)}_\D^h 
\\
&\; := \int_\D \Biggl[\Re \left(\frac{\bw-\buhdLa^{n-1}}
{\dt_{n}}\right) \cdot \bv
+ (1-\e) \grad\bw :\grad\bv + \frac{\e}{\Wi} 
\pi_h\left[ 
\kd(\bpsi,\xi)\,\Ad(\bpsi,\xi)\,
\Bd(\bpsi)\right]
:\grad\bv   
\nonumber \\   
& \qquad \qquad      
  + \frac{\Re}{2} \left[\left((\buhdLa^{n-1}\cdot\nabla)\bw \right) \cdot \bv 
  - \bw \cdot \left((\buhdLa^{n-1}\cdot\nabla)\bv \right) \right]  
\nonumber \\[2mm]
& \qquad \qquad + \alpha \left[ \grad \bpsi :: \grad \bphi
+ \grad \xi \cdot \grad \eta \right] 
- 2  \grad\bw : \pi_h\left[\,\kd(\bpsi,\xi) \left[\bphi+ \eta \I\right] \Bd(\bpsi)\right]
\Biggr] \ddx
\nonumber
\\[2mm]
& \quad 
+ \int_\D \pi_h\biggl[\left(\frac{\bpsi-\strhdLa^{n-1}}{\dt_{n}}\right):\bphi 
 + \frac{\Ad(\bpsi,\xi)\,\Bd(\bpsi)}{\Wi}
 : \bphi
+ 
\left(\frac{\xi-\varrhohda^{n-1}}{\dt_{n}}\right)\eta 
 + \frac{\tr\brk{\Ad(\bpsi,\xi)\,\Bd(\bpsi) }}{\Wi}
 \eta 
 \Biggr]\ddx 
\nonumber
\\
& \quad 
- \intd \sum_{m=1}^d \sum_{p=1}^d 
[\buhdLa^{n-1}]_m \left[\Lambda_{\delta,m,p}(\bpsi) :
\frac{\partial \bphi}{\partial \xx_p} 
-b\,\Lambda_{\delta,m,p}\left(1-\frac{\xi}{b}\right) \,
\frac{\partial \eta}{\partial \xx_p} 
\right] \ddx
- \langle \f^n, \bv \rangle_{H^1_0(\D)}
\nonumber \\
& \hspace{4in}
\quad \forall (\bv,\bphi,\eta) \in \Vhone \times \Shone \times \Qhone.
\nonumber
\end{align}
A solution 
$(\buhdLa^{n},\strhdLa^{n},\varrhohda^n)$
to (\ref{eq:Pdh}--c), if it exists,
corresponds to a zero of $\mathcal{F}^h$. 
On recalling 
(\ref{Lambdampdefq},b), (\ref{hLambdajdef},b) and (\ref{qhLambdajdef}), 
it is easily deduced that the mapping $\mathcal{F}^h$ is continuous.
For any $(\bw,\bpsi,\xi) \in \Vhone\times\Shone \times \Qhone$, on choosing
$(\bv,\bphi,\eta) = \brk{\bw,-\frac{\e}{2\Wi}\pi_h[\Gd'(\bpsi)],
\frac{\e}{2\Wi}\pi_h\left[\Gd'\left(1-\frac{\xi}{b}\right)\right]}$,
we obtain analogously to (\ref{eq:estimate-PdLah}) that
\begin{align} 
 \label{eq:inequality1aL}
&\Scal{\mathcal{F}^h(\bw,\bpsi,\xi)}
{\brk{\bw,-\frac{\e}{2\Wi}\pi_h[\Gd'(\bpsi)],
\frac{\e}{2\Wi}\pi_h\left[\Gd'\left(1-\frac{\xi}{b}\right)\right]
}}_\D^h
\\
& \qquad \ge \frac{\Fdh(\bw,\bpsi,\xi)-\Fdh(\buhdLa^{n-1},
\strhdLa^{n-1},\varrhohda^{n-1})}
{\dt_{n}} 
+ \frac{\Re}{2\dt_{n}}\intd\|\bw-\buhdLa^{n-1}\|^2\,\ddx
\nonumber
\\
& \quad\qquad 
+ \frac{1-\e}{2} 
\intd\|\grad\bw\|^2\,\ddx
-\frac{1+C_P}{2(1-\e)}\Norm{\f^{n}}_{H^{-1}(\D)}^2
+\frac{\e}{2{\rm Wi}^2}\intd
\pi_h\left[\tr\left(\left( 
\Ad(\bpsi,\xi)
\right)^2
\Bd(\bpsi)\right)
\right]\ddx
\nonumber \\
& \quad \qquad  
+ \frac{\alpha\e\delta^2}{2{\rm Wi}}
\intd \left[ \|\grad\pi_h[\Gd'(\bpsi)]\|^2 + b\left\| 
\grad\pi_h\left[\Gd'\left(1-\frac{\xi}{b}\right)\right]\right\|^2 \right]
\ddx.
\nonumber
\end{align}

Let
$$ \Norm{(\bv,\bphi,\eta)}_D^h := 
\left[((\bv,\bphi,\eta),(\bv,\bphi,\eta))_D^h\right]^{\frac{1}{2}} =
\brk{\intd\Brk{\,\|\bv\|^2+\pi_h[\,\|\bphi\|^2 + |\eta|^2 \,]\,}\,\ddx }^\frac12. $$
If for any $\gamma \in \R_{>0}$, 
the continuous mapping $\mathcal{F}^h$ 
has no zero $(\buhdLa^{n},\strhdLa^{n},\varrhohda^n)$,  
which lies in the ball
$$ \mathcal{B}_\gamma^h := \BRK{ (\bv,\bphi,\eta) \in \Vhone\times\Shone \times \Qhone 
\,: \, 
\Norm{(\bv,\bphi,\eta)}_\D^h\le\gamma };$$
then for such $\gamma$, we can define the continuous mapping $\mathcal{G}_\gamma^h
: \mathcal{B}_\gamma^h \rightarrow  \mathcal{B}_\gamma^h$
such that for all $(\bv,\bphi,\eta) \in \mathcal{B}_\gamma^h$
$$ \mathcal{G}_\gamma^h(\bv,\bphi,\eta) := -\gamma 
\frac{\mathcal{F}^h(\bv,\bphi,\eta)}{\Norm{\mathcal{F}^h(\bv,\bphi,\eta)}_\D^h}. $$
By the Brouwer fixed point theorem, $\mathcal{G}_\gamma^h$ has at least 
one fixed point $(\bw_\gamma,\bpsi_\gamma,\xi_\gamma)$ in $\mathcal{B}_\gamma^h$. 
Hence it satisfies
\begin{equation}\label{eq:fixed-pointaL}
\Norm{(\bw_\gamma,\bpsi_\gamma,\xi_\gamma)}_\D^h=
 \Norm{\mathcal{G}_\gamma^h(\bw_\gamma,\bpsi_\gamma,\xi_\gamma)}_\D^h=\gamma.
\end{equation}
In addition,
$\Scal{(\bw_\gamma,\bpsi_\gamma,\xi_\gamma)}{(\bv,\bphi,\eta)}_\D^h= 
\Scal{\mathcal{G}^h(\bw_\gamma,\bpsi_\gamma,\xi_\gamma)}{(\bv,\bphi,\eta)}_\D^h$
with $(\bv,\bphi,\eta)=(\bzero,\I,-1)$ yields that 
\begin{align}
\intd [ \tr(\bpsi_\gamma)-\xi_\gamma]\,\ddx
=\intd [ \tr(\strhdLa^{n-1})-\varrhohda^{n-1}]\,\ddx=0.
\label{intgam}
\end{align}

On noting  
(\ref{inverse}), we have that  
there exists a $\mu_h \in \R_{>0}$
such that for all $\bphi \in \Shone$,
\begin{equation}\label{eq:norm_equivalenceaL} 
\|\pi_h[\,\|\bphi\|\,]\|_{L^\infty(\D)}^2
= \|\pi_h[\,\|\bphi\|^2]\|_{L^\infty(\D)}
\leq  
\mu_h^2 \int_\D \pi_h[\,\|\bphi\|^2\,]\,\ddx,
\end{equation}
and an equivalent result holding for all $\eta \in \Qhone$. 
It follows from (\ref{eq:free-energy-PdLah}), (\ref{intgam}),  (\ref{Entropy2}), 
(\ref{Gdbbelow}), 
(\ref{eq:norm_equivalenceaL})  
and (\ref{eq:fixed-pointaL}) 
that 
\begin{align} 
\label{bb1aL} 
&\Fdh(\bw_\gamma,\bpsi_\gamma,\xi_\gamma)  
\\
& \quad=
\frac{\Re}{2} \intd \|\bw_\gamma\|^2 \,\ddx + \frac{\e}{2\Wi}
\intd
\pi_h\left[ \tr(\bpsi_\gamma-\Gd(\bpsi_\gamma)-\I) - b \,\Gd\left( 1-\frac{\xi_\gamma}{b} 
\right) - \xi_\gamma \right]\ddx
\nonumber
\\
& \quad \ge
\frac{\Re}{2} \intd\|\bw_\gamma\|^2 \,\ddx+ \frac{\e}{4\Wi}
\left[\intd \pi_h[\,\|\bpsi_\gamma\|+ |\xi_\gamma|\,]\,\ddx-(2d+3b)|\D|\right]
\nonumber
\\
& \quad \ge 
\frac{\Re}{2} \intd\|\bw_\gamma\|^2 \,\ddx
-\frac{\e (2d+3b) |\D|}{4\Wi}
\nonumber \\
& \quad \quad  
+ \frac{\e}{4\Wi\,\mu_h\gamma}
\left[\|\pi_h[\,\|\bpsi_\gamma\|\,]\|_{L^\infty(\D)}
\intd \pi_h[\,\|\bpsi_\gamma\|]\,\ddx+\|\pi_h[\,|\xi_\gamma|\,]\|_{L^\infty(\D)}
\intd \pi_h[\,|\xi_\gamma|\,]\,\ddx\right]
\nonumber
\\
& \ge
\min\brk{\frac{\Re}{2},\frac{\e}{4\Wi\,\mu_h\gamma}}
\brk{ \intd\left[\,\|\bw_\gamma\|^2+ \pi_h[\,\|\bpsi_\gamma\|^2
+|\xi_\gamma|^2\,]\,\right] \,\ddx}
-\frac{\e (2d+3b) |\D|}{4\Wi}
\nonumber
\\
& = 
\min\brk{\frac{\Re}{2},\frac{\e}{4\Wi\,\mu_h\gamma}}
\gamma^2 
-\frac{\e (2d+3b) |\D|}{4\Wi}.
\nonumber
\end{align}
Hence 
for all $\gamma$ sufficiently large,
it follows from (\ref{eq:inequality1aL}) and (\ref{bb1aL})   
that
\beq \label{eq:one-handaL}
\Scal{\mathcal{F}^h(\bw_\gamma,\bpsi_\gamma,\xi_\gamma)}{\brk{\bw_\gamma,-\frac{\e}{2\Wi}
\pi_h[\Gd'(\bpsi_\gamma)],\frac{\e}{2\Wi}
\pi_h\!\left[\Gd'\left(1-\frac{\xi_\gamma}{b}\right)\right]}}_D^h
\ge 0.
\eeq

On the other hand as $(\bw_\gamma,\bpsi_\gamma,\xi_\gamma)$ is a fixed point 
of ${\mathcal G}_\gamma^h$, we have that
\begin{align}
\label{eq:whereasaL}
&\Scal{\mathcal{F}^h(\bw_\gamma,\bpsi_\gamma,\xi_\gamma)}{\brk{\bw_\gamma,-\frac{\e}{2\Wi}
\pi_h[\Gd'(\bpsi_\gamma)],\frac{\e}{2\Wi}
\pi_h\!\left[\Gd'\left(1-\frac{\xi_\gamma}{b}\right)\right]}}_D^h
\\ 
& \hspace{0.2in} = 
-\frac{\Norm{\mathcal{F}^h(\bw_\gamma,\bpsi_\gamma,\xi_\gamma)}_\D^h}{\gamma} 
\intd \left[ \|\bw_\gamma\|^2 - \frac{\e}{2\Wi} 
\pi_h\!\left[\bpsi_\gamma :\Gd'(\bpsi_\gamma) - \xi_\gamma\,\Gd'\left(1-\frac{\xi_\gamma}{b}\right)
\right]\right]
\ddx.
\nonumber
\end{align}
It follows from (\ref{intgam}), (\ref{Entropy2}), 
(\ref{eq:FENEPstrs-above-OBstrs}), and similarly to (\ref{bb1aL}),
on noting (\ref{eq:norm_equivalenceaL}) and (\ref{eq:fixed-pointaL}), that
\begin{align}
&\intd \left[
\|\bw_\gamma\|^2 - \frac{\e}{2\Wi} \pi_h\!\left[\bpsi_\gamma :\Gd'(\bpsi_\gamma)
- \xi_\gamma\,\Gd'\left(1-\frac{\xi_\gamma}{b}\right)
\right] 
\right] \ddx 
 \label{bb2aL}
\\
&\hspace{0.7in}=
\intd \left[
\|\bw_\gamma\|^2 + \frac{\e}{2\Wi} \pi_h\!\left[\bpsi_\gamma :(\I-\Gd'(\bpsi_\gamma))
+ \xi_\gamma\,\left(\Gd'\left(1-\frac{\xi_\gamma}{b}\right)-1\right)
\right] 
\right]\ddx
\nonumber \\
&\hspace{0.7in}\ge  
\intd \left[
\|\bw_\gamma\|^2 + \frac{\e}{4\Wi} \left[\pi_h[\,\|\bpsi_\gamma\|
+|\xi_\gamma|
\,] - 2(d+b) \right]\right]
\ddx
\nonumber
\\
& \hspace{0.7in}
\ge \min\brk{1,\frac{\e}{4\Wi\,\mu_h \gamma}}\gamma^2 - \frac{\e (d+b)|\D|}{2\Wi}. 
\nonumber
\end{align}
Therefore on combining (\ref{eq:whereasaL}) and (\ref{bb2aL}), we have for all 
$\gamma$ sufficiently large that 
\begin{align} \label{eq:other-handaL}
\Scal{\mathcal{F}^h(\bw_\gamma,\bpsi_\gamma,\xi_\gamma)}{\brk{\bw_\gamma,-\frac{\e}{2\Wi}
\pi_h[\Gd'(\bpsi_\gamma)],
\frac{\e}{2\Wi}
\pi_h\!\left[\Gd'\left(1-\frac{\xi_\gamma}{b}\right)\right]
}}^h_D < 0 \,,
\end{align}
which obviously contradicts~\eqref{eq:one-handaL}.
Hence the mapping $\mathcal{F}^h$ has a zero, $(\buhdLa^{n},\strhdLa^{n},\varrhohda^{n}) 
\in \mathcal{B}_\gamma^h$
for $\gamma$ sufficiently large. 
Finally, similarly to (\ref{intgam}), it follows,
on choosing $(\bv,\bphi,\eta)=(\bzero,\I,-1)$
in
$\Scal{\mathcal{F}^h(\buhdLa^{n},\strhdLa^{n},\varrhohda^{n})}{(\bv,\bphi,\eta)}_\D^h=0$,
that $\intd [ \tr(\strhdLa^n)-\varrhohda^n]\,\ddx
=\intd [ \tr(\strhdLa^{n-1})-\varrhohda^{n-1}]\,\ddx=0$.
\end{proof}

We now have the analogue of Theorem \ref{dstabthm}.

\begin{theorem} 
\label{dstabthmaorL}
For any $\delta \in (0,\min\{\frac{1}{2},b\}]$, %$L \geq 2$,
$N_T \geq 1$ and any
partitioning of $[0,T]$ into $N_T$
time steps, 
there exists a solution  
$\{(\buhdLa^{n},\strhdLa^{n},\varrhoadh^n)\}_{n=1}^{N_T}
\in [\Vhone \times \Sh^1\times \Qhone]^{N_T}
$  
to {\bf (P$^{\dt}_{\alpha,\delta,h}$)}, (\ref{eq:PdLaha}--c).

In addition, it follows for $n=1,\ldots,N_T$ that
$\intd [ \tr(\strhdLa^n)-\varrhohda^n]\,\ddx=0$ and
\begin{align}
&\Fdh(\buhdLa^{n},\strhdLa^{n},\varrhohda^n) 
+ \frac{1}{2} \sum_{m=1}^n
\int_\D \left[ {\rm Re} \|\buhdLa^{m}-\buhdLa^{m-1}\|^2
+ (1-\e)\dt_{m}\|\gbuhdLa^{m}\|^2
\right]\ddx
\label{Fstab1aL}\\
& \qquad
+\frac{\e}{2{\rm Wi}^2}\sum_{m=1}^n \Delta t_m \intd
\pi_h\left[\tr\left(\left( 
\Ad(\strhdLa^m, \varrhohda^m)
\right)^2
\Bd(\strhdLa^{m})\right)
\right]\ddx
\nonumber \\
& \qquad
+ \frac{\alpha\e\delta^2}{2{\rm Wi}} \sum_{m=1}^n \Delta t_m
\intd \left[ \|\grad\pi_h[\Gd'(\strhdLa^{m})]\|^2 + b\left\| 
\grad\pi_h\left[\Gd'\left(1-\frac{\varrhohda^{m}}{b}\right)\right]\right\|^2 \right]
\ddx
\nonumber \\
& \hspace{1in}
\leq
\Fdh(\buh^0,\strh^0,\tr(\strh^0))
+ \frac{1+C_P}{2(1-\e)} \sum_{m=1}^n \dt_m \|\f^m\|_{H^{-1}(\D)}^2
\leq C,
\nonumber 
\end{align}
which yields that
\begin{align}
\label{Fstab2aL}
\max_{n=0, \ldots, N_T} \int_\D \left[\, \|\buhdLa^n\|^2 + \|\strhdLa^n\| +
|\varrhohda^n|+ 
\delta^{-1}\,\pi_h\left[\,\|[\strhdLa^n]_{-}\|
+ \left|[b-\varrhohda^n]_{-}\right|\,\right] 
\,\right]\,\ddx
&\leq C;
\end{align}
where $C$ is independent of $\alpha$, as well as $\delta$, $h$ and $\Delta t$.
\begin{comment}
Moreover, for some $C(h,\Delta t) \in {\mathbb R}_{>0}$, but independent of $\delta$,
it follows that 
for  
$n=1,\ldots,N_T$
\begin{subequations}
\begin{alignat}{2} 
\label{Fstab3aLh}
\pi_h\left[\,\left\|\Gd'\brk{1-\frac{\varrhohda^n}{b}}\Bd(\strhdLa^n)
\right\| \,\right]
&\le C(h,\Delta t) 
&&\quad\mbox{on }\D, 
\\
\label{Fstab4aLh}
\pi_h\left[\,\|[\beta_\delta(\strhdLa^n)]^{-1}\|\,\right] 
&\leq C(h,\Delta t)\left[1 +   
\pi_h\left[\Gd'\left(1-\frac{\varrhohda^n}{b}
\right)\right]\right] &&\quad\mbox{on }\D.
\end{alignat} 
\end{subequations}
\end{comment}
\begin{comment}
 Moreover, it follows that 
\begin{align}
\nonumber 
&\max_{n=0, \ldots, N_T} \int_\D \left[ \|\buhdLa^{n}\|^2 
+ \pi_h[\,\|\strhdLa^{n}\| + |\varrhohda|\,] + 
\delta^{-1}\,\pi_h[\,\|[\strhdLa^{n}]_{-}\|\,] \right]
\\
& \hspace{0.1in}+ \sum_{n=1}^{N_T} \intd
\left[ \dt_n \|\gbuhdLa^{n}\|^2 +
\dt_n \pi_h[\,\|[\Bd(\strhdLa^{n})]^{-1}\|\,]
+ \|\buhdLa^{n}-\buhdLa^{n-1}\|^2 
\right] 
\nonumber \
\\
&\hspace{4in}
\leq C\,. 
\label{Fstab2aL}
\end{align}  
\end{comment}
 \end{theorem}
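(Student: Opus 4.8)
The plan is to follow the scheme already established for the regularized problem without stress diffusion, namely the proof of Theorem~\ref{dstabthm}, and to adapt each step to the present setting by combining it with the new ingredients assembled in Section~\ref{sec:Palphah}. First I would obtain existence of a solution $\{(\buhdLa^{n},\strhdLa^{n},\varrhoadh^n)\}_{n=1}^{N_T}$ by induction on $n$: given the data at time level $n-1$ satisfying $\intd[\tr(\strhdLa^{n-1})-\varrhohda^{n-1}]\,\ddx=0$ (which holds at $n=0$ by the definition $\varrhohda^0=\tr(\strh^0)$), Proposition~\ref{prop:existence-PdLah} yields a solution at level $n$, again with $\intd[\tr(\strhdLa^{n})-\varrhohda^{n}]\,\ddx=0$, so the induction closes. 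Then, summing the per-step energy inequality of Proposition~\ref{prop:free-energy-PdLah} over $m=1,\ldots,n$ telescopes the $\Fdh$ terms and, after absorbing the $\f$-term on the right into the $(1-\e)\|\gbuhdLa^m\|^2$ contribution (the second line of~\eqref{eq:estimate-PdLah}), produces the first inequality in~\eqref{Fstab1aL}, with $\Fdh(\buh^0,\strh^0,\tr(\strh^0))$ on the right.

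Next I would bound $\Fdh(\buh^0,\strh^0,\tr(\strh^0))$ by a constant $C$ independent of $\delta$, $h$, $\Delta t$ and $\alpha$. Here I use~\eqref{eq:free-energy-PdLah}, the concavity bound $\Gd(s)\ge G(s)$ from~\eqref{eq:Gd} (which gives $-b\,\Gd(1-\tfrac{\eta}{b})-\tr(\Gd(\bphi)+\I)\le$ the analogous expression with $G$ in place of $\Gd$), the vertex-value bounds of Lemma~\ref{lem:idatahspd}, namely $\sigma_{\rm min}^0\|\bxi\|^2\le\bxi^T\strh^0(P_p)\bxi\le\sigma_{\rm max}^0\|\bxi\|^2$ and $\tr(\strh^0(P_p))\le b^\star<b$, together with the $L^2$ bound~\eqref{idatah} on $\buh^0$; since $\pi_h$ only samples vertex values, these control the integrand of $\Fdh$ pointwise. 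The bound $\sum_{m=1}^n\dt_m\|\f^m\|_{H^{-1}(\D)}^2\le\|\f\|_{L^2(0,T;H^{-1}(\D))}^2$ from~\eqref{fncont} then gives the final $\le C$ in~\eqref{Fstab1aL}.

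For~\eqref{Fstab2aL} I would drop the nonnegative terms in~\eqref{Fstab1aL} involving $\gbuhdLa$, the increments $\|\buhdLa^m-\buhdLa^{m-1}\|^2$, and the two gradient-of-$\pi_h[\Gd']$ terms (nonnegative), keeping only $\Fdh(\buhdLa^n,\strhdLa^n,\varrhohda^n)\le C$. Then I split $\Fdh$ into three nonnegative pieces exactly as in the proof of Theorem~\ref{dstabthm}: the kinetic term $\tfrac{\Re}{2}\intd\|\buhdLa^n\|^2$; the matrix entropy $\tfrac{\e}{2\Wi}\intd\pi_h[\tr(\strhdLa^n-\Gd(\strhdLa^n)-\I)]$, handled by the scalar/matrix inequalities~\eqref{Entropy1} and~\eqref{Entropy2} applied vertex-wise under $\pi_h$ (these give $\ge\tfrac12\pi_h[\|\strhdLa^n\|]$ and $\ge\tfrac{1}{2\delta}\pi_h[\|[\strhdLa^n]_{-}\|]$); and the scalar trace-entropy $\tfrac{\e}{2\Wi}\intd\pi_h[-b\,\Gd(1-\tfrac{\varrhohda^n}{b})-\varrhohda^n]$, for which Lemma~\ref{GLemma2}\,\eqref{Gdbbelow} gives $\ge\tfrac12[|\varrhohda^n|-3b]_+$ and a scalar version of~\eqref{Entropy2} applied to $s\mapsto1-\tfrac{s}{b}$ gives the $\delta^{-1}|[b-\varrhohda^n]_-|$ control. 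Collecting these and using~\eqref{Fstab1aL} yields~\eqref{Fstab2aL}, with $C$ manifestly independent of $\alpha$.

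The main obstacle, and the place where genuine care is required, is verifying the discrete energy identity inside Proposition~\ref{prop:free-energy-PdLah} — in particular that the advective terms, when tested with $-\tfrac{\e}{2\Wi}\pi_h[\Gd'(\strhdLa^n)]$ and $\tfrac{\e}{2\Wi}\pi_h[\Gd'(1-\tfrac{\varrhohda^n}{b})]$ respectively, collapse to integrals of $\buhdLa^{n-1}\cdot\grad$ of an element of $\Qhone$, which then vanish because $\buhdLa^{n-1}\in\Vhone$. This is exactly what the matrices $\Lambda_{\delta,m,p}(\cdot)$ and the identities~\eqref{Lambdajq},\eqref{Lambdaj} are engineered to deliver, and the compatibility of the velocity stretching terms with the $\kd$ factor (so that they cancel against the $\kd$-weighted term from Navier--Stokes) must also be checked; but all of this is assumed available from Proposition~\ref{prop:free-energy-PdLah}, whose proof in the excerpt already carries it out via~\eqref{eq:free-energy-PdhLa-demo1}. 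So at the level of Theorem~\ref{dstabthmaorL} the argument is essentially bookkeeping: assembling Propositions~\ref{prop:existence-PdLah} and~\ref{prop:free-energy-PdLah}, telescoping, bounding the initial free energy uniformly, and splitting $\Fdh$ into its three nonnegative constituents.
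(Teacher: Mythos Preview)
Your proposal is correct and follows essentially the same approach as the paper: existence by induction via Proposition~\ref{prop:existence-PdLah}, the telescoped energy bound from Proposition~\ref{prop:free-energy-PdLah}, control of $\Fdh(\buh^0,\strh^0,\tr(\strh^0))$ through \eqref{eq:Gd}, \eqref{idatah}, \eqref{idatahspd} and \eqref{fncont}, and then the pointwise entropy inequalities \eqref{Entropy2} and Lemma~\ref{GLemma2} under $\pi_h$ for \eqref{Fstab2aL}. The only point worth making explicit is that your three-term splitting of $\Fdh$ into kinetic, matrix entropy $\pi_h[\tr(\strhdLa^n-\Gd(\strhdLa^n)-\I)]$, and scalar entropy $\pi_h[-b\,\Gd(1-\varrhohda^n/b)-\varrhohda^n]$ silently uses $\intd\pi_h[\tr(\strhdLa^n)-\varrhohda^n]\,\ddx=\intd[\tr(\strhdLa^n)-\varrhohda^n]\,\ddx=0$ (the integrand being in $\Qhone$); the paper records this same manipulation as the identity~\eqref{Fstab2aLpr}.
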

\begin{proof}
Existence and the stability result (\ref{Fstab1aL}) follow 
immediately from Propositions \ref{prop:existence-PdLah}
and \ref{prop:free-energy-PdLah}, respectively,
on noting 
(\ref{eq:free-energy-PdLah}), (\ref{eq:Gd}), 
(\ref{idatah}), (\ref{idatahspd}), (\ref{fncont}) and (\ref{freg}).
The bounds (\ref{Fstab2aL}) follow immediately from 
(\ref{Fstab1aL}), on noting (\ref{Addef}), 
(\ref{eq:positive-term}), (\ref{eq:free-energy-PdLah}), 
(\ref{Entropy2}) and that
\begin{align} 
\label{Fstab2aLpr}
& \intd \pi_h\left[
b \Brk{ \left(1-\frac{\varrhohda^n}{b}\right)-
 \Gd\brk{1-\frac{\varrhohda^n}{b}}}
+ \tr(\strhdLa^n-\Gd(\strhdLa^n))
\right]\ddx
\\
&\qquad= 
\intd \pi_h\left[
b \Brk{1-
 \Gd\brk{1-\frac{\varrhohda^n}{b}}}
- \tr(\Gd(\strhdLa^n))
\right]\ddx
\leq C.
\nonumber 
\end{align}
\end{proof}

\begin{remark}\label{remPdah}
We recall that we have used $\Shone$ for the approximation of $\strad$ in 
{\bf (P$_{\alpha,\delta,h}^{\Delta t}$)}, (\ref{eq:PdLaha}--c), 
due to the presence of the diffusion term in (\ref{eq:aoldroyd-b-sigma2d}).    
Secondly, due to the advective term in (\ref{eq:aoldroyd-b-sigma2d}), one has to  
introduce the variable $\varrhoadh^n$ and its equation (\ref{eq:PdLahc}) 
in {\bf (P$_{\alpha,\delta,h}^{\Delta t}$)}
in order to 
obtain the entropy bound (\ref{eq:estimate-PdLah}).
However, we now have a bound on $\pi_h[[b-\varrhoadh^n]_-]$ in
(\ref{Fstab2aL}), as opposed to $[b-\tr(\strhd^n)]_-$ in
(\ref{Fstab2}). Now, it does not seem possible to pass to the limit $\delta \rightarrow 0$
in {\bf (P$_{\alpha,\delta,h}^{\Delta t}$)} to prove well-posedness of the corresponding
direct approximation of {\bf (P$_\alpha$)}, i.e.\ {\bf (P$_{\alpha,h}^{\Delta t}$)}
without the regularization $\delta$,
as we did for {\bf (P$_{\delta,h}^{\Delta t}$)} in subsection 
\ref{secPdhtoPd}.
\end{remark}

Finally, we note the following Lemmas for later purposes.
\begin{lemma}\label{lem:usefulinv}
For all $K_k \in {\mathcal T}_h$
and $\bphi \in [C(\overline{K_k})]^{d \times d}_S$, we have, for $r\in [1,\infty)$, that 
\begin{align}
\int_{K_k} \left[ \pi_h\left[ \|\bphi\|^r \right] +\left|\pi_h\left[ \|\bphi\| \right]\right|^r \right]\ddx
&\leq C \int_{K_k} \|\pi_h \bphi\|^r \,\ddx.
\label{use1} 
\end{align}
\end{lemma}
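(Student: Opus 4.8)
The plan is to reduce everything to a single reference element computation via the affine maps $\mathcal{B}_k$ of (\ref{eq:mapping-RK}), where all norms are equivalent because the space of piecewise linear symmetric-tensor fields on $\widehat K$ is finite-dimensional. First I would fix $K_k\in{\mathcal T}_h$ and $\bphi\in[C(\overline{K_k})]^{d\times d}_S$, and note that the quantities appearing in (\ref{use1}) depend on $\bphi$ only through its vertex values $\bphi(P_0^k),\ldots,\bphi(P_d^k)$, since $\pi_h\bphi$ is the linear interpolant determined by those values and $\pi_h[\|\bphi\|^r]$, $\pi_h[\|\bphi\|]$ are the linear interpolants of $s\mapsto\|\bphi(P^k_i)\|^r$, $\|\bphi(P^k_i)\|$ respectively. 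So it suffices to compare, on the reference simplex $\widehat K$ with vertices $\widehat P_0,\ldots,\widehat P_d$, the three quantities $\int_{\widehat K}\widehat\pi_h[\|\widehat\bphi\|^r]$, $\int_{\widehat K}|\widehat\pi_h[\|\widehat\bphi\|]|^r$ and $\int_{\widehat K}\|\widehat\pi_h\widehat\bphi\|^r$, and then transfer back by multiplying through by $|\det B_k|=|K_k|/|\widehat K|$, which cancels from both sides.

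On $\widehat K$, writing $a_i:=\|\widehat\bphi(\widehat P_i)\|\ge 0$ and using the barycentric basis $\{\widehat\eta_i\}$, I have $\widehat\pi_h[\|\widehat\bphi\|^r]=\sum_i a_i^r\widehat\eta_i$, $\widehat\pi_h[\|\widehat\bphi\|]=\sum_i a_i\widehat\eta_i$, and $\|\widehat\pi_h\widehat\bphi\|^2=\|\sum_i\widehat\bphi(\widehat P_i)\widehat\eta_i\|^2$. Since $\widehat\eta_i\ge0$ and $\sum_i\widehat\eta_i\equiv1$, the map $(x_0,\ldots,x_d)\mapsto\left(\sum_i x_i^r\widehat\eta_i\right)^{1/r}$, $\left(\sum_i x_i\widehat\eta_i\right)$ and $\|\sum_i\widehat\bchi_i\widehat\eta_i\|$ are all norms (or at least continuous, positively homogeneous, finite seminorms) on the finite-dimensional spaces $\mathbb{R}^{d+1}_{\ge0}$, resp.\ $\mathrm{S}^{d+1}$, whose vanishing is equivalent to all vertex data vanishing; integrating over $\widehat K$ (a fixed bounded set) preserves this. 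Hence by equivalence of norms on finite-dimensional spaces there is a constant $C=C(d,r)$, independent of the data, with
\begin{align}
\int_{\widehat K}\left[\widehat\pi_h[\|\widehat\bphi\|^r]+\left|\widehat\pi_h[\|\widehat\bphi\|]\right|^r\right]\ddx
&\le C\int_{\widehat K}\|\widehat\pi_h\widehat\bphi\|^r\,\ddx.
\label{usehat}
\end{align}
For the second term this uses in addition that $\left|\widehat\pi_h[\|\widehat\bphi\|]\right|^r=\left(\sum_i a_i\widehat\eta_i\right)^r\le\sum_i a_i^r\widehat\eta_i=\widehat\pi_h[\|\widehat\bphi\|^r]$ by convexity of $s\mapsto s^r$ (Jensen's inequality with weights $\widehat\eta_i(\widehat\xx)$), so it already reduces to controlling $\int_{\widehat K}\widehat\pi_h[\|\widehat\bphi\|^r]$. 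The remaining inequality $\int_{\widehat K}\sum_i a_i^r\widehat\eta_i\le C\int_{\widehat K}\|\sum_i\widehat\bphi(\widehat P_i)\widehat\eta_i\|^r$ is the genuine content: both sides are continuous homogeneous-degree-$r$ functions of the vertex data that vanish simultaneously (the right-hand side vanishes only if $\widehat\pi_h\widehat\bphi\equiv0$, i.e.\ all $\widehat\bphi(\widehat P_i)=0$, i.e.\ all $a_i=0$), so the ratio is bounded on the unit sphere by compactness. Finally I pull (\ref{usehat}) back to $K_k$ via (\ref{eq:mapping-derivative}) and the change of variables $\ddx=|\det B_k|\,{\rm d}\widehat\xx$, cancelling $|\det B_k|$, to obtain (\ref{use1}).

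The main obstacle — such as it is — is purely bookkeeping: making sure the compactness/norm-equivalence argument is applied on the \emph{reference} element so that the constant $C$ depends only on $d$ and $r$ and not on $K_k$ or on $\bphi$; once the problem is transported to $\widehat K$ there is no analysis left, only the elementary facts that a continuous positively-homogeneous function vanishing only at the origin on a finite-dimensional space is bounded below by a positive multiple of any norm on the unit sphere, plus Jensen's inequality for the convex power $s\mapsto s^r$. No geometric regularity of the mesh beyond affine equivalence is needed here, since the Jacobian factor cancels; quasi-uniformity or non-obtuseness play no role in this particular Lemma.
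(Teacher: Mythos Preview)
Your argument is correct, but the paper's proof is considerably shorter and proceeds differently. The paper simply bounds both integrands on $K_k$ pointwise by $\|\pi_h\bphi\|_{L^\infty(K_k)}^r$ (since $\pi_h[\|\bphi\|^r]$ and $\pi_h[\|\bphi\|]$ are convex combinations of vertex values $\|\bphi(P^k_i)\|^r$, $\|\bphi(P^k_i)\|$, and $\|\pi_h\bphi(P^k_i)\|=\|\bphi(P^k_i)\|$), giving
\[
\int_{K_k}\left[\pi_h[\|\bphi\|^r]+|\pi_h[\|\bphi\|]|^r\right]\ddx\le 2\,|K_k|\,\|\pi_h\bphi\|_{L^\infty(K_k)}^r,
\]
and then applies the local inverse inequality (\ref{inverse}) together with H\"older to pass from $\|\pi_h\bphi\|_{L^\infty(K_k)}$ to $\int_{K_k}\|\pi_h\bphi\|^r\,\ddx$. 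Your route --- transport to $\widehat K$, then a compactness/norm-equivalence argument on the finite-dimensional space of vertex data --- is essentially what underlies (\ref{inverse}) itself, so you are re-deriving the needed consequence from first principles rather than citing the inverse inequality as a black box. Both are valid; the paper's version is terser because the tool is already on the table, while yours is more self-contained and makes explicit that only affine equivalence (not shape regularity) is needed, a point you correctly emphasize.
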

\begin{proof}
It follows immediately from (\ref{inverse}) that
\begin{align*}
\int_{K_k}  \left[\pi_h\left[ \|\bphi\|^r \right] 
+\left|\pi_h\left[ \|\bphi\| \right]\right|^r \right]
\ddx \leq 2\,|K_k|\, \| \pi_h \bphi\|_{L^\infty(K_k)}^r
\leq C \int_{K_k} \|\pi_h \bphi\|^r \,\ddx,
\end{align*}
and hence the desired result(\ref{use1}).
\end{proof}

\begin{lemma} 
\label{lemMXitt}
Let $g \in C^{0,1}({\mathbb R})$ with Lipschitz constant $g_{\rm Lip}$.
For all $K_k \in {\mathcal T}_h$,
and for all $q \in \Qhone$, $\bphi \in \Shone$ we have that
\begin{subequations}
\begin{align}
&\int_{K_k}\!
\|\pi_h[\Bd(\bphi)]-\Bd(\bphi)\|^2 \,\ddx
+ \max_{m,p=1,\ldots,d}
\int_{K_k}\!
\|\Lambda_{\delta,m,p}(\bphi)- \Bd(\bphi)\,\delta_{mp}\|^2
\,\ddx 
\label{MXittxbd}
\\
& \hspace{3.5in}
\leq C  
\,h^2\int_{K_k} \!\|\grad \bphi\|^2 \,\ddx,
\nonumber
\\
&\int_{K_k}
\|\pi_h[g(q)]-g(q)\|^2\,\ddx 
\leq C \,g_{\rm Lip}^2  
\,h^2\int_{K_k} \|\grad q\|^2
\,\ddx \label{MXittxbdq}\\
&\hspace{1.5in} \mbox{and} \quad
\int_{K_k}
\|\pi_h[g(\bphi)]-g(\bphi)\|^2 \,\ddx
\leq C\,g_{\rm Lip}^2  
\,h^2\int_{K_k} \|\grad \bphi\|^2\,\ddx.
\nonumber
\end{align}
\end{subequations}
In addition, if $g$ is monotonic then, for
all $K_k \in {\mathcal T}_h$
and for all $q \in \Qhone$, we have that
\begin{align}
&\int_{K_k}
\|\pi_h[g(q)]-g(q)\|^2\,\ddx 
\leq C  
\,h^2\int_{K_k} \|\grad \pi_h[g(q)]\|^2
\,\ddx. \label{CME}
\end{align}
\end{lemma}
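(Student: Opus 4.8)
The plan is to prove the three estimates in Lemma~\ref{lemMXitt} by reducing everything to standard interpolation-error bounds on a single simplex $K_k$, exploiting the explicit piecewise-linear/piecewise-constant structure of the quantities involved. The key preliminary observation is that on each $K_k$ the barycentric coordinates $\eta^k_i$ are affine, so for any $v \in [\PP_1]^{d\times d}_{\rm S}$ (or $\PP_1$) the nodal values determine $v$ completely, and the interpolation error $(\I-\pi_h)[g(v)]$ for a Lipschitz $g$ can be controlled pointwise by the oscillation of $g(v)$ over $K_k$, which in turn is controlled by $g_{\rm Lip}$ times the oscillation of $v$, i.e. by $g_{\rm Lip}\,h_k\,\|\grad v\|_{L^\infty(K_k)}$. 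Then a local inverse estimate of type (\ref{Sinverse}) converts the $L^\infty$ gradient into an $L^2$ gradient at the cost of an $h_k^{-1}$, and a Cauchy--Schwarz/volume factor restores the right power of $h$. I expect the bookkeeping of powers of $h_k$ versus $h$ (using mesh regularity, $h_k \le h$ and $h_k/\rho_k \le C$) to be routine rather than delicate.

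For (\ref{MXittxbdq}) I would argue as follows. Fix $K_k$ and set $w = g(\bphi)$; since $\bphi\mid_{K_k}\in[\PP_1]^{d\times d}_{\rm S}$ and $g$ is Lipschitz, $w$ is Lipschitz on $K_k$ with $|w(\xx_1)-w(\xx_2)|\le g_{\rm Lip}\,\|\bphi(\xx_1)-\bphi(\xx_2)\|\le g_{\rm Lip}\,h_k\,\|\grad\bphi\|_{L^\infty(K_k)}$. The standard estimate $\|(\I-\pi_h)w\|_{L^\infty(K_k)}\le C\,\mathrm{osc}_{K_k}(w)$ for the nodal interpolant then gives $\|(\I-\pi_h)[g(\bphi)]\|_{L^\infty(K_k)}\le C\,g_{\rm Lip}\,h_k\,\|\grad\bphi\|_{L^\infty(K_k)}$. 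Squaring, integrating over $K_k$, and then invoking the inverse inequality $\|\grad\bphi\|_{L^\infty(K_k)}\le C\,h_k^{-d/2}\|\grad\bphi\|_{L^2(K_k)}$ together with $|K_k|\le C\,h_k^d$ yields $\int_{K_k}\|(\I-\pi_h)[g(\bphi)]\|^2\,\ddx\le C\,g_{\rm Lip}^2\,h_k^2\,\|\grad\bphi\|_{L^2(K_k)}^2\le C\,g_{\rm Lip}^2\,h^2\int_{K_k}\|\grad\bphi\|^2\,\ddx$. The scalar version for $q\in\Qhone$ is identical. For (\ref{MXittxbd}), the first term is the special case $g=\Bd$ (Lipschitz constant $1$) of the $\bphi$-statement just proved; for the $\Lambda_{\delta,m,p}$ term I would use definitions (\ref{Lambdampdef}), (\ref{hLambdajdef},b), the bound $\|(B_k^T)^{-1}\|\,\|B_k^T\|\le C$ from (\ref{Breg}), and the fact that each $\widehat\Lambda_{\delta,j}^k(\widehat\bphi)$ is a convex combination of $\Bd(\bphi(P_j^k))$ and $\Bd(\bphi(P_0^k))$, hence lies within $\mathrm{osc}_{K_k}(\Bd(\bphi))\le h_k\,\|\grad\bphi\|_{L^\infty(K_k)}$ of any nodal value of $\pi_h[\Bd(\bphi)]$; comparing with $\Bd(\bphi)$ at a generic point of $K_k$ costs another oscillation term, so $\|\Lambda_{\delta,m,p}(\bphi)-\Bd(\bphi)\delta_{mp}\|_{L^\infty(K_k)}\le C\,h_k\,\|\grad\bphi\|_{L^\infty(K_k)}$, and the same squaring/inverse-inequality argument closes it.

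For the monotone case (\ref{CME}) the point is to re-express the right-hand side of (\ref{MXittxbdq}) in terms of $\pi_h[g(q)]$ rather than $q$. Here I would use Lemma~\ref{lem:inf-bound}: since $g$ is monotonically increasing with Lipschitz constant $g_{\rm Lip}$, on each $K_k$ we have $g_{\rm Lip}\,\grad\pi_h[g(q)]\cdot\grad q \ge \|\grad\pi_h[g(q)]\|^2$, and hence by Cauchy--Schwarz $\|\grad\pi_h[g(q)]\|_{L^2(K_k)}\le g_{\rm Lip}\,\|\grad q\|_{L^2(K_k)}$ — but also, running the same inequality the other way with the inverse estimate, or more directly: $\|\grad\pi_h[g(q)]\|^2 \le g_{\rm Lip}\,\grad\pi_h[g(q)]\cdot\grad q$ integrated over $K_k$ and Cauchy--Schwarz gives $\|\grad\pi_h[g(q)]\|_{L^2(K_k)}\le g_{\rm Lip}\|\grad q\|_{L^2(K_k)}$, which is the wrong direction. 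Instead, the clean route is: by monotonicity the nodal oscillation of $g(q)$ over $K_k$ equals $|g(q(P_i^k))-g(q(P_j^k))|$ for the extreme nodal pair, which is exactly the nodal oscillation of $\pi_h[g(q)]$, hence $\mathrm{osc}_{K_k}(g(q))\le C\,h_k\,\|\grad\pi_h[g(q)]\|_{L^\infty(K_k)}$ (the piecewise-linear function $\pi_h[g(q)]$ has gradient comparable to its oscillation divided by $h_k$). Then $\|(\I-\pi_h)[g(q)]\|_{L^\infty(K_k)}\le C\,\mathrm{osc}_{K_k}(g(q))\le C\,h_k\,\|\grad\pi_h[g(q)]\|_{L^\infty(K_k)}$, and squaring, integrating, and applying the inverse inequality (\ref{Vinverse})-type bound to $\pi_h[g(q)]\in\Qhone$ gives $\int_{K_k}\|(\I-\pi_h)[g(q)]\|^2\,\ddx \le C\,h_k^2\,\|\grad\pi_h[g(q)]\|_{L^2(K_k)}^2 \le C\,h^2\int_{K_k}\|\grad\pi_h[g(q)]\|^2\,\ddx$. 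The main obstacle I anticipate is not any single estimate but getting the monotonicity argument for (\ref{CME}) exactly right: one must be careful that $\mathrm{osc}_{K_k}(g(q))$ is genuinely bounded by the oscillation of the \emph{interpolant} $\pi_h[g(q)]$ and not merely of $q$, which is precisely where monotonicity of $g$ is used — for a non-monotone $g$ the extreme values of $g(q)$ on $K_k$ need not be attained at vertices, so the reduction fails and one only gets the weaker (\ref{MXittxbdq}).
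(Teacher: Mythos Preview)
Your proposal is correct and follows essentially the same approach as the paper. The paper's proof is very brief---it cites Lemma~5.3 of \cite{barrett-boyaval-09} for (\ref{MXittxbd}) and (\ref{MXittxbdq}), and for (\ref{CME}) records only the key pointwise bound $\|\pi_h[g(q)]-g(q)\|_{L^\infty(K_k)}\le \max_{i,j}|g(q(P_i^k))-g(q(P_j^k))|$, which is precisely your nodal-oscillation inequality; your subsequent conversion of this to $h_k\,\|\grad\pi_h[g(q)]\|_{L^\infty(K_k)}$ and then to $L^2$ is the intended finish, and your explanation of why monotonicity is exactly what makes (\ref{CME}) work (extremes of $g(q)$ on $K_k$ are attained at vertices since $q$ is affine) is the correct diagnosis.
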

\begin{proof}
The results (\ref{MXittxbd}) are proved in Lemma 5.3 of \cite{barrett-boyaval-09} for the case
when $\Bd$, $\Lambda_{\delta,m,p}$ and $\Shone$ are replaced by 
$\beta$, $\Lambda_{m,p}$ and $\Shone\cap\SPD$. The proofs given there are trivially adapted 
to the present case. In fact, the proof of the first result in (\ref{MXittxbd}) in 
\cite{barrett-boyaval-09} is easily adapted to any function $g \in C^{0,1}({\mathbb R})$.
Hence, we have the results (\ref{MXittxbdq}).

The result (\ref{CME}) is a simple variation of (\ref{MXittxbdq}) and follows on noting that
\begin{align}
\|\pi_h[g(q)]-g(q)\|_{L^\infty(K_k)} \leq \max_{i,\,j =0,\ldots,P_d^k} 
|g(q(P_i^k)) -g(q(P_j^k))|,
\label{CME1}
\end{align}
where $\{P^k_j\}_{j=0}^d$  are the vertices of $K_k$.
\end{proof}

\section{Convergence of {\bf (P$^{\Delta t}_{\alpha,\delta,h}$)} to
{\bf(P$_{\alpha}$)} in the case $d=2$} 
\label{sec:convergence}
\setcounter{equation}{0}
\subsection{Stability}
Before proving our stability results,
\begin{comment}
, we first deduce some simple inequalities
that will be required.
We recall the following well-known results concerning the interpolant $\pi_h$:
\begin{subequations}
\begin{align}
\|(\I-\pi_h)\bphi\|_{W^{1,\infty}(K_k)} &\leq C\,h\,|\bphi|_{W^{2,\infty}(K_k)} 
\qquad \forall \bphi \in [W^{2,\infty}(K_k)]_{\rm S}^{d \times d}, 
\nonumber \\
& \hspace{2in}
\quad k=1,\ldots, N_K;
\label{interp1}\\
\|(\I-\pi_h)[\bchi : \bphi]\|_{L^2(\D)}
& \leq C\,h^2\,\|\nabla \bchi\|_{L^2(\D)}\,\|\nabla \bphi\|_{L^\infty(\D)}
\nonumber
\\
&\leq C\,h\,\|\bchi\|_{L^2(\D)}\,\|\nabla \bphi\|_{L^\infty(\D)}
\qquad \forall \bchi,\,\bphi \in \Shone. 
\label{interp2}
\end{align}
\end{subequations}
We note for any $\zeta \in \R_{>0}$ that 
\begin{align}
&\left[\pi_h[\bchi:\bphi]\right](\xx) \leq \frac{1}{2}
\left[\pi_h\left[\zeta \,\|\bchi\|^2 + \zeta^{-1}\|\bphi\|^2 \right]\right](\xx)
\nonumber
\\
&\hspace{1.5in}\forall \xx \in K_k, \quad \forall 
\bchi, \bphi \in [C(\overline{K_k})]^{d \times d}, \quad k=1,\ldots,N_K\,.
 \label{vecint}
\end{align}
\end{comment}
we introduce some further notation. 
We require the $L^2$ projector ${\mathcal R}_h : \Uz \rightarrow \Vhone$
defined by
\begin{align}
\intd (\bv -{\mathcal R}_h\bv)\cdot\bw \,\ddx=0 \qquad \forall \bw \in \Vhone. 
\label{Rh}
\end{align}
In addition, we require  
${\mathcal P}_h : [L^2(\D)]^{d \times d}_{\rm S} \rightarrow \Shone$
defined by
\begin{align}
\intd \pi_h[{\mathcal P}_h\bchi :\bphi]\,\ddx =
\intd \bchi : \bphi \,\ddx \qquad \forall \bphi \in \Shone. 
\label{Ph}
\end{align}
It is easily deduced for $p =1,\ldots,N_P$ and $i,\,j
= 1, \ldots, d$ that
\begin{align}
[{\mathcal P}_h \bchi]_{ij}(P_p) = \frac{1}{\intd \eta_p} 
\intd [{\mathcal P}_h \bchi]_{ij}\,\eta_p\,\ddx,
\label{strh01def}
\end{align}
where $\eta_p \in {\rm Q}_h^1$ is such that $\eta_p(P_r)=\delta_{pr}$
for $p,\,r = 1,\ldots,N_P$.
It follows from
(\ref{Ph}) and
(\ref{interpinf}) with $\bphi= {\mathcal P}_h \bchi$, in both cases, that 
\begin{align}
\intd \|{\mathcal P}_h \bchi\|^2 \,\ddx\leq \intd \pi_h[\,\|{\mathcal P}_h \bchi\|^2]\,\ddx
\leq \intd \|\bchi\|^2 \,\ddx\qquad \forall \bchi \in [L^2(\D)]^{d \times d}_{\rm S}.
\label{PhstabL2}
\end{align}

We shall assume from now on that $\D$ is convex and that 
the family $\{{\mathcal T}_h\}_{h>0}$ is quasi-uniform,
i.e.\ $h_k \geq C\,h$, $k=1,\ldots, N_K$.  
It then follows that
\begin{align}
\|{\mathcal R}_h \bv\|_{H^1(\D)} \leq C \|\bv\|_{H^1(\D)}
\qquad \forall \bv \in \Uz,  
\label{Rhstab}
\end{align}
see Lemma 4.3 in Heywood and Rannacher \cite{HR}. Similarly, it is easily established that
\begin{align}
\|{\mathcal P}_h \bchi\|_{H^1(\D)} \leq C \|\bchi\|_{H^1(\D)}
\qquad \forall \bchi \in [H^1(\D)]^{d \times d}_{\rm S}.  
\label{Phstab}
\end{align}
We also require the scalar analogue of ${\mathcal P}_h$,
where $d=1$ and $\Shone$ is replaced by $\Qhone$,
satisfying the analogues of (\ref{Ph})--(\ref{PhstabL2})
and (\ref{Phstab}).

Let $([H^1(\D)]^{d \times d}_{\rm S})'$ be the topological dual of
  $[H^1(\D)]^{d \times d}_{\rm S}$ with $[L^2(\D)]^{d \times d}_{\rm S}$
being the pivot space.
Let ${\mathcal E} : ([H^1(\D)]^{d \times d}_{\rm S})' \rightarrow [H^1(\D)]^{d \times d}_{\rm S}$
be such that ${\mathcal E} \bchi$ is the unique solution of the Helmholtz
problem
\begin{align}
\intd \left [ \grad ({\mathcal E} \bchi) ::
\grad \bphi + ({\mathcal E} \bchi) : \bphi \right] \,\ddx=
\langle \bchi,\bphi\rangle_{H^1(\D)} \qquad \forall \bphi \in [H^1(\D)]^{d \times d}_{\rm S},
\label{Echi}
\end{align}  
where $\langle \cdot,\cdot \rangle_{H^1(\D)}$ denotes the duality pairing 
between $([H^1(\D)]^{d \times d}_{\rm S})'$ and $[H^1(\D)]^{d \times d}_{\rm S}$. We note that
\begin{align}
\langle \bchi,{\mathcal E}\bchi\rangle_{H^1(\D)} =\|{\mathcal E} \bchi\|_{H^1(\D)}^2
\qquad \forall \bchi \in ([H^1(\D)]^{d \times d}_{\rm S})',
\label{Echinorm}
\end{align} 
and $\|{\mathcal E}\cdot\|_{H^1(\D)}$ is a norm on $([H^1(\D)]^{d \times d}_{\rm S})'$.
We also employ this operator in the scalar case, $d=1$, i.e.\ ${\mathcal E}
: H^1(\D)' \rightarrow H^1(\D)$.

Let $\Uz'$ be the topological dual of $\Uz$
with the space of weakly divergent free functions in $[L^2(\D)]^d$
being the pivot space.
Let ${\mathcal S} : \Uz' \rightarrow
\Uz$ be such that ${\mathcal S} \bw$ is the unique solution to the Helmholtz-Stokes
problem
\begin{align}
\intd \left [ \grad ({\mathcal S} \bw) :
\grad \bv + ({\mathcal S} \bw) \cdot \bv \right]\,\ddx=
\langle \bw,\bv\rangle_{\Uz} \qquad \forall \bv \in \Uz,
\label{Sw}
\end{align}  
where $\langle \cdot,\cdot \rangle_{\Uz}$ denotes the duality pairing 
between $\Uz'$ and $\Uz$. We note that
\begin{align}
\langle \bw,{\mathcal S}\bw\rangle_{\Uz} =\|{\mathcal S} \bw\|_{H^1(\D)}^2
\qquad \forall \bw \in \Uz',
\label{Swnorm}
\end{align} 
and $\|{\mathcal S}\cdot\|_{H^1(\D)}$ is a norm on the reflexive space $\Uz'$.
\begin{comment}
As $\Uz$ is continuously embedded in $[H^1_0(\D)]^d$, it follows that
$[H^{-1}(\D)]^d$ is continuously embedded in $\Uz'$.
\end{comment}

We recall the following well-known Gagliardo-Nirenberg inequality. 
Let $r \in [2,\infty)$ if $d=2$, and $r \in [2,6]$ if $d=3$ and $\theta =
d(\frac{1}{2}-\frac{1}{r})$. Then, there exists a positive constant $C(\D,r,d)$
such that
\begin{align}
\|\eta\|_{L^r(\D)} \leq C(\D,r,d) \|\eta\|_{L^2(\D)}^{1-\theta} 
\|\eta\|_{H^1(\D)}^\theta \qquad \forall \eta \in H^1(\D).
\label{GN}
\end{align}

We recall also the following compactness results.
Let ${\mathcal Y}_0$, ${\mathcal Y}$ and
${\mathcal Y}_1$ be real Banach
spaces, ${\mathcal Y}_i$, $i=0,1$, reflexive, with a compact embedding 
${\mathcal Y}_0
\hookrightarrow {\mathcal Y}$ and a continuous embedding ${\mathcal Y} \hookrightarrow
{\mathcal Y}_1$. 
Then, for $\mu_i>1$, $i=0,1$, the following embedding is compact:
\begin{align}
&\{\,\eta \in L^{\mu_0}(0,T;{\mathcal Y}_0): \frac{\partial \eta}{\partial t}
\in L^{\mu_1}(0,T;{\mathcal Y}_1)\,\} \hookrightarrow L^{\mu_0}(0,T;{\mathcal Y});
\label{compact1}
\end{align}
see Theorem 2.1 on p184 in Temam \cite{Temam}. %and Simon.\cite{Simon}
Let ${\mathcal X}_0$, ${\mathcal X}$ and
${\mathcal X}_1$ be real Hilbert
spaces with a compact embedding 
${\mathcal X}_0
\hookrightarrow {\mathcal X}$ and a continuous embedding ${\mathcal X} \hookrightarrow
{\mathcal X}_1$. 
Then, for $\gamma>0$, the following embedding is compact:
\begin{align}
&\{\,\eta \in L^{2}(0,T;{\mathcal X}_0): D^\gamma_t \eta
\in L^{2}(0,T;{\mathcal X}_1)\,\} \hookrightarrow L^{2}(0,T;{\mathcal X}),
\label{compact2}
\end{align}
where $D^\gamma_t \eta$ is the time derivative of order $\gamma$ of $\eta$,
which can be defined in terms of the Fourier transform of $\eta$;
see Theorem 2.2 on p186 in Temam \cite{Temam}.  

Finally, we recall the discrete Gronwall inequality:
\begin{alignat}{2}
\label{DG}
(r^0)^2 +(s^0)^2 &\leq (q^0)^2, 
\\
(r^m)^2 +(s^m)^2 &\leq \sum_{n=0}^{m-1} (\eta^n)^2 (r^n)^2 + \sum_{n=0}^m
(q^n)^2 \qquad &&m\geq 1
\nonumber
\\
\Rightarrow \qquad (r^m)^2 +(s^m)^2 &\leq \exp( \sum_{n=0}^{m-1} (\eta^n)^2)
\sum_{n=0}^m (q^n)^2 \qquad &&m\geq1.
\nonumber 
\end{alignat}

\begin{theorem}
\label{dstabthmaL}
Under the assumptions of Theorem \ref{dstabthmaorL},
there exists a solution 
$\{(\buadh^{n},\stradh^{n},$\linebreak$\varrhoadh^n)\}_{n=1}^{N_T} 
\in [\Vhone \times \Shone \times \Qhone]^{N_T}$ 
of {\bf (P$^{\dt}_{\alpha,\delta,h}$)}, (\ref{eq:PdLaha}--c),
such that the bounds (\ref{Fstab1aL}) and (\ref{Fstab2aL}) hold.

Moreover, if $d=2$, (\ref{Deltatqu}) holds %, $\alpha \leq \frac{1}{2\Wi}$ 
and $\Delta t \leq C_\star(\zeta^{-1})\,\alpha^{1+\zeta}\,h^2$,
for a $\zeta >0$ and a $C_\star(\zeta^{-1}) \in \mathbb{R}_{>0}$ sufficiently small,
then the following bounds hold:
\begin{subequations}
\begin{align}
\label{Fstab3aL}
&\max_{n=0, \ldots, N_T} \intd
\pi_h[\,\|\stradh^{n}\|^2\,]\,\ddx
+  \sum_{n=1}^{N_T} \intd \left[\dt_n\, \alpha \|\grad \stradh^{n}\|^2 
+ \pi_h[\,\|\stradh^{n}-\stradh^{n-1}\|^2\,]
\right] \ddx\leq C,\\
\label{Fstab3aLrho}
&\max_{n=0, \ldots, N_T} \intd
\pi_h[\,|\varrhoadh^{n}|^2\,]\,\ddx
+  \sum_{n=1}^{N_T} \intd \left[\dt_n\, \alpha \|\grad \varrhoadh^{n}\|^2 
+ \pi_h[\,|\varrhoadh^{n}-\varrhoadh^{n-1}|^2\,]
\right] \ddx\leq C,\\
\label{FstabLam}
&\sum_{n=1}^{N_T} \dt_n
\sum_{m=1}^2 \sum_{p=1}^2
\left[
\left \|\Lambda_{\delta,m,p}(\stradh^{n}) \right\|_{L^4(\D)}^4
+ \left \|\Lambda_{\delta,m,p}\left(1-\frac{\varrhoadh^{n}}{b}\right) \right\|_{L^4(\D)}^4
\right] \ddx\leq C.
\end{align}
\end{subequations}
\end{theorem}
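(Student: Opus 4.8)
Existence of a solution $\{(\buadh^{n},\stradh^{n},\varrhoadh^{n})\}_{n=1}^{N_T}\in[\Vhone\times\Shone\times\Qhone]^{N_T}$ of {\bf (P$^{\dt}_{\alpha,\delta,h}$)}, together with the bounds (\ref{Fstab1aL}) and (\ref{Fstab2aL}), is furnished by Theorem~\ref{dstabthmaorL}; only (\ref{Fstab3aL})--(\ref{FstabLam}) remain. The plan is to test (\ref{eq:PdLahb}) with $\bphi=\stradh^{n}\in\Shone$ and (\ref{eq:PdLahc}) with $\eta=\varrhoadh^{n}\in\Qhone$, add, multiply by $\dt_{n}$ and sum over $n=1,\dots,m$ for $1\le m\le N_T$. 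Applying (\ref{elemident}) at the vertices of $\mathcal{T}_h$ (so that $\pi_h$ commutes with the pointwise identity) makes the discrete time-derivative terms telescope and produces the non-negative contributions $\intd\pi_h[\|\stradh^{m}\|^2]\,\ddx$, $\intd\pi_h[|\varrhoadh^{m}|^2]\,\ddx$ and $\sum_{n=1}^{m}\intd\pi_h[\|\stradh^{n}-\stradh^{n-1}\|^2]\,\ddx$, $\sum_{n=1}^{m}\intd\pi_h[|\varrhoadh^{n}-\varrhoadh^{n-1}|^2]\,\ddx$, which by (\ref{eqnorm}), (\ref{idatah}) and (\ref{idatahspd}) dominate the left-hand sides of (\ref{Fstab3aL}), (\ref{Fstab3aLrho}) up to a bounded initial-data term; the stress-diffusion terms produce the remaining good quantities $\alpha\,\dt_{n}\|\grad\stradh^{n}\|_{L^2(\D)}^2$ and $\alpha\,\dt_{n}\|\grad\varrhoadh^{n}\|_{L^2(\D)}^2$.

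Three groups of terms must then be absorbed. The reaction terms $\Wi^{-1}\intd\pi_h[\Ad(\stradh^{n},\varrhoadh^{n})\,\Bd(\stradh^{n}):\stradh^{n}]\,\ddx$ and $\Wi^{-1}\intd\pi_h[\tr(\Ad(\stradh^{n},\varrhoadh^{n})\,\Bd(\stradh^{n}))\,\varrhoadh^{n}]\,\ddx$, together with the velocity--stress coupling terms $2\intd\gbuadh^{n}:\pi_h[\kd(\stradh^{n},\varrhoadh^{n})\,(\stradh^{n}+\varrhoadh^{n}\I)\,\Bd(\stradh^{n})]\,\ddx$, are controlled using the elementary bounds $\|\kd(\bphi,\eta)\,[\Bd(\bphi)]^{\frac12}\|^2\le b$ (from (\ref{kddef}), (\ref{BdLdef})) and $\tr(\bphi^2\,\Bd(\bphi))\le C(\|\bphi\|^3+\|\bphi\|^2)$ for $\bphi\in\RS$: since $\Ad(\bphi,\eta)$, $\Bd(\bphi)$ and $\bphi$ commute, each integrand is bounded, pointwise at the vertices, by $C\,(\tr((\Ad)^2\Bd))^{\frac12}$ or by $C$ times $(\|\stradh^{n}\|^{3/2}+\|\stradh^{n}\|+|\varrhoadh^{n}|^{3/2}+|\varrhoadh^{n}|)$, and Lemma~\ref{lem:usefulinv} converts the resulting $\pi_h$-quantities into genuine $L^3(\D)$ and $L^2(\D)$ norms. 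Cauchy--Schwarz in space and in $n$ then estimates the reaction contribution against $\sum_{n}\dt_{n}\intd\pi_h[\tr((\Ad)^2\Bd)]\,\ddx\le C$ and the coupling contribution against $\sum_{n}\dt_{n}\|\gbuadh^{n}\|_{L^2(\D)}^2\le C$, both from (\ref{Fstab1aL}), leaving a factor $\sum_{n}\dt_{n}(\|\stradh^{n}\|_{L^3(\D)}^3+\|\varrhoadh^{n}\|_{L^3(\D)}^3+\cdots)$ which, by the two-dimensional Gagliardo--Nirenberg inequality (\ref{GN}) with $r=3$, is at most $C\max_{n}(\|\stradh^{n}\|_{L^2(\D)}^2+\|\varrhoadh^{n}\|_{L^2(\D)}^2)\sum_{n}\dt_{n}(\|\stradh^{n}\|_{H^1(\D)}+\|\varrhoadh^{n}\|_{H^1(\D)})$; Young's inequality splits this into a small multiple of the diffusion terms plus Gronwall-type terms whose $\dt_{n}$- and $\dt_{n}\|\gbuadh^{n}\|_{L^2(\D)}^2$-weighted coefficients sum to $O(1)$. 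For the advection terms $\intd\sum_{m,p}[\buadh^{n-1}]_m\,\Lambda_{\delta,m,p}(\stradh^{n}):\partial_{\xx_p}\stradh^{n}\,\ddx$ and $b\intd\sum_{m,p}[\buadh^{n-1}]_m\,\Lambda_{\delta,m,p}(1-\tfrac{\varrhoadh^{n}}{b})\,\partial_{\xx_p}\varrhoadh^{n}\,\ddx$, I would proceed exactly as in the proof of the corresponding theorem in \cite{barrett-boyaval-09}: use Lemma~\ref{lemMXitt} to replace $\Lambda_{\delta,m,p}$ by $\Bd(\stradh^{n})\,\delta_{mp}$ (resp.\ $\Bd(1-\tfrac{\varrhoadh^{n}}{b})\,\delta_{mp}$) up to an $O(h\|\grad\stradh^{n}\|)$ error, exploit that $\buadh^{n-1}\in\Vhone$ is discretely divergence free for the leading part, and control the errors and the quadrature mismatch through (\ref{LdmpLinf}), $\|\Bd(\bphi)\|\le\|\bphi\|+C\delta$ and inverse estimates, at the cost of factors $\dt_{n}h^{-2}$.

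It is here that the step-size restriction $\dt\le C_\star(\zeta^{-1})\,\alpha^{1+\zeta}h^{2}$ is used: by (\ref{Vinverse}), quasi-uniformity and the uniform $L^2(\D)$ bound on $\buadh^{n}$ from (\ref{Fstab2aL}), every ``diagonal'' ($n=m$) Gronwall coefficient carries a factor $\dt_{m}h^{-2}$ or $C_\epsilon\,\dt_{m}\|\gbuadh^{m}\|_{L^2(\D)}^2\le C\,C_\epsilon\,\dt_{m}h^{-2}$, and since $C_\epsilon\sim\alpha^{-1}$ is the weight picked up from Young against the diffusion, this is $\le C\,C_\star\,\alpha^{\zeta}$ (the surplus power $\alpha^{\zeta}$, rather than $\alpha^{0}$, being exactly the slack needed); taking $C_\star$ small makes it $<\tfrac12$, so it can be moved to the left. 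What remains is an inequality of the form
\begin{align*}
&\intd\pi_h\big[\|\stradh^{m}\|^2+|\varrhoadh^{m}|^2\big]\,\ddx
+\sum_{n=1}^{m}\intd\pi_h\big[\|\stradh^{n}-\stradh^{n-1}\|^2+|\varrhoadh^{n}-\varrhoadh^{n-1}|^2\big]\,\ddx
+\alpha\sum_{n=1}^{m}\dt_{n}\intd\big[\|\grad\stradh^{n}\|^2+\|\grad\varrhoadh^{n}\|^2\big]\,\ddx\\
&\qquad\qquad\le C+\sum_{n=0}^{m}\theta_{n}\intd\pi_h\big[\|\stradh^{n}\|^2+|\varrhoadh^{n}|^2\big]\,\ddx,
\qquad \sum_{n=0}^{N_T}\theta_{n}\le C,
\end{align*}
and (\ref{Fstab3aL}), (\ref{Fstab3aLrho}) follow from the discrete Gronwall inequality (\ref{DG}), using (\ref{Deltatqu}) to control the $n=m$ term. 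Finally (\ref{FstabLam}) is a consequence: on each $K_k$ the piecewise-constant matrix $\Lambda_{\delta,m,p}(\stradh^{n})$ (resp.\ $\Lambda_{\delta,m,p}(1-\tfrac{\varrhoadh^{n}}{b})$) has size $\le C(\max_{\text{vertices of }K_k}\|\stradh^{n}\|+\delta)$ (resp.\ $\le C(\max_{\text{vertices of }K_k}|1-\tfrac{\varrhoadh^{n}}{b}|+\delta)$) by (\ref{LdmpLinf}), (\ref{BdLdef}) and $\|\Bd(\bphi)\|\le\|\bphi\|+C\delta$, whence a local $L^4$ inverse estimate gives $\|\Lambda_{\delta,m,p}(\stradh^{n})\|_{L^4(\D)}^4\le C\|\stradh^{n}\|_{L^4(\D)}^4+C$ and likewise for $\varrhoadh^{n}$; then (\ref{GN}) with $r=4$, $d=2$, together with (\ref{Fstab3aL}), (\ref{Fstab3aLrho}), yields $\sum_{n}\dt_{n}\|\Lambda_{\delta,m,p}(\stradh^{n})\|_{L^4(\D)}^4\le C\max_{n}\|\stradh^{n}\|_{L^2(\D)}^2\sum_{n}\dt_{n}\|\stradh^{n}\|_{H^1(\D)}^2+C\le C$, and similarly for the $\varrhoadh^{n}$ term.

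The main obstacle is the supercritical scaling in $\stradh^{n}$ of the coupling and advection contributions — they behave like $\|\gbuadh^{n}\|_{L^2(\D)}\|\stradh^{n}\|_{L^3(\D)}^{3/2}$ and $h\|\grad\buadh^{n-1}\|\,\|\grad\stradh^{n}\|^2$ — so that closing the estimate requires the restriction to $d=2$ (so that $\|\cdot\|_{L^3}^{3/2}$ interpolates between $L^2$ and $H^1$, and $\Shone$, $\Vhone$-functions have a favourable $L^4$ norm), the mere $L^2(\D_T)$ bound on $\gbuadh^{n}$ forcing one to use inverse estimates, and hence the step-size restriction $\dt\le C_\star\alpha^{1+\zeta}h^2$ to make the discrete Gronwall inequality applicable; lining up the powers of $\alpha$, $h$ and $\dt$ simultaneously is the delicate part, and it is carried out in detail, for the analogous Oldroyd-B scheme, in \cite{barrett-boyaval-09}.
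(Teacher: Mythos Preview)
Your overall strategy is right --- test (\ref{eq:PdLahb}) with $\bphi=\stradh^{n}$ and (\ref{eq:PdLahc}) with $\eta=\varrhoadh^{n}$, use the two-dimensional Gagliardo--Nirenberg inequality and Young to produce a Gronwall-type inequality, and close with (\ref{DG}) under the step-size restriction --- and your derivation of (\ref{FstabLam}) from (\ref{Fstab3aL}), (\ref{Fstab3aLrho}) via (\ref{LdmpLinf}), the local inverse estimate and (\ref{GN}) with $r=4$ is exactly what the paper does. Two points, however, differ from the paper and deserve correction.

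First, the paper treats $\stradh^{n}$ and $\varrhoadh^{n}$ \emph{sequentially}, not jointly: one proves (\ref{Fstab3aL}) first, and then uses the resulting bound $\sum_{n}\dt_{n}\|\stradh^{n}\|_{H^{1}(\D)}^{2}\le C$ as an input when deriving (\ref{Fstab3aLrho}). The reason is that the coupling term in the $\varrho$-equation, $\pi_h[\kd\,\varrhoadh^{n}\,\Bd(\stradh^{n})]$, is bounded in $L^{2}(\D)$ by a product involving $\|\stradh^{n}\|_{H^{1}(\D)}^{1/3}$ (see the paper's estimate for this term), so the $\varrho$-Gronwall coefficient contains $\|\stradh^{n}\|_{H^{1}(\D)}^{2}$ rather than $\|\grad\buadh^{n}\|_{L^{2}(\D)}^{2}$; adding the two tests does not decouple this.

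Second --- and more substantively --- the paper does \emph{not} handle the advection term by splitting $\Lambda_{\delta,m,p}(\stradh^{n})=\Bd(\stradh^{n})\,\delta_{mp}+\text{error}$ and invoking discrete divergence-freeness on the leading part. It estimates the whole term directly by H\"older with exponents $\bigl(\tfrac{2(2+\zeta)}{\zeta},\,2+\zeta,\,2\bigr)$:
\[
\Bigl|\int_\D\sum_{m,p}[\buadh^{n-1}]_{m}\,\Lambda_{\delta,m,p}(\stradh^{n}):\partial_{\xx_{p}}\stradh^{n}\,\ddx\Bigr|
\;\le\;C\,\|\buadh^{n-1}\|_{L^{\frac{2(2+\zeta)}{\zeta}}(\D)}\,\|\Lambda_{\delta,m,p}(\stradh^{n})\|_{L^{2+\zeta}(\D)}\,\|\grad\stradh^{n}\|_{L^{2}(\D)},
\]
then bounds the first factor by $C(\zeta^{-1})\|\grad\buadh^{n-1}\|_{L^{2}(\D)}^{2/(2+\zeta)}$ via (\ref{GN}), (\ref{Fstab2aL}), and the second via (\ref{LdmpLinf}), a local inverse estimate and (\ref{GN}), obtaining $\|\Lambda_{\delta,m,p}(\stradh^{n})\|_{L^{2+\zeta}(\D)}^{2+\zeta}\le C+C(\zeta)\|\stradh^{n}\|_{L^{2}(\D)}^{2}\|\stradh^{n}\|_{H^{1}(\D)}^{\zeta}$. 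Two applications of Young then absorb $\|\grad\stradh^{n}\|_{L^{2}(\D)}^{2}$ into the diffusion, leaving $C(\zeta^{-1})\,\alpha^{-(1+\zeta)}\,\dt_{n}\bigl[1+\|\grad\buadh^{n}\|_{L^{2}}^{2}+\|\grad\buadh^{n-1}\|_{L^{2}}^{2}\bigr]\bigl(1+\|\stradh^{n}\|_{L^{2}(\D)}^{2}\bigr)$; this is where the power $\alpha^{-(1+\zeta)}$ and hence the step-size condition $\dt\le C_{\star}\alpha^{1+\zeta}h^{2}$ originate (the $n=m$ coefficient is controlled via (\ref{Vinverse}) and (\ref{Fstab2aL})). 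Your proposed route --- replace $\Lambda_{\delta,m,p}$ by $\Bd\,\delta_{mp}$ and use that $\buadh^{n-1}\in\Vhone$ --- does not kill the ``leading part'': $\int_\D(\buadh^{n-1}\!\cdot\!\grad)\stradh^{n}:\Bd(\stradh^{n})\,\ddx$ is \emph{not} a total derivative in $\xx$ that integrates to zero against a discretely divergence-free field (even when $\Bd(\stradh^{n})=\stradh^{n}$, the scalar $\|\stradh^{n}\|^{2}$ is only piecewise quadratic, while the divergence-free condition in $\Vhone$ tests against $\Qhone$), and the residual is of the same order as the term you are trying to bound. The paper's direct H\"older approach avoids this altogether.
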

\begin{proof}
Existence  
and the bounds
(\ref{Fstab1aL}) and (\ref{Fstab2aL})
were proved in Theorem
\ref{dstabthmaorL}.

On choosing $\bphi = \stradh^{n}$ 
in  
(\ref{eq:PdLahb}), 
it follows
from 
(\ref{eq:BGd}),
and (\ref{elemident}),   
on applying a Young's inequality, for any $\zeta >0$ that 
\begin{align}
\label{sigstaba}
&\frac{1}{2}\intd \pi_h[ \,\|\stradh^{n}\|^2+\|\stradh^{n}-
\stradh^{n-1}\|^2\,]\,\ddx + \dt_n \,\alpha \intd \|\grad \stradh^{n}\|^2
\,\ddx\\
& \quad + \frac{\dt_n}{\Wi}
\intd \pi_h\left[
\Gd'\left(1-\frac{\varrhoadh^n}{b}\right)\,\tr\left((\Bd(\stradh^n))^2\right) 
\right]\ddx
\nonumber\\
&\qquad \leq \frac{1}{2}\intd \pi_h[ \,\|\stradh^{n-1}\|^2\,]\,\ddx 
+ 
2\dt_n
\intd \grad \buadh^{n} : 
\pi_h[\kd(\stradh^{n}, \varrhoadh^n)\,\stradh^{n}\,\Bd(\stradh^{n})]\,\ddx
\nonumber\\
& \qquad \quad
+ \frac{\dt_n}{\Wi}
\intd \pi_h\left[\Gd'\left(1-\frac{\varrhoadh^n}{b}\right)
\tr\left(\Bd(\stradh^n) \,(\Bd(\stradh^n)-\stradh^n)\right) \right]\ddx
\nonumber
\\
&\qquad \quad + 
\frac{\dt_n}{\Wi} 
\,\|\stradh^n\|_{L^1(\D)}
+ \dt_n
\int_\D \sum_{m=1}^d \sum_{p=1}^d 
[\buadh^{n-1}]_m \,\Lambda_{\delta,m,p}(\stradh^{n})
: \frac{\partial \stradh^n}{\partial \xx_p}\,\ddx
\nonumber \\
&\qquad \leq \frac{1}{2} \intd \pi_h[\, \|\stradh^{n-1}\|^2\,]\,\ddx 
+\frac{\dt_n}{\Wi} 
\,\|\stradh^n\|_{L^1(\D)}
\nonumber \\
& \qquad \quad
+ C\,\delta^{-1}\,\dt_n
\intd \pi_h\left[
\tr\left(\Bd(\stradh^n)\left|\Bd(\stradh^n)-\stradh^n\right|\right) 
\right]\,\ddx
\nonumber \\
& \qquad \quad
+2\,\dt_n \,\|\grad \buadh^{n}\|_{L^2(\D)} 
\,
\|\pi_h[\kd(\stradh^{n}, \varrhoadh^n)\,\stradh^{n}\,\Bd(\stradh^n)]\|_{L^2(\D)}
\nonumber \\
& \qquad \quad 
+ C\,\dt_n\,\|\buadh^{n-1}\|_{L^{\frac{2(2+\zeta)}{\zeta}}(\D)}\,
\|\Lambda_{\delta,m,p}(\stradh^{n})\|_{L^{2+\zeta}(\D)}\,
\|\grad \stradh^{n}\|_{L^2(\D)}.
\nonumber 
\end{align}
We deduce from   
(\ref{eq:Bd}), (\ref{modphisq}), (\ref{interpinf}), 
(\ref{normprod}), (\ref{kddef}), (\ref{BdLdef}), (\ref{use1}) and (\ref{GN}), as $d=2$,
that
\begin{subequations}
\begin{align}
&\tr\left(\Bd(\stradh^n)\left|\Bd(\stradh^n)-\stradh^n\right|\right)
\leq \delta\, \tr\left(\left|\Bd(\stradh^n)-\stradh^n\right|\right)
\leq C\,\delta\,(\delta + \|\stradh^n\|),
\label{deltr}\\
&\|\pi_h[\kd(\stradh^{n}, \varrhoadh^n)\,\stradh^{n}\,\Bd(\stradh^n)]\|_{L^2(\D)}^2 
\label{deltnorm}
\\
& \qquad \leq 
\intd  \pi_h\left[\,\|\kd(\stradh^{n}, \varrhoadh^n)\,\stradh^{n}\,\Bd(\stradh^n)\|^2\right]
\,\ddx \nonumber \\
& \qquad \leq C\,b
\intd  \pi_h\left[\,\|\stradh^{n}\|^2\,\|\Bd(\stradh^n)\|\right]\,\ddx
\nonumber
\leq C\left( \delta^3 + \intd \pi_h[\,\|\stradh^{n}\|^3\,] \,\ddx\right)
\\
& \qquad \leq C\left( \delta^3 + \|\stradh^{n}\|_{L^3(\D)}^3 \right)
\leq C\,\left( \delta^3 + \|\stradh^{n}\|_{L^2(\D)}^2\,\|\stradh^{n}\|_{H^1(\D)}
\right).
\nonumber 
\end{align} 
\end{subequations}
Similarly, as $d=2$, it follows from (\ref{LdmpLinf}),
(\ref{eq:Bd}), (\ref{inverse}) 
and (\ref{GN}) that
for all $\zeta >0$ 
\begin{align}
\label{LambdaL2}
 \|\Lambda_{\delta,m,p}(\stradh^{n})\|_{L^{2+\zeta}(\D)}^{2+\zeta}
&\leq \sum_{k=1}^{N_K} |K_k|\, \|\Lambda_{\delta,m,p}(\stradh^{n})\|_{L^\infty(K_k)}^{2+\zeta}
\leq C\,\sum_{k=1}^{N_K} |K_k|\, \|\pi_h[\Bd(\stradh^{n})]\|_{L^\infty(K_k)}^{2+\zeta}
\\
&\leq C\left[\delta^{2+\zeta} + \|\stradh^{n}\|_{L^{2+\zeta}(\D)}^{2+\zeta} \right]
\leq C + C(\zeta)\,\|\stradh^{n}\|_{L^2(\D)}^2\,\|\stradh^{n}\|_{H^1(\D)}^\zeta.
\nonumber
\end{align}
In addition, as $d=2$, we note from (\ref{GN}), (\ref{eq:poincare}) and (\ref{Fstab2aL}) that
for all $\zeta>0$
\begin{align}
\|\buadh^{n-1}\|_{L^{\frac{2(2+\zeta)}{\zeta}}(\D)}
\leq C(\zeta^{-1})\, \|\buadh^{n-1}\|_{L^{2}(\D)}^{\frac{\zeta}{2+\zeta}}
\,\|\buadh^{n-1}\|_{H^{1}(\D)}^{\frac{2}{2+\zeta}}
\leq C(\zeta^{-1})\,
\|\grad \buadh^{n-1}\|_{L^{2}(\D)}^{\frac{2}{2+\zeta}}\,.
\label{uzeta}
\end{align}

Combining (\ref{sigstaba})--(\ref{uzeta}), yields, 
on 
applying a Young's inequality, that
for all $\zeta >0$
\begin{align}
&\intd \pi_h[ \,\|\stradh^{n}\|^2+ \|\stradh^{n}-
\stradh^{n-1}\|^2\,] \,\ddx+ \dt_n\, \alpha \intd \|\grad \stradh^{n}\|^2\,\ddx
\label{sigstabb}
\\
& \quad
\leq 
\intd \pi_h[ \,\|\stradh^{n-1}\|^2\,]\,\ddx 
\nonumber \\
& \qquad 
+ C(\zeta^{-1})\,\dt_n \,\alpha^{-(1+\zeta)}
\left[ 1 
+ \|\grad \buadh^{n}\|_{L^2(\D)}^2\,
+ \|\grad \buadh^{n-1}\|_{L^2(\D)}^2 \right]
\left(1+
\|\stradh^{n}\|^2_{L^2(\D)}\right).
\nonumber
\end{align}
Hence, summing (\ref{sigstabb}) from $n=1,\ldots,m$ for $m=1, \ldots,N_T$
yields, on noting 
(\ref{interpinf}), that
for any $\zeta >0$
\begin{align}
\label{sigstabc}  
&\intd \pi_h[ \,\|\stradh^{m}\|^2\,]\,\ddx
+ \alpha \,\sum_{n=1}^m 
\dt_n 
\intd 
 \|\grad \stradh^{n}\|^2\,\ddx
+ \sum_{n=1}^m 
\intd
\pi_h[\,\|\stradh^{n}-\stradh^{n-1}\|^2\,]\,\ddx 
\\
& \quad \quad
\leq 
\intd \pi_h[\, \|\strh^{0}\|^2\,] \,\ddx+ C(\zeta^{-1})\,\alpha^{-(1+\zeta)}
\nonumber \\
& \quad \quad \quad  + C(\zeta^{-1})\,\alpha^{-(1+\zeta)}
\,\sum_{n=1}^m
\dt_n
\left[1+ \sum_{k=n-1}^n \|\grad \buadh^{k}\|_{L^2(\D)}^2 
\right]
\,
\intd \pi_h[\,\|\stradh^{n}\|^2\,]\,\ddx.
\nonumber
\end{align}
Applying the discrete Gronwall inequality (\ref{DG}) to (\ref{sigstabc}), and noting
(\ref{Deltatqu}), (\ref{eqnorm}), (\ref{idatah}), 
(\ref{Vinverse}),
(\ref{Fstab1aL}),
(\ref{Fstab2aL}), 
 and that $\dt \leq C_\star(\zeta^{-1})
\,\alpha^{1+\zeta}\,h^2$, 
for a $\zeta >0$ where $C_\star(\zeta^{-1})$ is sufficiently small, 
yields the bounds (\ref{Fstab3aL}).

Similarly to (\ref{sigstaba})
on choosing $\eta = \varrhoadh^{n}$ 
in  
(\ref{eq:PdLahc}), 
it follows
from 
(\ref{elemident}), (\ref{ipmat}), (\ref{normphi}) and (\ref{modphisq}),  
on applying a Young's inequality, for any $\zeta >0$ that  
\begin{align}
\label{sigstabarho}
&\frac{1}{2}\intd \pi_h[ \,|\varrhoadh^{n}|^2+|\varrhoadh^{n}-
\varrhoadh^{n-1}|^2\,]\,\ddx + \dt_n \,\alpha \intd \|\grad \varrhoadh^{n}\|^2
\,\ddx
\\
&\quad \leq \frac{1}{2}\intd \pi_h[ \,|\varrhoadh^{n-1}|^2\,]\,\ddx 
+ 2\dt_n
\intd \grad \buadh^{n} : 
\pi_h[\kd(\stradh^n,\varrhoadh^n)\,\varrhoadh^n\,\Bd(\stradh^{n})]\,\ddx
\nonumber
\\
&\qquad \qquad 
- \frac{\dt_n}{\Wi}
\intd \pi_h\left[\tr\left(
\Ad(\stradh^n,\varrhoadh^n)\,
\Bd(\stradh^n)\right) \varrhoadh^n \right]\,\ddx
\nonumber \\
& \qquad \qquad - \dt_n\,b
\int_\D \sum_{m=1}^d \sum_{p=1}^d 
[\buadh^{n-1}]_m \,\Lambda_{\delta,m,p}\left(1-\frac{\varrhoadh^{n}}{b}\right)
\frac{\partial \varrhoadh^n}{\partial \xx_p}\,\ddx
\nonumber \\
&\quad \leq \frac{1}{2} \intd \pi_h[\, |\varrhoadh^{n-1}|^2\,]\,\ddx
+ 2\,\dt_n \,\|\grad \buadh^{n}\|_{L^2(\D)} 
\,
\|\pi_h[\kd(\stradh^n,\varrhoadh^n)\,\varrhoadh^n\,\Bd(\stradh^n)]\|_{L^2(\D)}
\nonumber \\
& \qquad \qquad 
+ C\,\dt_n\,\|\buadh^{n-1}\|_{L^{\frac{2(2+\zeta)}{\zeta}}(\D)}\,
\left\|\Lambda_{\delta,m,p}\left(1-\frac{\varrhoadh^{n}}{b}\right)\right\|_{L^{2+\zeta}(\D)}
\|\grad \varrhoadh^{n}\|_{L^2(\D)}
\nonumber \\
& \qquad \qquad 
+ \frac{\dt_n}{2\Wi}
\intd \pi_h\left[\tr\left(\left(
\Ad(\stradh^n,\varrhoadh^n)
\right)^2\Bd(\stradh^n)\right)+ d^{\frac{1}{2}}(\varrhoadh^n)^2 \|\Bd(\stradh^n)\|
\right]\ddx.
\nonumber 
\end{align}
Similarly to (\ref{deltnorm}), as $d=2$, we deduce from 
(\ref{interpinf}), (\ref{kddef}), (\ref{BdLdef}),
(\ref{eq:Bd}), (\ref{inverse}), (\ref{GN}) and (\ref{Fstab3aL})  
that
\begin{align}
\| \pi_h[\kd(\stradh^n,\varrhoadh^n)\,\varrhoadh^n \,\Bd(\stradh^n)] \|_{L^2(\D)}^2
\!\!
&\leq C\,b\,\intd \pi_h\left[ (\varrhoadh^n)^2\,\|\Bd(\stradh^n)\|\right]
\,\ddx
\label{rhoL4a}\\
&\leq C\left( 1+ \| \stradh^n \|_{L^3(\D)}\right)\| \varrhoadh^n\|_{L^3(\D)}^2 
\nonumber \\
&\leq C\left( 1+ \| \stradh^n \|_{H^1(\D)}^{\frac{1}{3}}\right)
\| \varrhoadh^n\|_{L^2(\D)}^{\frac{4}{3}}
\| \varrhoadh^n\|_{H^1(\D)}^{\frac{2}{3}}.
\nonumber 
\end{align}
Similarly to (\ref{LambdaL2}), as $d=2$, it follows from (\ref{LdmpLinfq}), 
(\ref{eq:Bd}), (\ref{inverse}) 
and (\ref{GN}) that
for all $\zeta >0$ 
\begin{align}
\label{LambdaL2rho}
 \left \|\Lambda_{\delta,m,p}\left(1-\frac{\varrhoadh^{n}}{b}
 \right)\right \|_{L^{2+\zeta}(\D)}^{2+\zeta}
&
\leq C + C(\zeta)\,\|\varrhoadh^{n}\|_{L^2(\D)}^2\,\|\varrhoadh^{n}\|_{H^1(\D)}^\zeta.
\end{align}

Combining (\ref{sigstabarho})--(\ref{LambdaL2rho}) and (\ref{uzeta}), yields, 
on    
applying a Young's inequality, that
for all $\zeta >0$
\begin{align}
&\intd \pi_h[ \,|\varrhoadh^{n}\|^2+ |\varrhoadh^{n}-
\varrhoadh^{n-1}|^2\,]\,\ddx + \dt_n\, \alpha \intd \|\grad \varrhoadh^{n}\|^2
\,\ddx
\label{sigstabbrho}
\\
& \quad
\leq 
\intd \pi_h[ \,|\varrhoadh^{n-1}|^2\,]\,\ddx + 2\, \dt_n\,
\|\grad \buadh^{n}\|_{L^2(\D)}^2
\nonumber \\
&\qquad 
+ \frac{\dt_n}{\Wi}
\intd \pi_h\left[
\tr\left(\left(
\Ad(\stradh^n,\varrhoadh^n)\right)^2
\Bd(\stradh^n)\right) 
\right]\ddx
\nonumber \\
& \qquad + C(\zeta^{-1})\,\dt_n \,\alpha^{-(1+\zeta)}
\left[ 1+ \|\stradh^{n}\|_{H^1(\D)}^2\,
+ \|\grad \buadh^{n-1}\|_{L^2(\D)}^2 \right]
\left(1+
\|\varrhoadh^{n}\|^2_{L^2(\D)}\right).
\nonumber
\end{align}
Hence, summing (\ref{sigstabbrho}) from $n=1,\ldots,m$ for $m=1, \ldots,N_T$
yields, on noting %(\ref{Deltatqu}), 
(\ref{interpinf}) and (\ref{Fstab1aL}), %and (\ref{Fstab3aL}), 
that for any $\zeta >0$
\begin{align}
\label{sigstabcrho}  
&\intd \pi_h[ \,|\varrhoadh^{m}|^2\,]\,\ddx
+ \alpha \,\sum_{n=1}^m 
\dt_n 
\intd 
 \|\grad \varrhoadh^{n}\|^2\,\ddx
+ \sum_{n=1}^m 
\intd
\pi_h[\,|\varrhoadh^{n}-\varrhoadh^{n-1}\|^2\,]\,\ddx 
\\
& \;
\leq 
\intd \pi_h[\, %|\varrhoadh^{0}|^2
(\tr(\strh^0))^2\,]\,\ddx + C(\zeta^{-1})\,\alpha^{-(1+\zeta)}
\nonumber \\
& \quad   + C(\zeta^{-1})\,\alpha^{-(1+\zeta)}
\,\sum_{n=1}^m
\dt_n
\left[1+ \|\stradh^n\|_{H^1(\D)}^2 + \|\grad \buadh^{n-1}\|_{L^2(\D)}^2
\right]
\,
\intd \pi_h[\,|\varrhoadh^{n}|^2\,]\,\ddx.
\nonumber
\end{align}
Applying the discrete Gronwall inequality (\ref{DG}) to (\ref{sigstabcrho}), and noting
(\ref{Deltatqu}), (\ref{idatahspd}), 
(\ref{Sinverse},b),
(\ref{Fstab1aL}),
(\ref{Fstab2aL}), (\ref{eqnorm}), (\ref{Fstab3aL}),
 and that $\dt \leq C_\star(\zeta^{-1})
\,\alpha^{1+\zeta}\,h^2$, 
for a $\zeta >0$ where $C_\star(\zeta^{-1})$ is sufficiently small, 
yields the bounds (\ref{Fstab3aLrho}).

Finally, the desired result (\ref{FstabLam}) follows immediately from 
(\ref{LambdaL2}) and (\ref{LambdaL2rho}) with $\zeta=2$,
on noting (\ref{Fstab3aL},b) and (\ref{eqnorm}). 
\end{proof}

\begin{comment}
On choosing $\bphi \equiv \stradh^{n}$ in the version of (\ref{eq:PLahb}), 
it follows
from (\ref{elemident}), (\ref{vecint}),  
(\ref{combi}), 
(\ref{Fstab2aL}) and (\ref{LmpLinf}) 
on applying a Young's inequality that 
\begin{align}
\nonumber
&\frac{1}{2}\intd \pi_h[ \,\|\stradh^{n}\|^2+\|\stradh^{n}-
\stradh^{n-1}\|^2\,] 
\nonumber \\
& \quad
+ \dt_n \alpha \intd \|\grad \stradh^{n}\|^2
+ \frac{\dt_n}{2 \Wi} \intd \pi_h[ \,\|\stradh^{n}\|^2\,]
\nonumber
\\
&\qquad \leq \frac{1}{2}\intd \pi_h[ \,\|\stradh^{n-1}\|^2\,]
+ 2\dt_n
\intd \|\grad \buadh^{n}\| 
\,\|\pi_h[\stradh^{n}\,\BL(\stradh^{n})]\|
\nonumber \\
& \qquad \qquad + \dt_n
\intd \|\buadh^{n-1}\|\,\|\grad \stradh^{n}\| \left(\sum_{m=1}^d \sum_{p=1}^d
\|\Lambda_{m,p}^L( \stradh^{n}) \|^2\right)^{\frac{1}{2}}
+ \frac{\dt_n d|D|}{2\Wi}
\nonumber \\
&\qquad \leq \frac{1}{2} \left[\intd \pi_h[ \,\|\stradh^{n-1}\|^2\,]
+ \dt_n \alpha \intd \|\grad \stradh^{n}\|^2\right]
+ \frac{\dt_n}{4\Wi}\intd \pi_h[\,\|\stradh^{n}\|^2\,] 
\nonumber \\
& \qquad \qquad 
+ \dt_n C(L) \left[ 1 + \intd \|\grad \buadh^{n}\|^2 \right].
\label{sigstab}
\end{align}
Hence, summing (\ref{sigstab}) from $n=1,\ldots,m$ for $m=1, \ldots,N_T$
yields, on noting (\ref{Fstab2aL}), the desired result (\ref{Fstab3aL}).
\end{comment}

\begin{remark} \label{remd2}
Our final convergence result will be   restricted to $d=2$ for the same reason as
why our result for Oldroyd-B in \cite{barrett-boyaval-09} was
restricted to $d=2$. For example, the control of the term
(\ref{deltnorm}) necessitates the restriction to $d=2$.
This could be overcome for FENE-P
by replacing the regularization $\Bd$ in {\bf (P$_{\alpha,\delta}$)},
(\ref{eq:aoldroyd-b-sigmad}--h),
by $\Bdb$ and using test functions based on $\Gd^b$ in place of $\Gd$, where 
$(\Gd^b)'(s)=\Bdb(s)$ for all $s \in \R$.
On making a similar change to our numerical approximation {\bf (P$_{\alpha,\delta}^{h,\Delta t}$)},
(\ref{eq:PdLaha}--c), 
it is then possible to prove the analogues of Theorem \ref{dstabthmaorL}
and Theorem \ref{dstabthmaL} for $d=2$ and $3$ with no restriction on $\Delta t$.       
One can then establish analogues of Lemmas \ref{dstabthmaLex}, \ref{dstabthmaLexF}
and Theorem \ref{convaL}, below, but now the limits involve the cut-off $b$, i.e. 
$\bu_\alpha^b$, $\stra^b$ and $\varrho_\alpha^b$.
Moreover, it does not seem possible
to establish that $\varrho_\alpha^b=\tr(\stra^b)$, as we now have
$\Lambda_{\delta,m,p}^b$ in place of $\Lambda_{\delta,m,p}$ in the 
analogue of (\ref{eqvarrhocondif}). Hence, in taking the limit $\delta,\,h,\,\Delta t
\rightarrow 0_+$,
the last term in the analogue of (\ref{eqdifcon}) would have
$b\, \beta^b\left(1-\frac{\varrho_\alpha^b}{b}\right) + \tr(\beta^b(\stra^b))$     
instead of $-(\varrho_\alpha^b-\tr(\stra^b))$, where $\beta^b(s)=\min\{s,b\}$. 
Therefore, the  extension of the convergence analysis in this paper to $d=3$ and with
a weaker restriction on $\Delta t$ will be a topic of further research. 
\end{remark}

\begin{lemma}
\label{dstabthmaLex}
Under all of the assumptions of Theorem \ref{dstabthmaL}, 
the solution 
$\{(\buadh^{n},\stradh^{n},$\linebreak$\varrhoadh^n)\}_{n=1}^{N_T}$ 
of {\bf (P$^{\dt}_{\alpha,\delta,h}$)}, (\ref{eq:PdLaha}--c), satisfies 
the following bounds:
\begin{subequations}
\begin{align}
&\sum_{n=1}^{N_T} \dt_n \left\|{\mathcal S}
\left( \frac{\buadh^{n}-\buadh^{n-1}}{\dt_n}\right)
\right\|^{\frac{4}{\vartheta}}_{H^1(\D)} \leq C,
\label{Fstab4aL} \\
&\sum_{n=1}^{N_T} \dt_n \left\|{\mathcal E}
\left( \frac{\stradh^{n}-\stradh^{n-1}}{\dt_n}\right)
\right\|^{2}_{H^1(\D)}
+ 
\sum_{n=1}^{N_T} \dt_n \left\|{\mathcal E}
\left( \frac{\varrhoadh^{n}-\varrhoadh^{n-1}}{\dt_n}\right)
\right\|^{2}_{H^1(\D)}
\leq C,
\label{Fstab4aLa}\\
&\sum_{n=1}^{N_T}\dt_n\left\|\pi_h\left[
\kd(\stradh^{n},\varrhoadh^n)\,\Ad(\stradh^{n},\varrhoadh^n)\,
\Bd(\stradh^{n})\right]\right\|^2_{L^2(\D)} \leq C,
\label{kdAdrhobd}\\
&\sum_{n=1}^{N_T}\dt_n\left\|\pi_h\left[
\Ad(\stradh^{n},\varrhoadh^n)\,
\Bd(\stradh^{n})\right]\right\|^{\frac{8}{5}}_{L^{\frac{8}{5}}(\D)}
\leq C,
\label{kdAdrhobda}
\end{align}
\end{subequations}
where
$\vartheta \in (2,4]$ and $C$ in (\ref{Fstab4aL},c) is independent of
$\alpha$, as well as $\delta$, $h$ and $\Delta t$.
\end{lemma}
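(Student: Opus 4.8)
The plan is to derive each bound by testing the discrete equations (\ref{eq:PdLaha}--c) with suitably chosen test functions built from the Helmholtz-type lifting operators $\mathcal{S}$ and $\mathcal{E}$, and from the already-established a priori estimates of Theorems~\ref{dstabthmaorL} and~\ref{dstabthmaL}. For (\ref{Fstab4aL}), I would take $\bv = \dt_n\,\mathcal{S}\!\left(\frac{\buadh^{n}-\buadh^{n-1}}{\dt_n}\right)$ in (\ref{eq:PdLaha}); note $\mathcal{S}$ maps into $\Uz$ but we need a test function in $\Vhone$, so in fact I would first project: since $\buadh^{n}-\buadh^{n-1}\in\Vhone$, one has $\mathcal{S}$ applied to the functional $\bv\mapsto\intd\frac{\buadh^n-\buadh^{n-1}}{\dt_n}\cdot\bv$, and then test the discrete momentum equation with $\mathcal{R}_h$ of that lifting (using (\ref{Rh}), (\ref{Rhstab})). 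By (\ref{Swnorm}) the left side produces $\left\|\mathcal{S}(\cdot)\right\|_{H^1(\D)}^2$, and the right side is bounded, term by term, by: the viscous term $\lesssim\|\gbuadh^n\|_{L^2}\|\mathcal{S}(\cdot)\|_{H^1}$; the convective term, handled by (\ref{GN}) in $d=2$, $\lesssim\|\buadh^{n-1}\|_{L^4}\|\gbuadh^n\|_{L^2}\|\mathcal{S}(\cdot)\|_{H^1}$, with $\|\buadh^{n-1}\|_{L^4}\lesssim\|\buadh^{n-1}\|_{L^2}^{1/2}\|\buadh^{n-1}\|_{H^1}^{1/2}$; the stress term, using the $L^2$ bound (\ref{kdAdrhobd}) established concurrently; and $\langle\f^n,\cdot\rangle$, $\lesssim\|\f^n\|_{H^{-1}}\|\mathcal{S}(\cdot)\|_{H^1}$. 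Dividing by $\|\mathcal{S}(\cdot)\|_{H^1}$, taking the $\frac{4}{\vartheta}$-th power, multiplying by $\dt_n$ and summing, then invoking (\ref{Fstab1aL}), (\ref{Fstab2aL}), (\ref{fncont}) and a H\"older exponent bookkeeping gives (\ref{Fstab4aL}) for $\vartheta\in(2,4]$ — the value $\vartheta$ is dictated by the fact that $\sum_n\dt_n\|\gbuadh^n\|_{L^2}^2\le C$ only controls the second power, so the convective term's $\|\buadh^{n-1}\|_{H^1}^{1/2}$ forces the reduced exponent.

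For (\ref{Fstab4aLa}) I would test (\ref{eq:PdLahb}) with $\bphi = \mathcal{E}\!\left(\frac{\stradh^{n}-\stradh^{n-1}}{\dt_n}\right)$ — more precisely, since $\pi_h$ appears on the time-derivative term, with $\mathcal{P}_h$ applied to this lifting, so that $\intd\pi_h\!\left[\frac{\stradh^n-\stradh^{n-1}}{\dt_n}:\mathcal{P}_h(\cdot)\right] = \intd\frac{\stradh^n-\stradh^{n-1}}{\dt_n}:\mathcal{E}(\cdot) = \left\|\mathcal{E}(\cdot)\right\|_{H^1}^2$ by (\ref{Ph}), (\ref{Echinorm}). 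The remaining terms of (\ref{eq:PdLahb}) are bounded using: $\|\bchi^{n}\|_{L^2}\lesssim$ (from the extra-stress $L^2$ bound (\ref{kdAdrhobd})); the diffusion term $\alpha\|\grad\stradh^n\|_{L^2}\|\grad\mathcal{P}_h(\cdot)\|_{L^2}$, with $\|\grad\mathcal{P}_h(\cdot)\|_{H^1}\lesssim\|\mathcal{E}(\cdot)\|_{H^1}$ by (\ref{Phstab}); the velocity-gradient--stress coupling, controlled by (\ref{GN}) with $\|\gbuadh^n\|_{L^2}\|\pi_h[\cdots]\|_{L^2}$ using the $L^2$-in-space, $L^2$-in-time bounds on $\stradh^n$ from (\ref{Fstab3aL}); and the advective $\Lambda_{\delta,m,p}$ term, bounded via (\ref{FstabLam}) ($L^4$ bound on $\Lambda$), $\|\buadh^{n-1}\|_{L^4}$, and $\|\grad\mathcal{P}_h(\cdot)\|$. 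Squaring, multiplying by $\dt_n$, summing, and absorbing using (\ref{Fstab1aL}), (\ref{Fstab2aL}), (\ref{Fstab3aL}), (\ref{FstabLam}) yields the stress half; the $\varrhoadh$ half is entirely analogous using (\ref{eq:PdLahc}), (\ref{Fstab3aLrho}) and the scalar versions of $\mathcal{E}$, $\mathcal{P}_h$.

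The bound (\ref{kdAdrhobd}) is the load-bearing estimate and I would prove it first, directly from the free energy bound: by the pointwise inequality (\ref{kddefbd}) we have $\|\kd\,\Ad\,\Bd\|^2\le b\,\tr((\Ad)^2\Bd)$, and applying $\pi_h$ (which preserves this since it is a nonnegative combination of vertex values) and integrating gives $\left\|\pi_h[\kd\,\Ad\,\Bd]\right\|_{L^2(\D)}^2\le b\intd\pi_h[\tr((\Ad(\stradh^n,\varrhoadh^n))^2\Bd(\stradh^n))]\,\ddx$; multiplying by $\dt_n$ and summing, the right side is exactly controlled by the third term on the left of (\ref{Fstab1aL}), hence $\le C$. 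For (\ref{kdAdrhobda}), the point is that $\Ad\,\Bd$ itself (without the $\kd$ factor) is only in $L^{8/5}$: I would write $\|\Ad\,\Bd\| = \|\kd^{-1}\,\kd\,\Ad\,\Bd\|\le\|\kd^{-1}\|\,\|\kd\,\Ad\,\Bd\|$ where, by (\ref{kddef})--(\ref{BdLdef}), $\kd^{-1} = \left[\tr(\Bd(\bphi))/\Bdb(\eta)\right]^{1/2}$ and $\Bdb(\eta)\ge\delta$ is not uniformly bounded below, but here we use instead the bound $\tr(\Bd(\stradh^n))\le \sqrt d\,\|\Bd(\stradh^n)\|\le\sqrt d(\|\stradh^n\|+\delta)$ and $\Bdb(\varrhoadh^n)^{-1}$ bounded below away from $0$ on the region where $\varrhoadh^n$ stays bounded, combined with the $L^2$-in-space bound on $\stradh^n$ from (\ref{Fstab3aL}). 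A clean route: apply H\"older in space with exponents $(\frac{8}{5})$ split as a product of $\|\kd\,\Ad\,\Bd\|\in L^2$ and $\|\kd^{-1}\|$ controlled in the complementary $L^{8/3}$ norm via (\ref{Fstab3aL}) and (\ref{GN}) in $d=2$ — since $\|\stradh^n\|^{1/2}\in L^{?}$ follows from $\stradh^n\in L^\infty(L^2)\cap L^2(H^1)$ which in $d=2$ embeds into $L^4(L^4)$, giving $\|\stradh^n\|^{1/2}\in L^8(L^8)$, more than enough. Then time-integrate. The main obstacle is getting the exponents in (\ref{Fstab4aL}) and (\ref{kdAdrhobda}) to close: everything downstream only has $L^2$-in-time control on $\gbuadh^n$ and on $\|\grad\stradh^n\|$, and on the $\dt\le C_\star\alpha^{1+\zeta}h^2$ regime the $\alpha$-dependence of (\ref{Fstab3aL}), (\ref{Fstab3aLrho}) must \emph{not} leak into the constants in (\ref{Fstab4aL}), (\ref{kdAdrhobd}) — so care is needed to route those two bounds only through the $\alpha$-independent estimates (\ref{Fstab1aL}), (\ref{Fstab2aL}), (\ref{kddefbd}) and not through (\ref{Fstab3aL}); the sub-optimal time exponents $\frac{4}{\vartheta}$ and $\frac{8}{5}$ are precisely the artifact of this constraint.
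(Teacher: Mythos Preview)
Your overall strategy matches the paper's: test (\ref{eq:PdLaha}) with $\mathcal{R}_h\mathcal{S}(\cdot)$, test (\ref{eq:PdLahb},c) with $\mathcal{P}_h\mathcal{E}(\cdot)$, and derive (\ref{kdAdrhobd}) directly from the pointwise bound (\ref{kddefbd}) and the free-energy dissipation in (\ref{Fstab1aL}). That part is fine and essentially reproduces the paper.

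There is, however, a genuine gap in how you handle the term $\pi_h[\Ad(\stradh^n,\varrhoadh^n)\,\Bd(\stradh^n)]$ \emph{without} the cutoff factor $\kd$, and this affects both (\ref{Fstab4aLa}) and (\ref{kdAdrhobda}). For (\ref{Fstab4aLa}) you write that this source term in (\ref{eq:PdLahb}) is controlled ``from the extra-stress $L^2$ bound (\ref{kdAdrhobd})'', but (\ref{kdAdrhobd}) bounds $\kd\,\Ad\,\Bd$, not $\Ad\,\Bd$, and the equation contains the latter. For (\ref{kdAdrhobda}) you propose to write $\Ad\,\Bd=\kd^{-1}\,(\kd\,\Ad\,\Bd)$ and place $\kd^{-1}$ in the complementary Lebesgue space via $\|\stradh^n\|^{1/2}\in L^8(\D_T)$. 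But $\kd^{-1}=\bigl[\tr(\Bd(\stradh^n))/\Bdb(\varrhoadh^n)\bigr]^{1/2}$ also carries the factor $\Bdb(\varrhoadh^n)^{-1/2}$, and there is no $\delta$-independent lower bound on $\Bdb(\varrhoadh^n)$: it can be as small as $\delta$, since $\varrhoadh^n$ is not known to stay nonnegative. So your route introduces a $\delta^{-1}$ and does not close. (Your H\"older exponent is also off: with one factor in $L^2$, the complementary exponent for an $L^{8/5}$ product is $L^8$, not $L^{8/3}$.)

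The paper avoids $\kd^{-1}$ entirely by splitting $\Ad\,\Bd=(\Ad\,[\Bd]^{1/2})\,[\Bd]^{1/2}$. Then $\|\Ad\,[\Bd]^{1/2}\|^2=\tr\bigl((\Ad)^2\Bd\bigr)$ is exactly the free-energy dissipation, so it lies in $L^1(\D_T)$ with an $\alpha$-independent bound from (\ref{Fstab1aL}); and $\|[\Bd]^{1/2}\|^2=\tr(\Bd)\lesssim\|\stradh^n\|+\delta$ lies in $L^4(\D_T)$ by (\ref{Fstab3aL}) and (\ref{GN}) in $d=2$. Combining these via H\"older gives (\ref{kdAdrhobda}). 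The same splitting, with the test function $\mathcal{P}_h\mathcal{E}(\cdot)$ absorbing the $[\Bd]^{1/2}$ factor through an estimate of the form $\int_\D\pi_h[\|\bphi\|^2\|\Bd(\stradh^n)\|]\le C\|\bphi\|_{H^1(\D)}^2$ (using $\|\stradh^n\|_{L^2}\le C$), handles the source term in (\ref{eq:PdLahb}) for (\ref{Fstab4aLa}). This is the one structural idea your sketch is missing.
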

\begin{proof}
On choosing $\bv = {\mathcal R}_h \left[ {\mathcal S}
\left(\frac{\buadh^{n}-\buadh^{n-1}}{\dt_n}\right)\right] \in \Vhone$ 
in (\ref{eq:PdLaha})
yields, on noting (\ref{Rh}), (\ref{Swnorm}), (\ref{Rhstab})
 and Sobolev embedding, that
\begin{align}
\label{equndtb}
&\Re \left\|{\mathcal S} \left(\frac{\buadh^{n}-\buadh^{n-1}}{\Delta t_n}\right)
\right\|^2_{H^1(\D)}
=\Re\int_{\D}
\frac{\buadh^{n}-\buadh^{n-1}}{\Delta t_n} \cdot
{\mathcal R}_h \left[ {\mathcal S}
\left(\frac{\buadh^{n}-\buadh^{n-1}}{\Delta t_n}\right)\right]\ddx
\\
&\; =
\frac{\e}{\Wi} \intd 
\pi_h\left[ 
\kd(\stradh^{n},\varrhoadh^n)\,\Ad(\stradh^{n},\varrhoadh^n)
\,\Bd(\stradh^{n})\right] 
: \grad \left[{\mathcal R}_h \left[{\mathcal S}
\left(\frac{\buadh^{n}-\buadh^{n-1}}{\Delta t_n}\right)\right] \right]\ddx
\nonumber\\
& \hspace{0.8in} 
-(1-\e) \intd  \grad
\buadh^{n} 
: \grad \left[{\mathcal R}_h \left[{\mathcal S}
\left(\frac{\buadh^{n}-\buadh^{n-1}}{\Delta t_n}\right)\right] \right]\ddx
\nonumber \\ 
& \hspace{0.8in} 
- \frac{\Re}{2} \intd 
\left((\buadh^{n-1} \cdot \grad) \buadh^{n}\right)
\cdot \displaystyle
{\mathcal R}_h \left[{\mathcal S}
\left(\frac{\buadh^{n}-\buadh^{n-1}}{\Delta t_n}\right)
\right]\ddx
\nonumber \\
& \hspace{0.8in} + \frac{\Re}{2}
\intd
\buadh^{n} \cdot
\left((\buadh^{n-1} \cdot \grad)
\left[ {\mathcal R}_h \left[{\mathcal S} 
\left(\frac{\buadh^{n}-\buadh^{n-1}}{\Delta t_n}\right)
\right]\right]\right)\ddx \nonumber \\
& \hspace{0.8in} + \left \langle \f^n, 
\displaystyle
{\mathcal R}_h \left[{\mathcal S}
\left(\frac{\buadh^{n}-\buadh^{n-1}}{\Delta t_n}\right)
\right] \right \rangle_{H^1_0(\D)}
\nonumber \\
& \; \leq C \biggl[\,\left\|\pi_h\left[
\kd(\stradh^{n},\varrhoadh^n)\,\Ad(\stradh^{n},\varrhoadh^n)\,
\Bd(\stradh^{n})\right]\right\|^2_{L^2(\D)}
+\|\grad \buadh^{n}\|^2_{L^2(\D)}
\nonumber\\
& \hspace{0.8in}
+\|\,\|\buadh^{n-1}\|\,\|\buadh^{n}\|\,\|^2_{L^2(\D)}
+ \|\,\|\buadh^{n-1}\| \,\|\grad \buadh^{n}\|\,\|_{L^{1+\theta}(\D)}^2
+ \|\f^n\|_{H^{-1}(\D)}^2 \biggr],
\nonumber 
\end{align}
where $\theta>0$ as $d=2$. 
It follows from (\ref{interpinf}) and (\ref{kddefbd}) that 
\begin{align}
\label{kdAdbd}
&\left\|\pi_h\left[
\kd(\stradh^{n},\varrhoadh^n)\,\Ad(\stradh^{n},\varrhoadh^n)\,
\Bd(\stradh^{n})\right]\right\|^2_{L^2(\D)}
\\
& \hspace{1in}\leq \intd \pi_h\left[\left\|\kd(\stradh^{n},\varrhoadh^n)\,\Ad(\stradh^{n},\varrhoadh^n)\,
\Bd(\stradh^{n})\right\|^2\right]\ddx
\nonumber
\\
&\hspace{1in} \leq b\,\intd \pi_h\left[\tr\left(\left(\Ad(\stradh^{n},\varrhoadh^n)\right)^2
\Bd(\stradh^{n})\right)\right]\ddx.
\nonumber
\end{align}
Applying the Cauchy--Schwarz and the algebraic-geometric mean
inequalities, in conjunction with 
(\ref{GN}), for $d=2$, and the Poincar\'{e} inequality
(\ref{eq:poincare})
yields that
\begin{align}
\label{L4sob}
\displaystyle
\|\,\|\buadh^{n-1}\| \,\|\buadh^{n}\|\,\|_{L^2(\D)}^2
&\leq
\|\buadh^{n-1}\|^2_{L^4(\D)}
\,\|\buadh^{n}\|^2_{L^4(\D)}
\leq
\textstyle \frac{1}{2}
\displaystyle\sum_{m=n-1}^n
\|\buadh^m\|^4_{L^4(\D)}
\\ &
\leq C \displaystyle\sum_{m=n-1}^n
\left[
\|\buadh^m\|^{2}_{L^2(\D)}\,
\|\grad \buadh^m\|^{2}_{L^2(\D)}\,
\right].
\nonumber
\end{align}
Similarly, we have for any $\theta \in (0,1)$, as $d=2$, but now using a Young's inequality
\begin{align}
\label{sob1}
\|\,\|\buadh^{n-1}\| \,\|\grad \buadh^{n}\|\,\|_{L^{1+\theta}(\D)}^2
& \leq
\|\buadh^{n-1}\|_{L^{\frac{2(1+\theta)}{1-\theta}}(\D)}^2
\,\|\grad \buadh^{n}\|_{L^{2}(\D)}^2
\\
& \leq C\,
\|\buadh^{n-1}\|_{L^2(\D)}^{\frac{2(1-\theta)}{1+\theta}}
\,\|\grad \buadh^{n-1}\|_{L^{2}(\D)}^{\frac{4\theta}{1+\theta}}
\,\|\grad \buadh^{n}\|_{L^{2}(\D)}^2
\nonumber \\
&
\leq C \,\|\buadh^{n-1}\|_{L^2(\D)}^{\frac{2(1-\theta)}{1+\theta}}
\displaystyle\sum_{m=n-1}^n
\|\grad \buadh^m\|^{\frac{2(1+3\theta)}{1+\theta}}_{L^2(\D)}.
\nonumber
\end{align}
\begin{comment}
and if $d=3$, $(\theta = \frac{1}{5})$, that
\begin{align}
\nonumber
\|\,\|\buadh^{n-1}\| \,\|\grad \buadh^{n}\|\,\|_{L^{\frac{6}{5}}(\D)}^2
& \leq
\|\buadh^{n-1}\|_{L^{3}(\D)}^2
\,\|\grad \buadh^{n}\|_{L^{2}(\D)}^2
\\
&\leq C \|\buadh^{n-1}\|_{L^2(\D)} \displaystyle\sum_{m=n-1}^n
\|\grad \buadh^m\|^{3}_{L^2(\D)}.
\label{sob2}
\end{align}
\end{subequations}
\end{comment}
On taking the $\frac{2}{\vartheta}$
power of both sides of
(\ref{equndtb}),
multiplying by $\Delta t_n$,
summing from $n=1,\ldots,N_T$
and noting (\ref{kdAdbd}), (\ref{L4sob}), (\ref{sob1})
with $\theta = \frac{\vartheta - 2}{6 - \vartheta}
\Leftrightarrow \vartheta = \frac{2(1+3\theta)}{(1+\theta)} \in (2,4)$,
(\ref{Deltatqu}), (\ref{fncont}), (\ref{Fstab1aL}),
(\ref{Fstab2aL}) and (\ref{idatah}) yields that
\begin{align}
\label{corotfor3dis}
&\sum_{n=1}^{N_T} \Delta t_n 
\left \|{\mathcal S} \left(\frac{\buadh^{n}-\buadh^{n-1}}{\Delta
t_n}\right)\right\|^{\frac{4}{\vartheta}}_{H^1(\D)}
\\
&\hspace{0.5in} \leq 
C \left[\,\sum_{n=1}^{N_T} \Delta t_n
\intd \pi_h\left[\tr\left(\left(\Ad(\stradh^{n},\varrhoadh^n)\right)^2
\Bd(\stradh^{n})\right)\right]\ddx\right]^{\frac{2}{\vartheta}}
\nonumber \\
& \hspace{1in}
+ C \left[\,\sum_{n=1}^{N_T} \Delta t_n
\,\left[\|\grad \buadh^{n}\|_{L^2(\D)}^2 +
\|\f^n\|^2_{H^{-1}(\D)}
\right]
\right]^{\frac{2}{\vartheta}}
\nonumber \\
&\hspace{1in} 
+C \left[1+ \max_{n=0,\ldots, N_T}
\left(\|\buadh^{n}\|^2_{L^2(\D)} 
\right) 
 \right]
\,\left[\sum_{n=0}^{N_T} \Delta t_n\, 
\|\grad \buadh^{n}\|^2_{L^2(\D)}
\right]\nonumber\\
& \hspace{0.5in} \leq C,
\nonumber
\end{align}
and hence the bound (\ref{Fstab4aL}) with $C$ independent of $\alpha$.

Choosing $\bphi
= {\mathcal P}_h\left[{\mathcal E} \left( \frac{\stradh^{n}-\stradh^{n-1}}
{\Delta t_n}
\right) \right] \in \Shone$ in (\ref{eq:PdLahb})
yields,
on noting (\ref{Ph}) and (\ref{Echinorm}),
that
\begin{align}
\label{psitG1}
&\left
\|{\mathcal E} \left( \frac{\stradh^{n}-\stradh^{n-1}}{\Delta t_n}
\right)
\right\|^2_{H^1(\D)}
 \\
&\qquad =\intd \pi_h\left[ \left(\frac{\stradh^{n}-\stradh^{n-1}}{\Delta t_n}
\right) :
 {\mathcal P}_h \left[{\mathcal E} \left( \frac{\stradh^{n}
 -\stradh^{n-1}}{\Delta t_n}
\right) \right] \right]\ddx
\nonumber
\\
&\qquad =- \frac{1}{\Wi}\intd \pi_h \left[
\Ad(\stradh^{n},\varrhoadh^n)\,
\Bd(\stradh^{n}) : {\mathcal P}_h 
\left[{\mathcal E} \left( \frac{\stradh^{n}-\stradh^{n-1}}{\Delta t_n}
\right) \right] \right]\ddx
\nonumber \\
& \qquad \qquad -\alpha \intd
\grad  \stradh^{n} ::
\grad \left[{\mathcal P}_h \left[{\mathcal E} 
\left( \frac{\stradh^{n}-\stradh^{n-1}}{\Delta t_n}
\right) \right] \right]\ddx 
\nonumber \\
& \qquad \qquad + 2 \intd \grad \buadh^{n} :
\pi_{h}
\left[
\kd(\stradh^{n},\varrhoadh^n)\,
{\mathcal P}_h \left[{\mathcal E} 
\left( \frac{\stradh^{n}-\stradh^{n-1}}{\Delta t_n}
\right) \right] \Bd(\stradh^{n})\right]\ddx
\nonumber \\
& \qquad \qquad +
\intd \sum_{m=1}^d \sum_{p=1}^d 
[\buadh^{n-1}]_m \,
\Lambda_{\delta,m,p}(\stradh^{n}) : \frac{\partial}{\partial \xx_p}
\left[{\mathcal P}_h 
\left[{\mathcal E} \left( \frac{\stradh^{n}-\stradh^{n-1}}{\Delta t_n}
\right) \right] \right]\ddx.
\nonumber 
\end{align}
It follows from (\ref{eq:symmetric-tr},b), 
(\ref{ipmat}), (\ref{normphi})
and (\ref{modphisq})
that for any $\zeta \in \mathbb{R}_{>0}$
\begin{align}
&\left|\intd \pi_h \left[
\Ad(\stradh^{n},\varrhoadh^n)\,
\Bd(\stradh^{n}) : {\mathcal P}_h 
\left[{\mathcal E} \left( \frac{\stradh^{n}-\stradh^{n-1}}{\Delta t_n}
\right) \right] \right]\ddx \right|
\label{psitG1a} \\
& \quad \leq
\intd \pi_h
\left[
\|\Ad(\stradh^{n},\varrhoadh^n)\,[\Bd(\stradh^n)]^{\frac{1}{2}}\|
\left\|{\mathcal P}_h 
\left[{\mathcal E} \left( \frac{\stradh^{n}-\stradh^{n-1}}{\Delta t_n}
\right) \right]\right\| 
\,\|[\Bd(\stradh^n)]^{\frac{1}{2}}\|
\right]\ddx
\nonumber
\\
& \quad \leq
\zeta^{-1}
\intd \pi_h
\left[
\tr \left((\Ad(\stradh^{n},\varrhoadh^n))^2\,\Bd(\stradh^n)\right)\right]
\,\ddx
\nonumber \\
& \hspace{1in} +
\zeta\,d^{\frac{1}{2}}\intd
\pi_h
\left[\,
\left\|{\mathcal P}_h 
\left[{\mathcal E} \left( \frac{\stradh^{n}-\stradh^{n-1}}{\Delta t_n}
\right) \right]\right\|^2 
\|\Bd(\stradh^n)\|
\right]\ddx.
\nonumber
\end{align}
Similarly to (\ref{deltnorm}),  
it follows from
(\ref{interpinf}), (\ref{normprod}), (\ref{kddef}), (\ref{BdLdef})
and (\ref{modphisq})  
that  
\begin{align}
&\left\|\pi_{h}
\left[
\kd(\stradh^{n},\varrhoadh^n)\,
{\mathcal P}_h \left[{\mathcal E} 
\left( \frac{\stradh^{n}-\stradh^{n-1}}{\Delta t_n}
\right) \right] \Bd(\stradh^{n})\right]\right\|_{L^2(\D)}^2
\label{psitG1b}
\\
& \qquad \leq C 
\int_{\D} \pi_h \left[
\left\|{\mathcal P}_h \left[{\mathcal E} 
\left( \frac{\stradh^{n}-\stradh^{n-1}}{\Delta t_n}
\right)\right]\right\|^2\|\Bd(\stradh^{n})\| \right]\ddx.
\nonumber 
\end{align} 
In addition,  
(\ref{eq:Bd}), (\ref{inverse}),  
(\ref{eqnorm}) and (\ref{Fstab3aL}) imply that 
for all $\bphi \in \Shone$
\begin{align}
\label{psitG1c}
\int_{\D} \pi_h \left[
\left\|
\bphi
\right\|^2\|\Bd(\stradh^n)\| \right]\ddx
& \leq 
\sum_{k=1}^{N_K} \left[\|\stradh^{n}\|_{L^\infty(K_k)}+\delta\right]
\int_{K_k} 
\left\|\bphi \right\|^2 \,\ddx
\\
& \leq C\,\left[\|\stradh^{n}\|_{L^2(\D)} +\delta \right]
\left\|\bphi \right\|^2_{L^4(\D)} 
\leq C\,
\left\|\bphi \right\|^2_{H^1(\D)}. 
\nonumber
\end{align}
Combining (\ref{psitG1})--(\ref{psitG1c}), 
yields, on noting (\ref{Phstab}), that
\begin{align}
\label{psitG2}
&\left
\|{\mathcal E} \left( \frac{\stradh^{n}-\stradh^{n-1}}{\Delta t_n}
\right)
\right\|^2_{H^1(\D)}\\
& \hspace{0.5in} \leq
C  
\biggl[ \intd \pi_h
\left[
\tr \left((\Ad(\stradh^{n},\varrhoadh^n))^2\,\Bd(\stradh^n)\right)\right]
\,\ddx
+\alpha \|\grad \stradh^{n}\|_{L^2(\D)}^2
\nonumber 
\\
& \hspace{1in}
+ \|\grad \buadh^{n}\|_{L^2(\D)}^2
+ \|\buadh^{n-1}\|_{L^4(\D)}^2 
\intd \sum_{m=1}^2 \sum_{p=1}^2 
\left\|\Lambda_{\delta,m,p}(\stradh^{n})\right\|_{L^4(\D)}^2
\,\ddx
\biggr].
\nonumber 
\end{align}
Similarly to (\ref{psitG1})--(\ref{psitG2}),
choosing $\eta
= {\mathcal P}_h\left[{\mathcal E} \left( \frac{\varrhoadh^{n}-\varrhoadh^{n-1}}
{\Delta t_n}
\right) \right] \in \Qhone$ in (\ref{eq:PdLahc})
yields 
that 
\begin{align}
\label{psitG1rho}
&\left
\|{\mathcal E} \left( \frac{\varrhoadh^{n}-\varrhoadh^{n-1}}{\Delta t_n}
\right)
\right\|^2_{H^1(\D)}
\\
& \hspace{0.3in} \leq
C  
\biggl[ \intd \pi_h
\left[
\tr \left((\Ad(\stradh^{n},\varrhoadh^n))^2\,\Bd(\stradh^n)\right)\right]
\,\ddx
+\alpha \|\grad \varrhoadh^{n}\|_{L^2(\D)}^2
\nonumber 
\\
& \hspace{0.6in}
+ \|\grad \buadh^{n}\|_{L^2(\D)}^2
+ \|\buadh^{n-1}\|_{L^4(\D)}^2 
\intd \sum_{m=1}^2 \sum_{p=1}^2 
\left\|\Lambda_{\delta,m,p}\left(1-\frac{\varrhoadh^{n}}{b}\right)\right\|_{L^4(\D)}^2
\ddx
\biggr].
\nonumber
\end{align}
\begin{comment}
&\quad =- \intd \pi_h \left[
\frac{\tr\left(\Ad(\stradh^{n},\varrhoadh^n)\,
\Bd(\stradh^{n})\right)}{\Wi} {\mathcal P}_h 
\left[{\mathcal E} \left( \frac{\varrhoadh^{n}-\varrhoadh^{n-1}}{\Delta t_n}
\right) \right] \right]
\nonumber \\
& \qquad -\alpha \intd
\grad  \varrhoadh^{n} ::
\grad \left[{\mathcal P}_h \left[{\mathcal E} 
\left( \frac{\varrhoadh^{n}-\varrhoadh^{n-1}}{\Delta t_n}
\right) \right] \right] 
\nonumber \\
& \qquad + 2 \intd \grad \buadh^{n} :
\pi_{h}
\left[
\kd(\stradh^{n},\varrhoadh^n)\,
{\mathcal P}_h \left[{\mathcal E} 
\left( \frac{\varrhoadh^{n}-\varrhoadh^{n-1}}{\Delta t_n}
\right) \right] \Bd(\stradh^{n})\right]
\nonumber \\
& \qquad -b
\intd \sum_{m=1}^d \sum_{p=1}^d 
[\buadh^{n-1}]_m \,
\Lambda_{\delta,m,p}\left(1-\frac{\varrhoadh^{n}}{b}\right) : \frac{\partial}{\partial \xx_p}
\left[{\mathcal P}_h 
\left[{\mathcal E} \left( \frac{\varrhoadh^{n}-\varrhoadh^{n-1}}{\Delta t_n}
\right) \right] \right].
\nonumber 
\end{comment}
Multiplying (\ref{psitG2}) and (\ref{psitG1rho}) by $\Delta t_n$, summing from $n=1,...,N_T$
and noting (\ref{Fstab1aL}), (\ref{Fstab2aL}), (\ref{L4sob}), (\ref{Deltatqu}),
(\ref{idatah}) and (\ref{Fstab3aL}--c)
yields the bounds (\ref{Fstab4aLa}).

Multiplying (\ref{kdAdbd}) by $\Delta t_n$, summing from $n=1,...,N_T$
and noting (\ref{Fstab1aL}) yields the result (\ref{kdAdrhobd}) with $C$
independent of $\alpha$. 
Finally, it follows from (\ref{interprod}) that
\begin{align}
&
\left\|\pi_h\left[
\Ad(\stradh^{n},\varrhoadh^n)\,
\Bd(\stradh^{n})\right]\right\|^{\frac{8}{5}}_{L^{\frac{8}{5}}(\D)}
\label{kdAdrhobd1} \\
& \quad \leq
\int_{\D} 
\left(\pi_h\left[\left\|\Ad(\stradh^{n},\varrhoadh^n)\,
[\Bd(\stradh^{n})]^{\frac{1}{2}}\right\|^2\right]\right)^{\frac{4}{5}}
\left(\pi_h\left[\left\|
[\Bd(\stradh^{n})]^{\frac{1}{2}}\right\|^2\right]\right)^{\frac{4}{5}}
\ddx
\nonumber  \\
& \quad \leq
 \left\|\pi_h \left[\left\|\Ad(\stradh^{n},\varrhoadh^n)\,
[\Bd(\stradh^{n})]^{\frac{1}{2}}\right\|^2\right]
\right\|_{L^1(\D)}^{\frac{4}{5}}
\left\|\pi_h\left[\left\|
[\Bd(\stradh^{n})]^{\frac{1}{2}}\right\|^2\right]\right\|_{L^4(\D)}^{\frac{4}{5}}.
\nonumber  
\end{align}
Multiplying (\ref{kdAdrhobd1}) by $\Delta t_n$, summing from $n=1,...,N_T$
and noting
(\ref{ipmat}), (\ref{normphi}),  
(\ref{Fstab1aL}), (\ref{modphisq}), (\ref{eq:Bd}), (\ref{use1}), (\ref{GN}) with $d=2$,
(\ref{eqnorm}) and (\ref{Fstab3aL})
yields that 
\begin{align}
&\sum_{n=1}^{N_T}\dt_n
\left\|\pi_h\left[
\Ad(\stradh^{n},\varrhoadh^n)\,
\Bd(\stradh^{n})\right]\right\|^{\frac{8}{5}}_{L^{\frac{8}{5}}(\D)}
\label{kdAdrhobd2} \\
& \quad \leq
\left(\sum_{n=1}^{N_T}\dt_n\left\| \pi_h \left[\left\|\Ad(\stradh^{n},\varrhoadh^n)\,
[\Bd(\stradh^{n})]^{\frac{1}{2}}\right\|^2\right]
\right\|_{L^1(\D)}\right)^{\frac{4}{5}}\nonumber \\
& \hspace{1in} \times 
\left(\sum_{n=1}^{N_T}\dt_n
\left\|\pi_h\left[\|
\Bd(\stradh^{n})\|\right]\right\|_{L^4(\D)}^4
\right)^{\frac{1}{5}}
\nonumber \\
& \quad \leq C 
\left(\sum_{n=1}^{N_T}\dt_n
\left\|\pi_h\left[
\Bd(\stradh^{n})\right]\right\|_{L^4(\D)}^4
\right)^{\frac{1}{5}}
\leq C 
\left(\delta^4 + \sum_{n=1}^{N_T}\dt_n
\left\|\stradh^{n}\right\|_{L^4(\D)}^4
\right)^{\frac{1}{5}}
\nonumber \\
& \quad \leq C
\left(\delta^4 + \sum_{n=1}^{N_T}\dt_n \left\|\stradh^{n}\right\|_{L^2(\D)}^2\,
\left\|\stradh^{n}\right\|_{H^1(\D)}^2
\right)^{\frac{1}{5}} \leq C.
\nonumber
\end{align}
Hence, we have the desired result (\ref{kdAdrhobda}). 
\end{proof}

Unfortunately, the bound (\ref{Fstab4aL}) is not useful 
for obtaining compactness via (\ref{compact1}), see the discussion in the proof of 
Theorem \ref{convaL} below. Instead one has to exploit the compactness result (\ref{compact2}).
This we now do, by following the proof of Lemma 5.6 on p237 in Temam \cite{Temam}. 
Here the introduction of $\kd(\stradh^{n},\varrhoadh^n)$ in the extra stress term,
as discussed above (\ref{kddef}), 
is crucial, as this yields an $L^2(\D_T)$ bound on this stress term  via the bound (\ref{kddefbd}).
For this purpose, we introduce the following notation 
in line with (\ref{eq:constant-source}). 
Let $\buadh^{\dt} \in C([0,T];\Vhone)$ and $\buadh^{\dt,\pm}  
\in L^\infty(0,T;\Vhone)$
be such that
for $n=1,\ldots,N_T$
\begin{subequations}
\begin{alignat}{2}
\buadh^{\dt}(t,\cdot) &:= \frac{t-t^{n-1}}{\dt_n} \buadh^{n}(\cdot) 
+ \frac{t^n-t}{\dt_n} \buadh^{n-1}(\cdot)
&&\quad t \in [t^{n-1},t^n], %%\quad n=1,\ldots,N_T\,,
\label{timeconaL}\\
\buadh^{\dt,+}(t,\cdot)&:= \buadh^{n}(\cdot),
\quad \buadh^{\dt,-}(t,\cdot):= \buadh^{n-1}(\cdot)
&&\quad t \in [t^{n-1},t^n), 
\label{time+-aL}\\
\mbox{and} \qquad \qquad \Delta(t)&:= \dt_n 
&&\quad t \in [t^{n-1},t^n). 
\label{deltat}
\end{alignat}
\end{subequations}
We note that 
\begin{align}
\buadh^{\dt}-\buadh^{\dt,\pm}&=(t-t^n_{\pm})\frac{\partial \buadh^{\dt}}
{\partial t}
\quad t \in (t^{n-1},t^n), \quad n=1,\ldots,N_T\,, 
\label{eqtime+}
\end{align}
where $t^n_{+}:=t^n$ and $t^n_{-}:=t^{n-1}$.
We shall adopt $\buadh^{\dt(,\pm)}$ as a collective symbol for
$\buadh^{\dt}$,  
$\buadh^{\dt,\pm}$.
We also define 
%$\stradh^{\dt} \in C([0,T];\Shone)$,
$\stradh^{\dt(,\pm)}$ %\in L^\infty(0,T;$ $\Shone)$,
%$\varrhoadh^{\dt} \in C([0,T];\Qhone)$
and $\varrhoadh^{\dt(,\pm)}$ %\in L^\infty(0,T;$ $\Qhone)$
similarly to (\ref{timeconaL},b).

Using the notation (\ref{timeconaL},b), {\bf (P$_{\alpha,\delta,h}^{\Delta t}$)},
i.e.\
(\ref{eq:PdLaha}--c)
multiplied by $\Delta t_n$ and summed for $n=1,\ldots, N_T$,
can be restated as:
\begin{subequations}
\begin{alignat}{2}
\label{equncon}
 & \displaystyle \int_{\D_T} 
\left[ \Re \frac{\partial \buadh^{\dt}}{\partial t}
 \cdot \bv 
+ (1-\e) 
\grad \buadh^{\dt,+} :
\grad \bv \right] \ddx\,\ddt
\\
&\hspace{0.05in}+ \frac{\Re}{2} \int_{\D_T}
\left[ \left[ (\buadh^{\dt,-} \cdot \grad) \buadh^{\dt,+} \right] \cdot \bv
- \left[(\buadh^{\dt,-} \cdot \grad) \bv \right] \cdot \buadh^{\dt,+}
\right] \ddx\,\ddt
\nonumber \\
&\hspace{0.05in}+ \frac{\e}{\Wi} \int_{\D_T}
\pi_h\left[
\kd(\stradh^{\dt,+},\varrhoadh^{\dt,+})\,
\Ad(\stradh^{\dt,+},\varrhoadh^{\dt,+})
\,\Bd(\stradh^{\dt,+})\right]: \grad
\bv \,\ddx\,\ddt
\nonumber \\
& \hspace{1in}
=
\int_0^T \langle \f^+,\bv \rangle_{H^1_0(\D)} \, \ddt
&& 
\hspace{-1.2in}
\forall \bv \in L^2(0,T;\Vhone),
\nonumber
\\
\label{eqpsincon}
&\int_{\D_T} \pi_h\left[\frac{\partial \stradh^{\dt}}{\partial t}
: \bphi + \frac{\Ad(\stradh^{\dt,+},\varrhoadh^{\dt,+})\,
\Bd(\stradh^{\dt,+})
}{\Wi} 
: \bphi
\right] \ddx\,\ddt
\\
& \hspace{0.05in} 
+ \alpha \int_{\D_T} \grad \stradh^{\dt,+} :: \grad \bphi \,\ddx\,\ddt
-2 \int_{\D_T} \grad \buadh^{\dt,+}
: \pi_h[\kd(\stradh^{\dt,+},\varrhoadh^{\dt,+})\,\bphi\,\Bd(\stradh^{\dt,+})] \,\ddx\,\ddt
\nonumber \\
& \hspace{0.05in} -\int_{\D_T}
\sum_{m=1}^d \sum_{p=1}^d [\buadh^{\dt,-}]_m \,\Lambda_{\delta,m,p}
(\stradh^{\dt,+}) : \frac{\partial \bphi}{\partial \xx_p}\,
\ddx\,\ddt
=0 
&&\hspace{-1.2in}\forall \bphi \in L^2(0,T;\Shone),
\nonumber
\\
\label{eqvarrhocon}
&\int_{\D_T} \pi_h\left[\frac{\partial \varrhoadh^{\dt}}{\partial t}
\,\eta + 
\frac{\tr\brk{\Ad(\stradh^{\dt,+},\varrhoadh^{\dt,+})\,
\Bd(\stradh^{\dt,+})}
}{\Wi}
\,\eta
\right] \ddx\,\ddt
\\
& \hspace{0.05in} 
+ \alpha \int_{\D_T} \grad \varrhoadh^{\dt,+} \cdot \grad \eta \,\ddx\,\ddt
-2 \int_{\D_T} \grad \buadh^{\dt,+}
: \pi_h[\kd(\stradh^{\dt,+},\varrhoadh^{\dt,+})\,\eta\,\Bd(\stradh^{\dt,+})] \,\ddx\,\ddt
\nonumber \\
& \hspace{0.05in} +b \int_{\D_T}
\sum_{m=1}^d \sum_{p=1}^d [\buadh^{\dt,-}]_m \,\Lambda_{\delta,m,p}
\left(1-\frac{\varrhoadh^{\dt,+}}{b}\right) \frac{\partial \eta}{\partial \xx_p}\,
\ddx\,\ddt
=0 
&&\hspace{-1.2in}\forall \eta \in L^2(0,T;\Qhone)
\nonumber
\end{alignat}
\end{subequations}
subject to the initial conditions 
$\buadh^{\dt}(0)= \buh^0$, $\stradh^{\dt}(0)=\strh^0$ 
and $\varrhoadh^{\dt}(0)=\tr(\strh^0)$. 

\begin{lemma}
\label{dstabthmaLexF}
Under all of the assumptions of Theorem \ref{dstabthmaL},
the solution 
$(\buadh^{\Delta t},\stradh^{\Delta t},\varrhoadh^{\Delta t})$ 
of \linebreak {\bf (P$^{\dt}_{\alpha,\delta,h}$)}, (\ref{equncon}--c), satisfies 
the following bound:
\begin{align}
\int_0^T \|D^\gamma_t \buadh^{\Delta t}\|^2_{L^2(\D)} \,\ddt
\leq C,
\label{Fstab4aLF}
\end{align}
where
$\gamma \in (0, \textstyle \frac{1}{4})$
and $C$ is independent of $\alpha$, as well as $\delta$, $h$ and $\Delta t$.
\end{lemma}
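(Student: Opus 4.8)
The plan is to adapt the Fourier-in-time compactness argument of Temam \cite{Temam} (cf.\ the proof of Lemma~5.6 there), as anticipated in the paragraph preceding the statement; note that the weak bound (\ref{Fstab4aL}) of Lemma~\ref{dstabthmaLex} is too coarse for the compactness result (\ref{compact2}), whence the need to work in frequency space directly. Write $\bw:=\buadh^{\dt}$, let $\widetilde{\bw}$ be its extension by zero to $\R$, and let $\widehat{\bw}(\tau)$ denote the Fourier transform of $\widetilde{\bw}$ in time. By the Fourier characterisation of $D^\gamma_t$ (see Temam \cite{Temam}, and recall (\ref{compact2})) one has $\int_0^T\|D^\gamma_t\buadh^{\dt}\|_{L^2(\D)}^2\,\ddt\le\int_\R|\tau|^{2\gamma}\|\widehat{\bw}(\tau)\|_{L^2(\D)}^2\,{\rm d}\tau$, so it suffices to bound the latter by a constant independent of $\alpha,\delta,h,\dt$. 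Since on each subinterval $\bw$ is the convex combination in time of $\buadh^{\dt,\pm}$ (see (\ref{timeconaL})), the bounds (\ref{Fstab1aL}), (\ref{Fstab2aL}) together with (\ref{eq:poincare}) give $\|\bw\|_{L^\infty(0,T;[L^2(\D)]^d)}+\|\bw\|_{L^2(0,T;[H^1(\D)]^d)}\le C$; by Parseval this yields $\int_\R\|\widehat{\bw}(\tau)\|_{L^2(\D)}^2\,{\rm d}\tau\le C$ and $\int_\R\|\widehat{\bw}(\tau)\|_{H^1(\D)}^2\,{\rm d}\tau\le C$, and the former already handles the range $|\tau|\le1$ of the target integral, where $|\tau|^{2\gamma}\le1$.

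For $|\tau|\ge1$ I would first extract an equation for $\partial_t\bw$ from the scheme. On $(t^{n-1},t^n)$ one has $\partial_t\bw=(\buadh^{n}-\buadh^{n-1})/\dt_n\in\Vhone$, and (\ref{eq:PdLaha}) gives, for all $\bv\in\Vhone$, $\Re\,(\partial_t\bw(t),\bv)_{L^2(\D)}=\langle\mathcal{L}(t),\bv\rangle$, where $\mathcal{L}(t)$ depends only on $\buadh^{\dt,+}(t)$, $\buadh^{\dt,-}(t)$ and $\f^{\dt,+}(t)$ and collects the viscous term $-(1-\e)\grad\buadh^{\dt,+}$, the (skew-symmetrised, see (\ref{conv0c})) convection term, the extra-stress term of (\ref{equncon}), and the forcing. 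Estimating each contribution — Cauchy--Schwarz for the viscous and forcing terms, (\ref{interpinf}) and (\ref{kddefbd}) for the extra stress, and, for the convection term, $|((\buadh^{\dt,-}\!\cdot\!\grad)\buadh^{\dt,+},\bv)|\le\|\buadh^{\dt,-}\|_{L^4(\D)}\|\buadh^{\dt,+}\|_{H^1(\D)}\|\bv\|_{L^4(\D)}$ with the two-dimensional Gagliardo--Nirenberg inequality (\ref{GN}) ($r=4$, so $\|\eta\|_{L^4(\D)}\le C\|\eta\|_{L^2(\D)}^{1/2}\|\eta\|_{H^1(\D)}^{1/2}$) — gives $|\langle\mathcal{L}(t),\bv\rangle|\le C\,\Psi(t)\,\|\bv\|_{H^1(\D)}$ for all $\bv\in\Vhone$, where $\Psi(t)$ is the sum of $\|\buadh^{\dt,+}(t)\|_{H^1(\D)}(1+\|\buadh^{\dt,-}(t)\|_{L^4(\D)})$, the $L^2(\D)$-norm of the extra-stress term, and $\|\f^{\dt,+}(t)\|_{H^{-1}(\D)}$. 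Combining $\int_0^T\|\buadh^{\dt,-}\|_{L^4(\D)}^2\,\ddt\le C\|\buadh^{\dt,-}\|_{L^\infty(0,T;[L^2(\D)]^d)}\int_0^T\|\buadh^{\dt,-}\|_{H^1(\D)}\,\ddt\le C$ (valid for $d=2$) with (\ref{Fstab1aL}), (\ref{Fstab2aL}), (\ref{kdAdrhobd}) (which, via (\ref{kddefbd}), bounds the extra-stress term in $L^2(\D_T)$) and (\ref{fncont}), one obtains $\int_0^T\Psi(t)\,\ddt\le C$ with $C$ independent of $\alpha,\delta,h,\dt$.

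Finally I would Fourier transform in time. Because $\widetilde{\bw}$ jumps by $-\buh^0$ at $t=0$ and by $\buadh^{N_T}$ at $t=T$, its distributional derivative is $\Re^{-1}\widetilde{\mathcal{L}}-\buh^0\,\delta_0+\buadh^{N_T}\,\delta_T$; transforming and pairing with $\overline{\widehat{\bw}(\tau)}$ in $L^2(\D)$ then gives, on taking moduli,
\begin{align*}
2\pi|\tau|\,\|\widehat{\bw}(\tau)\|_{L^2(\D)}^2
&\le\Re^{-1}\left|\int_0^T e^{-2\pi{\rm i}t\tau}\langle\mathcal{L}(t),\overline{\widehat{\bw}(\tau)}\rangle\,\ddt\right|
+\big(\|\buh^0\|_{L^2(\D)}+\|\buadh^{N_T}\|_{L^2(\D)}\big)\|\widehat{\bw}(\tau)\|_{L^2(\D)}\\
&\le C\,\|\widehat{\bw}(\tau)\|_{H^1(\D)}\int_0^T\Psi(t)\,\ddt+C\,\|\widehat{\bw}(\tau)\|_{H^1(\D)}
\le C\,\|\widehat{\bw}(\tau)\|_{H^1(\D)},
\end{align*}
where $\|\buh^0\|_{L^2(\D)}$ and $\|\buadh^{N_T}\|_{L^2(\D)}$ are bounded uniformly by (\ref{idatah}) and (\ref{Fstab2aL}). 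Hence $\|\widehat{\bw}(\tau)\|_{L^2(\D)}^2\le C|\tau|^{-1}\|\widehat{\bw}(\tau)\|_{H^1(\D)}$ for $|\tau|\ge1$, and by Cauchy--Schwarz in $\tau$,
\begin{equation*}
\int_{|\tau|\ge1}|\tau|^{2\gamma}\|\widehat{\bw}(\tau)\|_{L^2(\D)}^2\,{\rm d}\tau
\le C\Big(\int_{|\tau|\ge1}|\tau|^{4\gamma-2}\,{\rm d}\tau\Big)^{\!\frac12}\Big(\int_\R\|\widehat{\bw}(\tau)\|_{H^1(\D)}^2\,{\rm d}\tau\Big)^{\!\frac12}\le C,
\end{equation*}
the first integral being finite precisely because $\gamma<\tfrac{1}{4}$; adding the $|\tau|\le1$ contribution yields (\ref{Fstab4aLF}). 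The main difficulty I anticipate is the careful bookkeeping of the jump (boundary) terms produced by the zero extension — it is exactly these that make the pairing with $\widehat{\bw}(\tau)$ necessary, $\partial_t\buadh^{\dt}$ itself being available only in the weaker norm (\ref{Fstab4aL}) — together with the verification that $\Psi\in L^1(0,T)$ uniformly, which is where the hypothesis $d=2$ enters (through (\ref{GN}) with $r=4$), in line with Remark~\ref{remd2}.
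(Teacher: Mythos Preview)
Your proposal is correct and follows essentially the same route as the paper: reinterpret (\ref{equncon}) as $\Re\,\partial_t\buadh^{\dt}=\mathcal{L}$ with $\mathcal{L}$ bounded in $L^1(0,T;(\Vhone)')$ uniformly (the paper packages this via the Riesz representative $\bg^{\dt,+}\in\Vhone$, see (\ref{reinterp})--(\ref{bgbnd2}), then cites \cite{barrett-boyaval-09} for the Fourier argument), extend by zero, Fourier-transform, test against $\overline{\widehat{\bw}(\tau)}$, and combine the resulting bound $|\tau|\,\|\widehat{\bw}(\tau)\|_{L^2(\D)}^2\le C\|\widehat{\bw}(\tau)\|_{H^1(\D)}$ with Parseval. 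One cosmetic slip: the jump terms in the distributional derivative of $\widetilde{\bw}$ carry the opposite signs ($+\buh^0\,\delta_0-\buadh^{N_T}\,\delta_T$), but since you take moduli this does not affect the estimate.
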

\begin{proof}
Equation (\ref{equncon}) can be reinterpreted as
\begin{align}
\Re \, \frac{\rm d}{\ddt} \int_\D \buadh^{\Delta t}(t)
 \cdot \bv \,\ddx
= \int_\D \grad \bg^{\Delta t,+}(t) : \grad \bv \,\ddx
\qquad \forall \bv \in \Vhone, 
\qquad t \in (0,T), 
\label{reinterp}
\end{align} 
where $\bg^{\Delta t,+}(t) \in \Vhone$ is defined by  
\begin{align}
\label{bg}
&\int_\D \grad \bg^{\Delta t,+}(t) : \grad \bv \,\ddx=
\langle \f^+(t),\bv \rangle_{H^1_0(\D)}
- (1-\e)\, \int_\D 
\grad \buadh^{\dt,+}(t) :
\grad \bv \,\ddx\\
& \quad - \frac{\Re}{2} \int_{\D}
\left[ \left[ (\buadh^{\dt,-}(t) \cdot \grad) \buadh^{\dt,+}(t) \right] \cdot \bv
- \left[(\buadh^{\dt,-}(t) \cdot \grad) \bv \right] \cdot \buadh^{\dt,+}(t)
\right]\ddx
\nonumber \\ 
& \quad -
\frac{\e}{\Wi} \int_{\D}
\pi_h\left[
\kd(\stradh^{\dt,+}(t),\varrhoadh^{\dt,+}(t))\,
\Ad(\stradh^{\dt,+}(t),\varrhoadh^{\dt,+}(t))
\,\Bd(\stradh^{\dt,+}(t))\right]: \grad
\bv\,\ddx.
\nonumber 
\end{align} 
Similarly to (\ref{equndtb}) and (\ref{kdAdbd}) with $\theta \in (0,1)$,
it follows from (\ref{bg}) that
\begin{align}
\label{bgbnd}
\|\grad \bg^{\Delta t,+}(t)\|_{L^2(\D)}   
& \leq C\,\biggl[ \|\f^+(t)\|_{H^{-1}(\D)} +
\|\grad \buadh^{\Delta t,+}(t)\|_{L^2(\D)}
\\
& \qquad
+ \| \,\|\buadh^{\Delta t,-}(t)\|   
\, \|\buadh^{\Delta t,+}(t)\| \,\|_{L^2(\D)}  
+ \| \,\|\buadh^{\Delta t,-}(t)\|   
\, \|\grad \buadh^{\Delta t,+}(t)\| \,\|_{L^{1+\theta}(\D)}
\nonumber \\
& \qquad
+ \left(\intd \tr \left(\left(\Ad(\stradh^{\dt,+}(t),\varrhoadh^{\dt,+}(t))\right)^2
\Bd(\stradh^{\dt,+}(t)) \right)\ddx\right)^{\frac{1}{2}}
\biggr].
\nonumber
\end{align}
On noting (\ref{L4sob}), (\ref{sob1}) and the bound on 
$\buadh^{n}$ in (\ref{Fstab2aL}), we deduce from (\ref{bgbnd}) that 
\begin{align}
\label{bgbnd1}
\|\grad \bg^{\Delta t,+}(t)\|_{L^2(\D)} 
&\leq C\,\biggl[1+ \|\f^+(t)\|_{H^{-1}(\D)} 
+ \|\grad \buadh^{\Delta t,-}(t)\|_{L^2(\D)}^{\frac{1+3\theta}{1+\theta}}    
+ \|\grad \buadh^{\Delta t,+}(t)\|_{L^2(\D)}^{\frac{1+3\theta}{1+\theta}} 
\\
& \hspace{0.5in} +
\left(\intd \tr\left( \left(\Ad(\stradh^{\dt,+}(t),\varrhoadh^{\dt,+}(t))\right)^2
\Bd(\stradh^{\dt,+}(t))\right) \ddx\right)^{\frac{1}{2}}
\biggr].
\nonumber
\end{align}
Similarly to (\ref{corotfor3dis}), on recalling (\ref{idatah}), (\ref{fncont}) and  
(\ref{Fstab2aL}), we deduce from (\ref{bgbnd1}) that  
\begin{align}
\int_0^T 
\|\grad \bg^{\Delta t,+}(t)\|_{L^2(\D)}^{\frac{4}{\vartheta}} \,\ddt \leq C,
\label{bgbnd2}
\end{align}
where $\vartheta \in (2,4]$ and $C$ is independent of $\alpha$, 
as well as $\delta$, $h$ and $\Delta t$.
The rest of the proof follows as on p1825--6 in 
\cite{barrett-boyaval-09}, which is based on the proof of Lemma 5.6 on p237 in \cite{Temam}.
\end{proof}

\subsection{Convergence}

It follows from (\ref{Fstab1aL}), (\ref{Fstab2aL}), (\ref{Fstab3aL}--c), (\ref{idatah}), 
(\ref{eqnorm}), 
(\ref{Fstab4aL}--d), (\ref{Fstab4aLF}) and (\ref{timeconaL}--c)
that
\begin{subequations}
\begin{align}
&\sup_{t \in (0,T)} \|\buadh^{\dt(,\pm)}\|^2_{L^2(\D)}
+ 
\int_{0}^T \left[
\|\grad \buadh^{\dt (,\pm)}\|^2_{L^2(\D)}
+
\frac{\|\buadh^{\dt,+}
-\buadh^{\dt,-}\|^2_{L^2(\D)}}{\Delta(t)}
\right]
 \ddt \leq C,
\label{stab1c}
\\
&\sup_{t \in (0,T)}  
\|\stradh^{\dt(,\pm)}\|^2_{L^2(\D)}
\label{stab2c}
+  \int_{0}^T \left[\alpha \|\grad \stradh^{\dt(, \pm)}\|_{L^2(\D)}^2
+ \frac{\|\stradh^{\dt,+}-\stradh^{\dt,-}\|^2_{L^2(\D)}}{\Delta(t)}
\right]
\ddt \leq C,\\
&\sup_{t \in (0,T)} 
\|\varrhoadh^{\dt(,\pm)}\|^2_{L^2(\D)}
\label{stab2crho}
+  \int_{0}^T \left[\alpha \|\grad \varrhoadh^{\dt(, \pm)}\|_{L^2(\D)}^2
+ \frac{\|\varrhoadh^{\dt,+}-\varrhoadh^{\dt,-}\|^2_{L^2(\D)}}{\Delta(t)}
\right]
\ddt\\ 
& \hspace{2in}
+ \delta^2 \int_0^T \left\|\grad \pi_h\left[\Gd'\left(1-\frac{\varrhoadh^{\dt,+}}{b}\right)
\right] \right\|^2_{L^2(\D)}\ddt
\leq C,\nonumber \\
\label{stab2ctr}
&\int_{\D_T} \pi_h\left[\tr\left(\Ad(\stradh^{\dt,+},\varrhoadh^{\dt,+})^2
\Bd(\stradh^{\dt,+})\right)\right]\ddx \,\ddt \leq C,\\
&\sup_{t \in (0,T)} \intd 
\pi_h\left[ \|[\stradh^{\dt(,\pm)}]_{-}\| + |[b-\varrhoadh^{\dt(,\pm)}]_{-}|
\right] \ddx  \leq C\,\delta,
\label{stab3delta}
\\
&\int_0^T \left[ \left\|{\mathcal S}\,\frac{\partial \buadh^{\dt}}{\partial t}
\right\|_{H^1(\D)}^{\frac{4}{\vartheta}}
+ \|D^{\gamma}_t \buadh^{\dt}\|_{L^2(\D)}^2
\right] \ddt \leq C, 
\label{stab3c}\\
&\int_0^T \left[
\left\|{\mathcal E}\,\frac{\partial 
\stradh^{\dt}}{\partial t} \right\|_{H^1(\D)}^{2}
+ \left\|{\mathcal E}\,\frac{\partial 
\varrhoadh^{\dt}}{\partial t} \right\|_{H^1(\D)}^{2}
\right] \ddt
\leq C,
\label{stab3crho}\\
&\left\|\pi_h\left[\kd(\stradh^{\dt,+},\varrhoadh^{\dt,+})\,
\Ad(\stradh^{\dt,+},\varrhoadh^{\dt,+})
\,\Bd(\stradh^{\dt,+})\right]\right\|_{L^{2}(\D_T)} 
\leq C,
\label{stabkdAdBd} \\
&\left\|\pi_h\left[
\Ad(\stradh^{\dt,+},\varrhoadh^{\dt,+})
\,\Bd(\stradh^{\dt,+})\right]\right\|_{L^{\frac{8}{5}}(\D_T)}
\leq C,
\label{stabkdAdBda}
\end{align}
\end{subequations}
where $\vartheta \in (2,4]$, $\gamma\in (0,\frac{1}{4})$ and
$C$ in (\ref{stab1c},d,e,f,h) is independent of $\alpha$, as well as $\delta$, $h$ and $\Delta t$.

We are now in a position to prove the following convergence result 
concerning {\bf (P$^{\Delta t}_{\alpha,\delta,h}$)}.
\begin{theorem}\label{convaL}
Under all of the assumptions of Theorem \ref{dstabthmaL},
there exists a subsequence of \linebreak 
$\{(\buadh^{\dt},\stradh^{\dt},\varrhoadh^{\dt})\}_{
\delta>0,h>0,\Delta t>0}$, 
and functions 
\begin{subequations}
\begin{align}
\buaL &\in
L^{\infty}(0,T;
{\rm H})\cap L^{2}(0,T;\Uz) \cap
W^{1,\frac{4}{\vartheta}}(0,T;\Uz')
\mbox{ with } \buaL(0)=\bu^0,
\label{ualpreg}\\ 
\straL &\in L^{\infty}(0,T;[L^{2}(\D)]^{2 \times 2}_{\rm S})
\cap L^{2}(0,T;[H^{1}(\D)]^{2 \times 2}_{\rm S})
\cap H^1(0,T;([H^{1}(\D)]^{2 \times 2}_{\rm S})')
\label{stralpreg}\\
& \hspace{1in}\mbox{ 
with $\straL$ non-negative definite a.e.\ in } \D_T
\mbox{ and } \straL(0)=\strs^0,
\nonumber \\
\varrho_\alpha &\in L^{\infty}(0,T;
L^{2}(\D))
\cap L^{2}(0,T;H^{1}(\D))
\cap H^1(0,T;(H^{1}(\D))')
\label{rhoalpreg}\\
& \hspace{1in}
\mbox{ with } \varrho_\alpha \leq b \mbox{ a.e.\ in } \D_T
\mbox{ and } \varrho_\alpha(0)=\tr(\strs^0),
\nonumber
\end{align}
\end{subequations}
such that, as $\delta,\,h,\,\Delta t \rightarrow 0_+$,
\begin{subequations}
\begin{alignat}{2}
\buadh^{\Delta t (,\pm)} &\rightarrow \buaL \qquad &&\mbox{weak* in }
L^{\infty}(0,T;[L^2(\D)]^2), \label{uwconL2}\\
\buadh^{\Delta t (,\pm)} &\rightarrow \buaL \qquad &&\mbox{weakly in }
L^{2}(0,T;[H^1(\D)]^{2}), \label{uwconH1}\\
{\mathcal S} \frac{\partial \buadh^{\dt}}{\partial t} 
&\rightarrow {\mathcal S} \frac{\partial \buaL}{\partial t}
 \qquad &&\mbox{weakly in }
L^{\frac{4}{\vartheta}}(0,T;\Uz), \label{utwconL2}\\
\buadh^{\Delta t (,\pm)} &\rightarrow \buaL
\qquad &&\mbox{strongly in }
L^{2}(0,T;[L^{r}(\D)]^2), \label{usconL2}
\end{alignat}
\end{subequations}
\begin{subequations}
\begin{alignat}{2}
\stradh^{\Delta t (,\pm)} &\rightarrow
\straL
\quad &&\mbox{weak* in }
L^{\infty}(0,T;[L^2(\D)]^{2 \times 2}), \label{psiwconL2}\\
\stradh^{\Delta t(,\pm)}
&\rightarrow  \straL
\quad &&\mbox{weakly in }
L^{2}(0,T;[H^1(\D)]^{2 \times 2}), \label{psiwconH1x}\\
{\mathcal E} \frac{\partial \stradh^{\Delta t}}{\partial t}
&\rightarrow {\mathcal E} \frac{\partial \straL}{\partial t}
 \qquad &&\mbox{weakly in }
L^{2}(0,T;[H^1(\D)]^{2 \times 2}_{\rm S}), \label{psitwconL2}\\
\stradh^{\Delta t (,\pm)} &\rightarrow
\straL
\qquad &&\mbox{strongly in }
L^{2}(0,T;[L^{r}(\D)]^{2 \times 2}), \label{psisconL2}
\\
\pi_h[\Bd(
\stradh^{\Delta t (,\pm)})] &\rightarrow
\straL
\qquad &&\mbox{strongly in }
L^{2}(0,T;[L^{2}(\D)]^{2 \times 2}),
\label{piBdLconv}
\\
\Lambda_{\delta,m,p}(\stradh^{\Delta t (,\pm)}) &\rightarrow
\straL\,\delta_{mp}
\qquad &&\mbox{strongly in }
L^{2}(0,T;[L^{2}(\D)]^{2 \times 2}),
\quad m,\,p =1,\,2,
\label{XXxLinf}
\end{alignat}
\end{subequations}
and
\begin{subequations}
\begin{alignat}{2}
\varrhoadh^{\Delta t (,\pm)} &\rightarrow
\varrho_\alpha
\quad &&\mbox{weak* in }
L^{\infty}(0,T;L^2(\D)), \label{psiwconL2rho}\\
\varrhoadh^{\Delta t(,\pm)}
&\rightarrow  \varrho_\alpha
\quad &&\mbox{weakly in }
L^{2}(0,T;H^1(\D)), \label{psiwconH1xrho}\\
{\mathcal E} \frac{\partial \varrhoadh^{\Delta t}}{\partial t}
&\rightarrow {\mathcal E} \frac{\partial \varrho_\alpha}{\partial t}
 \qquad &&\mbox{weakly in }
L^{2}(0,T;H^1(\D)), \label{psitwconL2rho}\\
\varrhoadh^{\Delta t (,\pm)} &\rightarrow
\varrho_\alpha
\qquad &&\mbox{strongly in }
L^{2}(0,T;L^{r}(\D)), \label{psisconL2rho}
\\
\pi_h\left[\Bd\left(1-\frac{\varrhoadh^{\Delta t (,\pm)}}{b}\right)\right] &\rightarrow
\left(1-\frac{\varrho_\alpha}{b}\right)
\qquad &&\mbox{strongly in }
L^{2}(0,T;L^{2}(\D)),
\label{piBdLconvrho}\\
\Lambda_{\delta,m,p}\left(1-\frac{\varrhoadh^{\Delta t (,\pm)}}{b}\right) &\rightarrow
\left(1-\frac{\varrho_\alpha}{b}\right)\,\delta_{mp}
\qquad &&\mbox{strongly in }
L^{2}(0,T;L^{2}(\D)),\quad m,\,p =1,\,2,
\label{XXxLinfrho}
\end{alignat}
\end{subequations}
where $\vartheta \in (2,4]$ 
and $r \in [1,\infty)$. 
\end{theorem}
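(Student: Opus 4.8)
The plan is to extract weakly/strongly convergent subsequences using the uniform bounds (\ref{stab1c})--(\ref{stabkdAdBda}), and then identify the limits. First I would obtain (\ref{uwconL2})--(\ref{uwconH1}) and (\ref{psiwconL2})--(\ref{psiwconH1xrho}) directly from the $L^\infty(0,T;L^2)$ and $L^2(0,T;H^1)$ bounds in (\ref{stab1c})--(\ref{stab2crho}) by weak/weak-$*$ compactness, and (\ref{utwconL2}), (\ref{psitwconL2}), (\ref{psitwconL2rho}) from (\ref{stab3c})--(\ref{stab3crho}) on noting that $\|{\mathcal S}\cdot\|_{H^1(\D)}$ and $\|{\mathcal E}\cdot\|_{H^1(\D)}$ are norms on $\Uz'$, $([H^1(\D)]^{2\times2}_{\rm S})'$, $(H^1(\D))'$ respectively. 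Since $\buadh^{\Delta t}$, $\stradh^{\Delta t}$, $\varrhoadh^{\Delta t}$ are continuous piecewise linear in time, the three time-interpolants $\buadh^{\Delta t(,\pm)}$ all share the same weak limit: the difference $\buadh^{\Delta t,+}-\buadh^{\Delta t,-}$ vanishes in $L^2(\D_T)$ by (\ref{stab1c}), and $\buadh^{\Delta t}-\buadh^{\Delta t,\pm}$ is controlled via (\ref{eqtime+}) and the $L^2(\D_T)$ bound on $\partial_t\buadh^{\Delta t}$ times $\Delta t$; similarly for the stress and trace variables using (\ref{stab2c})--(\ref{stab2crho}).

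Next I would prove the strong convergences. For the velocity, the bound (\ref{stab3c}) on $\|D^\gamma_t\buadh^{\Delta t}\|_{L^2(0,T;L^2(\D))}$ together with the $L^2(0,T;[H^1(\D)]^2)$ bound and the compactness lemma (\ref{compact2}) (with ${\mathcal X}_0=[H^1(\D)]^2$, ${\mathcal X}=[L^2(\D)]^2$, ${\mathcal X}_1=[L^2(\D)]^2$) gives $\buadh^{\Delta t}\to\buaL$ strongly in $L^2(0,T;[L^2(\D)]^2)$; interpolating this strong $L^2$ convergence with the $L^2(0,T;H^1)$ bound and using (\ref{GN}) with $d=2$ upgrades to (\ref{usconL2}) for any $r\in[1,\infty)$; the same limit then serves $\buadh^{\Delta t,\pm}$. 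For the stress and trace, I would instead use (\ref{compact1}) with $\mu_0=\mu_1=2$, ${\mathcal Y}_0=[H^1(\D)]^{2\times2}_{\rm S}$, ${\mathcal Y}={\mathcal Y}_1=([H^1(\D)]^{2\times2}_{\rm S})'$ closed range — more precisely, apply (\ref{compact1}) with the triple chosen so that $L^2(0,T;H^1)\cap H^1(0,T;(H^1)')$ embeds compactly into $L^2(0,T;L^2)$ — giving (\ref{psisconL2}) and (\ref{psisconL2rho}), then bootstrap via (\ref{GN}) to general $r$. A subtle point: one must pass from convergence of the time-interpolant to convergence of $\pi_h$-weighted quantities, using (\ref{eqnorm}) to compare $\int\pi_h[\|\cdot\|^2]$ with $\int\|\cdot\|^2$. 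The convergences (\ref{piBdLconv}), (\ref{piBdLconvrho}), (\ref{XXxLinf}), (\ref{XXxLinfrho}) then follow: from (\ref{stab3delta}) the negative parts vanish in the limit so $\Bd(\stradh^{\Delta t(,\pm)})$ and $\stradh^{\Delta t(,\pm)}$ have the same limit by (\ref{Lip}) and $\|\Bd(s)-s\|\le\delta$; (\ref{MXittxbd}) controls $\|\pi_h[\Bd(\bphi)]-\Bd(\bphi)\|$ and $\|\Lambda_{\delta,m,p}(\bphi)-\Bd(\bphi)\delta_{mp}\|$ in $L^2(K_k)$ by $C\,h\|\grad\bphi\|_{L^2(K_k)}$, which tends to zero after multiplying by $h$ and using the $\alpha$-weighted $L^2(0,T;H^1)$ bounds (\ref{stab2c})--(\ref{stab2crho}); and (\ref{MXittxbdq}) does the analogous job for $\varrhoadh$. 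Likewise $\pi_h[\Bd(\cdot)]$ is compared with $\Bd(\cdot)$ via (\ref{interpinf}) and the inverse estimate.

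Then I would identify the limit derivatives and initial conditions. For (\ref{utwconL2}), (\ref{psitwconL2}), (\ref{psitwconL2rho}): the linearity of ${\mathcal S}$, ${\mathcal E}$ commutes with $\partial_t$, and the weak limit of $\partial_t$ of the time-interpolant equals $\partial_t$ of the weak limit; the regularity classes in (\ref{ualpreg})--(\ref{rhoalpreg}) follow. The initial conditions $\buaL(0)=\bu^0$, $\straL(0)=\strs^0$, $\varrho_\alpha(0)=\tr(\strs^0)$ come from $\buadh^{\Delta t}(0)=\buh^0$, $\stradh^{\Delta t}(0)=\strh^0$, $\varrhoadh^{\Delta t}(0)=\tr(\strh^0)$, the convergences (\ref{bu0hconv}), and the fact that the limit functions are (weakly) continuous in time into $L^2(\D)$ thanks to the $H^1(0,T;(\cdot)')$ regularity. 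Non-negative definiteness of $\straL$ and $\varrho_\alpha\le b$ a.e.\ follow from (\ref{stab3delta}) as $\delta\to0$, exactly as in the argument following (\ref{conv}) in Theorem~\ref{dconthm}. The bounds (\ref{stabkdAdBd}) and (\ref{stabkdAdBda}) are reproduced from (\ref{kdAdrhobd}) and (\ref{kdAdrhobda}) rewritten with the time-interpolant notation; these give a weak $L^2(\D_T)$ limit for the regularized extra-stress $\pi_h[\kd\,\Ad\,\Bd]$ and a weak $L^{8/5}(\D_T)$ limit for $\pi_h[\Ad\,\Bd]$ (whose identification as $A(\straL)\straL$ is deferred to the subsequent convergence theorem, not needed here). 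The main obstacle I expect is the bookkeeping needed to show that the three time-interpolants $\cdot^{\Delta t}$, $\cdot^{\Delta t,+}$, $\cdot^{\Delta t,-}$ all converge to the \emph{same} limit in each relevant topology — in particular handling the $\pi_h$ numerical-integration weights consistently when passing to the limit — and verifying that the $h$-dependent consistency errors from (\ref{MXittxbd})--(\ref{MXittxbdq}) genuinely vanish given only the $\alpha$-weighted (not $\alpha$-uniform) gradient bounds; this is where the constraint $\Delta t\le C_\star(\zeta^{-1})\alpha^{1+\zeta}h^2$ and the quasi-uniformity of the mesh are used to keep everything under control.
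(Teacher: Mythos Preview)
Your proposal is correct and follows essentially the same route as the paper: weak/weak-$*$ limits from the uniform bounds, strong convergence of the velocity via the fractional-derivative bound and (\ref{compact2}) with ${\mathcal X}_0=[H^1(\D)]^2$, ${\mathcal X}={\mathcal X}_1=[L^2(\D)]^2$, strong convergence of $\stradh^{\Delta t}$ and $\varrhoadh^{\Delta t}$ via (\ref{compact1}) with the triple $H^1\hookrightarrow L^r\hookrightarrow (H^1)'$, upgrading to general $r$ through (\ref{GN}), and then (\ref{piBdLconv},f), (\ref{piBdLconvrho},f) via (\ref{MXittxbd}), (\ref{Lip}) and the sign constraints from (\ref{stab3delta}). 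Two small clean-ups: your first choice ${\mathcal Y}={\mathcal Y}_1=([H^1]^{2\times2}_{\rm S})'$ is not what you want (you corrected it immediately afterwards --- the paper takes ${\mathcal Y}=[L^r(\D)]^{2\times2}$, ${\mathcal Y}_1=([H^1(\D)]^{2\times2})'$); and the $h$-consistency errors from (\ref{MXittxbd}) vanish simply because $h^2\int_0^T\|\grad\stradh^{\Delta t,\pm}\|_{L^2(\D)}^2\,{\rm d}t\le C\,h^2/\alpha\to 0$ for fixed $\alpha$, so no further appeal to the time-step constraint is needed at this stage.
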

\begin{proof}
The results  (\ref{uwconL2}--c) follow immediately from the bounds
(\ref{stab1c},f) on noting the notation (\ref{timeconaL}--c).
The denseness of $\bigcup_{h>0}{\rm Q}_h^1$ in $L^2(\D)$
and (\ref{Vh1}) yield that $\buaL \in L^2(0,T;\Uz)$.
Hence, the result (\ref{ualpreg}) holds on noting (\ref{Swnorm}) and (\ref{bu0hconv}),
where $\bua :[0,T] \rightarrow {\rm H}$ is weakly continuous.

The strong convergence result
(\ref{usconL2}) for $\buadh^{\Delta t}$ and $r=2$
follows immediately from (\ref{stab1c}) and the second bound in (\ref{stab3c}) and  
(\ref{compact2}) with ${\mathcal X}_0 = [H^1(\D)]^2$ and ${\mathcal X}={\mathcal X}_1=[L^2(\D)]^2$.
Here we note that $H^1(\D)$ is compactly embedded in $L^2(\D)$.
We note here also that one cannot 
appeal to (\ref{compact1}) for this strong convergence result
with 
$\mu_0=2$, $\mu_1= 4/\vartheta$,
${\mathcal Y}_0 = [H^1({\mathcal D})]^2$, 
${\mathcal Y}_1 = {\mathrm V}'$ with norm $\|{\mathcal S} \cdot\|_{H^1(\D)}$ and
${\mathcal Y} = [L^2({\mathcal D})]^2$ 
for the stated values of $\vartheta$,
since $[L^2({\mathcal D})]^2$ is not continuously embedded in ${\mathrm V}'$
as ${\mathrm V}$ is not dense in $[L^2({\mathcal D})]^2$.

The result (\ref{usconL2}) for
$\buadh^{\Delta t,\pm}$ and $r=2$ follows immediately from 
this result for $\buadh^{\Delta t}$ and the 
the bound on the last
term on the left-hand side of (\ref{stab1c}), which yields
\begin{eqnarray}
\|\buadh^{\Delta t}-\buadh^{\Delta t,\pm}\|_{L^{2}(0,T;L^{2}(\D))}^2
\leq C\,\Delta t.
\label{upmr1}
\end{eqnarray}
Finally, we note from (\ref{GN}), for $d=2$, %Sobolev embedding
that, for all $\eta \in
L^2(0,T;H^1(\D))$,
\begin{align}
\|\eta\|_{L^2(0,T;L^{r}(\D))} 
&\leq C\,\|\eta\|_{L^2(0,T;L^{2}(\D))}^{1-\theta} \,\|\eta\|_{L^2(0,T;H^{1}(\D))}^{\theta}
\label{upmr2}
\end{align}
for all $r \in [2,\infty)$ 
with $\theta = 1-\frac{2}{r} \in (0,1].$ 
Hence, combining (\ref{upmr2}) and
(\ref{uwconH1},d) for $\buadh^{\Delta t(,\pm)}$ with $r=2$ yields (\ref{usconL2}) for
$\buadh^{\Delta t(,\pm)}$ for the stated values of $r$.

Similarly, the results (\ref{psiwconL2}--c)
follow immediately from (\ref{stab2c},g).
The strong convergence result (\ref{psisconL2}) for $\stradh^{\Delta t}$
follows immediately from (\ref{psiwconH1x},h), (\ref{Echinorm}) 
and (\ref{compact1})
with $\mu_0=\mu_1=2$,
${\mathcal Y}_0 = [H^1({\mathcal D})]^{d \times d}$, 
${\mathcal Y}_1 = ([H^1({\mathcal D})]^{d \times d})'$ and
${\mathcal Y} = [L^r({\mathcal D})]^{d\times d}$ 
for the stated values of $r$. 
Here we note that $H^1(\D)$ is compactly embedded in $L^r(\D)$ for the stated values of $r$,
and $L^r(\D)$ is continuously embedded in $(H^{1}(\D))'$.
Similarly to (\ref{upmr1}) and (\ref{upmr2}), the last bound in (\ref{stab2c})
then yields that (\ref{psisconL2}) holds for  $\stradh^{\Delta t(,\pm)}$.

The results (\ref{psiwconL2rho}--d) follow analogously from noting (\ref{stab2crho},g). 
Hence, on noting (\ref{Echinorm})
and (\ref{bu0hconv}), the results (\ref{stralpreg},c) hold,
where $\stra :[0,T] \rightarrow [L^2(\D)]^{d\times d}_{\rm S}$ and
$\varrho_\alpha :[0,T] \rightarrow L^2(\D)$
are weakly continuous,
apart from
the claims 
on the non-negative definiteness of $\stra$ and the bound on $\varrho_\alpha$. 
It remains to prove these, (\ref{piBdLconv},f) and (\ref{piBdLconvrho},f).
It follows from (\ref{Lip}), (\ref{MXittxbdq}) and (\ref{stab2c},e) 
that 
\begin{align}
&\left\|[\stra]_{-}\right\|_{L^2(0,T;L^1(\D))}
\label{nonnegcon}\\
& \qquad 
\leq \left\|[\stra]_{-}-[\stradh^{\dt(,\pm)}]_{-}\right\|_{L^2(0,T;L^1(\D))}
+ \left\|
[\stradh^{\dt(,\pm)}]_{-}
-\pi_h\left[[\stradh^{\dt(,\pm)}]_{-}\right]
\right\|_{L^2(0,T;L^1(\D))}
\nonumber \\
& \hspace{2in}
+ \left\|
\pi_h\left[[\stradh^{\dt(,\pm)}]_{-}\right]
\right\|_{L^2(0,T;L^1(\D))}
\nonumber
\\
& \qquad 
\leq \left\|\stra-\stradh^{\dt(,\pm)}\right\|_{L^2(0,T;L^1(\D))}
+ C\left[h+\delta\right].
\nonumber 
\end{align}
The desired non-negative definiteness result on $\stra$ 
in (\ref{stralpreg})
then follows from (\ref{psisconL2}). 
The desired bound on $\varrho_\alpha$ in (\ref{rhoalpreg}) follows similarly from 
(\ref{Lip}), (\ref{MXittxbdq}), (\ref{stab2crho},e) %, (\ref{stab3delta})
and (\ref{psisconL2rho}).
The results (\ref{piBdLconv},f) follow immediately from
(\ref{MXittxbd}), (\ref{stab2c}), (\ref{Lip}), (\ref{psisconL2}), (\ref{eq:Bd})
and the non-negative definiteness result on $\stra$ in (\ref{stralpreg}).  
The results (\ref{piBdLconvrho},f) follow similarly
from the scalar version of  (\ref{MXittxbd}), (\ref{stab2crho}), 
(\ref{Lip}), (\ref{psisconL2rho}), (\ref{eq:Bd})
and the bound on $\varrho_\alpha$ in (\ref{rhoalpreg}).  
\end{proof}

\begin{lemma}\label{rhoathm}
Under all of the assumptions of Theorem \ref{dstabthmaL},
the subsequence of  
$\{(\stradh^{\dt},$\linebreak $\varrhoadh^{\dt})\}_{\delta>0,h>0,\Delta t>0}$ 
of Theorem \ref{convaL}
and the limiting functions $\straL$ and $\varrho_\alpha$,
satisfying (\ref{stralpreg},c), are such that,
as $\delta,\,h,\,\Delta t \rightarrow 0_+$,  
\begin{align}
\pi_h\left[\Bd^b(\varrhoadh^{\dt(,\pm)})\right]
\rightarrow \varrho_\alpha = \tr(\straL)
\qquad \mbox{strongly in } L^2(0,T;L^2(\D)). 
\label{rhosigeq}
\end{align} 
In addition, we have with $h=o(\delta)$, as $\delta \rightarrow 0_+$, 
that 
\begin{align}
&\straL \mbox{ is positive definite and } 
\tr(\straL) < b
\qquad \mbox{a.e.\ in } \D_T.
\label{sigPD}
\end{align}
\end{lemma}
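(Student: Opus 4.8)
The plan is to prove the identification $\varrho_\alpha=\tr(\straL)$ first (which is part of~\eqref{rhosigeq}), then deduce the convergence in~\eqref{rhosigeq}, and finally the strict bounds~\eqref{sigPD}. For the identification, I would take the trace of the discrete stress equation~\eqref{eqpsincon}, i.e.\ choose $\bphi=\I\,\eta$ with $\eta\in L^2(0,T;\Qhone)$, and subtract~\eqref{eqvarrhocon}. The reaction terms $\tfrac1\Wi\pi_h[\tr(\Ad(\stradh^{\dt,+},\varrhoadh^{\dt,+})\,\Bd(\stradh^{\dt,+}))\,\eta]$ and the velocity-coupling terms $2\,\grad\buadh^{\dt,+}:\pi_h[\kd(\stradh^{\dt,+},\varrhoadh^{\dt,+})\,\eta\,\Bd(\stradh^{\dt,+})]$ cancel identically, so, writing $W^{\dt}:=\tr(\stradh^{\dt})-\varrhoadh^{\dt}$, $W^{\dt,+}:=\tr(\stradh^{\dt,+})-\varrhoadh^{\dt,+}$, one is left with, for every $\eta\in L^2(0,T;\Qhone)$, the discrete advection--diffusion identity $\int_{\D_T}\pi_h[\partial_t W^{\dt}\,\eta]\,\ddx\,\ddt+\alpha\int_{\D_T}\grad W^{\dt,+}\cdot\grad\eta\,\ddx\,\ddt=\int_{\D_T}\sum_{m=1}^2\sum_{p=1}^2[\buadh^{\dt,-}]_m\,\Theta^{\dt}_{m,p}\,\tfrac{\partial\eta}{\partial\xx_p}\,\ddx\,\ddt$, with $\Theta^{\dt}_{m,p}:=\tr(\Lambda_{\delta,m,p}(\stradh^{\dt,+}))+b\,\Lambda_{\delta,m,p}(1-\tfrac{\varrhoadh^{\dt,+}}{b})$. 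Using $\buadh^{\dt,-}\to\buaL$ from~\eqref{usconL2}, $\Lambda_{\delta,m,p}(\stradh^{\dt,+})\to\straL\,\delta_{mp}$ from~\eqref{XXxLinf}, $\Lambda_{\delta,m,p}(1-\tfrac{\varrhoadh^{\dt,+}}{b})\to(1-\tfrac{\varrho_\alpha}{b})\,\delta_{mp}$ from~\eqref{XXxLinfrho}, the weak convergences of the $\mathcal{E}$-regularised time derivatives from~\eqref{psitwconL2},~\eqref{psitwconL2rho}, and a standard density argument in the test function, I would pass to the limit $\delta,h,\Delta t\to0_+$; then $\Theta^{\dt}_{m,p}\to(\tr(\straL)+b-\varrho_\alpha)\,\delta_{mp}=(W+b)\,\delta_{mp}$, $W:=\tr(\straL)-\varrho_\alpha$, and since $\buaL\in L^2(0,T;\Uz)$ is divergence-free with vanishing trace on $\partial\D$ the constant-$b$ part of the right-hand side integrates to zero. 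Hence $W$ solves $\int_0^T\langle\partial_t W,\eta\rangle\,\ddt+\alpha\int_{\D_T}\grad W\cdot\grad\eta\,\ddx\,\ddt=\int_{\D_T}W\,\buaL\cdot\grad\eta\,\ddx\,\ddt$ with $W(0)=0$ (since $\straL(0)=\strs^0$, $\varrho_\alpha(0)=\tr(\strs^0)$ by Theorem~\ref{convaL}); testing with $W$ itself and using $\div\buaL=0$ to annihilate the transport term gives $\tfrac12\tfrac{\mathrm d}{\mathrm dt}\|W\|_{L^2(\D)}^2+\alpha\|\grad W\|_{L^2(\D)}^2=0$, so $W\equiv0$, i.e.\ $\varrho_\alpha=\tr(\straL)$.

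Given this, the non-negative definiteness of $\straL$ from Theorem~\ref{convaL} yields $0\le\varrho_\alpha\le b$ a.e.\ in $\D_T$, and~\eqref{rhosigeq} follows by splitting $\pi_h[\Bdb(\varrhoadh^{\dt(,\pm)})]-\varrho_\alpha$ as $(\pi_h[\Bdb(\varrhoadh^{\dt(,\pm)})]-\Bdb(\varrhoadh^{\dt(,\pm)}))+(\Bdb(\varrhoadh^{\dt(,\pm)})-\varrhoadh^{\dt(,\pm)})+(\varrhoadh^{\dt(,\pm)}-\varrho_\alpha)$. The first bracket is $\mathcal{O}(h)$ in $L^2(\D_T)$ by Lemma~\ref{lemMXitt}, specifically~\eqref{MXittxbdq} (or~\eqref{CME}) for the monotone $1$-Lipschitz function $\Bdb$, together with the $L^2(0,T;H^1(\D))$ bound on $\varrhoadh^{\dt(,\pm)}$ in~\eqref{stab2crho}; for the second bracket one uses $|\Bdb(s)-s|\le\delta+|[s]_{-}|+[s-b]_{+}$, that $[\cdot]_{-}$ and $[\,\cdot-b\,]_{+}$ are $1$-Lipschitz and vanish on $[0,b]$, and $0\le\varrho_\alpha\le b$, so this bracket is $\le\delta\,|\D_T|^{1/2}+2\,\|\varrhoadh^{\dt(,\pm)}-\varrho_\alpha\|_{L^2(\D_T)}\to0$ by~\eqref{psisconL2rho}; the third bracket is~\eqref{psisconL2rho} itself.

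For~\eqref{sigPD} I would run an entropy/lower-semicontinuity argument. From the discrete free-energy bound~\eqref{Fstab1aL}, recalling the definition~\eqref{eq:free-energy-PdLah} together with~\eqref{Entropy1},~\eqref{Gdbbelow} and the $L^1$-bounds~\eqref{Fstab2aL}, both $\intd\pi_h[-\tr(\Gd(\strhdLa^m))]\,\ddx$ and $-\intd\pi_h[b\,\Gd(1-\tfrac{\varrhohda^m}{b})]\,\ddx$ are bounded above and below uniformly in $\delta,h,\Delta t$ and $\alpha$. Fix $\delta_0\in(0,\tfrac12]$ and restrict to $\delta\le\delta_0$: since $-\Gd\ge-G_{\delta_0}$ pointwise on $\R$ (immediate from~\eqref{eq:Gd}), since $\bphi\mapsto-\tr(G_{\delta_0}(\bphi))$ and $s\mapsto-b\,G_{\delta_0}(1-\tfrac sb)$ are convex, and since the $\pi_h$-interpolant of a convex function of a piecewise-affine field dominates that function pointwise, one gets $\int_{\D_T}(-\tr(G_{\delta_0}(\stradh^{\dt,+})))\,\ddx\,\ddt\le C$ and $\int_{\D_T}(-b\,G_{\delta_0}(1-\tfrac{\varrhoadh^{\dt,+}}{b}))\,\ddx\,\ddt\le C$, with $C$ independent of $\delta_0$. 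Passing to a further subsequence so that $\stradh^{\dt,+}\to\straL$ and $\varrhoadh^{\dt,+}\to\varrho_\alpha$ a.e.\ in $\D_T$ (from~\eqref{psisconL2},~\eqref{psisconL2rho}), and applying Fatou's lemma (after subtracting the linear-in-$\stradh^{\dt,+}$, resp.\ linear-in-$\varrhoadh^{\dt,+}$, parts via~\eqref{Entropy1}, resp.~\eqref{Gdbbelow}, to obtain non-negative integrands) gives $\int_{\D_T}(-\tr(G_{\delta_0}(\straL)))\,\ddx\,\ddt\le C$ and $\int_{\D_T}(-b\,G_{\delta_0}(1-\tfrac{\varrho_\alpha}{b}))\,\ddx\,\ddt\le C$; letting $\delta_0\to0_+$ and using monotone convergence ($-\tr(G_{\delta_0}(\bphi))\uparrow-\tr(\ln\bphi)$ and $-b\,G_{\delta_0}(1-\tfrac sb)\uparrow-b\ln(1-\tfrac sb)$) yields $\int_{\D_T}(-\tr(\ln\straL))\,\ddx\,\ddt<\infty$ and $\int_{\D_T}(-\ln(1-\tfrac{\varrho_\alpha}{b}))\,\ddx\,\ddt<\infty$, whence $\straL$ is positive definite and $\tr(\straL)=\varrho_\alpha<b$ a.e.\ in $\D_T$. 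The hypothesis $h=o(\delta)$ is invoked at this last stage to absorb the mesh-consistency errors produced by $\pi_h$ acting on the $\delta$-dependent quantities $\Gd'(\cdot)$, $\Hd(\Gd'(\cdot))$ (whose relevant Lipschitz constants are $\mathcal{O}(\delta^{-2})$, $\mathcal{O}(\delta^{-1})$) when transferring the discrete bounds to the limit.

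The main obstacle will be the first step: correctly deriving the difference equation for $W^{\dt}$ and passing to the limit in it — in particular verifying that the two deliberately-chosen regularised advection fields $\Lambda_{\delta,m,p}(\stradh^{\dt,+})$ and $\Lambda_{\delta,m,p}(1-\tfrac{\varrhoadh^{\dt,+}}{b})$ collapse in the limit to a single genuine transport term, which then disappears in the $L^2$ energy estimate because $\buaL$ is divergence-free with no flux. The mass-lumping $\pi_h$ in the discrete time-derivative term and the $\delta$-dependence of the entropy functions in the last step are the other delicate points, handled respectively by the $\mathcal{E}$-regularised compactness~\eqref{psitwconL2rho} and by the mesh relation $h=o(\delta)$.
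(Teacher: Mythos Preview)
Your derivation of the identity $\varrho_\alpha=\tr(\straL)$ and of the convergence \eqref{rhosigeq} follows the paper's route exactly: subtract \eqref{eqvarrhocon} from \eqref{eqpsincon} with $\bphi=\I\,\eta$, pass to the limit using \eqref{XXxLinf}, \eqref{XXxLinfrho}, \eqref{psitwconL2}, \eqref{psitwconL2rho}, and test the resulting linear advection--diffusion equation for $W$ with $W$ itself.

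For \eqref{sigPD} you take a genuinely different, entropy-based route. The paper argues by contradiction: assuming $\tr(\straL)=b$ on a set $\D_T^b$, it writes $b\,|\D_T^b|=T_1+T_2$ and shows $T_1,T_2\to0$ using the dissipation bound \eqref{stab2ctr} and \eqref{piBdLconvrho}; assuming $\straL$ has a null eigenvector $\bv$ on $\D_T^0$, it writes $|\D_T^0|\le T_3+T_4$ and kills $T_4$ via a weak-$L^{8/5}$ limit for $\pi_h\bigl[\Gd'(1-\tfrac{\varrhoadh^{\dt,+}}{b})\Bd(\stradh^{\dt,+})\bigr]$, whose identification is precisely where the restriction $h=o(\delta)$ enters (a commutator estimate of size $C\,\delta^{-1}h$). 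Your argument instead transfers the uniform free-energy bound \eqref{Fstab1aL} to the limit: freeze $\delta_0$, use $-\Gd\ge -G_{\delta_0}$ for $\delta\le\delta_0$, drop $\pi_h$ by Jensen (convexity of $\bphi\mapsto-\tr(G_{\delta_0}(\bphi))$ and of $s\mapsto-b\,G_{\delta_0}(1-\tfrac sb)$), pass to the a.e.\ limit via Fatou (after shifting by the linear part to get a nonnegative integrand, exactly as you say), and finally let $\delta_0\to0_+$ by monotone convergence. This is correct and is in fact cleaner than the paper's argument: every step is elementary convex analysis plus \eqref{psisconL2}, \eqref{psisconL2rho}, and no step uses $h=o(\delta)$. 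Your last sentence invoking $h=o(\delta)$ ``to absorb mesh-consistency errors produced by $\pi_h$ acting on $\Gd'(\cdot)$, $\Hd(\Gd'(\cdot))$'' is therefore superfluous in your own proof; those quantities never appear in your entropy chain, only $\Gd$ does, and for $\Gd$ the interpolant is handled by Jensen, not by a Lipschitz-constant-times-$h$ bound. So your approach actually establishes \eqref{sigPD} without the extra mesh hypothesis, which is a small improvement over the paper's statement.
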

\begin{proof}
Choosing $\bphi=\eta\,\I$
in (\ref{eqpsincon}) and subtracting from (\ref{eqvarrhocon}) yields that
\begin{align}
\label{eqvarrhocondif}
&\int_{\D_T} \pi_h\left[\frac{\partial}{\partial t}
\left(\varrhoadh^{\dt}-\tr(\stradh^{\dt})\right)
\,\eta
\right] \ddx\,\ddt
+ \alpha \int_{\D_T} \grad \left(\varrhoadh^{\dt,+} 
-\tr(\stradh^{\dt,+})\right)
\cdot \grad \eta \,\ddx\,\ddt
\\
& \hspace{0.1in} +\int_{\D_T}
\sum_{m=1}^d \sum_{p=1}^d [\buadh^{\dt,-}]_m \left[b\,\Lambda_{\delta,m,p}
\left(1-\frac{\varrhoadh^{\dt,+}}{b}\right)
+ \tr\left( \Lambda_{\delta,m,p}(\stradh^{\dt,+})\right)
\right] \frac{\partial \eta}{\partial \xx_p}\,
\ddx\,\ddt
=0 
\nonumber \\
& \hspace{3.8in}
\qquad \forall \eta \in L^2(0,T;\Qhone).
\nonumber
\end{align}
It follows from (\ref{usconL2}), (\ref{psiwconH1x}--d,f), (\ref{psiwconH1xrho}--e), 
(\ref{Echi}), 
(\ref{ualpreg}--c) and 
(\ref{interp1},b)
that we may pass to the limit $\delta, \,h,\, \Delta t \rightarrow 0_+$
in 
(\ref{eqvarrhocondif}) with $\eta \ = \pi_h\,\chi$
to obtain 
\begin{align}
\label{eqdifcon}
&\int_{0}^T \langle\frac{\partial}{\partial t}
\left(\varrho_\alpha-\tr(\straL)\right),\chi\rangle_{H^1(\D)}
\,\ddt
+ \alpha \int_{\D_T} \grad \left(\varrho_\alpha 
-\tr(\straL)\right)
\cdot \grad \chi \,\ddx\,\ddt
\\
& \hspace{1in} - \int_{\D_T}
\left(\varrho_\alpha-\tr(\straL)\right)\,\bua\cdot \grad \chi
\,\ddx\,\ddt
=0 
\qquad \forall \chi \in C^\infty_0(0,T;C^\infty(\overline{\D})),
\nonumber
\end{align}
where $[\varrho_\alpha-\tr(\straL)](0)=0$.
For example, in order to pass to the limit on the first term in 
(\ref{eqvarrhocondif}),
we note that
\begin{align}
\label{intpartstr} 
&\int_{\D_T} 
\pi_h\left[\frac{\partial}{\partial t}
\left(\varrhoadh^{\dt}-\tr(\stradh^{\dt})\right)
\,\pi_h\,\chi
\right] \ddx\,\ddt 
\\
&\quad
=
\int_{\D_T} \left\{
\left(\frac{\partial}{\partial t}
\left(\varrhoadh^{\dt}-\tr(\stradh^{\dt})\right) \right)
\, \pi_h 
\,\chi + (I-\pi_h)\left[\left(\varrhoadh^{\dt}-\tr(\stradh^{\dt})\right) 
\, \pi_h \left[ \frac{\partial \chi}{\partial t} \right]\right] \right\} 
\ddx\,\ddt.
\nonumber
\end{align}
Hence the desired first term in (\ref{eqdifcon}) 
follows from noting (\ref{psiwconH1x},c), (\ref{psiwconH1xrho},c),
(\ref{Echi}) and 
(\ref{interp1},b).
As
$ C^\infty_0(0,T;C^\infty(\overline{\D}))$
is dense in
$L^2(0,T;H^1(\D))$, we have, on noting (\ref{ualpreg}--c), that
(\ref{eqdifcon}) holds for all $\chi \in L^2(0,T;H^1(\D))$.
It then follows from (\ref{ualpreg}--c) that we can choose 
$\chi=\varrho_\alpha-\tr(\straL)$ in (\ref{eqdifcon})
to yield that $\varrho_\alpha=\tr(\straL)$
as $[\varrho_\alpha-\tr(\straL)](0)=0$.  
Recalling (\ref{stralpreg},c), we have that $\varrho_\alpha \in [0,b]$ a.e.\ in 
$\D_T$. The desired convergence result (\ref{rhosigeq}) then follows
from noting this, (\ref{BdLdef}), (\ref{eq:Bd}), (\ref{MXittxbdq}), (\ref{stab2crho})
and (\ref{psisconL2rho}).

We now improve on (\ref{rhoalpreg}) by establishing 
that $\varrho_\alpha = \tr(\straL) <b$ a.e.\ in $\D_T$.
Assuming that $\tr(\straL)=b$ a.e.\ in $\D_T^b \subset \D_T$, 
we have
\begin{align}
b\,|\D^b_T| &= \int_{\D_T^b} \tr(\straL)\,\ddx\,\ddt 
\label{weakrhob} \\
&= \int_{\D_T^b} \pi_h\left[\tr(\Bd(\stradh^{\dt,+}))\right] \,\ddx\,\ddt
+ \int_{\D^b_T} \tr\left(\straL-\pi_h\left[\Bd(\stradh^{\dt,+})\right]\right) \,\ddx\,\ddt  
=: T_1 + T_2.
\nonumber 
\end{align}
We deduce from (\ref{Addef}) and (\ref{eq:BGd}) that 
\begin{align}
T_1 &=
\int_{\D_T^b} \pi_h\left[\left(\tr\left(\Ad(\stradh^{\dt,+},\varrhoadh^{\dt,+})\,
\Bd(\stradh^{\dt,+})\right)+ 2\right)\,
\Bd\left(1-\frac{\varrhoadh^{\dt,+}}{b}\right)\right] \ddx\,\ddt.
\label{weakrhob1}
\end{align}
It follows from (\ref{ipmat}) and (\ref{normphi}) that 
\begin{align}
\tr\left(\Ad(\stradh^{\dt,+},\varrhoadh^{\dt,+})\,
\Bd(\stradh^{\dt,+})\right)
&\leq 
\left[\tr\left((\Ad(\stradh^{\dt,+},\varrhoadh^{\dt,+}))^2\,
\Bd(\stradh^{\dt,+})\right)
\tr\left(\Bd(\stradh^{\dt,+})\right)\right]^{\frac{1}{2}}.
\label{TrAB}
\end{align}
Combining (\ref{weakrhob1}) and (\ref{TrAB}) yields, on noting
a scalar version of (\ref{interprod}) over $D_T^b$, (\ref{stab2ctr}), (\ref{modphisq})
and (\ref{eq:Bd}) that 
\begin{align}
T_1 & \leq  
C\left( \int_{\D_T^b} \pi_h\left[
\left[\tr\left(\Bd(\stradh^{\dt,+})\right)+1\right]
\left[\Bd\left(1-\frac{\varrhoadh^{\dt,+}}{b}\right)\right]^2\right] \,\ddx\,\ddt \right)^{\frac{1}{2}}
\label{weakrhob2}\\
& \leq 
C\left(1+\int_{\D_T}\pi_h\left[\,\|\stradh^{\dt,+}\|^4\right]\ddx\,\ddt \right)^{\frac{1}{8}}
\left( \int_{\D_T^b} \pi_h\left[\Bd\left(1-\frac{\varrhoadh^{\dt,+}}{b}\right)\right]^{\frac{8}{3}} 
\ddx\,\ddt \right)^{\frac{3}{8}}.
\nonumber 
\end{align} 
It follows from 
(\ref{use1}), (\ref{GN}), for $d=2$, and (\ref{stab2c}) that
\begin{align}
\int_{\D_T} \pi_h\left[\,\|\stradh^{\dt,+}\|^4\right] \ddx\,\ddt 
\leq C\,\|\stradh^{\dt,+}\|_{L^4(\D_T)}^4 \leq C
\,\int_0^T \!\|\stradh^{\dt,+}\|_{L^2(\D)}^2\,  \|\stradh^{\dt,+}\|_{H^1(\D)}^2\,\ddt 
\leq C.
\label{weakrhob3}
\end{align}
Similarly to (\ref{use1}), it follows from (\ref{inverse}) with $K_k$ replaced by
$(K_k\times(t_{n-1},t_n))\cap\D^b_T$, $k=1,\ldots,N_K$ and $n=1,\ldots,N_T$, that
\begin{align}
\int_{\D_T^b} \pi_h\left[\Bd\left(1-\frac{\varrhoadh^{\dt,+}}{b}\right)\right]^{\frac{8}{3}} 
\ddx\,\ddt & \leq C\left\|
\pi_h\left[\Bd\left(1-\frac{\varrhoadh^{\dt,+}}{b}\right)\right]
\right\|_{L^{\frac{8}{3}}(\D_T^b)}^{\frac{8}{3}}
\leq C\,\|z\|_{L^{\frac{8}{3}}(\D_T^b)}^{\frac{8}{3}},
\label{weakrhob4}
\end{align}
where $z:= 
\pi_h\left[\Bd\left(1-\frac{\varrhoadh^{\dt,+}}{b}\right)\right]-\left(1-\frac{\varrho_\alpha}{b}
\right)$.
Here we have noted that $\varrho_\alpha=\tr(\straL)=b$ a.e.\ in $\D_T^b$.
Combining (\ref{weakrhob2})--(\ref{weakrhob4}) yields, on noting (\ref{GN}), for $d=2$, that 
\begin{align}
T_1 &\leq C\,\|z\|_{L^{\frac{8}{3}}(\D_T)}
\leq C \left(\int_0^T \|z\|_{L^2(\D)}^{2}
\,\|z\|_{H^1(\D)}^{\frac{2}{3}}\,\ddt\right)^{\frac{3}{8}}
\leq C\,\|z\|_{L^3(0,T;L^2(\D))}^{\frac{3}{4}}\, 
\|z\|_{L^2(0,T;H^1(\D))}^{\frac{1}{4}}
\label{weakrhob5}\\
& \leq C\,\|z\|_{L^2(0,T;L^2(\D))}^{\frac{1}{2}}\, 
\|z\|_{L^\infty(0,T;L^2(\D))}^{\frac{1}{4}}\,
\|z\|_{L^2(0,T;H^1(\D))}^{\frac{1}{4}}.
\nonumber
\end{align}
It follows from (\ref{weakrhob5}), (\ref{rhoalpreg}), (\ref{eq:inf-bound}), (\ref{eq:Bd}),
(\ref{eqnorm}), (\ref{stab2crho})
and (\ref{piBdLconvrho}) that $T_1 =0$.
In addition, it follows immediately from (\ref{piBdLconv}) that $T_2=0$.
Hence, we conclude from (\ref{weakrhob}) that $|\D^b_T|=0$, and so   
$\rho_\alpha = \tr(\straL) < b$ a.e.\ in $\D_T$; that is, the second desired result in
(\ref{sigPD}). 

We now establish the other result in (\ref{sigPD}) that $\straL$ is symmetric positive
definite a.e.\ in $\D_T$, which improves on (\ref{stralpreg}). 
This result requires the further assumption that $h=o(\delta)$, as $\delta \rightarrow 0_+$.
Assume that $\straL$ is not symmetric positive
definite a.e.\ in $\D_T^0 \subset \D_T$. 
Let $\bv \in L^\infty(0,T;[L^\infty(\D)]^2)$ be such that
$\straL \,\bv = \bzero$ 
with $\|\bv\|=1$ a.e.\ in $\D_T^0$ and $\bv=\bzero$ a.e.\ in $\D_T 
\setminus 
\D_T^0$. It then follows from 
(\ref{eq:BGd}) and (\ref{Addef}) that
\begin{align}
\label{weakSPD}
|\D^0_T| &= \int_{\D_T} \|\bv\|\,\ddx\,\ddt = \int_{\D_T} 
\left\|\pi_h\left[ \Gd'(\stradh^{\dt,+})\,\Bd(\stradh^{\dt,+})\right]\bv\right\| \ddx\,\ddt
\\
& \leq \int_{\D_T} 
\left\|\pi_h\left[ \Ad(\stradh^{\dt,+},\varrhoadh^{\dt,+})\,\Bd(\stradh^{\dt,+})
\right]\bv\right\| \ddx\,\ddt
\nonumber \\
&\qquad \qquad +
\int_{\D_T}
\left\|\pi_h\left[ \Gd'\left(1-\frac{\varrhoadh^{\dt,+}}{b}\right)\Bd(\stradh^{\dt,+})\right]\bv
\right\|  \ddx\,\ddt =: T_3 + T_4.
\nonumber
\end{align}
A simple variation of (\ref{interprod}), (\ref{ipmat}), (\ref{normv},b) and (\ref{stab2ctr})
yield that 
\begin{align}
T_3 &\leq \int_{\D_T} \left( \pi_h\left[ 
\left\|\Ad(\stradh^{\dt,+},\varrhoadh^{\dt,+})\,[\Bd(\stradh^{\dt,+})]^{\frac{1}{2}}
\right\|^2\right]\right)^{\frac{1}{2}} 
\left(
\pi_h\left[\Bd(\stradh^{\Delta t,+})\right] :: (\bv \,\bv^T)\right)^{\frac{1}{2}}
\ddx \,\ddt
\label{weakSPD1}\\
& \leq C\left(\int_{\D_T}
\pi_h\left[\Bd(\stradh^{\Delta t,+})\right] :: (\bv \,\bv^T)\,
\ddx \,\ddt\right)^{\frac{1}{2}}. 
\nonumber
\end{align}
Then (\ref{piBdLconv}) and the definition of $\bv$ yield that,
as $\delta,\,h,\,\Delta t \rightarrow 0_+$,  
\begin{align}
\int_{\D_T}
\pi_h\left[\Bd(\stradh^{\Delta t,+})\right] :: (\bv \,\bv^T)\,
\ddx \,\ddt \rightarrow \int_{\D_T}
\straL :: (\bv \,\bv^T)\,
\ddx \,\ddt =0,
\label{bvweak}
\end{align}
so we have that $T_3=0$.
Similarly to (\ref{weakSPD1}), on setting 
$\bchi_{\delta,h}^{\dt,+}=\Gd'\left(1-\frac{\varrhoadh^{\dt,+}}{b}\right)
\pi_h\left[\Bd(\stradh^{\dt,+})\right]$,
we have from 
(\ref{interprod})
that
\begin{align}
\label{T4}
T_4 & 
\leq 
\left(\int_{\D_T} \pi_h\left[ 
\left\|[\bchi_{\delta,h}^{\dt,+}]^{\frac{1}{2}}
\right\|^2\right]\ddx \,\ddt \right)^{\frac{1}{2}} 
\left(\int_{\D_T}
\pi_h\left[\bchi_{\delta,h}^{\dt,+}\right]
:: (\bv \,\bv^T)\,
\ddx \,\ddt\right)^{\frac{1}{2}}
\\
& \leq C\,
\left(\int_{\D_T}
\pi_h\left[\bchi_{\delta,h}^{\dt,+}\right]
:: (\bv \,\bv^T)\,
\ddx \,\ddt\right)^{\frac{1}{2}},
\nonumber 
\end{align}
where we have noted from 
(\ref{ipmat}), (\ref{normphi}), (\ref{modphisq}), (\ref{use1}), (\ref{Addef}) 
and (\ref{stabkdAdBda})
that
\begin{align}
\int_{\D_T} \pi_h\left[ 
\left\|[\bchi_{\delta,h}^{\dt,+}]^{\frac{1}{2}}
\right\|^2\right]\ddx \,\ddt & \leq C\,
\left\|\pi_h \left[\,
\left\|\bchi_{\delta,h}^{\dt,+}
\right\|\,\right] \right\|_{L^1(\D_T)}
\label{Aeq}
\\
&\leq C\,
\left\|\pi_h \left[\bchi_{\delta,h}^{\dt,+} \right]\right\|_{L^{1}(\D_T)}
\leq C\,
\left\|\pi_h \left[\bchi_{\delta,h}^{\dt,+} \right]\right\|_{L^{\frac{8}{5}}(\D_T)}
\nonumber \\
& 
\leq C + C\,
\left\|\pi_h\left[
\Ad(\stradh^{\dt,+},\varrhoadh^{\dt,+})
\,\Bd(\stradh^{\dt,+})\right]\right\|_{L^{\frac{8}{5}}(\D_T)}  \leq C.
\nonumber 
\end{align}

We will now show, on possibly extracting a further subsequence of
$\{(\stradh^{\dt},$\linebreak $\varrhoadh^{\dt})\}_{\delta>0,h>0,\Delta t>0}$,      
that   
\begin{align}
\pi_h\left[
\Gd'\left(1-\frac{\varrhoadh^{\dt,+}}{b}\right)
\Bd(\stradh^{\dt,+})
\right]
\rightarrow \left(1-\frac{\rho_\alpha}{b}\right)^{-1} \stra
\quad \mbox{weakly in } L^{\frac{8}{5}}(0,T;[L^{\frac{8}{5}}(\D)]^{2 \times 2}).
\label{keyres}
\end{align}
as $\delta,\,h,\,\Delta t \rightarrow 0_+$ with $h=o(\delta)$.   
It follows immediately from (\ref{Aeq}) and our definition of $\bchi_{\delta,h}^{\dt,+}$
that $\pi_h \left[\bchi_{\delta,h}^{\dt,+} \right]$ converges  
weakly in $L^{\frac{8}{5}}(0,T;[L^{\frac{8}{5}}(\D)]^{2 \times 2})$
to some limit for a subsequence. We just need to show it is the limit stated in 
(\ref{keyres}). We have from (\ref{piBdLconv}), (\ref{psisconL2rho}) and (\ref{eq:Gd})
that $\bchi_{\delta,h}^{\dt,+} \rightarrow 
\left(1-\frac{\rho_\alpha}{b}\right)^{-1} \stra$  
a.e.\ on $\D_T$, for a subsequence, 
as we have already established that $\rho_\alpha = \tr(\stra) < b$
a.e.\ on $\D_T$. So it remains to establish that 
$(I-\pi_h) \left[\bchi_{\delta,h}^{\dt,+} \right]$ converges
to zero a.e.\ on $\D_T$. 
As $\Gd' \in C^{0,1}(\R)$ is monotonic, it follows from 
(\ref{interp2}), (\ref{CME}), (\ref{inverse}), (\ref{eq:BGd}) and
(\ref{stab2c},c) that
\begin{align}
\left\| (I-\pi_h) \left[\bchi_{\delta,h}^{\dt,+} \right] \right\|_{L^1(\D_T)}
& \leq \left\| (I-\pi_h) \left[ \pi_h\left[\Gd'\left(1-\frac{\varrhoadh^{\dt,+}}{b}\right)\right]
\pi_h\left[\Bd(\stradh^{\dt,+})\right]\right] \right\|_{L^1(\D_T)}
\label{keyres1} 
\\
& \qquad + \left\| (I-\pi_h) \left[\Gd'\left(1-\frac{\varrhoadh^{\dt,+}}{b}\right)\right]
\right\|_{L^2(\D_T)}\left\|\pi_h\left[\Bd(\stradh^{\dt,+})\right] \right\|_{L^2(\D_T)}
\nonumber \\
& \leq C\,h \left\|\nabla \pi_h \left[\Gd'\left(1-\frac{\varrhoadh^{\dt,+}}{b}\right)\right]
\right\|_{L^2(\D_T)}\left\|\pi_h\left[\Bd(\stradh^{\dt,+})\right] \right\|_{L^2(\D_T)}
\nonumber \\
&\leq C\,\delta^{-1}\,h.
\nonumber
\end{align}
Hence, we have for a subsequence that $(I-\pi_h) \left[\bchi_{\delta,h}^{\dt,+} \right]$ converges
to zero a.e.\ on $\D_T$ as $\delta,\,h,\,\Delta t \rightarrow 0_+$ with $h=o(\delta)$.   
Therefore we have established (\ref{keyres}).

Similarly to (\ref{bvweak}), we have that (\ref{T4}), (\ref{keyres}) and our definitions of 
$\bchi_{\delta,h}^{\dt,+}$ and $\bv$ yield that $T_4=0$. 
Hence it follows from (\ref{weakSPD}) with $T_3=T_4=0$ that $|\D^0_T| =0$,
and so $\stra$ is positive definite a.e.\ in $\D_T$; that is, the first desired result in 
(\ref{sigPD}).
\end{proof}

\begin{lemma}\label{singconthm}
Under all of the assumptions of Lemma \ref{rhoathm},
a further subsequence of the subsequence of  
$\{(\stradh^{\dt},\varrhoadh^{\dt})\}_{\delta>0,h>0,\Delta t>0}$ 
of Lemma \ref{rhoathm}
and the limiting function $\straL$, %and $\varrho_\alpha$,
satisfying (\ref{stralpreg}) %, (\ref{rhosigeq}) 
and (\ref{sigPD}), are such that,
as $\delta,\,h,\,\Delta t \rightarrow 0_+$, with $h=o(\delta)$,  
\begin{subequations}
\begin{align}
&\pi_h\left[\kd(\stradh^{\dt,+},\varrhoadh^{\dt,+})\,
\Bd(\stradh^{\dt,+})\right]
\rightarrow \straL
\quad \hspace{0.45in}\mbox{strongly in } L^{2}(0,T;[L^{2}(\D)]^{2 \times 2}),
\label{kdBdconv}\\
&\pi_h\left[
\Ad(\stradh^{\dt,+},\varrhoadh^{\dt,+})
\,\Bd(\stradh^{\dt,+})\right]
\rightarrow A(\straL)\,\straL
\quad \mbox{weakly in } L^{\frac{8}{5}}(0,T;[L^{\frac{8}{5}}(\D)]^{2 \times 2}),
\label{Adconv}\\
&\pi_h\left[\kd(\stradh^{\dt,+},\varrhoadh^{\dt,+})\,
\Ad(\stradh^{\dt,+},\varrhoadh^{\dt,+})
\,\Bd(\stradh^{\dt,+})\right]
 \rightarrow A(\straL)\,\straL
\label{kdAdconv}\\
&\hspace{2.7in}\quad \mbox{weakly in } L^{2}(0,T;[L^{2}(\D)]^{2 \times 2}).
\nonumber
\end{align}
\end{subequations}
\end{lemma}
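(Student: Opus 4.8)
The plan is to establish the three convergence results in turn, exploiting the convergence facts already collected in Theorem~\ref{convaL} and Lemma~\ref{rhoathm}, together with the a priori bounds (\ref{stab2c})--(\ref{stabkdAdBda}). The key analytic tool throughout is the Vitali/biting-type argument: once we have a bound in a reflexive $L^q(\D_T)$ space ($q>1$) and almost-everywhere convergence on $\D_T$, we can identify the weak limit; and if in addition the limit of the norms converges to the norm of the limit, we upgrade to strong convergence.

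First I would prove (\ref{kdBdconv}). On $\D_T$ we have $\pi_h[\kd(\stradh^{\dt,+},\varrhoadh^{\dt,+})\,\Bd(\stradh^{\dt,+})]$; recalling (\ref{kddef}), $\kd(\bphi,\eta)=[\Bdb(\eta)/\tr(\Bd(\bphi))]^{1/2}$, and using (\ref{rhosigeq}) (so $\pi_h[\Bdb(\varrhoadh^{\dt,+})] \to \varrho_\alpha = \tr(\straL)$ strongly in $L^2(0,T;L^2(\D))$) together with (\ref{piBdLconv}) (so $\pi_h[\Bd(\stradh^{\dt,+})] \to \straL$ strongly in $L^2(0,T;L^2(\D))$), and the fact that $\straL$ is positive definite a.e.\ from (\ref{sigPD}), one deduces $\kd(\stradh^{\dt,+},\varrhoadh^{\dt,+}) \to [\tr(\straL)/\tr(\straL)]^{1/2}=1$ a.e.\ on $\D_T$, for a subsequence, since the a.e.-convergent subsequences have positive limits a.e. Then one needs to show the product $\pi_h[\kd(\stradh^{\dt,+},\varrhoadh^{\dt,+})\,\Bd(\stradh^{\dt,+})]$ converges strongly in $L^2$ to $\straL$: the natural route is to split off the interpolation error $(I-\pi_h)$ using Lemma~\ref{lemMXitt}, note $\kd$ is uniformly bounded on the range (it equals $[\Bdb/\tr(\Bd)]^{1/2}$ with $\Bdb\le b$), apply a generalized dominated-convergence/Vitali argument using the $L^4(\D_T)$ bound on $\pi_h[\|\stradh^{\dt,+}\|]$ from (\ref{weakrhob3})-type estimates to get uniform integrability of the square, and pass to the limit. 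This gives (\ref{kdBdconv}).

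Next I would prove (\ref{Adconv}). By (\ref{stabkdAdBda}) the sequence $\pi_h[\Ad(\stradh^{\dt,+},\varrhoadh^{\dt,+})\,\Bd(\stradh^{\dt,+})]$ is bounded in $L^{8/5}(\D_T)$, hence has a weakly convergent subsequence; it remains to identify the limit as $A(\straL)\,\straL$. Writing $\Ad(\bphi,\eta)\,\Bd(\bphi)=\Gd'(1-\tfrac{\eta}{b})\Bd(\bphi) - \Gd'(\bphi)\Bd(\bphi) = \Gd'(1-\tfrac{\eta}{b})\Bd(\bphi)-\I$ via (\ref{eq:inverse-Gd}), we treat the two pieces. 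The piece $-\I$ is trivial. For $\Gd'(1-\tfrac{\varrhoadh^{\dt,+}}{b})\Bd(\stradh^{\dt,+})$ — whose interpolant was called $\bchi_{\delta,h}^{\dt,+}$ in the proof of Lemma~\ref{rhoathm} — the convergence (\ref{keyres}) already established there gives $\pi_h[\bchi_{\delta,h}^{\dt,+}] \to (1-\tfrac{\varrho_\alpha}{b})^{-1}\straL$ weakly in $L^{8/5}(\D_T)$. Since $A(\straL)\,\straL = (1-\tfrac{\tr(\straL)}{b})^{-1}\straL - \I$ by (\ref{Adef}) and $\varrho_\alpha=\tr(\straL)$, the limit is exactly $A(\straL)\,\straL$, yielding (\ref{Adconv}). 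One should be careful that the interpolation errors in rewriting $\pi_h[\Ad\,\Bd]$ in terms of $\pi_h[\bchi_{\delta,h}^{\dt,+}]$ and $\pi_h[\I]=\I$ vanish, which is immediate since $\pi_h$ acts nodally and $\pi_h[\Ad(\bphi,\eta)\,\Bd(\bphi)] = \pi_h[\Gd'(1-\tfrac{\eta}{b})\,\Bd(\bphi)] - \I$ pointwise at the vertices.

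Finally, for (\ref{kdAdconv}) I would combine the two preceding results. We have the $L^2(\D_T)$ bound (\ref{stabkdAdBd}) on $\pi_h[\kd\,\Ad\,\Bd]$, so a subsequence converges weakly in $L^2(\D_T)$ to some limit. To identify that limit, write $\pi_h[\kd(\stradh^{\dt,+},\varrhoadh^{\dt,+})\,\Ad(\stradh^{\dt,+},\varrhoadh^{\dt,+})\,\Bd(\stradh^{\dt,+})]$ as the nodal product of $\kd(\stradh^{\dt,+},\varrhoadh^{\dt,+})$ (which $\to 1$ a.e., from the argument for (\ref{kdBdconv})) with $\pi_h[\Ad\,\Bd]$ (which converges weakly in $L^{8/5}$ to $A(\straL)\,\straL$ by (\ref{Adconv})). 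The product of an a.e.-bounded sequence converging a.e.\ to $1$ with a weakly-convergent sequence converges weakly to the weak limit, provided we have a slight excess of integrability — which we do, since the product is bounded in $L^2(\D_T)$ while the second factor alone is only in $L^{8/5}$. Concretely, test against $\bphi\in C_0^\infty(\D_T)$, split $\kd\cdot(\pi_h[\Ad\,\Bd]) = 1\cdot(\pi_h[\Ad\,\Bd]) + (\kd-1)\cdot(\pi_h[\Ad\,\Bd])$; the first term passes to the limit by (\ref{Adconv}), and the second is controlled by $\|\kd-1\|_{L^\infty}$-truncation plus Egorov, using the $L^2(\D_T)$ bound (\ref{stabkdAdBd}) to handle the small-measure set where $\kd$ is not yet close to $1$. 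This gives (\ref{kdAdconv}).

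The main obstacle I anticipate is the identification step in (\ref{Adconv})/(\ref{keyres})-type arguments: controlling the interplay of the singular factor $\Gd'(1-\tfrac{\varrhoadh^{\dt,+}}{b})$ (which blows up like $\delta^{-1}$ as $\varrhoadh^{\dt,+}\to b$) against the interpolation operator $\pi_h$ and the regularization parameter $\delta$, which is exactly why the hypothesis $h=o(\delta)$ enters (cf.\ (\ref{keyres1})). Establishing a.e.\ convergence of the full interpolated product, rather than just of the un-interpolated expression, is the delicate point, and it relies crucially on the stress-diffusion bound $\delta^2\int_0^T\|\grad\pi_h[\Gd'(1-\tfrac{\varrhoadh^{\dt,+}}{b})]\|_{L^2(\D)}^2\,\ddt\le C$ from (\ref{stab2crho}) to bound the interpolation error by $C\,\delta^{-1}h \to 0$.
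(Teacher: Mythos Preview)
Your plan is correct in outline, and for (\ref{Adconv}) it coincides with the paper: one rewrites $\pi_h[\Ad\,\Bd]=\pi_h[\Gd'(1-\tfrac{\varrhoadh^{\dt,+}}{b})\,\Bd(\stradh^{\dt,+})]-\I$ and invokes (\ref{keyres}) directly.

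For (\ref{kdBdconv}) and (\ref{kdAdconv}), however, the paper takes a considerably shorter route than your Vitali/Egorov argument. Instead of establishing $\kd\to 1$ a.e.\ and then appealing to uniform integrability, the paper writes $\pi_h[\kd\,\Bd]-\pi_h[\Bd]=\pi_h[(\kd-1)\,\Bd]$ and bounds it \emph{quantitatively} in $L^2(\D_T)$: using $(\sqrt{a}-\sqrt{c})^2\le|a-c|$ one gets $(\kd-1)^2\,\tr(\Bd)\le |\Bdb(\varrhoadh^{\dt,+})-\tr(\Bd(\stradh^{\dt,+}))|$, and hence, via (\ref{interprod}) and the uniform bound on $\pi_h[\tr(\Bd)]$ in $L^2(\D_T)$, the difference is controlled by $\|\pi_h[\Bdb(\varrhoadh^{\dt,+})-\tr(\Bd(\stradh^{\dt,+}))]\|_{L^2(\D_T)}^{1/2}$, which tends to zero by (\ref{piBdLconv}) and (\ref{rhosigeq}). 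The same device handles (\ref{kdAdconv}): one bounds $\|\pi_h[\kd\,\Ad\,\Bd]-\pi_h[\Ad\,\Bd]\|_{L^1(\D_T)}$ by the same vanishing quantity (now paired with the bounded term $\pi_h[\tr((\Ad)^2\Bd)]$ from (\ref{stab2ctr})), so the weak $L^2$ limit of $\pi_h[\kd\,\Ad\,\Bd]$ (which exists by (\ref{stabkdAdBd})) coincides with that of $\pi_h[\Ad\,\Bd]$, namely $A(\stra)\,\stra$ by (\ref{Adconv}).

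Two comments on your version. First, the remark that ``$\kd$ is uniformly bounded'' is not correct as stated (one only has $\kd\le [b/(d\delta)]^{1/2}$); what matters, and what you effectively use, is that $\kd^2\,\|\Bd\|^2\le b\,\tr(\Bd)$, giving uniform integrability of the square. Second, in your treatment of (\ref{kdAdconv}) you write the pointwise splitting $\kd\cdot\pi_h[\Ad\,\Bd]=\pi_h[\Ad\,\Bd]+(\kd-1)\cdot\pi_h[\Ad\,\Bd]$, but the object at hand is $\pi_h[\kd\,\Ad\,\Bd]$, which differs from $\kd\cdot\pi_h[\Ad\,\Bd]$; the correct identity is nodal, $\pi_h[\kd\,\Ad\,\Bd]-\pi_h[\Ad\,\Bd]=\pi_h[(\kd-1)\,\Ad\,\Bd]$, and your Egorov argument must then be carried out through $\pi_h$, which is workable but fiddly. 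The paper's direct $L^1$ estimate sidesteps this entirely.
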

\begin{proof}
It follows from (\ref{interprod}), 
(\ref{ipmat}), (\ref{normphi}),
(\ref{kddef}), (\ref{modphisq}), (\ref{eq:Bd}), 
(\ref{eqnorm}) and (\ref{stab2c}) that
\begin{align}
&
\left\|\pi_h\left[\kd(\stradh^{\dt,+},\varrhoadh^{\dt,+})\,
\Bd(\stradh^{\dt,+})\right]- \pi_h\left[\Bd(\stradh^{\dt,+})\right]\right\|^2_{L^2(\D_T)}
%\ddx \,\ddt
\label{kdBdconv1}
\\
& \qquad \leq \int_{\D_T} \pi_h\left[
(\kd(\stradh^{\dt,+},\varrhoadh^{\dt,+})-1)^2 \,\tr(\Bd(\stradh^{\dt,+})) \right]
\,\pi_h \left[\tr(\Bd(\stradh^{\dt,+})) \right]\ddx \,\ddt
\nonumber \\
& \qquad \leq C\,\int_{\D_T} \pi_h \left[\,\left|\Bd^b(\varrhoadh^{\dt,+})-\tr(\Bd(\stradh^{\dt,+}) 
\right| \,\right]
\,\left(\pi_h \left[ \|\stradh^{\dt,+}\|\right] + \delta \right) \ddx \,\ddt
\nonumber \\
& \qquad \leq C\,\left\|\pi_h \left[
\Bd^b(\varrhoadh^{\dt,+})-\tr(\Bd(\stradh^{\dt,+})\right]\right\|_{L^2(\D_T)}.
\nonumber
\end{align}
The desired result (\ref{kdBdconv}) then follows immediately from
(\ref{kdBdconv1}),
(\ref{piBdLconv}) and (\ref{rhosigeq}).

The desired result (\ref{Adconv}) follows immediately from (\ref{Addef}), (\ref{eq:BGd}),
(\ref{keyres}) and (\ref{Adef}) as $\rho_\alpha = \tr(\stra)$.  

Similarly to (\ref{kdBdconv1}), it follows from
(\ref{interprod}), (\ref{ipmat}), (\ref{normphi}), (\ref{kddef}), %(\ref{eq:Bd}),
(\ref{eqnorm}),  and (\ref{stab2ctr}) that
\begin{align} 
&\left\|\pi_h\left[\kd(\stradh^{\dt,+},\varrhoadh^{\dt,+})\,\Ad(\stradh^{\dt,+},\varrhoadh^{\dt,+})\,
\Bd(\stradh^{\dt,+})\right]- \pi_h\left[
\Ad(\stradh^{\dt,+},\varrhoadh^{\dt,+})\,
\Bd(\stradh^{\dt,+})\right]\right\|_{L^1(\D_T)}
\label{kdAdconv1}
\\
&\quad
\leq C\int_{\D_T} \left(\pi_h \left[\,
\left|\Bd^b(\varrhoadh^{\dt,+})-\tr(\Bd(\stradh^{\dt,+})\right|\, \right]\right)^{\frac{1}{2}}
\nonumber \\
& \hspace{1in} \times
\left(\pi_h \left[ \left\|\Ad(\stradh^{\dt,+},\varrhoadh^{\dt,+})\,
\left[\Bd(\stradh^{\dt,+})\right]^{\frac{1}{2}}\right\|^2 \right] \right)^{\frac{1}{2}} \ddx \,\ddt
\nonumber \\
& \quad 
\leq C \left\|\pi_h \left[\, 
\left|\Bd^b(\varrhoadh^{\dt,+})-\tr(\Bd(\stradh^{\dt,+})
\right|\,\right]\right\|_{L^1(\D_T)}^{\frac{1}{2}}
\leq C \left\|\pi_h \left[
\Bd^b(\varrhoadh^{\dt,+})-\tr(\Bd(\stradh^{\dt,+})\right]\right\|_{L^2(\D_T)}^{\frac{1}{2}}.
\nonumber
\end{align}
It follows immediately from
(\ref{kdAdconv1}),
(\ref{piBdLconv}), (\ref{rhosigeq}) and (\ref{stabkdAdBd})
that the weak limits in (\ref{Adconv},c) are the same.
Hence, the desired result (\ref{kdAdconv}).
\end{proof}

\begin{theorem}\label{exPathm}
Under all of the assumptions of Lemma \ref{rhoathm}, the limiting functions
$(\buaL,\straL)$ satisfying (\ref{ualpreg},b) %, (\ref{rhosigeq})
and (\ref{sigPD})
solve the following problem:

{\rm ({\bf P}$_\alpha$)} Find $\buaL \in L^{\infty}(0,T;{\rm H})
\cap L^{2}(0,T;\Uz) \cap W^{1,\frac{4}{\vartheta}}(0,T;\Uz')$
and $\straL
\in L^{\infty}(0,T;[L^{2}(\D)]^{2 \times 2}_{{\rm S},> 0,b})\cap
L^{2}(0,T;[H^{1}(\D)]^{2 \times 2}_{\rm S})\cap H^{1}(0,T;([H^{1}(\D)]^{2 \times 2}_{\rm S})')$,
with $A(\stra)\,\stra \in L^2(0,T;([L^{2}(\D)]^{2 \times 2}_{\rm S}))$,
such that $\bua(0)=\bu^0$, $\stra(0)=\strs^0$ and 
\begin{subequations}
\begin{align}
\label{weak1d}
&{\rm Re} \displaystyle\int_{0}^{T}  \left\langle \frac{\partial \buaL}{\partial t},
\bv \right\rangle_{\Uz}
\ddt  
+ \int_{\D_T} \left[ 
(1-\e) \,\grad \buaL: \grad \bv +
{\rm Re} \left[ (\buaL \cdot \grad) \buaL
\right]\,\cdot\,\bv \right] \, \ddx\,\ddt
\\
&\hspace{0.5in} =
\int_0^T \langle \f , \bv \rangle_{H^1_0(\D)} \,\ddt
- \frac{\e}{\rm Wi} \int_{\D_T} A(\straL)\,\straL 
: \grad \bv \, \ddx\,\ddt  
\qquad \forall \bv \in L^{\frac{4}{4-\vartheta}}(0,T;\Uz),
\nonumber
\\
&\int_{0}^T  
\left \langle \frac{\partial \straL}{\partial t}
,\bphi
\right \rangle_{H^1(\D)} \ddt
+ \int_{\D_T} \left[
(\buaL \cdot \grad) \straL : \bphi +
\alpha\,\grad \straL :: \grad \bphi \right] \,  \ddx\,\ddt
\label{weak2d} 
\\
&\hspace{0.2in} = \int_{\D_T}
\left[2\,(\grad \buaL)\,\straL - \frac{1}{\rm Wi}
A(\straL)\,\straL \right] : \bphi \, \ddx\,\ddt
\qquad \forall
\bphi \in L^{2}(0,T;[H^1(\D)]^{2 \times 2}_{\rm S}),
\nonumber
\end{align}
\end{subequations}
where $\vartheta \in (2,4)$.
\end{theorem}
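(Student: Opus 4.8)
The plan is to pass to the limit $\delta,\,h,\,\Delta t \rightarrow 0_+$ (along the subsequence already extracted in Theorem \ref{convaL} and further refined in Lemmas \ref{rhoathm} and \ref{singconthm}) in the restated scheme (\ref{equncon}--c). First I would fix test functions: for (\ref{weak1d}) take $\bv \in C^\infty_0(0,T;[C^\infty(\overline{\D})]^2)$ with $\div \bv =0$, and approximate by $\mathcal{R}_h \bv \in \Vhone$ using (\ref{Vhconv}) and (\ref{Rhstab}); for (\ref{weak2d}) take $\bphi \in C^\infty_0(0,T;[C^\infty(\overline{\D})]^{2\times 2}_{\rm S})$ and approximate by $\pi_h \bphi$ using (\ref{interp1},b). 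The time-derivative terms are handled through the operators $\mathcal{S}$ and $\mathcal{E}$: rewrite $\int_{\D_T} \Re\,\partial_t \buadh^{\dt}\cdot \mathcal{R}_h\bv$ as $\Re\int_0^T \langle \mathcal{S}\partial_t\buadh^{\dt},\bv\rangle_{\Uz}$-type pairings via (\ref{Rh}), (\ref{Sw}), and pass to the limit using (\ref{utwconL2}); similarly for the stress equation via (\ref{psitwconL2}) and (\ref{Echi}), noting that a $(\I-\pi_h)$ commutator term appears (as in (\ref{intpartstr})) which vanishes by (\ref{interp1},b) together with the $H^1$-bounds (\ref{stab2c}).

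Next I would treat each remaining term. The viscous term $(1-\e)\int_{\D_T}\grad\buadh^{\dt,+}::\grad(\mathcal{R}_h\bv)$ passes to the limit by (\ref{uwconH1}) and (\ref{Rhstab}); the diffusion term $\alpha\int_{\D_T}\grad\stradh^{\dt,+}::\grad(\pi_h\bphi)$ by (\ref{psiwconH1x}); the forcing term by (\ref{fnconv}). The Navier--Stokes convection term $\frac{\Re}{2}\int_{\D_T}[\,[(\buadh^{\dt,-}\cdot\grad)\buadh^{\dt,+}]\cdot\bv - \ldots\,]$ is handled by combining the strong $L^2(0,T;[L^r(\D)]^2)$ convergence (\ref{usconL2}) of $\buadh^{\dt,\pm}$ with the weak $H^1$ convergence (\ref{uwconH1}), and then rewriting the antisymmetric form back as $\Re\int_{\D_T}[(\bua\cdot\grad)\bua]\cdot\bv$ using $\bua \in \Uz$ and (\ref{conv0c}). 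For the extra-stress term in the momentum equation, $\frac{\e}{\Wi}\int_{\D_T}\pi_h[\kd\,\Ad\,\Bd]::\grad(\mathcal{R}_h\bv)$, I use the weak convergence (\ref{kdAdconv}) to $A(\stra)\stra$ in $L^2(0,T;[L^2(\D)]^{2\times2})$. For the stress equation, the reaction term $\frac{1}{\Wi}\int_{\D_T}\pi_h[\Ad\,\Bd:\bphi]$ uses (\ref{Adconv}) (weak $L^{8/5}$ convergence, paired against the smooth $\bphi$), after removing the $\pi_h$ via (\ref{interp1},b); the stretching term $-2\int_{\D_T}\grad\buadh^{\dt,+}:\pi_h[\kd\,\bphi\,\Bd]$ uses (\ref{kdBdconv}) (strong $L^2$ convergence of $\pi_h[\kd\Bd]$ to $\stra$) against the weak convergence (\ref{uwconH1}) of $\grad\buadh^{\dt,+}$; here one writes $\grad\buadh^{\dt,+}:\pi_h[\kd\bphi\Bd] = \bphi\,\pi_h[\kd\Bd] : (\gbuadh^{\dt,+})^T$ modulo interpolation errors controlled by (\ref{interp1},b) and the bound (\ref{stab2c}). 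The advection term $-\int_{\D_T}\sum_{m,p}[\buadh^{\dt,-}]_m\,\Lambda_{\delta,m,p}(\stradh^{\dt,+}):\partial_{\xx_p}\bphi$ becomes, in the limit, $\int_{\D_T}(\bua\cdot\grad)\stra:\bphi$ (after an integration by parts using $\bua\in\Uz$) by combining the strong convergence (\ref{XXxLinf}) of $\Lambda_{\delta,m,p}(\stradh^{\dt,+})$ to $\stra\,\delta_{mp}$ with the strong convergence (\ref{usconL2}) of $\buadh^{\dt,-}$.

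The regularity claims $\bua(0)=\bu^0$, $\stra(0)=\strs^0$ come from (\ref{bu0hconv}), the weak continuity of $\bua$ and $\stra$ into the pivot spaces (already noted in Theorem \ref{convaL}), and the initial conditions of the scheme; the membership $A(\stra)\stra \in L^2(0,T;[L^2(\D)]^{2\times2}_{\rm S})$ follows from (\ref{kdAdconv}) and weak lower semicontinuity of the $L^2$-norm applied to (\ref{stabkdAdBd}); and $\stra \in L^\infty(0,T;[L^2(\D)]^{2\times2}_{{\rm S},>0,b})$ is exactly (\ref{stralpreg}) upgraded by (\ref{sigPD}). Finally, I would extend the class of admissible test functions: in (\ref{weak1d}) from smooth divergence-free $\bv$ to all $\bv\in L^{4/(4-\vartheta)}(0,T;\Uz)$ by density, noting each term is continuous in $\bv$ with respect to that topology (the convection term requires $\bua\in L^\infty(0,T;{\rm H})\cap L^2(0,T;\Uz)$ and $d=2$ so that $(\bua\cdot\grad)\bua \in L^{4/(2+\vartheta)}$-type integrability suffices); in (\ref{weak2d}) from smooth $\bphi$ to all $\bphi\in L^2(0,T;[H^1(\D)]^{2\times2}_{\rm S})$ by density, which is legitimate since $\grad\bua \in L^2(\D_T)$, $\stra\in L^\infty(0,T;L^2(\D))\cap L^2(0,T;H^1(\D))$ (hence $(\bua\cdot\grad)\stra$ and $(\grad\bua)\stra$ lie in $L^1(\D_T)$ with the right duality, using Gagliardo--Nirenberg (\ref{GN}) in $d=2$), and $A(\stra)\stra\in L^2(\D_T)$. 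The main obstacle is the identification of the nonlinear limits of $\pi_h[\kd\,\Ad\,\Bd]$ and $\pi_h[\kd\,\bphi\,\Bd]$ — i.e.\ ensuring the interpolation operator $\pi_h$, the regularizing cut-offs $\Gd$, $\Bd$, $\kd$, and the discrete variable $\varrhoadh^{\dt}$ all disappear correctly — but this has essentially been isolated into Lemmas \ref{rhoathm} and \ref{singconthm} (in particular the facts $\varrho_\alpha=\tr(\stra)$, $\tr(\stra)<b$, and $\stra$ positive definite a.e.), so the remaining work in this theorem is the more routine matter of assembling these convergences term by term and performing the density arguments.
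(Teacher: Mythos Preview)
Your proposal is correct and follows essentially the same route as the paper's proof: pass to the limit in (\ref{equncon}--c) term by term using the convergences already assembled in Theorem \ref{convaL} and Lemmas \ref{rhoathm}, \ref{singconthm}, first against smooth test functions ($\pi_h\bphi$ for the stress equation, an $H^1$-approximation in $\Vhone$ for the velocity) and then extend by density. The paper is simply more terse, citing the relevant convergence labels in bulk rather than writing out each pairing, and it highlights the $(\I-\pi_h)$ commutator for the stress time-derivative term via an identity analogous to your (\ref{intpartstr})-style manipulation; your term-by-term account is a faithful expansion of this.
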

\begin{proof}
The function spaces and the initial conditions for $(\buaL,\straL)$  
follow immediately from \linebreak
(\ref{ualpreg},b), (\ref{sigPD}) and (\ref{kdAdconv}).
It remains to prove that $(\buaL,\straL)$ satisfy (\ref{weak1d},b).
It follows from (\ref{Vhconv}),
(\ref{uwconH1}--d), 
(\ref{kdAdconv}), 
(\ref{fnconv}), (\ref{Sw})
and (\ref{conv0c})
that we may pass to the limit, $\delta,\,h,\,\Delta t \rightarrow 0_+$, with $h=o(\delta)$, in
(\ref{equncon}) to obtain that $(\buaL, 
\straL)$ 
satisfy (\ref{weak1d}).

It follows from (\ref{psiwconH1x}--f), (\ref{uwconH1},d), 
(\ref{kdBdconv},b), (\ref{Echi}), 
(\ref{interp1},b), 
(\ref{eq:symmetric-tr}) and as $\buaL \in L^2(0,T;{\rm V})$
that we may pass to the limit $\delta,\,h,\,\Delta t \rightarrow 0_+$, with $h=o(\delta)$,
in 
(\ref{eqpsincon}) with $\bchi= \pi_h \bphi$
to obtain (\ref{weak2d}) for any
$\bphi \in C^\infty_0(0,T;[C^\infty(\overline{\D})]^{2 \times 2}_{\rm S})$.
For example, similarly to (\ref{intpartstr}), in order to pass to the limit on the first term in 
(\ref{eqpsincon}),
we note that
\begin{align}
\label{intparts} 
&\int_{\D_T} \pi_h \left[ \left(\frac{\partial \stradh^{\Delta t}}{\partial t}
+  \frac{\Ad(\stradh^{\dt,+},\varrhoadh^{\dt,+})\,
\Bd(\stradh^{\dt,+})}{\Wi} \right)
 : \pi_h\bphi \right]  \ddx\,\ddt \\
&\qquad 
=
\int_{\D_T} 
\left(\frac{\partial \stradh^{\Delta t}}{\partial t}
+ \frac{\pi_h\left[\Ad(\stradh^{\dt,+},\varrhoadh^{\dt,+})\,
\Bd(\stradh^{\dt,+})\right]
}{\Wi}\right) : \pi_h \bphi 
\,\ddx\,\ddt
\nonumber \\
& \qquad \qquad  + \int_{\D_T} 
(I-\pi_h) \left[
\stradh^{\Delta t} : 
\pi_h \left[\frac{\partial \bphi}{\partial t}\right]
-\frac{\pi_h \left[\Ad(\stradh^{\dt,+},\varrhoadh^{\dt,+})\,
\Bd(\stradh^{\dt,+})\right]}{\Wi} : 
 \pi_h \bphi
\right] \ddx\,\ddt.
\nonumber
\end{align}
The desired result (\ref{weak2d}) then follows from noting that
$ C^\infty_0(0,T;[C^\infty(\overline{\D})]^{2 \times 2}_{\rm S})$
is dense in
$L^2(0,T;$ $[H^1(\D)]^{2 \times 2}_{\rm S})$.

Of course passing to the limit 
$\delta,\,h,\,\Delta t \rightarrow 0_+$, with $h=o(\delta)$, in (\ref{eqvarrhocon}),
using in addition (\ref{psiwconH1xrho}--d,f), 
yields the weak formulation for $\tr(\stra)$ consistent with (\ref{weak2d}).  
\end{proof}

\begin{remark}\label{Lalphaindrem}
Choosing $\frac{1}{2}(\bphi +\bphi^T)$ as a test function 
in (\ref{weak2d}) for any $\bphi \in L^2(0,T;[H^1(\D)]^{2 \times 2})$ yields, on noting
the symmetry of $\stra$, (\ref{weak2d}) with the term $2 \,(\gbua)\stra$   
replaced by $(\gbua)\stra+\stra(\gbua)^T$, which is consistent with 
(\ref{eq:aoldroyd-b-sigma2}). 

Finally, it follows from (\ref{stab1c},f,h), (\ref{uwconL2}--c) and 
(\ref{kdAdconv}) 
that
\begin{align}
\sup_{t \in (0,T)} \|\buaL \|^2_{L^2(\D)}
+\int_{0}^T 
\left[ \|\grad \buaL \|^2_{L^2(\D)} + \left \|
{\mathcal S} \frac{\partial \buaL}{\partial t} \right\|_{H^1(\D)}^{\frac{4}{\vartheta}}
+ \|A(\stra)\,\stra\|_{L^2(\D)}^2 
\right]
\ddt
&\leq C,
\label{finalbd}
\end{align}
where $\vartheta \in (2,4)$ and  
$C$ is independent of the stress diffusion coefficient $\alpha$.
Of course, in addition, it follows from (\ref{sigPD}) and (\ref{modphisq}) 
that $\|\stra\|_{L^\infty(\D_T)} <b$.  
\end{remark}

%\bibliography{polyjwbesrefs,refs_moi,refs_polymer}
\bibliographystyle{ws-m3as}
\def\cprime{$'$} \def\cprime{$'$}
  \def\ocirc#1{\ifmmode\setbox0=\hbox{$#1$}\dimen0=\ht0\advance\dimen0
  by1pt\rlap{\hbox to\wd0{\hss\raise\dimen0
  \hbox{\hskip.2em$\scriptscriptstyle\circ$}\hss}}#1\else {\accent"17 #1}\fi}
  \def\cprime{$'$} \def\cprime{$'$} \def\cprime{$'$}
  \def\soft#1{\leavevmode\setbox0=\hbox{h}\dimen7=\ht0\advance \dimen7
  by-1ex\relax\if t#1\relax\rlap{\raise.6\dimen7
  \hbox{\kern.3ex\char'47}}#1\relax\else\if T#1\relax
  \rlap{\raise.5\dimen7\hbox{\kern1.3ex\char'47}}#1\relax \else\if
  d#1\relax\rlap{\raise.5\dimen7\hbox{\kern.9ex \char'47}}#1\relax\else\if
  D#1\relax\rlap{\raise.5\dimen7 \hbox{\kern1.4ex\char'47}}#1\relax\else\if
  l#1\relax \rlap{\raise.5\dimen7\hbox{\kern.4ex\char'47}}#1\relax \else\if
  L#1\relax\rlap{\raise.5\dimen7\hbox{\kern.7ex
  \char'47}}#1\relax\else\message{accent \string\soft \space #1 not
  defined!}#1\relax\fi\fi\fi\fi\fi\fi}
  \def\ocirc#1{\ifmmode\setbox0=\hbox{$#1$}\dimen0=\ht0 \advance\dimen0
  by1pt\rlap{\hbox to\wd0{\hss\raise\dimen0
  \hbox{\hskip.2em$\scriptscriptstyle\circ$}\hss}}#1\else {\accent"17 #1}\fi}
  \def\ocirc#1{\ifmmode\setbox0=\hbox{$#1$}\dimen0=\ht0 \advance\dimen0
  by1pt\rlap{\hbox to\wd0{\hss\raise\dimen0
  \hbox{\hskip.2em$\scriptscriptstyle\circ$}\hss}}#1\else {\accent"17 #1}\fi}
  \def\ocirc#1{\ifmmode\setbox0=\hbox{$#1$}\dimen0=\ht0 \advance\dimen0
  by1pt\rlap{\hbox to\wd0{\hss\raise\dimen0
  \hbox{\hskip.2em$\scriptscriptstyle\circ$}\hss}}#1\else {\accent"17 #1}\fi}

\end{document}